\numberwithin{equation}{section}
\theoremstyle{plain}
        \newtheorem{theorem}{Theorem}[section]
        \newtheorem{proposition}[theorem]{Proposition}
        \newtheorem{lemma}[theorem]{Lemma}
        \newtheorem{corollary}[theorem]{Corollary} 
        \newtheorem{conjecture}[theorem]{Conjecture}
        \newtheorem{definition}[theorem]{Definition} 
        \newtheorem{remark}[theorem]{Remark}  
\newtheorem*{theorem*}{Theorem}
\newtheorem*{proposition*}{Proposition}
\newcommand{ \black  }{\color{black} }
\let\oldmarginpar\marginpar
\renewcommand\marginpar[1]{\-\oldmarginpar[\raggedleft\footnotesize #1]%
{\raggedright\footnotesize #1}}
\newcommand \be {\begin{equation}}
\newcommand \ee {\end{equation}}
\newcommand \la \langle
\newcommand \ra \rangle
\newcommand \lan {\langle}
\newcommand \ran {\rangle}
\newcommand{\R}{\mathbb{R}}
\newcommand{\C}{\mathbb{C}}
\newcommand{\T}{\mathbb{T}}
\newcommand{\Z}{\mathbb{Z}}
\newcommand{\N}{\mathbb{N}}
\newcommand{\GG}{\mathfrak G}
\newcommand{\BB}{\mathcal B}
\newcommand{\TT}{\mathcal T}
\newcommand{\RR}{\mathcal R}
\begin{document}
\title
[Weakly nonlinear large box limit for 2D NLS] 
{The Weakly nonlinear large box limit of the 2D cubic nonlinear Schr\"odinger
equation}%maybe remove weakly?
%{Coherent frequency profiles for the periodic nonlinear Schr\"odinger equation} 

%1) Long-time dynamics of frequency envelopes (or profiles) for the 2d cubic nonlinear Schrodinger equation on a periodic box.
%2) Coherent frequency dynamics of the nonlinear Schrodinger equation on a periodic box. (maybe remove the work Coherent?).

%Here are the previous suggestions:

%1) Weakly nonlinear dynamics for NLS in a big box
%2) Large box limit for weakly nonlinear Schršdinger equation
%3) Weakly nonlinear high frequency dynamics for NLS on the box
\thanks{E. Faou is supported by the ERC Starting Grant project GEOPARDI}
\thanks{P. Germain is partially supported by NSF grant DMS-1101269, a start-up grant from the Courant Institute, and a Sloan fellowship.}
\thanks{Z. Hani is supported by a Simons Postdoctoral Fellowship and NSF Grant DMS-1301647.}

\author
    [E. Faou, P. Germain, and Z. Hani]
    {Erwan Faou, Pierre Germain, and Zaher Hani}

\address{Erwan Faou \\ INRIA \& ENS Cachan Bretagne, Campus de Ker Lann, Avenue Robert Schumann, 35170 Bruz, France. }
\email{Erwan.Faou@inria.fr}

\address{Pierre Germain \\ Courant Institute of Mathematical Sciences, 251 Mercer Street, New York 10012-1185
NY, USA}
\email{pgermain@cims.nyu.edu}
\address{Zaher Hani\\ Courant Institute of Mathematical Sciences, 251 Mercer Street, New York 10012-1185
NY, USA}
\email{hani@cims.nyu.edu}

\date{\today}

\subjclass[2000]{}
\keywords{}

\begin{abstract} 
We consider the cubic nonlinear Schr\"odinger (NLS) equation set on a two dimensional box of size $L$ with periodic boundary conditions. By taking the large box limit $L \to \infty$ in the weakly nonlinear regime (characterized by smallness in the critical space), we derive a new equation set on $\R^2$ that approximates the dynamics of the frequency modes. This nonlinear equation turns out to be Hamiltonian and enjoys interesting symmetries, such as its invariance under the Fourier transform, as well as several families of explicit solutions. A large part of this work is devoted to a rigorous approximation result that allows to project the long-time dynamics of the limit equation into that of the cubic NLS equation on a box of finite size.
\end{abstract}
\maketitle

%\vfill
\tableofcontents

%=====================================================================

\section{Introduction}

The long-time behavior of nonlinear dispersive equations on compact domains is a subject that is remarkably rich yet very poorly understood. This complexity arises since dispersion does not translate into decay, neither at the linear nor at the nonlinear level, in contrast to the case of non-compact domains like $\R^d$. As a consequence, one loses all asymptotic stability results around equilibrium solutions (like the zero solution); such results are often the starting point in the study of asymptotic behavior in that setting. In fact, these equilibria are not expected to be stable in the long run, and completely out-of-equilibrium dynamics is anticipated. 

The purpose of the present work is to uncover new, and structured or coherent, aspects of this out-of-equilibrium behavior. Such coherent dynamics are best seen in frequency space and will be revealed in the \emph{large-volume limit} and \emph{weakly nonlinear} regime. The latter is determined by how the size of the data compares to that of the box, a factor that plays a key role in determining the limiting system as we argue below.

\subsection{Presentation of the equation}

Our model equation will be the 2D cubic nonlinear Schr\"odinger equation given by
\begin{equation}\label{cubicNLS}
\begin{cases}
-i\partial_t v +\Delta v&=\pm  |v|^2v \tag{NLS}\\
v(0,x)&= v_0(x)\qquad x\in \T^2_L=[0,L]\times[0, L].
\end{cases}
\end{equation}
where $v=v(t,x)$ is a complex-valued field, $t\in \R$ represents time, and the spatial domain is taken to be the box $\T^2_L$ of size $L$ with periodic boundary conditions\footnote{ Since the equation respects the parity of the initial data, the periodic boundary condition include both Dirichlet and Neumann conditions corresponding respectively to odd and even solutions.}. Recall that this equation conserves mass and the Hamiltonian given respectively by:
\begin{equation}\label{conservation laws}
M[v(t)] \overset{def}{=} \int_{\T^2_L}|v(t,x)|^2\, dx\quad\text{and}\quad H[v(t)] \overset{def}{=} \frac{1}{2}\int_{\T^2_L}|\nabla v(t,x)|^2\,dx\pm \frac{1}{4}\int_{\T^2_L}|v(t,x)|^4\, dx.
\end{equation}

The sign of the non-linearity ($+$ for defocusing equation and $-$ for focusing equation) will not play a central part in the analysis; this is partly due to the fact that we focus on the long-time behavior of small initial data, i.e. ones that are close to the zero equilibrium $v=0$ in the scale-invariant topology (namely $L^2(\T^2_L)$). 

The size of the data is measured in comparison to the size $L$ of the box, and their relationship will be a defining factor for various regimes of long-time behavior. Consequently, it will be both convenient and more transparent to adopt the ansatz $v(t,x)=\epsilon u(t,x)$, where $\epsilon>0$. The field $u$ satisfies 
\begin{equation}\label{NLSe}
\begin{cases}
-i\partial_t u +\Delta u&=\pm \epsilon^2 |u|^2u\tag{NLS$_\epsilon$}\\
u(0,x)&= u_0(x).
\end{cases}
\end{equation}
When working with~\eqref{NLSe}, it makes most sense to consider that $\|u_0\|_{L^2}\sim 1$, since the $\epsilon$ parameter already accounts for the size of the data.

\subsection{Cubic NLS on $\mathbb{T}^2$ vs.~$\mathbb{R}^2$}
Before starting our analysis, let us review known results on the large-time behavior of solutions to \eqref{cubicNLS} on $\R^2$ and $\T^2$ (of course, it would be equivalent, but less convenient, to discuss \eqref{NLSe}) . 

The equation is mass-critical in the sense that the scaling symmetry given by $v \mapsto v_\lambda \overset{def}{=} \lambda v (\lambda^2 t,\lambda x)$ leaves both the equation and the mass invariant\footnote{On the torus, scaling is not a perfectly honest symmetry: $v_\lambda$ is a solution of \eqref{cubicNLS} on $\T^2_{\lambda^{-1}}$ rather than $\T^2$. Nevertheless, this symmetry keeps the mass $M[u]$ invariant, which extends the crucial notion of criticality to the compact setting.}.  This leads to global well-posedness in $H^s(\R^2)$ for all $s\geq 0$ and small mass (\cite{CW} or see \cite{TaoBook} for a textbook treatment). This  restriction on the mass can be removed completely in the defocusing case and relaxed in the focusing case up to the mass of the ground state \cite{KTV, Dodson1, Dodson2}. More importantly, the asymptotic behavior near the zero equilibrium is completely understood in this Euclidean setting: any small mass solution scatters to a linear solution as $t\to \pm \infty$. This means that if $v_0\in H^s(\R^2)$ for some $s\geq 0$, there exists $\varphi_{\pm \infty}\in H^s(\R^2)$ such that 
$$
\|v(t)-e^{-it\Delta_{\R^2}}\varphi_{\pm \infty}\|_{H^s(\R^2)}\to 0 \qquad \text{as }t\to\pm \infty.
$$
Moreover, one has the following asymptotic stability statement which is a direct consequence of the small data theory: If $v_0\in H^s(\R^2)$ for some $s\geq 0$ is sufficiently small, then
\begin{equation}\label{Euclidean stability}
\|v(t)-e^{-it\Delta}v_0\|_{H^s(\R^2)}=O(\|v_0\|_{H^s}^3) \qquad\text{ for all }t\in \R.
\end{equation}

On~$\mathbb{T}^2$, local well-posedness was proved by Bourgain~\cite{Bourgain} for data in $H^s$, with $s>0$. This leads when combined with \eqref{conservation laws} to global well-posedness for data with finite energy (and small mass in the focusing case). We remark that since we will be only interested in the dynamics of \emph{small} solutions of \eqref{cubicNLS} with finite energy, or equivalently solutions of \eqref{NLSe} with $\epsilon \ll 1$, all solutions exist globally and conserve mass and energy. It is also worth pointing out that local well-posedness in the scale-invariant norm $L^2$ remains an open question on $\T^2$ due to the failure of the scale-invariant $L^4_{t,x}$ Strichartz estimate (see \eqref{Strichartz}). Rather surprisingly, although we will only be interested in sufficiently regular solutions that exist globally by elementary arguments, this endpoint Strichartz estimate and the sharp constants associated to it will play a central role in our analysis (see Section \ref{tri est section}). 

When it comes to long-time behavior of \eqref{cubicNLS} on $\T^2$, we observe a number of phenomena rather than the single-scenario situation of the Euclidean setting. Indeed, the expectation is that \eqref{cubicNLS} is a \emph{non-integrable} infinite dimensional Hamiltonian system, and as a consequence it should sustain, just as in the finite dimensional case, orbits exhibiting strikingly different asymptotic behaviors. Known results include KAM tori and quasi-periodic solutions \cite{Geng, Procesi, BourgainQuasi, EliaKuk} and heteroclinic and unbounded Sobolev orbits (see \cite{CKSTT, Hani, Guardia} for partial results in this direction).  In particular, no long-time stability result anywhere close to \eqref{Euclidean stability} can hold for general data. For instance, one of the byproducts of the constructions in this paper is a whole regime of initial data violating \eqref{Euclidean stability} at time scales of roughly $\|v_0\|_{L^2}^{-2}$.

\subsection{The weakly nonlinear large-box limit} 
Equation \eqref{NLSe} has a particularly transparent form in Fourier space: Denoting by $\Z^2_L \overset{def}{=} L^{-1}\Z^2$ and expanding
$$
v(t, x)=L^{-2}\sum_{K \in \Z^2_L} a_K(t) e^{2\pi iK\cdot x},\qquad \text{where}\qquad a_K(t)=\int_{\T^2_L}v(t,x)e^{-2\pi iK\cdot x}\,dx
$$
one obtains that $a_K(t)$ solves the infinite dimensional system of ODE:
\begin{equation*}
-i\partial_t a_K(t)-4\pi^2 |K|^2a_K(t)=\pm \frac{\epsilon^2}{L^4}\sum_{(K_1, K_2, K_3)\in\mathcal S(K)} a_{K_1}(t) \overline{a_{K_2}(t)}a_{K_3}(t)
\end{equation*}
where $\mathcal S(K)=\{(K_1, K_2, K_3)\in \Z^2_L: K_1-K_2+K_3=K\}$. Obviously, the linear flow evolves much faster than the non-linear one, which makes it convenient to pass to the so-called ``interaction representation picture" by defining $\widetilde a_K(t)=e^{-4\pi^2i|K|^2t}a_K(t)$. The equation satisfied by $\widetilde a_K(t)$ is:
\begin{equation}\label{FNLSe}
-i\partial_t \widetilde a_K(t)=\pm\frac{\epsilon^2}{L^4}\sum_{\mathcal S(K)} \widetilde a_{K_1}(t) \overline{\widetilde a_{K_2}(t)}\widetilde a_{K_3}(t)e^{4\pi^2i\Omega t}
\end{equation}
where $\Omega \overset{def}{=} |K_1|^2-|K_2|^2+|K_3|^2-|K|^2$. This equation describes how the frequency mode $a_K$ is excited by other modes through the nonlinear interactions included in $\mathcal S(K)$.

Our aim is now to consider the weakly nonlinear ($\epsilon \to 0$) large-box ($L \to \infty$) limit, while keeping a balance between $\epsilon$ and $L$. We elaborate on this below.

\begin{itemize}
\item \underline{The weakly nonlinear limit} $\epsilon \to 0$ enables to restrict the above sum $\sum_{\mathcal S(K)}$ to \emph{resonant interactions}, which correspond to those $(K_1, K_2, K_3) \in \mathcal S(K)$ such that $\Omega=0$. This can be achieved by a normal form transformation. Restricting nonlinear interactions to resonant ones, one obtains the related \emph{resonant system} 
\begin{equation}\label{RS}
-i\partial_t  r_K(t)=\pm\frac{\epsilon^2}{L^4}\sum_{\mathcal R(K)}  r_{K_1}(t) \overline{ r_{K_2}(t)} r_{K_3}(t)\end{equation}
where $\mathcal R(K)=\{(K_1, K_2, K_3)\in \mathcal S(K): \Omega\overset{def}{=}|K_1|^2-|K_2|^2+|K_3|^2-|K|^2=0\}$. This system is also the first Birkhoff normal form approximation of \eqref{FNLSe} and will play a central role in our analysis. Its fundamental importance in describing the long-time behavior of \eqref{cubicNLS} has been realized and exploited in previous works (\cite{CKSTT, CF, Hani, HaPau}).
\item \underline{The large box limit} $L \to \infty$ is the thermodynamic or infinite volume limit: we are not concerned too much about the behavior of each single Fourier mode, but rather by ``macroscopic" interactions and variables. This perspective can be compared to that taken in statistical mechanics to study many-particle systems. Passing to the large-box limit is also one of the main ingredients of \emph{kinetic weak turbulence theory}, which seeks a statistical description of the out-of-equilibrium frequency dynamics of equations like \eqref{NLSe}. Taking the large-box limit, one obtains from \eqref{RS} a simpler system now set on $\R^2$. A very pedestrian explanation is the following: as $L$ goes to infinity, the lattice $\mathbb{Z}^2_L$ becomes more and more refined, allowing (in some sense which will be explained) to consider the right-hand side of \eqref{RS} as a Riemann sum, approaching a certain integral as $L \rightarrow \infty$.
\item \underline{The relative sizes of $\epsilon$ and $L$:} as measured by the quantity $\frac{(-\log \epsilon)}{\log L}$ are crucial in determining which limiting system best describes the dynamics in the corresponding range of $\epsilon$ and $L$. The main focus of this work is the regime where $\epsilon L \ll 1$. As we argue below, this is exactly the regime where the resonant dynamics dictated by \eqref{RS} dominate non-resonant interactions in \eqref{FNLSe}.
\end{itemize}
%While we will be deriving effective equations for the dynamics by taking the limits $\epsilon\to 0$ and $L\to \infty$, our main concern at the end of the day is to make statements about the long-time dynamics of \eqref{cubicNLS} on a box of finite size.  
Taking the large-box limit of \eqref{RS} in a rigorous fashion entails several difficulties due to the non-linear lattice relation ``$\Omega=0$" defining the resonant manifold $\mathcal R(K)$. This is the reason why tools from analytic and geometric number theory will play a crucial role in the analysis, first at the level of deriving the limit equation and then at the level of proving the needed uniform estimates on resonant sums like that in \eqref{RS}. Without going into the details of the derivation for now, the offspring of this analysis is a nonlinear integro-differential equation, now set on $\R^2$, given by
\begin{equation}\label{star}\tag{CR}
\boxed{\begin{split}
-i\partial_t g(\xi,t)=& \mathcal T(g,g,g)(\xi,t);\qquad \xi \in \R^2\\
\mathcal T(g,g,g)(\xi,t)=& \int_{-1}^1\int_{\R^2} g(\xi+\lambda z, t) \overline{g}(\xi+\lambda z+z^\perp)g(\xi+z^\perp) \,d\lambda\,dz.
\end{split}
}
\end{equation}
where we used $z^\perp$ to denote the rotation of $z\in \R^2$ by the angle of $+\pi/2$. The name \eqref{star} stands for \emph{continuous resonant} as it corresponds to a continuous limit of \eqref{RS}. Morally speaking, one should think of the frequency modes $a_K$ of \eqref{FNLSe} as being approximated by the trace\footnote{It is customary in the physics literature not to distinguish between the notation for $a_K(t)$ and its ``large-box limit" $g(t, K)$ and denote both by $a_K(t)$ with the understanding that $K\in \R^2$ after the large-box limit is taken.} of a \emph{rescaled version} of $g(t,\xi)$ on the lattice $\Z^2_L$. To the best of our knowledge, equation \eqref{star} is new. Upon analyzing it, one soon realizes that it is somewhat special in the sense that it enjoys rather unusual properties, symmetries, and even explicit solutions that we postpone elaborating on till the next section.

How does the new equation inform on the long-time behavior of \eqref{NLSe} on a box of finite size? In other words, can we prove that the nonlinear dynamics of \eqref{star} is a reflection or a ``subset" of the dynamics of \eqref{NLSe}? The positive answer to these questions is a major part of the analysis and is one of the novelties of this work. It is contained in Theorems \ref{approx thm1} and \ref{result on T^2}, whose precise statements we delay until we have set up all our notations and parameters in the next section. Roughly speaking, Theorem \ref{approx thm1} allows to project the dynamics of \eqref{star} onto that of \eqref{NLSe} posed on a box of size $L$, whereas Theorem \ref{result on T^2} shows how the dynamics of \eqref{star} embed into that of \eqref{cubicNLS} on the torus $\T^2$.

The weakly nonlinear, large-box regime is the usual set up for the theory of weak turbulence, also called wave turbulence. We present this theory briefly in the next section to draw the similarities and differences with what is done here. Equation \eqref{star} can be viewed as a deterministic version of the famous Kolmogorov-Zakharov (KZ)\footnote{Also called sometime \emph{wave-wave kinetic equation}.} equation of weak turbulence theory. The (KZ) equation is widely used in the physics and applied sciences (oceanography and atmospheric sciences) to understand several aspects of the frequency dynamics of equations like \eqref{NLSe}. Despite many similarities, the most notable difference between this work and the vast physics literature on (kinetic) wave turbulence is that our derivation does not involve any randomization of the data. This has the effect that the limiting equation \eqref{star} is time-reversible (even Hamiltonian), as opposed to (KZ) (cf.~Section \ref{WT subsection}). It should be noted that the proper and rigorous justification of the Kolmogorov-Zakharov equation is a very important, and difficult, open question both from the physical and mathematical viewpoint. We hope that this work is a step forward in this direction.

\subsection{Organization of the paper} The paper is organized as follows. We end this introduction by listing the notation used in the rest of the manuscript. In the next section, we sketch briefly the derivation of \eqref{star}, state the main approximation theorems \ref{approx thm1} and \ref{result on T^2} mentioned above, and summarize the properties of \eqref{star}. We also include in that section some heuristic interpretations of \eqref{star} and elaborate on its relation to weak turbulence theory. In section \ref{tri est section} we prove the crucial trilinear estimates on resonant sums like that in \eqref{RS}, which are of interest in their own right as they are intimately related to some sharp Strichartz estimates on $\T^2$. In Section \ref{T_L to T section}, we prove that the right-hand-side of \eqref{RS} converges in the limit of large $L$ to the right-hand-side of \eqref{star} up to important rescaling factors and obtain quantitative estimates on the errors incurred. In Section \ref{proof of approx theorem section} we give the proof of the approximation theorems. Afterwards, we embark on analyzing equation \eqref{star} in Sections \ref{sectionhamiltonian}-\ref{sectionvariational}, starting from its Hamiltonian properties, symmetries and invariances, explicit solutions, global well-posedness results, as well as variational properties and characterizations of some of the explicit solutions. 

\subsection{Notations}

The following notations are used. Recall that we denote $\Z^2_L=L^{-1}\Z^2$ and for any subset $\Lambda$ of $\Z^2_L$, $\Lambda^*= \Lambda \setminus \{0\}$.

\subsubsection{Fourier transform} \underline{On $\R^2$:} The Fourier transform of a function $f$ on $\mathbb{R}^2$ is denoted $\mathcal{F} f$ or $\widehat{f}$, and given by 
$$
\mathcal{F} f (\xi) = \widehat{f} (\xi) = \frac{1}{2\pi} \int_{\mathbb{R}^2} e^{-ix\cdot \xi}
f(x)\,dx \quad \mbox{so that} \quad f(x) = \frac{1}{2\pi} \int_{\mathbb{R}^2} e^{ix\cdot \xi}
\widehat{f}(\xi)\,d\xi.
$$
The map $f \mapsto \widehat{f}$ is an isometry on $L^2(\mathbb{R}^d)$. 

\medskip

\noindent \underline{On $\T^2_L$:} The Fourier transform of a function $u$ on $\mathbb{T}^2_L$ is the sequence $a_K=\widehat u(K)$ with $K \in \Z^2_L=L^{-1}\Z^2$ given by 
$$
a_K= \int_{\T_L^2} u(x)e^{-2\pi iK.x} \,dx \quad \mbox{so that} \quad u(x)=\frac{1}{L^2}\sum_{K \in \Z_L^2} a_K(t) e^{2\pi i K.x}.
$$
The map $u \mapsto \{a_K\}$ is an isometry from $L^2(\T_L^2)$ to $\ell^2_L$ defined below.

\subsubsection{Function spaces}
\noindent \underline{On $\Z^2_L$:} We will consider sequences $(a_K)_{K \in \Z^2_L}$ and use the normalized counting measure $L^{-2} \sum_{K \in \Z^2_L}\delta_K$ to define $\ell^p(\Z^2_L)$ norms for $p < \infty$. Therefore we set for $1 \leq p < +\infty,$
$$
\|a_K\|_{\ell^p_L}\overset{def}{=}\left(L^{-2}\sum_{K \in \Z^2_L} |a_K|^p\right)^{1/p}
\quad \mbox{and} \quad 
\| a_K \|_{\ell_L^\infty} \overset{def}{=} \sup_{K \in \Z^2_L} |a_K|, \quad \mbox{for} \quad p = + \infty. 
$$
The weighted $\ell_L^p(\Z^2_L)$ spaces will be denoted $\ell^{p,\sigma}_L$ with norms
$$
\|a_K\|_{\ell^{p,\sigma}_L}\overset{def}{=} \|\langle K \rangle^\sigma a_K\|_{\ell^{p,}_L}
$$
Of particular importance in the analysis will be the space $X^\sigma_L \overset{def}{=} \ell^{\infty,\sigma}_L$ with $\sigma \in \R$, the space associated with the norm 
\begin{equation}
\label{eq:defXsigmaL}
\|a_K\|_{X^\sigma_L}\overset{def}{=}\|\langle K \rangle^\sigma a_K\|_{\ell^\infty_L}.
\end{equation}
Notice that with our normalization of the $\ell^{2,s}_L(\Z^2_L)$ spaces we have that 
$$
\|a_K\|_{\ell^{2,s}_L(\Z^2_L)}\lesssim \|a_K\|_{X^\sigma_L(\Z^2_L)}\quad \mbox{ whenever }\sigma>s+1.
$$
 
\medskip

\noindent \underline{On $\R^2$ and $\T^2_L$:} We use the standard definitions and notations for Lebesgue, weighted Lebesgue (in their homogeneous and inhomogeneous versions), and Sobolev spaces on $\R^2$ and $\T^2_L$. For example, with $M$ standing for either $\R^2$ or $\T^2_L$ we denote the $L^p$, homogeneous weighted $L^p$, inhomogeneous weighted $L^p$, and Sobolev norms respectively by
\begin{align*}
& \|g\|_{L^p(M)}\overset{def}{=}\left(\int_{M} |g(x)|^p\,dx\right)^{1/p} \\
& \|g\|_{\dot L^{p,\sigma}(M)}\overset{def}{=}\| | x |^\sigma g(x)\|_{L^p(M)} \\
& \|g\|_{L^{p,\sigma}(M)}\overset{def}{=}\|\langle x \rangle^\sigma g(x)\|_{L^p(M)} \\
& \|g\|_{H^m(M)}\overset{def}{=}\|\langle \nabla \rangle^{m}g(x)\|_{L^2(M)},
\end{align*}
for $p \in [1,\infty]$ (with the usual modification for $p=\infty$), and $m,\sigma \in \R$.  Notice that the Sobolev norm of a function $\phi \in H^s(\T^2_L)$ is equivalent to $\|\widehat \phi\|_{\ell^{2,s}_L}$.
The inner product on $L^2(M)$ is denoted by 
$$\langle f,g \rangle \overset{def}{=} \int_{M} f(x) \overline{g(x)}\,dx.$$
Finally, because of their particular importance in our analysis, we will denote by $\dot X^\sigma=\dot L^{\infty,\sigma}(\R^2)$ and $X^\sigma:=L^{\infty,\sigma}(\R^2)$ with norm
\begin{equation}
\label{eq:defXsigma}
\|g\|_{X^\sigma}\overset{def}{=}\|\langle x \rangle^\sigma g\|_{L^\infty(\R^2)}.
\end{equation}

Note that for a given function in $g \in X^\sigma(\R^2)$, we can associate the trace or projection sequence $\{ g_K\}_{K \in \Z^2_L} \in X^\sigma_L(\Z^2_L)$ by the formula $g_K = g(K)$.

\subsubsection{Miscellaneous}

\begin{itemize}
\item The counter-clockwise rotation of center $0$ and angle $\frac{\pi}{2}$ on $\mathbb{R}^2$ will be denoted by the superscript ${\cdot}^\perp$.
%\item For the rest of this paper, we will abuse notation and denote by $\sum_{1}^L n^{-1}$ by $\log L$.
\item The ``Japanese brackets" stand for $\lan x\ran \overset{def}{=} \sqrt{1+|x|^2}$.
\item The normalized Gaussian with $L^2$ mass one reads $G(x) \overset{def}{=} \frac{1}{\sqrt \pi} e^{-\frac{|x|^2}{2}}$.
\item $A \lesssim B$ if $A \leq C B$ for some implicit, universal constant $C$. $A\lesssim_\delta B$ means that the implicit constant $C$ depends on $\delta$.
\item $A \sim B$ if $A \lesssim B$ and $B \lesssim A$.
\item For any $m \in \R_+$, we denote by $\mathcal B(m)$ the box $[-\frac{m}{2}, \frac{m}{2})\times  [-\frac{m}{2}, \frac{m}{2})$ and by $B(x,m)$ the Euclidean ball with center $x$ and radius $m$.
\end{itemize}

\section{Statement of the results}\label{obtained results}
In this section, we present a very formal sketch of the derivation of \eqref{star} from \eqref{FNLSe}. This is done rigorously in the following sections. We then state the main results in this work starting with the main approximation theorems that allow to compare the dynamics of \eqref{star} and that of \eqref{cubicNLS} along with the crucial estimates involved in proving them. We believe the latter to be of interest in their own right. Afterwards, we state the properties of the \eqref{star} equation that are proved in Sections \ref{sectionhamiltonian}-\ref{sectionvariational}. Finally we present a heuristic interpretation of \eqref{star} and elaborate on how it fits in the weak turbulence picture. 

\subsection{Formal derivation of (CR)}\label{formal argument}
Recall that we denoted by $v(t,x)$ the solution of \eqref{cubicNLS}, by $u(t,x)\overset{def}{=}\epsilon^{-1} v(t,x)$ the solution of \eqref{NLSe}, and by $a_K(t)\overset{def}{=}\widehat u(t,K)$ for $K \in \Z^2_L$. We then went to the interaction representation picture and defined $\widetilde a_K(t)= a_K(t)e^{-4\pi^2i|K|^2t}$ which satisfies the system \eqref{FNLSe} 
\begin{equation*}
-i\partial_t \widetilde a_K(t)=\frac{\epsilon^2}{L^4}\sum_{\mathcal S(K)} \widetilde a_{K_1}(t) \overline{\widetilde a_{K_2}(t)}\widetilde a_{K_3}(t)e^{4\pi^2i\Omega t}
\end{equation*}
with $\Omega\overset{def}{=}|K_1|^2-|K_2|^2+|K_3|^2-|K|^2$ and $\mathcal S(K)=\{(K_1, K_2, K_3): K_1-K_2+K_3=K\}$. Note that $a_K(t)$ can be directly reconstructed from $\widetilde a_K(t)$ and vice-versa. Splitting the interactions on the right-hand-side of the above equation into resonant and non-resonant gives
\begin{equation*}
-i\partial_t \widetilde a_K(t)=\underbrace{\frac{\epsilon^2}{L^4}\sum_{\mathcal R(K)} \widetilde a_{K_1}(t) \overline{\widetilde a_{K_2}(t)}\widetilde a_{K_3}(t)}_{\text{Resonant interactions}}+\underbrace{\frac{\epsilon^2}{L^4}\sum_{\mathcal S(K)\setminus \mathcal R(K)} \widetilde a_{K_1}(t) \overline{\widetilde a_{K_2}(t)}\widetilde a_{K_3}(t)e^{4\pi^2i\Omega t}}_{\text{Non-resonant interactions}}
\end{equation*}
where $\mathcal R(K)=\{(K_1, K_2, K_3)\in \mathcal S(K): \Omega=0\}$. It is well-known that if $\epsilon$ is small enough resonant interactions have considerably more importance on the dynamics than non-resonant ones \cite{CKSTT, CF, Hani}. This is often made precise by resorting to a \emph{normal form transformation}. A normal form transformation is an invertible change of variable $\widetilde a_K \to c_K=\mathcal U(\widetilde a_K)$ that is sufficiently close to the identity operator and which transforms the equation for $\widetilde a_K$ into the following equation for $c_K(t)$:
\begin{equation*}
-i\partial_t c_K(t)=\frac{\epsilon^2}{L^4}\sum_{\mathcal R(K)} c_{K_1}(t) \overline{c_{K_2}(t)}c_{K_3}(t)+\epsilon^4 (\text{quintic term}).
\end{equation*}
Several normal form transformations give the above result, we mention Poincar\'e-Dulac and Birkhoff normal forms. We will use the somewhat simpler Poincar\'e-Dulac normal form, but leave the details to Section \ref{proof of approx theorem section} when we prove Theorem \ref{approx thm1}. If $\epsilon$ is small enough (depending on $L$ and the control we have on relevant norms of $\{c_K\}$), the resonant sum above dominates the remainder quintic term and drives the dynamics. It is here that the weak non-linearity assumption comes into the picture and dictates the range of $\frac{(-\log \epsilon)}{\log L}$ under consideration.

The upshot of the above discussion is that we can pretend, at least formally, that $\widetilde a_K(t)$ satisfies the resonant system \eqref{RS}, namely 
\begin{equation}\label{pretend eqn}
-i\partial_t \widetilde a_K(t)``="\frac{\epsilon^2}{L^4}\sum_{\mathcal R(K)} \widetilde a_{K_1}(t) \overline{\widetilde a_{K_2}(t)}\widetilde a_{K_3}(t)
\end{equation}
At this step, one can interpret $``="$ as an equality sign with some lower order terms added to the right-hand-side, but things will get even more formal below. The next step is to take the large-box limit of the above equation. This is equivalent to changing the above discrete sums in \eqref{pretend eqn} to integrals via a suitable equidistribution analysis. Recall that if $u(z)$ is a sufficiently regular and decaying function on $\R^2$ then 
$$
L^{-2}\sum_{K \in \Z_L^2} u(K) \to \int_{\R^2}u(z) \,dz, \quad \mbox{when} \quad L \to +\infty. 
$$
Of course the sum in \eqref{pretend eqn} is much more complicated than a standard Riemann sum, so our first task is to re-parametrize it in a way that allows us to pass to the large-box limit. For this, we change variables in $\mathcal R(K)$ and write $K_i=K+N_i$ for $i=1,2,3$. The defining relations of $\mathcal R(K)$ now translate to 
$N_1+N_3=N_2$ and $ |N_1|^2+|N_3|^2=|N_2|^2$.
As a result, we have by Pythagoras's theorem that
\begin{equation}
\label{eq:rect1}
\mathcal{R}(K) = \{(K + N_1, K + N_2 + N_3, K + N_3) , \quad N_1 \cdot N_3 = 0, \quad (N_1,N_3) \in \Z^2_L\}. 
\end{equation}
The degenerate sums when either $N_1$ or $N_3$ is zero disappear in the large-box limit and can be treated as error terms because they account for much fewer lattice points than the non-degenerate case when $N_1, N_3\neq 0$. This will be very clear when we do the estimates in Section \ref{T_L to T section}, but for now it can be seen from Figure \ref{figure1} which shows the plots of rectangles in $\mathcal R(K)$ with $K = (0,0)$ on lattices $\Z_L^2$ with different size of $L$. Notice how taking $L$ larger and larger makes the density of rectangles tend to a continuous one. 

\begin{figure}[h!]
\centering
\includegraphics[width=0.3\linewidth]{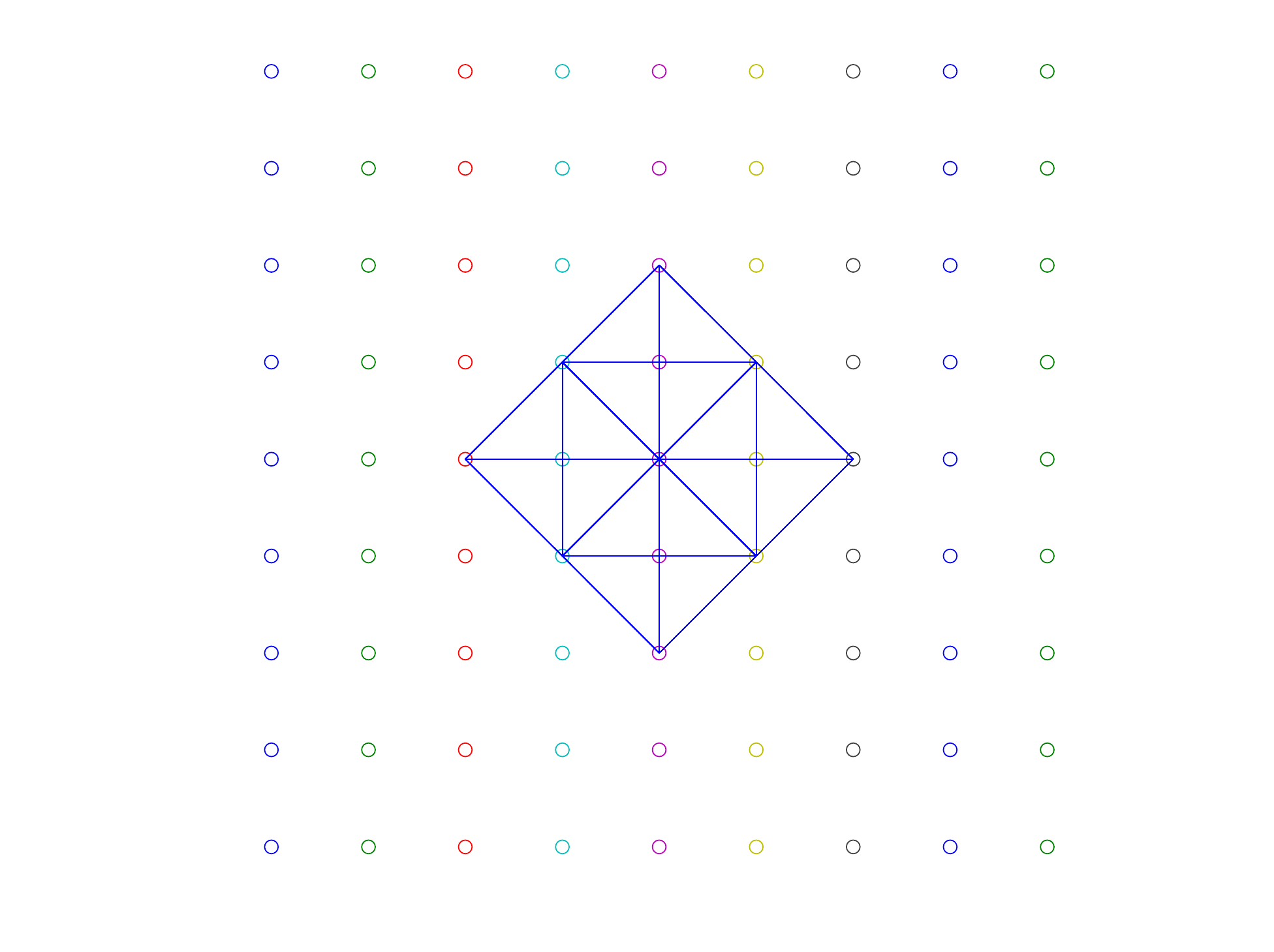}
\includegraphics[width=0.3\linewidth]{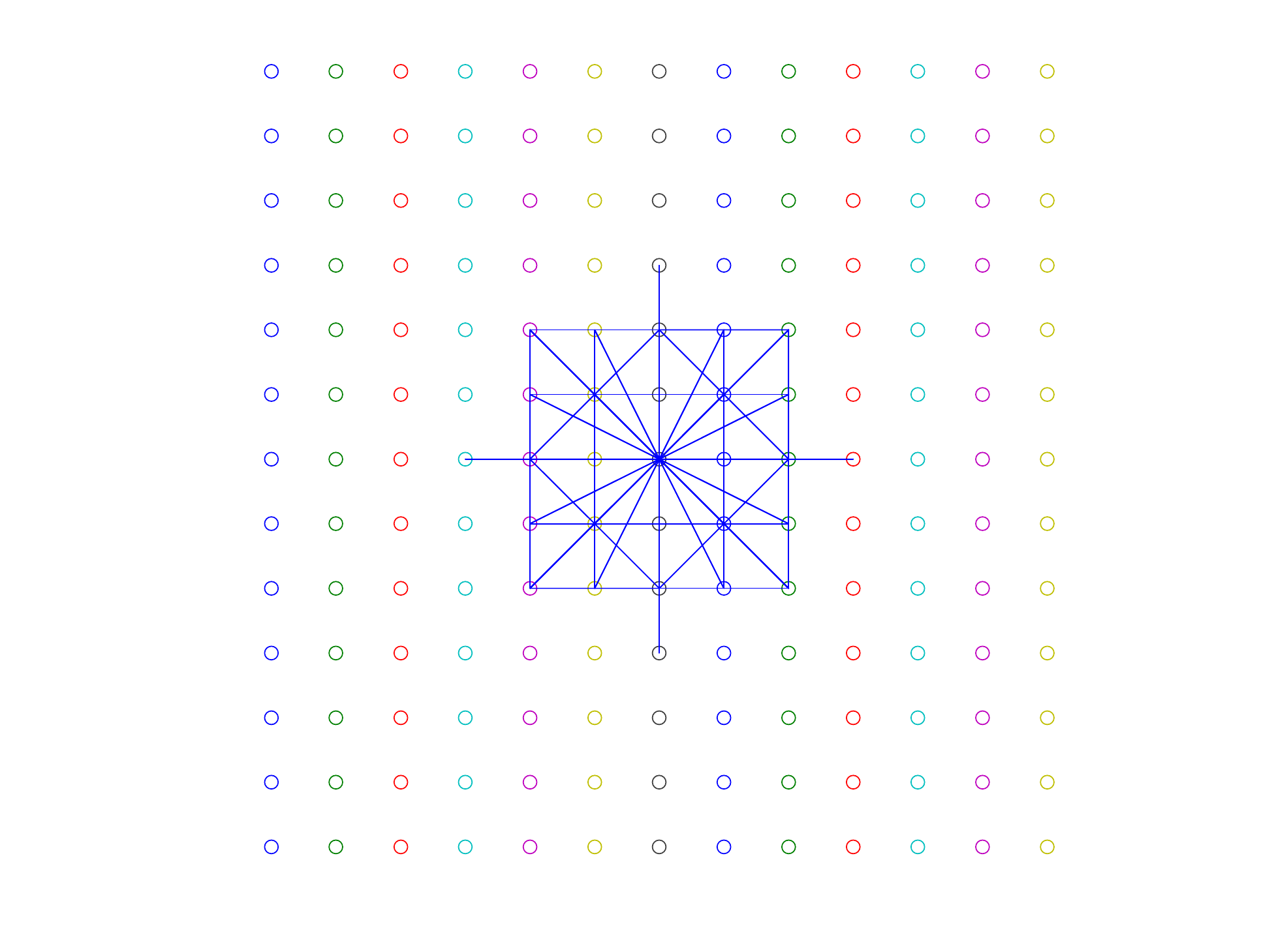}\\
\includegraphics[width=0.3\linewidth]{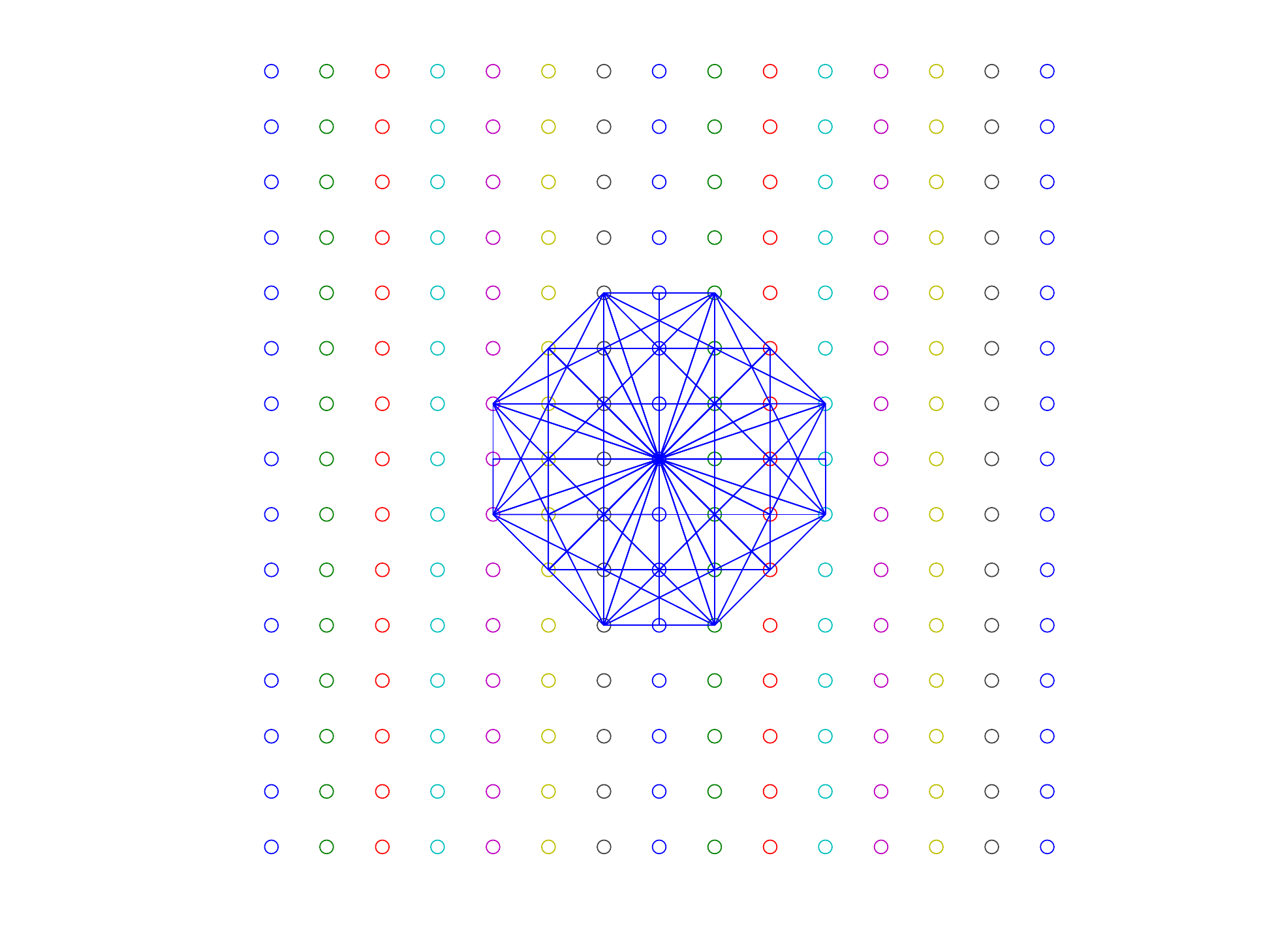}
\includegraphics[width=0.3\linewidth]{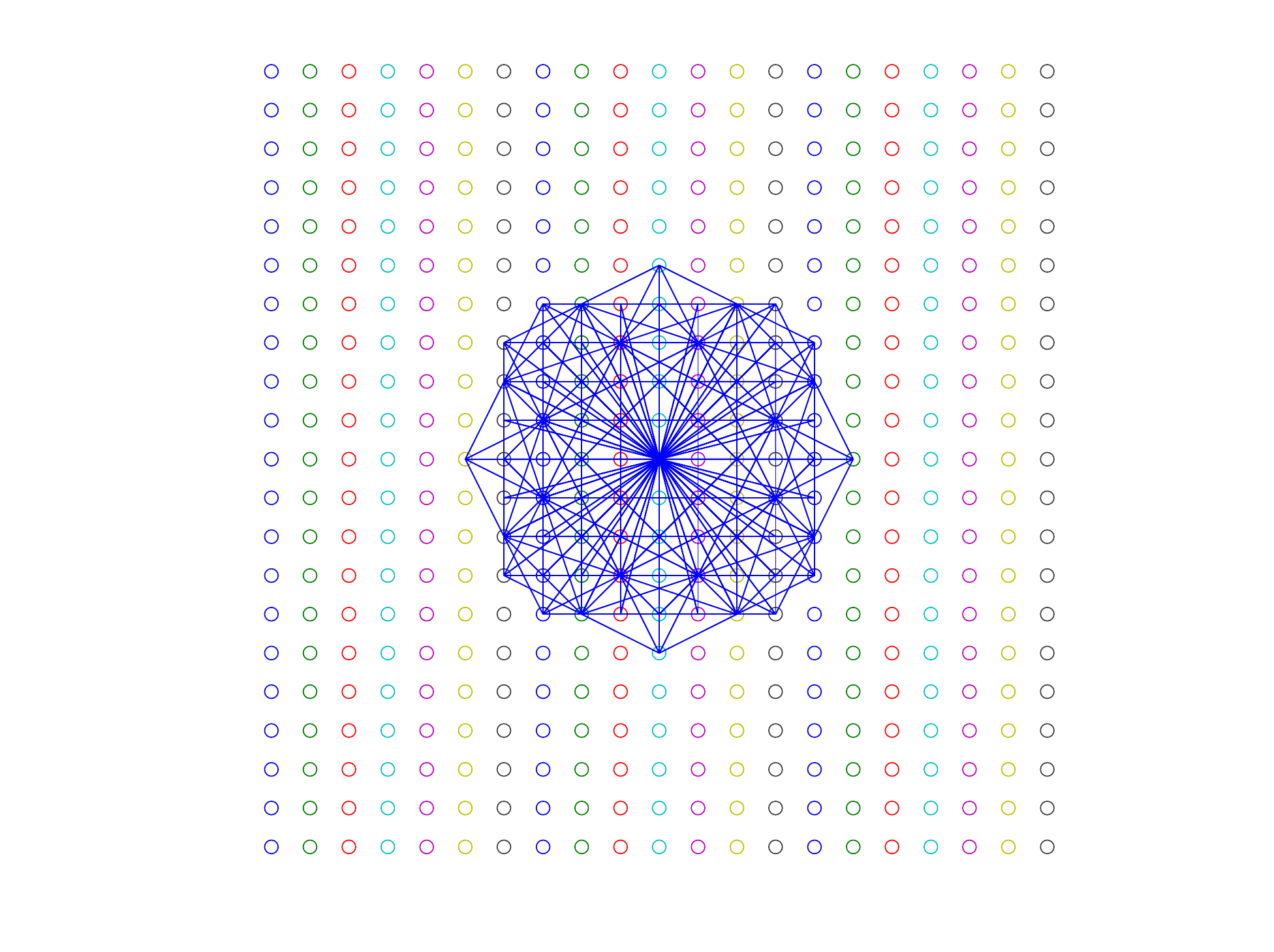}
\caption{Representations of the resonant sets $\mathcal{R}(0,0)$ when $L$ increases}\label{figure1}
\end{figure}

As a result, we can again pretend that $\widetilde a_K(t)$ satisfies the following system:
\begin{equation}\label{pretend eqn2}
-i\partial_t \widetilde a_K(t)``="\frac{\epsilon^2}{L^4}\sum_{\substack{N_1, N_3 \in {\Z^2_L}^*\\N_1 \perp N_3}} \widetilde a_{K +N_1}(t) \overline{\widetilde a_{K+N_1+N_3}(t)}\widetilde a_{K+N_3}(t).
\end{equation}
The above sum is still not very amenable to taking a large-box limit, so we further parametrize it as follows. Write $N_1=\frac{\alpha}{L}(p, q)$ where $\alpha \in \N$ and $p, q \in \Z$ are relatively prime in the sense that $g.c.d.(|p|, |q|)=1$\footnote{Here we use the convention that if $p$ (resp. $q$) is zero, then $p$ and $q$ are relatively prime if and only if $q=\pm1$ (resp. $p=\pm 1$).}. Since $N_3\perp N_1$, then it has to be of the form $ \frac{\beta}{L}(-q, p)=\frac{\beta}{L}(p, q)^\perp$ for some $\beta \in \Z^*$. Lattice points in $\Z^2_L$ of the form $\frac{(p, q)}{L}$ are special and are called \emph{visible lattice points}.
\begin{definition}
\label{defvisible}
For a given number $L> 0$, we say that a lattice point $z\in (\mathbb{Z}_L^2)^*$ is \emph{visible} (from $0$) if the segment $[0,z]$ does not intersect the lattice $\mathbb{Z}_L^2$ apart from the endpoints $0$ and $z$. It is easy to see that a lattice point $z \in {\mathbb{Z}^2_L}^*$ is visible if and only if $z=\frac{1}{L}(p,q)$ where $(p,q)$ are co-prime. \end{definition}

As a result, we can again morally pretend that $\widetilde a_K(t)$ satisfies
\begin{equation}\label{pretend eqn3}
-i\partial_t \widetilde a_K(t)``="\frac{\epsilon^2}{L^4}\sum_{\alpha\in \N, \beta \in \Z^*}\sum_{\substack{J \in {\Z^2_L}^*\\J \, visible}} \widetilde a(K +\alpha J,t) \overline{\widetilde a(K+\alpha J +\beta J^\perp,t)}\widetilde a(K+\beta J^\perp,t)
\end{equation}
where $J^\perp$ is the rotation of $J$ by $+\frac{\pi}{2}$. We are now in a better position to take the large-box limit. The only remaining obstruction is that the sum over $J$ is not a standard equidistribution sum over all lattice points in $\Z^2_L$ but only over visible ones. It turns out that visible lattice points have density $\zeta(2)^{-1}=\frac{6}{\pi^2}$ in $\Z^2_L$ where $\zeta(2)$ is the Riemann zeta function evaluated at 2. In other words, if one picks a lattice point at random then with probability $\zeta(2)^{-1}$ this lattice point is visible. The translation of this statement to equidistribution theory is the fact that if $u(z)$ is a sufficiently regular and decaying function then
$$
L^{-2}\sum_{\substack{J \in \Z^2_L\\J \; visible}} u(z)\, dz \to \frac{1}{\zeta(2)}\int_{\R^2}u(z) dz.
$$
The proof of the claim on the density of visible lattice points relies on the M\"obius inversion formula (see for example \cite{HW}). We briefly repeat it in Section \ref{co-prime equidistribution subsection} as it serves as a prototype for the argument needed to replace the more complicated trilinear sum (over $J$ visible) in \eqref{pretend eqn3} by an integral over $\R^2$ and obtain good estimates on the discrepancy error (see Lemma \ref{equidistribution lemma}). 

Now that we know how to replace sums over visible lattice points by integrals, we are ready to pass to the large-box limit, at least formally. Pretending that $\{\widetilde a(K, t)\}_{K \in \Z^2_L}$ is the trace of a sufficiently smooth and decaying function $\{\widetilde a(\xi, t)\}_{\xi \in \R^2}$ and replacing the sums in \eqref{pretend eqn3} by integrals, one obtains after a somewhat tedious calculation (performed in Section \ref{T_L to T section} with the needed error estimates) that $\widetilde a(K, t)$ satisfies the equation:
\begin{equation}\label{pretend eqn4}
-i\partial_t \widetilde a(K, t)`` ``=""\frac{4\epsilon^2 \log L}{\zeta(2) L^2} \int_{-1}^1 \int_{\mathbb{R}^2} \widetilde a(K+z) \overline{\widetilde a(K+z+\lambda z^\perp)}\widetilde a(K+\lambda z^\perp ) \,dz\,d\lambda, 
\end{equation}
where we used the notation $````=""$ to emphasize the heuristic nature of this last analysis. 

Reparameterizing time gives the equation
$$
-i\partial_t \widetilde a(K, t)`` ``=""\int_{-1}^1 \int_{\mathbb{R}^2} \widetilde a(K+z) \overline{\widetilde a(K+z+\lambda z^\perp)}\widetilde a(K+\lambda z^\perp ) \,dz\,d\lambda, 
$$
which is exactly \eqref{star}! Making sense of the above formal argument is not a straightforward task and several technical difficulties have to be overcome before any sort of rigorous statement can be made in this direction. We elaborate on some of them below.

\subsection{Uniform bounds and approximation theorems} The ultimate aim here is to understand and prove in what sense and to what extent do the long-time dynamics of \eqref{star} inform on that of \eqref{FNLSe}. Some answers are provided by Theorem \ref{approx thm1}, which pertains to \eqref{NLSe} on the box of size $L$, and Theorem \ref{result on T^2} which pertains to the \eqref{cubicNLS} on the torus $\T^2$ (or $L=1$).  
\subsubsection{Uniform bounds on resonant sums}
A crucial part in the proof of the above-mentioned theorems is obtaining \emph{sharp} bounds on resonant sums like that appearing on the right-hand-side of \eqref{pretend eqn}. More concretely, let us define the normalized trilinear operator
\begin{equation}\label{TL}
\mathcal{T}_L (e,f,g)(K) \overset{def}{=} \frac{\zeta(2)}{2L^2 \log L} \sum_{\mathcal{R}(K)} e_{K_1} f_{K_2} g_{K_3},
\end{equation}
acting on sequences $e = \{e_K\}$, $f = \{f_K\}$ and $g = \{g_K\}$ on $\Z^2_L$. The normalization of $\mathcal{T}_L$ was chosen merely for notational convenience so that the following sharp bounds appear uniform in $L$. Estimates on such operators are intimately connected to periodic Strichartz estimates and are in fact equivalent to the $L_x^2\to L_{t,x}^4$ estimate if one is bent on working in $L^2-$based spaces (see Section \ref{tri est section}). Notice that since we are essentially passing to the $L \rightarrow \infty$ limit, sharp estimates are indespensible  and even a logarithmic loss in $L$ cannot be tolerated. We will see that estimates with very small loss ($L^\epsilon$ for any $\epsilon$) can be somewhat easily proved from the existing literature on periodic Strichartz estimates (see \cite{Bourgain, Bourgain3} and Section \ref{non sharp estimates}), but they turn out to be almost completely useless by themselves to prove the approximation results. Unfortunately, the sharp version of the Strichartz estimate we need is open and is widely considered to be a very difficult problem at the intersection of harmonic analysis and number theory \cite{Bourgain2, Kishi}. Our way around this dilemma is to work with norms that are not $L^2-$based, namely spaces $X^\sigma_L(\Z^2_L)$ defined in \eqref{eq:defXsigmaL}, and prove the \emph{sharp} Stichartz-type estimate in these norms, which turns out to be more tractable than the $L^2-$based Strichartz estimate. The argument here relies on quite delicate estimates on lattices and the end result is the following theorem proved in Section \ref{tri est section}

\begin{theorem} \label{tri est theo}
Let $\sigma>2$ and $\{a_K\}, \{b_K\} $, and $\{c_K\}$ be three sequences in $X^\sigma_L$. The following estimate holds uniformly in $L$:
\begin{equation}\label{tri est}
 \left\| \mathcal{T}_L (a_K,b_K,c_K) \right\|_{X^\sigma_L} \lesssim \|a_K\|_{X^\sigma_L}  \|b_K\|_{X^\sigma_L}  \|c_K\|_{X^\sigma_L}.
\end{equation}
Moreover, the dependance in $L$ in this estimate is sharp.
\end{theorem}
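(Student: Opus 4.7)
The plan is to reduce the estimate to a uniform weighted lattice-counting problem on the resonant set $\mathcal R(K)$, and then to harvest the $\log L$ factor via a dyadic sum over the scales of visible lattice points. First, by the pointwise bound $|a_K| \leq \|a\|_{X^\sigma_L}\langle K\rangle^{-\sigma}$ and its analogues for $b$ and $c$, inequality \eqref{tri est} reduces to proving
\[
\langle K\rangle^\sigma \sum_{\mathcal R(K)} \langle K_1\rangle^{-\sigma}\langle K_2\rangle^{-\sigma}\langle K_3\rangle^{-\sigma} \lesssim L^2 \log L
\]
uniformly in $K \in \Z^2_L$ and $L \geq 2$. Writing $K_1 = K+N_1$, $K_2 = K+N_1+N_3$, $K_3 = K+N_3$ with $N_1 \perp N_3$ as in \eqref{eq:rect1}, the resonance relation becomes the Pythagorean identity $|K|^2 + |K_2|^2 = |K_1|^2 + |K_3|^2$, which immediately gives $\langle K\rangle \lesssim \max(\langle K_1\rangle,\langle K_3\rangle)$. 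By the $N_1 \leftrightarrow N_3$ symmetry of the sum I may assume $\langle K_1\rangle \geq \langle K_3\rangle$ and absorb the prefactor $\langle K\rangle^\sigma$ into $\langle K_1\rangle^{-\sigma}$, leaving only a weighted sum of the two remaining factors to control.

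Next, I would dispose of the degenerate rectangles (those with $N_1 = 0$ or $N_3 = 0$) separately. When $N_1 = 0$ one has $K_2 = K_3$, so the lattice sum collapses to $\langle K\rangle^{-\sigma} \sum_{N \in \Z^2_L} \langle K + N\rangle^{-2\sigma} \lesssim L^2 \langle K\rangle^{-\sigma}$, whose normalized contribution is $O(1/\log L)$ and is thus harmless. For the non-degenerate part, I would invoke the visible-lattice parametrization of Definition \ref{defvisible}: every pair $(N_1, N_3)$ with $N_1 \perp N_3$ and $N_1, N_3 \neq 0$ is uniquely written as $N_1 = \alpha J$ and $N_3 = \beta J^\perp$ with $J \in \Z^2_L$ visible, $\alpha \in \N$, $\alpha \geq 1$, and $\beta \in \Z \setminus \{0\}$. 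Fixing $J$ of magnitude $r = |J|$ and decomposing $K = K_\parallel \widehat J + K_\perp \widehat J^\perp$, the Pythagorean factorization $|K + \alpha J + \beta J^\perp|^2 = (K_\parallel + \alpha r)^2 + (K_\perp + \beta r)^2$ reduces the $(\alpha,\beta)$-sum to a product of one-dimensional lattice sums of the form $\sum_n (A^2 + (B + nr)^2)^{-\sigma/2}$. Standard integral comparison then produces a per-$J$ bound of $\lesssim r^{-2}$ when $r \leq 1$ and $\lesssim r^{-2\sigma}$ when $r \geq 1$, uniformly in $K$.

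Finally, I would sum over visible $J$ via a dyadic decomposition of $r \in [1/L, \infty)$. By the classical equidistribution of visible lattice points (the M\"obius-inversion identity revisited in Section \ref{co-prime equidistribution subsection}), the number of visible $J \in \Z^2_L$ in the annulus $r \leq |J| \leq 2r$ with $r \gtrsim 1/L$ is comparable to $\zeta(2)^{-1}\,r^2 L^2$. Combining with the per-$J$ estimate yields a contribution of order $L^2$ per dyadic scale in $r \in [L^{-1},1]$; there are $\sim \log L$ such scales, and a convergent large-scale tail contributes only $O(L^2)$ for $\sigma > 2$, so the total is $\lesssim L^2 \log L$ as required. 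Sharpness of the $L$-dependence is then checked by running the same dyadic sum from below on the trial triple $a_K = b_K = c_K = \langle K\rangle^{-\sigma}$ at $K = 0$: a matching lower bound at each dyadic scale $r \sim 2^k/L$ shows that the $\log L$ in the normalization of $\mathcal T_L$ cannot be replaced by any slower-growing function without destroying the uniform estimate.

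The principal obstacle I expect is the per-$J$ step: although the Pythagorean factorization separates the $(\widehat J, \widehat J^\perp)$ directions cleanly in the mixed weight $\langle K+\alpha J+\beta J^\perp\rangle^{-\sigma}$, the other remaining weight $\langle K+\beta J^\perp\rangle^{-\sigma}$ still depends on $K_\parallel$ and couples to the $\alpha$-sum indirectly. The cleanest fix is to majorize this factor by dropping its $K_\parallel$ dependence at the cost of a uniform constant; the regularity $\sigma > 2$ then comfortably absorbs both of the one-dimensional integrals produced by the subsequent summations in $\alpha$ and $\beta$, delivering a per-$J$ bound that is uniform in $K$ and in the direction $\widehat J$ and that, once combined with the dyadic lattice count, produces the $L^2 \log L$ scaling with the correct sharp constant.
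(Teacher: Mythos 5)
Your overall route is genuinely different from the paper's and the core of it is sound: instead of fixing $(\alpha,\beta)$ and summing over $J$ (which forces the paper to excise the exceptional directions $J=-Q_\alpha,-Q_\beta$ and treat them with the visibility/gcd and divisor-counting arguments of Case 2), you use the $K_1\leftrightarrow K_3$ symmetry plus the Pythagorean relation to absorb $\langle K\rangle^\sigma$ into the larger of $\langle K_1\rangle,\langle K_3\rangle$ once and for all, reducing everything to the two-factor bound $\sup_K\sum \langle K_2\rangle^{-\sigma}\langle K_3\rangle^{-\sigma}\lesssim L^2\log L$, which you then attack per visible direction $J$. In the regime $|J|\leq 1$ your per-$J$ bound $\lesssim |J|^{-2}$, uniform in $K$, is correct (the two one-dimensional progression sums each give $1/|J|$), and combined with the trivial count of lattice points per dyadic annulus it produces the $L^2\log L$ main term; this is where the logarithm lives, and you get it without any of the paper's number theory.

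The gap is in the large-$|J|$ tail. Your claimed per-$J$ bound $\lesssim |J|^{-2\sigma}$ for $|J|=r\geq 1$, \emph{uniformly in $K$}, is false: take $K=-J-J^\perp$; the single term $(\alpha,\beta)=(1,1)$ already contributes $\langle K+J+J^\perp\rangle^{-\sigma}\langle K+J^\perp\rangle^{-\sigma}=\langle J\rangle^{-\sigma}\sim r^{-\sigma}\gg r^{-2\sigma}$. Worse, your proposed fix (drop the $K_\parallel$-dependence of $\langle K+\beta J^\perp\rangle^{-\sigma}$ and decouple into two one-dimensional sums) gives only $O(1)$ per such $J$, since both progression sums can have a term of size one when $K$ is close to the lattice $\Z J+\Z J^\perp$; summing $O(1)$ over the $\sim \rho^2L^2$ points with $|J|\sim\rho\geq1$ diverges, so the "convergent large-scale tail" is not justified as written. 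This aligned-$K$ configuration is exactly the phenomenon the paper quarantines in the set $S$ and handles in Subcase 2(b) via the visibility of $Q_\beta$ and gcd counting. Fortunately your scheme can be repaired without that machinery: since the $\widehat J$-components of $K+\alpha J+\beta J^\perp$ and $K+\beta J^\perp$ differ by $\alpha r$, one has $\max(|K_\parallel+\alpha r|,|K_\parallel|)\geq \alpha r/2$, so the product of the two weights is always $\leq \langle \alpha r/2\rangle^{-\sigma}\langle K_\perp+\beta r\rangle^{-\sigma}$; summing in $\alpha$ and $\beta$ gives a per-$J$ bound $\lesssim r^{-\sigma}$ for $r\geq1$ (and $\lesssim r^{-2}$ for $r\leq1$), uniform in $K$, and $\sum_{|J|\geq1}|J|^{-\sigma}\lesssim L^2$ precisely because $\sigma>2$. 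With that correction, and with the visible-point density used only for the lower bound in your sharpness check at $K=0$ (the upper bound never needs it), your argument closes and is in fact shorter than the paper's proof.
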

\medskip
The true meaning of the last statement will become clear below where we show that the operator $\mathcal T_L$ is not only uniformly bounded in $X^\sigma_L$, but also converges as $L \to \infty$ to a \emph{non-zero} entity when it acts on sequences that are traces of sufficiently smooth and decaying functions. It is worth mentioning that as a corollary to Theorem \ref{tri est theo}, we obtain what seems to be the first $L^4_{t,x}$ Strichartz-type estimate on $\T^2$ with the sharp $L^2$-critical scaling (See Corollary \ref{new strichartz corollary}). 

\subsubsection{Discrete to continuous convergence} Let us first define the trilinear operator $\mathcal T$ acting on complex valued functions $f, g, h: \R^2 \to \C$ as follows
\begin{equation}
\label{eq:defT}
\mbox{if $\xi \in \mathbb{R}^2$}, \qquad \mathcal{T}(f,g,h) (\xi) \overset{def}{=}
\int_{-1}^1 \int_{\mathbb{R}^2} f(\xi+x) \overline{g(\xi+x+\lambda x^\perp)} h(\xi+\lambda x^\perp ) 
\,dx\,d\lambda.
\end{equation}
The crucial observation is that, as $L \rightarrow \infty$, $\mathcal{T}_L$ converges to $\mathcal{T}$ in a sense made precise by the following theorem:

\begin{theorem}[Discrete to continuous approximation in $X^\sigma$] 
\label{disctocont}
Let $\sigma>2$ and suppose that $g :\R^2\to \C$ satisfies the following bounds:
\begin{equation}\label{assumptions on g}
\|g\|_{X^{\sigma+1}(\R^2)} +\|\nabla g\|_{X^{\sigma+1}(\R^2)}\leq B.
\end{equation}
The following estimate holds for $L>1$
\begin{equation}\label{DtoC}
\left\| \mathcal T(g,g, g)(K)- \mathcal{T}_L(g,g,g)(K) \right\|_{X_L^\sigma(\Z^2_L)}\lesssim \frac{B^3}{\log L}.
\end{equation}
\end{theorem}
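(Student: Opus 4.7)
My plan is to rigorously follow the formal derivation of Section~\ref{formal argument}. Writing out $\mathcal{T}_L(g,g,g)(K)$ using the parametrization \eqref{eq:rect1}, i.e.\ $N_1=\alpha J$, $N_3=\beta J^\perp$ with $J\in\Z^2_L$ visible, $\alpha\in\N^*$ and $\beta\in\Z\setminus\{0\}$, I would first isolate the degenerate contributions corresponding to $N_1=0$ or $N_3=0$. These reduce to expressions like $g(K)\sum_{N\in\Z^2_L}|g(K+N)|^2$, which are bounded by $L^2\|g\|_{L^2(\R^2)}^2|g(K)|$; after multiplying by the normalization $\zeta(2)/(2L^2\log L)$ they contribute $O(B^3/\log L)$ to the $X^\sigma_L$ norm, which is exactly the claimed rate.

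\textbf{Visible-lattice equidistribution and reparametrization.} For each fixed pair $(\alpha,\beta)$ I would apply the visible-lattice equidistribution result (Lemma~\ref{equidistribution lemma}, proved via M\"obius inversion in Section~\ref{co-prime equidistribution subsection}) to replace the discrete sum over $J$ visible by $\zeta(2)^{-1}\int_{\R^2}(\cdot)\,dz$, tracking a quantitative discrepancy controlled by $\|\nabla g\|_{X^{\sigma+1}}$. A change of variables $w=\alpha z$, $\lambda=\beta/\alpha$ recasts the resulting integral as $\alpha^{-2}\,I(\lambda;K)$, where
$$
I(\lambda;K)\overset{def}{=}\int_{\R^2}g(K+w)\,\overline{g}(K+w+\lambda w^\perp)\,g(K+\lambda w^\perp)\,dw.
$$
At this point $\mathcal{T}_L(g,g,g)(K)$ is approximated by $\tfrac{1}{2\log L}\sum_{\alpha,\beta}\alpha^{-2}I(\beta/\alpha;K)$ plus summable error.

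\textbf{Riemann sum in $\beta$ and the logarithmic sum in $\alpha$.} For fixed $\alpha$, the sum $\sum_{\beta\ne 0}I(\beta/\alpha;K)$ is a Riemann sum for $\int_{\R}I(\lambda;K)\,d\lambda$ with spacing $1/\alpha$; an Euler--Maclaurin-type estimate, with the error controlled by $\partial_\lambda I$ (and hence by $\nabla g\in X^{\sigma+1}$), gives $\sum_\beta I(\beta/\alpha;K)=\alpha\int_{\R}I(\lambda;K)\,d\lambda+E_\alpha(K)$ with $|E_\alpha(K)|\lesssim B^3\langle K\rangle^{-\sigma}$ summable against $\alpha^{-2}$. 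The resulting sum over $\alpha$ is effectively truncated at $\alpha\sim L$ because the polynomial decay of $g$ forces $g(K+\alpha J)$ to be negligible once $\alpha|J|\gtrsim 1$ while $|J|\geq 1/L$; therefore $\sum_{1\le\alpha\lesssim L}\alpha^{-1}=\log L+O(1)$, the normalizing $\log L$ cancels, and one is left with $\tfrac{1}{2}\int_{\R}I(\lambda;K)\,d\lambda$ up to an $O(B^3/\log L)$ error.

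\textbf{Symmetrization and main obstacle.} The final step is to identify $\tfrac{1}{2}\int_{\R}I(\lambda;K)\,d\lambda$ with $\mathcal{T}(g,g,g)(K)=\int_{-1}^{1}I(\lambda;K)\,d\lambda$. The change of variables $\mu=1/\lambda$, $y=\lambda w^\perp$ yields the identity $I(\lambda;K)=\lambda^{-2}I(-1/\lambda;K)$, which folds the $|\lambda|>1$ portion of the integral bijectively onto the $|\lambda|<1$ portion, so $\int_{\R}I=2\int_{-1}^{1}I$. The main obstacle I anticipate is making both the lattice-discrepancy error and the Riemann-sum errors \emph{uniform in $K$} when measured in $\langle K\rangle^\sigma$ and simultaneously summable over the unbounded $(\alpha,\beta)$ range. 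This is precisely where the stronger hypothesis $g,\nabla g\in X^{\sigma+1}$ (one unit of decay/regularity above $\sigma$) and the constraint $\sigma>2$ enter: they allow one to beat the borderline two-dimensional integrability comfortably enough to avoid a spurious logarithm that would wreck the $1/\log L$ rate.
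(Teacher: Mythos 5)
Your overall scheme (peel off the degenerate terms, apply the co-prime equidistribution lemma in $J$, convert one integer parameter into the $\lambda$-integral by a Riemann-sum argument, and let the harmonic sum over the remaining parameter produce the $\log L$ that cancels the normalization) is the same strategy as the paper's, just with the roles of $\alpha$ and $\beta$ interchanged: you take $\lambda=\beta/\alpha\in\R$ together with the fold identity $I(\lambda)=\lambda^{-2}I(-1/\lambda)$, whereas the paper symmetrizes to $1\leq\alpha\leq\beta$ and works with $\lambda=\alpha/\beta\in[0,1]$. The degenerate terms, the per-pair equidistribution discrepancies and the Riemann-sum errors are treated as in the paper, and that part of the bookkeeping would indeed deliver $O(B^3/\log L)$.

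The genuine gap is the truncation of the parameter sum at scale $L$. You claim the $\alpha$-sum is ``effectively truncated at $\alpha\sim L$ by the polynomial decay of $g$'', but you invoke this \emph{after} replacing the $J$-sum by $\frac{L^2}{\zeta(2)}\alpha^{-2}I(\beta/\alpha;K)$ --- and at that point there is no decay in $\alpha$ left to exploit: for $\beta\sim\lambda\alpha$ the integral depends on $\alpha$ only through the prefactor $\alpha^{-2}$, so after the Riemann sum in $\beta$ your main term becomes $\frac{1}{2\log L}\sum_{\alpha\geq1}\alpha^{-1}\int_\R I(\lambda;K)\,d\lambda$, which diverges. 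The truncation must be imposed on the lattice sums \emph{before} passing to the continuum (equivalently, the equidistribution step is only legitimate for $\max(\alpha,|\beta|)<L$, its discrepancy being of size $\max(\alpha,|\beta|)/L$ up to logarithms), and one must then prove that the discarded lattice tail $\max(\alpha,|\beta|)\geq L$ contributes $O(L^2)$, i.e. $O(1/\log L)$ after normalization, uniformly in $K$ in the $\langle K\rangle^\sigma$-weighted norm. This is not a consequence of the decay of $g$ alone: the dangerous terms are those where $K+\alpha J$ or $K+\beta J^\perp$ sits near the origin (the values $J=-Q_\alpha$, $J=-Q_\beta$ in the paper's notation), where one factor supplies no decay and the bound requires the visibility/g.c.d.\ counting developed in the proof of Theorem \ref{tri est theo}; this is exactly the content of Remark \ref{beta>L} (estimate \eqref{beta>Lest}), which the paper invokes in Step 2 of its proof and which your proposal neither states nor replaces. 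Note also that in your unsymmetrized setup the same issue affects the rows $|\beta|\geq L$ with $\alpha$ small, since your Riemann sum in $\beta$ runs over all of $\Z$. Without this tail estimate the argument does not close even qualitatively, so it is the missing ingredient rather than a technicality.
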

This theorem is proved in Section \ref{T_L to T section}. As it stands, estimate \eqref{DtoC} is sharp in terms of the decay of $(\log L)^{-1}$, however it might be possible to isolate the terms that decay as such, move them to the left-hand-side and obtain better convergence rates. For the purposes of this paper, \eqref{DtoC} is more than enough.

\subsubsection{Approximation theorems}
We are finally ready to state our approximation theorems. We would like to start with a solution $g(t, \xi)$ of \eqref{star} defined on \emph{an arbitrarily long} time interval $[0,M]$ and construct a solution of \eqref{NLSe} whose dynamics in Fourier space is approximated by that of $g(t, \xi)$. The fact that $M$ is chosen arbitrarily allows to carry ``any" nonlinear dynamic of the limit equation \eqref{star} and project it into that of \eqref{FNLSe}. However, our heuristic calculation in Subsection \ref{formal argument} (precisely equation \eqref{pretend eqn4}) indicates that the two equations do not evolve with the same time scales. In fact, since the time-scale of \eqref{star} is normalized to 1, the ``clock" of \eqref{FNLSe} is much slower and its time-scale can be read from \eqref{pretend eqn4} to be $T_*=\frac{\zeta(2) L^2}{4\epsilon^2 \log L}$. As a result, we should actually expect to approximate the evolution of \eqref{FNLSe} with that of the rescaled function $g(\frac{t}{T^*}, \xi)$ for $0\leq t \leq MT^*$ where $g(t,\xi)$ solves \eqref{star} on $[0,M]$. This is the content of the following theorem.

\begin{theorem}\label{approx thm1}
Fix $\sigma>2$ and $0<\gamma<1$. Suppose that $g(t,\xi)$ is a solution of (CR) over a time interval $[0, M]$ with initial data $g_0=g(t=0)$ such that $g_0, \nabla g_0 \in X^{\sigma+1}(\R^2)$. By Theorem \ref{eider2} below, (CR) is globally well-posed for such initial data. Denote
$$
B\overset{def}{=}\sup_{t\in [0, M]}\|g(t)\|_{X^{\sigma+1}(\R^2)}+\|\nabla g(t)\|_{X^{\sigma+1}(\R^2)}.
$$
There exists $c=c(\gamma)$ such that if $L >1$ and $\epsilon< c L^{-1-\gamma} B^{-1}$, the solution $\widetilde a_K(t)$ of \eqref{FNLSe} with initial data $\widetilde a_K(0)\overset{def}{=}g_0(K)$ satisfies the following estimate:
\begin{equation}\label{tildea_K and g} 
\|\widetilde a_K(t)-g(\frac{t}{T^*},K)\|_{X_L^\sigma(\Z_L^2)}\lesssim \frac{B}{(\log L)^{1-\gamma}} \qquad \text{with } T^*\overset{def}{=}\frac{\zeta(2) L^2}{2\epsilon^2 \log L},
\end{equation}
for all $0\leq t \leq T^*\min\left(M, \frac{\gamma \log\log L}{c B^2}\right)$.%\min \left( M,\gamma \frac{\log \log L}{c_\gamma B^2} \right)$. 

In particular, if $L$ is chosen so that $L\geq \exp \exp(c\gamma^{-1} MB^2)$, then \eqref{tildea_K and g} holds for all $t\in [0, MT^*]$ (the full domain of definition of $g(\frac{t}{T^*})$.
\end{theorem}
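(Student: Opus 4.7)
The plan is to implement the formal derivation of Subsection~\ref{formal argument} rigorously: apply a Poincar\'e--Dulac normal form to \eqref{FNLSe} in order to eliminate non-resonant cubic interactions, use Theorem~\ref{disctocont} to replace the remaining resonant lattice sum by the continuous trilinear operator $\mathcal T$, and close a Gr\"onwall/bootstrap argument on the difference in $X^\sigma_L$ between the transformed solution and the rescaled solution of \eqref{star}.

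First I would define an invertible change of variable $c_K(t) = \widetilde a_K(t) + \epsilon^2 z_K(t)$ with
\begin{equation*}
z_K(t) = \frac{1}{4\pi^2 L^4}\sum_{\mathcal S(K)\setminus\mathcal R(K)} \frac{e^{4\pi^2 i\Omega t}}{\Omega}\, \widetilde a_{K_1}\overline{\widetilde a_{K_2}}\widetilde a_{K_3},
\end{equation*}
chosen so that $c_K$ satisfies
\begin{equation*}
-i\partial_t c_K = \frac{\epsilon^2}{L^4}\sum_{\mathcal R(K)} c_{K_1}\overline{c_{K_2}}c_{K_3} + \epsilon^4\,\mathcal Q(c)_K,
\end{equation*}
where $\mathcal Q(c)$ is a non-resonant quintic oscillatory expression. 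Under the smallness hypothesis $\epsilon\leq c L^{-1-\gamma}B^{-1}$, the correction $\epsilon^2 z_K$ is small in $X^\sigma_L$ because $|\Omega|\geq L^{-2}$ on the non-resonant lattice and hence $\|z\|_{X^\sigma_L}\lesssim L^2\|\widetilde a\|_{X^\sigma_L}^3$ by Theorem~\ref{tri est theo}, while $\epsilon^2 L^2 B^2\leq c^2 L^{-2\gamma}$ tends to zero.

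Next, the very definition~\eqref{TL} of $\mathcal T_L$ gives $\frac{\epsilon^2}{L^4}\sum_{\mathcal R(K)} c_{K_1}\overline{c_{K_2}}c_{K_3}=(1/T^*)\,\mathcal T_L(c,c,c)(K)$, so in the rescaled time $\tau=t/T^*$ the equation reads
\begin{equation*}
-i\partial_\tau c_K = \mathcal T_L(c,c,c)(K) + T^*\epsilon^4\,\mathcal Q(c)_K,
\end{equation*}
to be compared with $-i\partial_\tau g(\tau,K)=\mathcal T(g,g,g)(K)$. For $\phi_K(\tau):=c_K(\tau T^*)-g(\tau,K)$ this yields
\begin{equation*}
-i\partial_\tau\phi_K = [\mathcal T_L(c,c,c)-\mathcal T_L(g,g,g)]_K + [\mathcal T_L(g,g,g)-\mathcal T(g,g,g)]_K + T^*\epsilon^4\mathcal Q(c)_K.
\end{equation*}
I would run a continuity argument with the a priori assumption $\|\phi(\tau)\|_{X^\sigma_L}\leq B$, so that $\|c(\tau)\|_{X^\sigma_L}\lesssim B$. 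The multilinear form of Theorem~\ref{tri est theo} bounds the first bracket by $CB^2\|\phi\|_{X^\sigma_L}$; Theorem~\ref{disctocont} applied to $g$ (whose $X^{\sigma+1}$-norm and that of $\nabla g$ are uniformly $\leq B$) bounds the second bracket by $CB^3/\log L$; once the quintic remainder is controlled (see below), Gr\"onwall's inequality yields
\begin{equation*}
\|\phi(\tau)\|_{X^\sigma_L}\lesssim \tau\,\frac{B^3}{\log L}\,e^{CB^2\tau},
\end{equation*}
which on the window $0\leq \tau\leq \gamma\log\log L/(cB^2)$ is $\lesssim B/(\log L)^{1-\gamma}$. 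This closes the bootstrap, and the triangle inequality together with the bound on $\epsilon^2 z$ transfers the estimate back from $c_K$ to $\widetilde a_K$.

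The main obstacle is the sharp bound on the quintic remainder $\mathcal Q(c)$. The naive estimate, using $|\Omega|^{-1}\leq L^2$ and a double application of Theorem~\ref{tri est theo}, gives $\|\mathcal Q(c)\|_{X^\sigma_L}\lesssim L^2\|c\|_{X^\sigma_L}^5$, which after multiplication by $T^*\epsilon^4$ would leave a residual of size $\epsilon^2 L^4 B^5/\log L$ that is too large by a factor of $L^{2(1-\gamma)}$ to be absorbed in the Gr\"onwall step under the hypothesis $\epsilon\leq cL^{-1-\gamma}B^{-1}$. Overcoming this requires a finer analysis of the nested sextic sums appearing in $\mathcal Q$, exploiting the fact that when both nested phases are simultaneously small the number of relevant lattice configurations drops considerably, or equivalently recasting $\mathcal Q$ as an iterated instance of the trilinear sums controlled by Theorem~\ref{tri est theo} in a way that preserves the sharp logarithmic normalization. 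Everything else in the argument---the Poincar\'e--Dulac construction, the time rescaling, and the Gr\"onwall closure---is essentially routine once this quintic estimate and the ingredients of Theorems~\ref{tri est theo} and~\ref{disctocont} are in place.
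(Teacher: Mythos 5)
Your overall architecture coincides with the paper's: a Poincar\'e--Dulac normal form removing the non-resonant cubic terms, Theorem~\ref{tri est theo} for the resonant difference, Theorem~\ref{disctocont} for $\mathcal T_L(g,g,g)-\mathcal T(g,g,g)$, and a Gr\"onwall/bootstrap closure in $X^\sigma_L$ over the rescaled time window. But the point you flag at the end is a genuine gap, and you do not supply the idea that closes it. The paper's resolution is not a refined treatment of the nested sextic structure, nor an iteration of the sharp $\log L$-normalized estimate of Theorem~\ref{tri est theo}; it is Proposition~\ref{R_mu sum prop}: for each fixed non-resonant level set $\mathcal R_\mu(K)$ as in \eqref{def of R_mu}, the trilinear sum is bounded in $X^\sigma_L$ by $L^{2+\epsilon}$ (for $|\mu|\leq L^{10}$), proved by Bourgain-type lattice-point/divisor counting. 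One then decomposes the non-resonant set $\mathcal S(K)\setminus\mathcal R(K)$ according to $\Omega=m/L^2$, $m\in\Z\setminus\{0\}$, so that the normal-form weight is exactly $1/(4\pi^2 m)$; summing $|m|^{-1}\cdot L^{2+\epsilon}$ over $0<|m|<L^{10}$ costs only a logarithm (absorbed into $L^{\gamma}$), while the tail $|m|\geq L^{10}$ is killed by $|m|^{-1}\leq L^{-10}$ against the crude bound of Lemma~\ref{crude multiestimates}. The inner cubic sum coming from $\partial_t b$ is estimated crudely ($\|\partial_t b\|_{X^\sigma_L}\lesssim \epsilon^2 L^2 B^3$ in the paper's rescaled time); no sharp constant is needed there. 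This converts your problematic $L^4$ into $L^{2+\gamma}$, so the quintic remainder contributes $\epsilon^4 L^{2+\gamma}B^5$ per unit (rescaled) time, i.e.\ $\lesssim L^{-\gamma}\epsilon^2 B^3$ under $\epsilon<cL^{-1-\gamma}B^{-1}$, which Gr\"onwall absorbs. Your two suggested escape routes (exploiting simultaneous smallness of both nested phases, or preserving the sharp logarithmic normalization through an iterated trilinear bound) are not what is required and it is not clear they could be made to work; the needed input is the $X^\sigma_L$-level-set count, which is scaling-sharp only up to $L^\epsilon$ and is proved by entirely different (counting) arguments than Theorem~\ref{tri est theo}.

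Two smaller remarks. First, your bound $\|z\|_{X^\sigma_L}\lesssim L^2\|\widetilde a\|_{X^\sigma_L}^3$ cannot be obtained ``by Theorem~\ref{tri est theo}'': that theorem only controls the resonant sum over $\mathcal R(K)$. For the shift $\epsilon^2 z_K$ the crude full-sum bound of Lemma~\ref{crude multiestimates} combined with $|\Omega|^{-1}\leq L^2$ does give $L^2B^3$, which is indeed small enough under your hypothesis (the paper instead reuses Proposition~\ref{R_mu sum prop} to get $\epsilon^2 L^\gamma B^3$), so this is an attribution slip rather than a quantitative problem. Second, the paper performs the normal form directly on the difference $w_K=b_K-g(\epsilon^2 t,K)$ rather than on $\widetilde a_K$ itself; this is cosmetically different from your $c_K=\widetilde a_K+\epsilon^2 z_K$ but leads to the same error terms, so nothing essential hinges on it.
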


Note that by the approximation interval in \eqref{tildea_K and g} corresponds for the limiting equation \eqref{star} to the interval
$\left[ 0 , \min \left(M, \frac{\gamma \log\log L}{c B^2}\right) \right]$, which means that if $L$ is large enough, one can project the full dynamics of $g(t)$ over the interval $[0,M]$ into that of \eqref{NLSe}. 

\begin{remark} How optimal is the condition on $L$ and $\epsilon$ in the statement of the theorem? This question leads to distinguishing several natural regimes in the weakly nonlinear, large-box limit for {\rm (NLS)}:
\begin{itemize}
\item If $\epsilon^2 L \rightarrow \infty$, it is natural to expect that the nonlinear dynamics will be effectively  described (upon appropriate rescaling) by {\rm (NLS)} on $\mathbb{R}^2$. Indeed, for data of size one, localized around $B(0,1)$ in space and frequency, the nonlinear time-scale\footnote{Prior to this time-scale, the solution behaves linearly, and thus remains almost constant in the interaction-representation picture (i.e. constant linear profile).} is of order $\epsilon^{-2}$ while the wave travels at a speed $O(1)$. Thus, it takes a time $L$ before the wave can tell the difference between $\mathbb{R}^2$ and $\mathbb{T}_L^2$. Therefore, if $\frac{1}{\epsilon^2}\ll L$, the effective limiting equation is {\rm (NLS)} on $\mathbb{R}^2$.
\item The regime described in the above theorem is essentially $\epsilon L \rightarrow 0$. It seems optimal for the type of result we prove: indeed, the nonlinear time scale is $\frac{1}{\epsilon^2}$, and the necessary time to average over non-resonant interactions is $L^2$ (which also corresponds to the period of the linear Schr\"odinger equation). If $L^2 \ll \frac{1}{\epsilon^2}$, non-resonant interactions are averaged out before the nonlinear time-scale is reached, and only the resonant ones contribute to the dynamics. This numerology will become even more transparent once we perform the normal form analysis in the proof of Theorem \ref{approx thm1}. 
\item There remains the regime $\frac{1}{L} < \epsilon < \frac{1}{\sqrt{L}}$. What should be expected there is not very clear, but it seems likely that not only exactly resonant interactions should play a significant role.
\end{itemize}
\end{remark}

\medskip
It is easy by a scaling argument to translate the above theorem into a result for \eqref{cubicNLS} on $\mathbb{T}^2$ in the weakly non-linear, high frequency regime. Here, one can interpret \eqref{star} as an equation for the frequency envelopes of \eqref{cubicNLS}. Starting with solutions of \eqref{star}, we will be constructing solutions of \eqref{cubicNLS} having size $\sim 1$ in some Sobolev space $H^s(\T^2)$ with $s>1$, and describing the dynamics in that space as well. The parameter $\epsilon$ which is comparable to size of the initial data in $L^2(\T^2)$ will be used to normalize the data under consideration in $H^s(\T^2)$. 

\begin{theorem}\label{result on T^2}
Fix $s>1$ and $0<\gamma<1$. Suppose that $g(t,\xi)$ is a solution of 
(CR) over a time interval $[0, M]$ with initial data $g_0(\xi)=g(0, \xi)$ such that $g_0, \nabla g_0 \in X^{\sigma}(\R^2)$ with $\sigma>s+2$. Recall that \eqref{star} is globally well-posed for such initial data by Theorem \ref{eider2}. Denote by 
$$
B\overset{def}{=}\sup_{t\in [0, M]}\|g(t)\|_{X^{\sigma}(\R^2)}+\|\nabla g\|_{X^{\sigma}(\R^2)}.
$$
Let $N>1+CB^{\frac{2}{s-1}}$ and $v(t)$ be the solution to the cubic NLS equation 
\begin{equation}\label{nlsnormalized}
-i\partial_t v+\Delta v=\mu |v|^2v, \quad v(0,x) = v_0(x), \quad \mu=\pm 1, \quad x \in \T^2,
\end{equation}
where the Fourier transform of the initial data is given by 
\begin{equation}
\label{initnormalized}
\widehat v_0(k) = \frac{1}{N^{s+1}}g_0(\frac{k}{N})\quad \forall\, k \in \Z^2, 
\end{equation}
(consequently $\|v(0)\|_{H^s(\T^2)}\sim \|\langle \xi \rangle^s g_0\|_{L^2(\R^2)}\sim 1$ uniformly in $N$). Then the following estimate holds:
\begin{equation}\label{v_K and g} 
\|\widehat v(k,t)- e^{4\pi^2i |k|^2t}N^{-1-s}\mu g(\frac{t}{T_N}, \frac{k}{N})\|_{\ell^{2,s}(2\pi\Z^2)}\lesssim_{s} \frac{B}{(\log N)^{1-\gamma}}, \qquad \text{with }T_N=\frac{\zeta(2) N^{2s}}{2\log N}
\end{equation}
for all $0\leq t \leq T_N \min\left(M, \frac{\gamma \log\log N}{c B^2}\right)$. 

In particular, if $N\geq \exp \exp(c\gamma^{-1} MB^2)$, then \eqref{v_K and g} holds for all $t\in [0, MT_N]$ (the full domain of definition of $g(\frac{t}{T_N})$.
\end{theorem}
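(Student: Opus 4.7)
The plan is to reduce Theorem \ref{result on T^2} to Theorem \ref{approx thm1} via the $L^2$-critical scaling of cubic NLS. Set $L = N$ and $\epsilon = N^{-s}$, and introduce the rescaled field
$$
u(t,x) \overset{def}{=} N^{s-1}\,v\!\left(\frac{t}{N^2},\,\frac{x}{N}\right),\qquad x\in\T^2_N.
$$
A direct substitution shows that $u$ solves \eqref{NLSe} on $\T^2_N$ with the same sign $\mu$ of nonlinearity (the cubic scales by $N^{3s-3}$, the Laplacian by $N^{s-3}$, and $\epsilon^2 = N^{-2s}$ is exactly the ratio), and on the Fourier side one has $a_K(t)\overset{def}{=}\widehat u(t,K) = N^{s+1}\widehat v(t/N^2,\,KN)$ for $K\in\Z^2_N$. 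The normalization \eqref{initnormalized} then forces $a_K(0) = N^{s+1}\widehat{v_0}(KN) = g_0(K)$, which is precisely the initial condition required by Theorem \ref{approx thm1}.

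Next I verify the smallness hypothesis of Theorem \ref{approx thm1}. With $L=N$ and $\epsilon = N^{-s}$, the condition $\epsilon < cL^{-1-\gamma}B^{-1}$ reduces to $N^{s-1-\gamma}>B/c$, which for any $\gamma\in(0,(s-1)/2)$ is implied by the standing hypothesis $N > 1 + CB^{2/(s-1)}$ after enlarging $C$ if necessary. Applying Theorem \ref{approx thm1} in the interaction picture $\widetilde a_K(t') = e^{-4\pi^2 i|K|^2 t'}a_K(t')$ yields
$$
\bigl\|\widetilde a_K(t') - \mu\,g(t'/T^*,\,K)\bigr\|_{X^\sigma_N(\Z^2_N)}\lesssim\frac{B}{(\log N)^{1-\gamma}},\qquad T^* = \frac{\zeta(2)N^{2s+2}}{2\log N},
$$
on the interval $t'\in\bigl[0,\,T^*\min(M,\gamma\log\log N/(cB^2))\bigr]$. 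The prefactor $\mu$ records the sign of the cubic nonlinearity inherited via the rescaling and propagated through the Poincar\'e--Dulac normal form reduction that underlies Theorem \ref{approx thm1}.

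Finally I unwind the scaling. Writing $k=KN\in\Z^2$ and $\tau = t'/N^2$, one finds $T_N = T^*/N^2$ and $|K|^2 t' = |k|^2 \tau$, so that
$$
\widehat v(\tau,k) - e^{4\pi^2i|k|^2\tau}N^{-s-1}\mu\,g(\tau/T_N,k/N) = N^{-s-1}e^{4\pi^2 i|k|^2\tau}\bigl[\widetilde a_{k/N}(N^2\tau) - \mu g(\tau/T_N,k/N)\bigr].
$$
Using $\langle k\rangle \leq N\langle k/N\rangle$ for $N\geq 1$ together with the paper's normalization $\|\cdot\|_{\ell^{2,s}_N}^2 = N^{-2}\sum_{K\in\Z^2_N}\langle K\rangle^{2s}|\cdot|^2$, a one-line calculation gives
$$
\bigl\|\widehat v(\tau,\cdot)-(\cdots)\bigr\|_{\ell^{2,s}(\Z^2)}\;\leq\;\bigl\|\widetilde a_K(N^2\tau) - \mu g(\tau/T_N,K)\bigr\|_{\ell^{2,s}_N}.
$$
The embedding $\|\cdot\|_{\ell^{2,s}_N}\lesssim\|\cdot\|_{X^\sigma_N}$ noted after \eqref{eq:defXsigmaL} (valid for $\sigma>s+1$, hence a fortiori under our assumption $\sigma>s+2$) then upgrades the $X^\sigma_N$-bound into \eqref{v_K and g}; the displayed time window is just the reparametrization $\tau=t/N^2$ of the one supplied by Theorem \ref{approx thm1}.

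The main obstacle has already been cleared upstream: once the uniform-in-$L$ approximation of Theorem \ref{approx thm1} is in hand, the present proof is essentially scaling bookkeeping. The only genuine subtleties are (i) the careful tracking of the sign $\mu$ through the rescaling, and (ii) the passage from the $X^\sigma_N$-norm (pointwise weighted $\ell^\infty$ on the rescaled lattice) to the $\ell^{2,s}$-norm natural on $\T^2$, which is accomplished using the comparison $\langle k\rangle \leq N\langle k/N\rangle$ combined with the paper's built-in embedding.
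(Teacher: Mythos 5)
Your argument is, in substance, the paper's own proof: set $L=N$, $\epsilon=N^{-s}$, identify $N^{s+1}e^{-4\pi^2 i|k|^2 t}\widehat v(t,k)$ with $\widetilde a_{k/N}(N^2t)$ so that the initial data becomes $\widetilde a_K(0)=g_0(K)$, check that $\epsilon<cL^{-1-\gamma}B^{-1}$ follows from $N>1+CB^{2/(s-1)}$ after shrinking $\gamma$, apply Theorem \ref{approx thm1}, and then unwind the scaling using $\langle k\rangle\le N\langle k/N\rangle$, the embedding of $X^{\sigma'}_N$ into $\ell^{2,s}_N$ for $\sigma'>s+1$, and the time reparametrization $T_N=T^*/N^2$. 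All of that bookkeeping is correct and matches the paper. One small point you gloss over (as does the paper): since Theorem \ref{approx thm1} at exponent $\sigma'$ requires $g,\nabla g\in X^{\sigma'+1}$, it should be applied with $\sigma'=\sigma-1$, which is exactly why the hypothesis $\sigma>s+2$ (rather than $\sigma>s+1$) appears, and then the $B$ of that theorem coincides with the $B$ defined here.

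The one step that is not justified is your handling of the sign $\mu$. Theorem \ref{approx thm1} approximates $\widetilde a_K(t)$ by $g(t/T^*,K)$ with no prefactor, and a sign of the nonlinearity cannot be ``propagated through the normal form'' into a factor $\mu$ multiplying the limit profile: for $\mu=-1$ your displayed bound already fails at $t=0$, since $\widetilde a_K(0)=g_0(K)$ while $\mu g(0,K)=-g_0(K)$, a discrepancy of size comparable to $\|g_0\|_{X^\sigma}$ rather than $B(\log N)^{\gamma-1}$. The paper's proof simply applies Theorem \ref{approx thm1} as stated and obtains the estimate with $g$, not $\mu g$; the factor $\mu$ in \eqref{v_K and g} is a looseness of the statement itself, since with the data \eqref{initnormalized} the focusing case corresponds to the sign-reversed limit equation (equivalently $g$ run backwards in time, or $\bar g$ with conjugated data), not to $-g$. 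So your proof is correct, and identical in route to the paper's, provided you either restrict to $\mu=+1$ or state the conclusion with $g$ in place of $\mu g$ as the chain of estimates actually delivers; the sentence attributing the prefactor $\mu$ to the Poincar\'e--Dulac reduction should be deleted.
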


The condition on $s>1$ is the translation of the condition $\epsilon<L^{-1-\gamma}$ in Theorem \ref{approx thm1}. Notice that as we increase $N$, $v_0$ carries more and more refined information about $g_0$ and the approximation becomes better and better. The Sobolev norm of $v(t)$ is uniformly comparable to $\|\langle \xi \rangle^s g(\frac{t}{T_N},\xi )\|_{L^2(\R^2)}$ for all $0\leq t \leq T_NM$. As in Theorem \ref{approx thm1}, the approximation interval is long enough to carry all the nonlinear behavior of $g(t)$ on the interval $[0,M]$ of interest. This shows that we can find solutions to the (NLS) equation \eqref{nlsnormalized} in $H^s(\T^2)$ over very large times (both in the focusing and defocusing case) with rather interesting nonlinear behaviors inherited from \eqref{star}. Note, however, that these solutions are small in $H^r(\T^2)$ for $r < s$ (Their $L^2$ norm is $\sim N^{-s}\ll 1$ which guarantees global existence), and large in $H^r(\T^2)$ for $r > s$. The only ``non-trivial" dynamics described in the limit of large $N$ (i.e. uniformly in $N$) happens in $H^s(\T^2)$.

A striking application of this corollary happens when the data is taken to be any of the explicit solutions of \eqref{star} that will be discussed below. The simplest of which is when we take $g_0 = G(x)= \frac{1}{\sqrt \pi} e^{-\frac12|x|^2}$. We will see that the solution $g(t,x)$ is given explicitely by $g(t,x) = e^{i\omega_0t} G(x)$ for some \emph{non-zero} constant $\omega_0$. In this case, $B$ is uniformly bounded by a universal constant independent of $M$. The previous corollary can be interpreted as a stability result saying that the solution of the $\eqref{cubicNLS}$ equation \eqref{nlsnormalized} starting with the initial value \eqref{initnormalized} (which is close to a gaussian with small variance in the $x$-space) remains close in $H^s$ over long times to the function
\begin{equation}
\label{eq:solgaussian}
v_N(t,x) =  \sum_{k \in \Z^2} e^{2\pi ik \cdot x} e^{i(4\pi^2|k|^2t + C_N t)}\frac{1}{N^{s+1}}G( \frac{k}{N}), 
\end{equation}
where the phase shift $C_N=\omega_0T_N^{-1}$ is very small (of order $N^{-2s}$), but its effect is felt for times $t\geq T_N$. In other words, the function $v(t,x)$ behaves like a solution that is almost periodic in time, and is of size one in $H^s$. Note that in particular at times $t\in (2\pi)^{-1}\Z$ the solution \eqref{eq:solgaussian} focuses again to a Gaussian (up to unimodular factor), while for the other times, the solution is not concentrated anymore in the $x$-space. This is observed numerically as well. The nonlinear effect is only exhibited via the modulation factor $e^{i C_Nt}$ with $C_N=\omega_0 T_N^{-1}$. Since $\omega_0\neq 0$, this gives an explicit example of a dynamic of the cubic NLS equation on $\T^2$ that violates \eqref{Euclidean stability} at the tractable time scales.

Let us conclude by mentioning that this result can be compared with the long-time Sobolev stability of plane wave solutions\footnote{These are the solutions of the form $Ae^{i(2\pi n\cdot x+4\pi^2|n|^2t\pm |A|^2t)}$.} of (NLS) in \cite{FLG}, but note that the construction is radically different: Plane waves in the defocusing case are of order $1$ in $H^s$ for {\em all} $H^s$, $s \geq 0$,  and their stability comes from the creation of generically non-resonant frequencies around the orbit of the plane wave. The same game could be played with the other explicit solutions of $g$ in Section~\ref{sectionhamiltonian}.

\subsection{Analysis of  equation (CR)}
It turns out that equation \eqref{star} has a surprisingly rich structure in terms of its Hamiltonian nature, symmetries,  explicit solutions, and global well-posedness properties. We describe them in this section.

\subsubsection{Hamiltonian structure}
Equation {\rm (CR)} is Hamiltonian: it derives from the Hamiltonian functional
\begin{equation}\label{Hamil1}
\mathcal{H}(f) = \frac{1}{4} \int_{-1}^1 \int_{\mathbb{R}^2} \int_{\mathbb{R}^2} 
 f(x+z) f(\lambda x^\perp + z)  \overline{f(x+\lambda x^\perp +z)} \, \overline{f(z)} \,dx\,dz\,d\lambda
\end{equation}
with the symplectic form on $L^2(\R^2)$ given by
$$
\omega(f,g) = \frak{Im} \int_{\mathbb{R}^2} f(x) \overline{g (x)} \,dx.
$$
A small computation performed in Section \ref{sectionhamiltonian} reveals that the Hamiltonian functional can also be written
\begin{equation}\label{scarlatti}
\mathcal{H}(f) = \frac{\pi}{2} \int_{\mathbb{R}} \int_{\mathbb{R}^2} |e^{it\Delta_{\R^2}} \widehat{f}|^4 \,dx\,dt.
\end{equation}
Thus $\mathcal{H}(f)$ is simply a multiple of the $L^4_{t,x}$ Strichartz norm on $\R^2$! While $\mathcal H(f)$ can be easily seen, from \eqref{Hamil1} and Cauchy-Schwartz, to be bounded on $L^2(\R^2)$ in the sense that
\begin{equation}\label{HL2bdd}
\mathcal H(f)\lesssim \|f\|_{L^2(\R^2)}^4
\end{equation}
this estimate is nothing but the $L^4_{t,x}$-Strichartz estimate on $\R^2$ (\cite{Strichartz,Tomas}). The Hamiltonian structure of (CR) is examined in detail in Section~\ref{sectionhamiltonian}, let us mention here a few striking facts. In all what follows suppose that $g(t)$ is a solution of \eqref{star}.
\begin{itemize}
\item \underline{Conserved quantities.} The coercive conserved quantities are $\mathcal{H}(g)$, $\int|g|^2$, $\int |\nabla g|^2$ and $\int |xg|^2$.
\item \underline{Group Symmetries.} The equation (CR) enjoys a large group of symmetries, including in particular $g(x) \mapsto g(x+x_0)$, $g(x) \mapsto g(x)e^{ix\xi_0}$, 
$g \mapsto e^{i\lambda x^2} g$, $g \mapsto e^{i\lambda \Delta} g$ for any $x_0, \xi_0\in \R^2$ and $\lambda \in \R$.
\item \underline{Invariance under the Fourier transform.} The Fourier transform $\widehat g(t)=\mathcal F_{\R^2}g(t)$ is also a solution of $\eqref{star}$!
\item \underline{Invariance of the Eigenspaces of the harmonic oscillator.} Recall that the harmonic oscillator $-\Delta+|x|^2$ admits a discrete orthonormal basis of $L^2(\R^2)$ formed of eigenvectors. Each eigenspace $\{E_k\}_{k \in \N}$ is finite dimensional of dimension $k$. The flow of (CR) leaves each of the eigenspaces of the harmonic oscillator operator $-\Delta+|x|^2$ invariant, i.e. if $g_0 \in E_k$ for some $k$, then $g(t) \in E_k$ for all $t\in \R$.
\item \underline{Gaussians are stationary solutions.} By the previous point, the ground state of $-\Delta+|x|^2$ is invariant. It is given by $\mathbb{R} e^{-\frac{x^2}{2}}$. Thus, a stationary solution of
(CR) is given by $e^{-\frac{x^2}{2}+i\omega_0t}$ for a constant $\omega_0$.
\item \underline{Other stationary solutions.} We mention here $\frac{e^{iCt}}{|x|}$, for a suitable constant $C$. This solution corresponds exactly to the Raleigh-Jeans solution of the Kolmogorov-Zakharov equation (cf.~Section \ref{WT subsection}). Other explicit solutions are given in Section~\ref{sectionhamiltonian}. 
\end{itemize}

The rich structure of symmetries of \eqref{star} is somewhat reminiscent of the Szeg\"o equation studied by
G\'erard and Grellier~\cite{GG1,GG2,GG3} on $\T$ and Pocovnicu~\cite{P1,P2} on $\R$. It turns out that the Szeg\"o equation is completely integrable; however the question of whether one can show that \eqref{star} is completely integrable or not is very interesting and open with remarkable consequences on the long-time behavior of \eqref{cubicNLS}.

\subsubsection{Analytic properties}

The first step is to describe boundedness properties of $\mathcal{T}$. Recall that
\begin{equation}
\dot X^1(\R^2) \overset{def}{=} \{ f(x) \, \, | \,Ê\, |x| f(x) \in L^\infty(\R^2)\} 
\end{equation}
\begin{proposition}
\label{eider1}
The trilinear operator $\mathcal{T}$ is bounded from $(L^2)^3$ to $L^2$, and from $(\dot X^1)^3$ to $\dot X^1$.
\end{proposition}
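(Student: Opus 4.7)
The two bounds in the proposition rest on quite different tools: the $L^2$ estimate will follow from the Hamiltonian reformulation \eqref{scarlatti} together with the $L^4$ Strichartz estimate on $\R^2$, while the $\dot X^1$ estimate is a pointwise argument that reduces to the convergence of an explicit three-dimensional integral.

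\smallskip

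\emph{The $L^2$ bound.} My plan is to work with the quadrilinear form
$$
Q(f_1,f_2,f_3,f_4) \overset{def}{=} \int_{\R^2}\mathcal T(f_1,f_2,f_3)(\xi)\,\overline{f_4(\xi)}\,d\xi.
$$
A direct unfolding of the definition of $\mathcal T$ and comparison with \eqref{Hamil1} shows that $Q(f,f,f,f)=4\mathcal H(f)$. Polarizing the identity \eqref{scarlatti} then gives
$$
Q(f_1,f_2,f_3,f_4) \;=\; 2\pi\int_{\R}\int_{\R^2}\bigl(e^{it\Delta}\widehat{f_1}\bigr)\overline{\bigl(e^{it\Delta}\widehat{f_2}\bigr)}\bigl(e^{it\Delta}\widehat{f_3}\bigr)\overline{\bigl(e^{it\Delta}\widehat{f_4}\bigr)}\,dx\,dt.
$$
H\"older in $L^4_{t,x}$, the $L^2_x\to L^4_{t,x}$ Strichartz estimate on $\R^2$, and Plancherel yield $|Q|\lesssim\prod_{i=1}^4\|f_i\|_{L^2}$; by duality in the $f_4$ slot this is equivalent to $\|\mathcal T(f_1,f_2,f_3)\|_{L^2}\lesssim \prod_{i=1}^3\|f_i\|_{L^2}$.

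\smallskip

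\emph{The $\dot X^1$ bound.} For this I would apply the defining pointwise inequality $|f(y)|\leq \|f\|_{\dot X^1}/|y|$ to each of the three factors in \eqref{eq:defT}. Multiplying by $|\xi|$ and rescaling $x=|\xi|y$, while using that the remaining integrand is invariant under simultaneous rotation of $\hat\xi$ and $y$ (since $(Ry)^\perp=R(y^\perp)$), reduces the problem to the explicit estimate
$$
I \overset{def}{=} \int_{-1}^{1}\!\!\int_{\R^2}\frac{dy\,d\lambda}{|e_1+y|\,|e_1+y+\lambda y^\perp|\,|e_1+\lambda y^\perp|}<\infty.
$$

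\smallskip

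\emph{The main obstacle.} Verifying $I<\infty$ is the core of the argument. The three denominator factors vanish on three one-dimensional subsets of $\R^2\times[-1,1]$: respectively $\{y=-e_1\}$, $\{y+\lambda y^\perp=-e_1\}$, and $\{\lambda y^\perp=-e_1\}$ (the last of which only meets $|\lambda|\leq 1$ at the two isolated points $(\mp e_1^\perp,\pm 1)$). Near each locus in isolation the integrand is of size one over the distance to it, which is locally integrable in $\R^3$; and integrability at infinity follows from the pointwise bound $|y|^{-2}\max(1,|\lambda||y|)^{-1}$ for the integrand, which after integration in $\lambda$ and in the angular variable yields an $O\bigl((1+\log r)r^{-2}\bigr)$ tail. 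The delicate point is the unique pairwise intersection of the singular loci at $(y,\lambda)=(-e_1,0)$, where the first and third factors vanish simultaneously: writing $y=(-1+\alpha,b)$ and then $c=b-\lambda$, the obstruction localizes to a local 2D integral of the form
$$
\int_{|(\alpha,c)|\leq 1}\frac{d\alpha\,dc}{\sqrt{\alpha^2+(c+\lambda)^2}\,\sqrt{\alpha^2+c^2}} \;=\; O\bigl(\log(1/|\lambda|)\bigr),
$$
which remains integrable in $\lambda$. Once this two-curve coincidence is controlled, assembling the regional estimates delivers the desired uniform bound on $I$.
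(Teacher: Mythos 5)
Your $L^2$ bound is correct, but it takes a different route from the paper. You derive it from the Strichartz identity \eqref{scarlatti} (polarized to a quadrilinear identity) together with the classical $L^4_{t,x}$ Strichartz estimate on $\R^2$ and duality; the paper's proof of this part of Proposition \ref{baldeagle} is instead a two-line Cauchy--Schwarz estimate on $\langle \mathcal{T}(f,g,h),F\rangle$, pairing $f(x+z)F(z)$ against $g(\lambda x^\perp+z)h(x+\lambda x^\perp+z)$, with no Strichartz input. Since the paper itself notes that statement (i) is equivalent to the classical Strichartz estimate, your route is legitimate; it simply imports that known result (and needs the standard care you allude to: the polarization step and the extension of $\mathcal{T}$ from Schwartz functions to $L^2$ by density), whereas the paper's argument is self-contained and elementary.

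For the $\dot X^1$ bound your reduction is exactly the paper's (it suffices to show $\mathcal{T}(|x|^{-1},|x|^{-1},|x|^{-1})(e_1)<\infty$, which is Lemma \ref{cormorant}), but your verification of $I<\infty$ has a genuine gap, stemming from a wrong picture of the third singular set. The set $\{\lambda y^\perp=-e_1\}$ intersected with $0<|\lambda|\le 1$ is not two isolated points: it is the unbounded curve $y=(0,1/\lambda)$, which escapes to infinity as $\lambda\to 0$. Consequently your "integrability at infinity" bound $|y|^{-2}\max(1,|\lambda||y|)^{-1}$ is false in the region $|y|\gg 1$, $|\lambda|\sim 1/|y|$, where the factor $|e_1+\lambda y^\perp|$ can be arbitrarily small, and your argument simply does not account for this contribution. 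This is precisely the case ``$|x|\gg 1$ and $|\lambda|\sim 1/|x|$'' in Lemma \ref{cormorant}, which the paper controls by changing variables $u=\lambda y^\perp+e_1$ (Jacobian $\lambda^{-2}$), reducing it to a uniformly bounded integral of $|u|^{-1}$ over $|u|\lesssim 1$ and hence an $O(1)$ total contribution after integrating in $\lambda$; some such argument must be added to close your proof. Separately, at $(y,\lambda)=(-e_1,0)$ it is the first and \emph{second} factors that vanish, not the first and third: by the orthogonality identity $1+|e_1+y+\lambda y^\perp|^2=|e_1+y|^2+|e_1+\lambda y^\perp|^2$, the first and third factors can never be simultaneously small. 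Your local model integral in $(\alpha,c)$ is in fact the correct one for the pair that does vanish there, so that part is only a mislabeling; the missing unbounded-curve region is the real hole.
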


This statement easily gives obvious local well-posedness results in $L^2$ and $\dot X^1$; combining it with the conservation laws of \eqref{star} as well as its invariance with respect to the Fourier transform, one obtains the following global well-posedness theorem.

\begin{theorem}
\label{eider2}
\begin{itemize}
\item The equation {\rm (CR)} is locally well-posed in $\dot X^1(\R^2)$, and globally well-posed in $L^2(\R^2)$.
\item \eqref{star} is globally well-posed in $H^s(\R^2)$ and $L^{2,s}(\R^2)$ for any $s\geq 0$.
\item \eqref{star} is globally well-posed in $X^\sigma(\R^2)$ for any $\sigma>2$. Moreover, if $\nabla g(0) \in X^\sigma$, then $g, \nabla g \in C_{\operatorname{loc}}(\R; X^\sigma)$.
\end{itemize}
\end{theorem}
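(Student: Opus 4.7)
The plan is to set up local well-posedness in each listed space by a standard Picard iteration on the Duhamel formula $g(t) = g_0 + i\int_0^t \mathcal{T}(g,g,g)(s)\,ds$, using a trilinear bound $\|\mathcal{T}(f,g,h)\|_Y \lesssim \|f\|_Y\|g\|_Y\|h\|_Y$ in the relevant norm $Y$. For $Y = L^2$ and $Y = \dot X^1$ this is precisely Proposition~\ref{eider1}. For $Y = H^s$ and $Y = L^{2,s}$ with $s\geq 0$, I would combine Proposition~\ref{eider1} with the Leibniz-type identity
$$\nabla \mathcal{T}(f,g,h) = \mathcal{T}(\nabla f, g, h) + \mathcal{T}(f, \nabla g, h) + \mathcal{T}(f, g, \nabla h),$$
which follows from the translation invariance of $\mathcal{T}$ on $\mathbb{R}^2$, together with Gagliardo--Nirenberg interpolation. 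For $Y = X^\sigma$ with $\sigma > 2$, the trilinear estimate $\|\mathcal{T}(g,g,g)\|_{X^\sigma} \lesssim \|g\|_{X^\sigma}^3$ follows from the algebraic identity $(\xi + x) - (\xi + x + \lambda x^\perp) + (\xi + \lambda x^\perp) = \xi$, which forces at least one of the three arguments to satisfy $\langle \cdot \rangle \gtrsim \langle \xi \rangle$, together with integrability of $\langle \cdot \rangle^{-\sigma}$ on $\mathbb{R}^2$. In each case one obtains local existence on an interval of length $T_* \sim \|g_0\|_Y^{-2}$.

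Global existence in $L^2$ is then immediate from conservation of $\int|g|^2$ (listed in the Hamiltonian structure). For $L^{2,s}$ I would exploit the Fourier invariance of \eqref{star}: since $\widehat g$ also solves the equation and $\|g\|_{L^{2,s}} = \|\widehat g\|_{H^s}$, it suffices to prove global existence in $H^s$. For the latter, conservation of $\int|\nabla g|^2$ gives global control of $\|g\|_{\dot H^1}$; interpolation with $L^2$ handles $s \in [0,1]$, while for integer $s > 1$ an energy estimate obtained by applying $\nabla^s$ to the equation, using the Leibniz identity above, Proposition~\ref{eider1}, and Gagliardo--Nirenberg yields
$$\tfrac{d}{dt}\|g(t)\|_{H^s}^2 \lesssim \|g(t)\|_{L^2}^2 \, \|g(t)\|_{H^s}^2,$$
so Gronwall produces at most exponential in-time growth, hence global existence; non-integer $s > 1$ follows by interpolation.

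The global-in-time theory in $X^\sigma$ is the main remaining difficulty, since the cubic local bound feeds Gronwall a differential inequality of the form $y' \lesssim y^3$ which blows up in finite time. To close globally I would aim to refine the trilinear estimate to the shape
$$\|\mathcal{T}(g,g,g)\|_{X^\sigma} \lesssim \|g\|_{X^\sigma}\, \Phi\!\left(\|g\|_{L^2},\, \|\nabla g\|_{L^2},\, \|xg\|_{L^2}\right),$$
with $\Phi$ a polynomial in quantities conserved by the flow. The strategy is to decompose the integral defining $\mathcal{T}(g,g,g)(\xi)$ into three regions according to which of $\xi+x$, $\xi+x+\lambda x^\perp$, $\xi+\lambda x^\perp$ has modulus $\gtrsim \langle\xi\rangle$; on each region one factor of $\|g\|_{X^\sigma}\langle\xi\rangle^{-\sigma}$ is extracted from the corresponding $g$, and the remaining bilinear integral is controlled by conserved $L^2$-based norms. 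Gronwall then gives exponential in-time control on $\|g(t)\|_{X^\sigma}$, hence global existence. The persistence statement $g,\nabla g \in C_{\mathrm{loc}}(\mathbb{R}; X^\sigma)$ follows by applying $\nabla$ to \eqref{star}, using the Leibniz identity to obtain a coupled system for $(g,\nabla g)$, and running the same estimate on the pair.

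The main obstacle is precisely this refined trilinear estimate. A naive Cauchy--Schwarz on the remaining bilinear integral after extracting the weight produces a divergent $\int d\lambda / \lambda^2$, reflecting the degeneracy of the change of variables $x \mapsto \lambda x^\perp$ at $\lambda = 0$. Taming it requires performing the $x$-integration at fixed $\lambda \in [-1,1]$ to exploit the compactness of the $\lambda$-interval, and handling the near-diagonal regime $|\lambda| \ll 1$ via the pointwise decay of $g$ rather than an $L^2$ Cauchy--Schwarz in $x$, so that no change-of-variables singularity is encountered.
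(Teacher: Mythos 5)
Your outline coincides, in its architecture, with the paper's own proof of Theorem~\ref{th:7.4}: Picard iteration on the Duhamel formula via the boundedness results of Propositions~\ref{eider1} and~\ref{baldeagle}, mass conservation for global $L^2$, the Fourier-transform invariance to reduce $L^{2,s}$ to $H^s$, a tame weighted estimate plus Gronwall for $X^\sigma$, and differentiation of the equation to propagate $\nabla g\in X^\sigma$. The gap is in the one step that is genuinely delicate, namely the tame $X^\sigma$ bound. Once the weight $\langle\xi\rangle^\sigma$ has been extracted from one factor (say $g(\xi+z^\perp)$), the leftover bilinear integral $\int_{-1}^{1}\int_{\R^2}|g(\xi+\lambda z)|\,|g(\xi+\lambda z+z^\perp)|\,dz\,d\lambda$ must be bounded \emph{by conserved quantities only}, otherwise Gronwall does not close globally. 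Your fix for the degenerate region $|\lambda|\ll1$ --- ``use the pointwise decay of $g$'' --- spends a second factor of $\|g\|_{X^\sigma}$ (or of $\|g\|_{L^\infty}$, which is not conserved either), so the best differential inequality you can reach is $y'\lesssim Cy^{2}$, which blows up in finite time and proves nothing global. The paper's Lemma~\ref{X^sigma bound of TT} keeps the estimate linear in $\|g\|_{X^\sigma}$ by a different H\"older split: at fixed $\lambda$ it pairs the two remaining factors in $L^4_z\times L^{4/3}_z$ rather than $L^2\times L^2$ (or $L^\infty\times L^1$); the change of variables then costs $\|g(\xi+\lambda z)\|_{L^4_z}=|\lambda|^{-1/2}\|g\|_{L^4}$, an integrable singularity, while $\|g(\xi+\lambda z+z^\perp)\|_{L^{4/3}_z}\lesssim\|g\|_{L^{4/3}}$, and both $\|g\|_{L^4}\lesssim\|g\|_{H^1}$ and $\|g\|_{L^{4/3}}\lesssim\|\langle x\rangle g\|_{L^2}$ are controlled by conserved quantities. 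This yields $\|\mathcal T(g,g,g)\|_{X^\sigma}\lesssim\|g\|_{X^\sigma}\bigl(\|g\|_{L^2}^2+\|g\|_{H^1}\|\langle x\rangle g\|_{L^2}\bigr)$, hence linear Gronwall and global existence; note this is also why the paper's Theorem~\ref{th:7.4}(iv) assumes $g_0\in H^1$ in addition to $X^\sigma$, since the $X^\sigma$ norm does not control the conserved kinetic energy. With $\|g(t)\|_{X^\sigma}$ globally bounded, your plan for $\nabla g$ (apply $\nabla$, use the Leibniz identity, run the same tame estimate) is exactly the paper's and closes by linear Gronwall.

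A secondary flaw is the $H^s$/$L^{2,s}$ step for $s\in(0,1)$ and for non-integer $s>1$: conservation of $\int|\nabla g|^2$ is of no use if the datum lies only in $H^s$ with $s<1$, and global well-posedness does not ``interpolate'' between integer levels at which you have no data. The standard repair, which is what the paper's terse ``classical'' argument for item (iii) of Theorem~\ref{th:7.4} relies on, is a tame estimate $\|\mathcal T(f,g,h)\|_{H^s}\lesssim \|f\|_{H^s}\|g\|_{L^2}\|h\|_{L^2}+\|f\|_{L^2}\|g\|_{H^s}\|h\|_{L^2}+\|f\|_{L^2}\|g\|_{L^2}\|h\|_{H^s}$, valid for every real $s\ge0$: by the Fourier invariance it is equivalent to the corresponding $L^{2,s}$ bound, and there one inserts $\langle z\rangle^s\lesssim\langle z+x\rangle^s+\langle z+x+\lambda x^\perp\rangle^s+\langle z+\lambda x^\perp\rangle^s$ into the duality/Cauchy--Schwarz proof of Proposition~\ref{baldeagle}(i), so that the weight rides on a single factor while the others are paired in $L^2$. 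Then the local existence time depends only on the conserved mass and the $H^s$ solution (hence, via your Fourier-transform reduction, the $L^{2,s}$ solution) extends globally; your integer-$s$ energy estimate is correct but then unnecessary.
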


Further well-posedness results are mentioned in Theorem \ref{th:7.4}. Next, we observe that the trilinear operator $\mathcal{T}$ has a remarkable weak boundedness property, reminiscent of the div-curl lemma~\cite{Murat,Tartar}: 
\begin{theorem}
\label{eider3}
Assume that $\{f^n\}_{n \in \N}$, $\{g^n\}_{n\in \N}$ and $\{h^n\}_{n\in \N}$ are sequences converging weakly in $L^2$ to $f$, $g$ and $h$ respectively. Then the sequence $\{\mathcal{T}(f^n,g^n,h^n)\}_{n \in \N}$ converges weakly in $L^2$ to $\mathcal{T}(f,g,h)$.
\end{theorem}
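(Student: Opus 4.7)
The plan is to base the argument on the Strichartz representation \eqref{scarlatti} of the Hamiltonian $\mathcal H$. Polarizing this quartic Strichartz form and using that, up to a universal constant, $\mathcal T(f,f,f)$ is the gradient of $\mathcal H$, yields the quadrilinear identity
\[
\langle \mathcal T(f_1,f_2,f_3),f_4\rangle = \frac{\pi}{2}\int_\R\int_{\R^2} (e^{it\Delta}\widehat{f_1})(\overline{e^{it\Delta}\widehat{f_2}})(e^{it\Delta}\widehat{f_3})(\overline{e^{it\Delta}\widehat{f_4}})\,dx\,dt,
\]
valid for $f_1,\dots, f_4\in L^2(\R^2)$. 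Since $\mathcal T:(L^2)^3\to L^2$ is bounded by Proposition \ref{eider1}, the sequences $\{f^n\},\{g^n\},\{h^n\}$ are $L^2$-bounded. A standard multilinear telescoping then reduces Theorem \ref{eider3} to showing: if $\phi_n\rightharpoonup 0$ weakly in $L^2(\R^2)$ and $\{v_n\},\{w_n\}$ are $L^2$-bounded, then for every fixed $k\in L^2(\R^2)$,
\[
\mathcal Q_n := \int_\R\!\int_{\R^2}(e^{it\Delta}\widehat{\phi_n})(\overline{e^{it\Delta}\widehat{v_n}})(e^{it\Delta}\widehat{w_n})(\overline{e^{it\Delta}\widehat{k}})\,dx\,dt\longrightarrow 0.
\]

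Next I would exploit the fact that the linear map $\phi\mapsto e^{it\Delta}\widehat\phi$ is bounded from $L^2(\R^2)$ into $L^4_{t,x}(\R\times\R^2)$ by the Strichartz estimate, and hence weakly continuous: $e^{it\Delta}\widehat{\phi_n}\rightharpoonup 0$ in $L^4_{t,x}$, while the other two factors are $L^4_{t,x}$-bounded. Weak $L^4$ convergence is not preserved by products, so some compactness input is required; this motivates using the Merle--Vega--Keraani profile decomposition for $L^2$-bounded sequences with respect to the mass-critical Strichartz norm on $\R\times\R^2$. Along a subsequence, each of $\widehat{\phi_n},\widehat{v_n},\widehat{w_n}$ admits an expansion $\sum_{\ell=1}^L \Gamma_n^{j,\ell}\psi^{j,\ell} + r_n^{j,L}$, where the symmetry operators $\Gamma_n^{j,\ell}$ are products of parabolic scalings, spacetime translations, and Galilean modulations with asymptotically orthogonal parameters across $\ell$, and $\limsup_{n\to\infty}\|e^{it\Delta}r_n^{j,L}\|_{L^4_{t,x}}\to 0$ as $L\to\infty$. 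Crucially, the assumption $\phi_n\rightharpoonup 0$ forces the identity-symmetry profile in the decomposition of $\widehat{\phi_n}$ to vanish, so every non-trivial profile of $\widehat{\phi_n}$ escapes to infinity along at least one symmetry direction.

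Substituting the profile decompositions into $\mathcal Q_n$ produces a finite triple sum of quadrilinear Strichartz integrals plus residual contributions. The residual terms are bounded via H\"older by $\|e^{it\Delta}r_n^{j,L}\|_{L^4_{t,x}}$ times bounded factors, hence can be made arbitrarily small by taking $L$ large. The crucial input is then an \emph{orthogonality lemma} for Strichartz profiles: a quadrilinear integral of four linear Schr\"odinger evolutions vanishes in the limit as soon as one of the profile parameters diverges independently of those of its partners. Since every profile contributed by $\widehat{\phi_n}$ escapes, and the fourth factor $e^{it\Delta}\widehat k$ is a single fixed Schr\"odinger solution (which one may approximate by Schwartz data in $L^4_{t,x}$), each surviving term of the sum tends to zero. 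Sending first $n\to\infty$ and then $L\to\infty$ gives $\mathcal Q_n\to 0$.

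The main obstacle is the orthogonality lemma itself, and the case analysis it requires: one must handle separately profiles of $\widehat{\phi_n}$ that escape along scaling, translation, modulation, or time-shift. For large translations or time-shifts, vanishing follows from the pointwise dispersive decay of $e^{it\Delta}\widehat k$ outside bounded spacetime windows; for large Galilean modulations, from frequency transversality, cleanly quantified by the bilinear Strichartz estimate; for extreme parabolic scalings, from the concentration of $\widehat k$ at a fixed frequency scale together with the scale-invariance of the $L^4_{t,x}$ norm. Each case reduces, after a density approximation to Schwartz $k$, to a short dispersive calculation, but assembling them into a single clean vanishing statement for the quadrilinear form is the genuine technical core of the proof. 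This also explains the div-curl analogy mentioned in the statement of Theorem \ref{eider3}: weak continuity here hides a true compensated-compactness phenomenon, which no naive product estimate can deliver.
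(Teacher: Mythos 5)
Your argument is correct in outline, but it is a genuinely different proof from the one in the paper. The paper never invokes the Strichartz representation at this point: it localizes $\mathcal{T}$ in space, in frequency (exploiting the Fourier invariance of Lemma~\ref{albatros}), and in the parameter $\lambda$, and controls the far and degenerate regions uniformly through Proposition~\ref{loriot}, whose key item (ii) is an $L^\infty$ bound decaying like $M^{-1}$ coming from the nonlinear Loomis--Whitney inequality of Bennett--Carbery--Wright; once all three inputs are compactly localized in space and frequency, weak $L^2$ convergence upgrades to strong convergence on those pieces, and the $L^2$ boundedness of $\mathcal{T}$ closes the argument. Your route instead polarizes \eqref{scarlatti} into a quadrilinear Strichartz identity (legitimate, since both quadrilinear forms carry the symmetries needed for the diagonal to determine them, but this should be said or the identity verified directly as in the paper's computation of \eqref{HamStich}), telescopes, and brings in the mass-critical profile decomposition plus bilinear orthogonality estimates. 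What this buys is a formally stronger conclusion: after telescoping you show $\langle \mathcal{T}(\phi_n,v_n,w_n),k\rangle\to 0$ when only $\phi_n\rightharpoonup 0$ while $v_n,w_n$ are merely bounded, something the paper's localization scheme does not give directly; the cost is importing the Merle--Vega type decomposition and the case-by-case vanishing lemma, which are standard but much heavier than Proposition~\ref{loriot}. Two points to tighten: (a) you do not need to decompose $v_n$ and $w_n$ at all --- H\"older in the form $|\int u_1\bar u_2 u_3\bar u_4|\le \|u_1 u_4\|_{L^2_{t,x}}\|u_2\|_{L^4_{t,x}}\|u_3\|_{L^4_{t,x}}$ pairs each escaping profile of $\widehat{\phi_n}$ against the fixed (Schwartz-approximated) $\widehat{k}$, and this also repairs your slightly misstated orthogonality requirement: the profiles of $v_n,w_n$ may share the escaping parameters of $\phi_n$, so the divergence that matters is only relative to the fixed test function, which is exactly what dispersive decay (space/time translation escape) and bilinear Strichartz (frequency or scale separation) provide; (b) the assertion that $\phi_n\rightharpoonup 0$ annihilates the non-escaping profile deserves a line (at most one profile can have convergent parameters, and it coincides with the weak limit, hence vanishes). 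With these points made explicit, your proof is a valid alternative to the paper's.
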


This implies a weak compactness result for solutions of (CR).

\begin{theorem}
\label{eider4}
Given a sequence $\{f^n\}_{n \in \N}$ of solutions of {\rm (CR)}, uniformly bounded in the space $L^\infty([0,T],L^2)$ for some $T>0$, there exists a solution $f$ of ${\rm (CR)}$ towards which
a subsequence of $\{f^n\}_{n \in \N}$ converges in the weak-* topology of $L^\infty([0,T],L^2)$ . 
\end{theorem}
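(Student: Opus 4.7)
The plan is to combine Banach--Alaoglu weak-$*$ compactness with the weak continuity of $\mathcal T$ (Theorem \ref{eider3}) by passing to the limit in the Duhamel formulation of (CR). Since $\{f^n\}_{n\in\N}$ is uniformly bounded in $L^\infty([0,T],L^2) = (L^1([0,T],L^2))^*$ and the predual is separable, Banach--Alaoglu provides a subsequence (still denoted $\{f^n\}$) converging weakly-$*$ to some $f \in L^\infty([0,T],L^2)$. The goal is then to identify $f$ as a solution of (CR).

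The key step is to upgrade weak-$*$ convergence to \emph{pointwise-in-time} weak convergence in $L^2$. From the Duhamel form of (CR) and the bound $\mathcal T:(L^2)^3 \to L^2$ of Proposition \ref{eider1},
\begin{equation*}
\|f^n(t)-f^n(s)\|_{L^2} \;\leq\; \left|\int_s^t \|\mathcal T(f^n(\tau),f^n(\tau),f^n(\tau))\|_{L^2}\,d\tau\right| \;\lesssim\; |t-s|\,\sup_n \|f^n\|_{L^\infty_t L^2_x}^3,
\end{equation*}
so the maps $t \mapsto f^n(t) \in L^2$ are uniformly Lipschitz in $n$. Fix a countable dense family $\{\phi_k\} \subset L^2$; for every $k$, the real-valued sequence $t \mapsto \langle f^n(t),\phi_k\rangle$ is uniformly bounded and uniformly Lipschitz on $[0,T]$. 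Arzel\`a--Ascoli plus a diagonal extraction yield a further subsequence along which $\langle f^n(t),\phi_k\rangle$ converges uniformly in $t$ for every $k$, and the uniform $L^2$-bound on $f^n(t)$ extends the convergence to arbitrary $\phi \in L^2$. Hence $f^n(t) \rightharpoonup f(t)$ weakly in $L^2$ for \emph{every} $t \in [0,T]$ (and this pointwise-in-time limit agrees with the weak-$*$ limit $f$ by uniqueness).

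Now Theorem \ref{eider3} applies slice-by-slice: for each $t \in [0,T]$, $\mathcal T(f^n(t),f^n(t),f^n(t)) \rightharpoonup \mathcal T(f(t),f(t),f(t))$ weakly in $L^2$. Testing the Duhamel identity for $f^n$ against an arbitrary $\phi \in L^2$,
\begin{equation*}
\langle f^n(t),\phi\rangle = \langle f^n(0),\phi\rangle + i\int_0^t \langle \mathcal T(f^n(s),f^n(s),f^n(s)),\phi\rangle\,ds,
\end{equation*}
the integrand is bounded uniformly in $n$ and $s$ by $C\|\phi\|_{L^2}\sup_n \|f^n\|_{L^\infty_t L^2_x}^3$ and converges pointwise in $s$ by the previous paragraph. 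Dominated convergence produces the Duhamel identity for $f$, so $f$ solves (CR) in the mild (and hence, by Proposition \ref{eider1} and Theorem \ref{eider2}, strong) sense.

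The only genuine difficulty of this argument is passing to the limit inside a trilinear nonlinearity, which for a generic operator would fail under mere weak convergence. That obstacle is entirely absorbed into Theorem \ref{eider3}, whose proof presumably exploits the ``div--curl''-type algebraic structure of $\mathcal T$. What remains for us is the standard Aubin--Lions-flavored argument: using the equation to control $\partial_t f^n$ qualitatively, extracting a subsequence with pointwise-in-time weak convergence, and then invoking Theorem \ref{eider3} on each time slice before integrating.
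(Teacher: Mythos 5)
Your argument is correct and follows essentially the same route as the paper: a uniform Lipschitz-in-time bound coming from the equation and the $L^2$ bound on $\mathcal T$, extraction of a subsequence converging weakly in $L^2$ at every time $t$, and then Theorem~\ref{eider3} applied slice-by-slice to pass to the limit in the nonlinearity. The only (immaterial) implementation difference is that you pass to the limit in the Duhamel formula via dominated convergence, whereas the paper tests the equation against space-time test functions and uses a dyadic piecewise-constant-in-time approximation controlled by the same Lipschitz bound.
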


Theorems~\ref{eider1}--\ref{eider4} are proved, along with other results, in section~\ref{sectionanalytic}.

\subsubsection{Variational properties} The last of the properties of \eqref{star} that we mention here relates to the variational characterization of the Gaussian family of explicit solutions in relation to the Hamiltonian function $\mathcal H(f)$.  The theorem below is not new given the realization that $\mathcal H(f)$ can be equivalently written as \eqref{scarlatti} and the study of extremizers of Strichartz estimates in \cite{Fos, HZ}. Still, we chose to give a full proof in order to keep the paper as self-contained as possible. It relies on the heat flow monotonicity following ideas in~\cite{CLL,BCCT, BBCH}. 

\begin{theorem}
\label{duck1}
For a fixed mass $\|f\|_{L^2}=M$, \eqref{HL2bdd} is only saturated by Gaussians with mass $M$, up to the finite group of symmetries of $\mathcal H$. Furthermore, the optimal implicit constant in the \eqref{HL2bdd} is $\frac{\pi}{8}$.
\end{theorem}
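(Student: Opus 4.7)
The plan is to exploit the identity \eqref{scarlatti}, namely $\mathcal{H}(f)=\frac{\pi}{2}\|e^{it\Delta}\widehat{f}\|_{L^4_{t,x}(\R\times\R^2)}^4$, which has been established just above in Section~\ref{sectionhamiltonian}. Since $\mathcal{F}$ is an $L^2$-isometry, setting $g=\widehat f$ reduces the assertion of Theorem~\ref{duck1} to the sharp Strichartz inequality
\[
\|e^{it\Delta}g\|_{L^4_{t,x}(\R\times\R^2)}^{4}\ \leq\ \tfrac{1}{4}\,\|g\|_{L^2(\R^2)}^{4},
\]
together with the claim that equality forces $g$ to be a Gaussian (up to the usual symmetries: $L^2$-phase, spatial translation, frequency modulation $g\mapsto e^{ix\cdot\xi_0}g$, scaling $g(x)\mapsto \lambda g(\lambda x)$, and complex conjugation). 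All these symmetries are inherited by $\mathcal H$ through the symmetry group listed in Section~\ref{sectionhamiltonian}.

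\textbf{Step 2: Value on Gaussians.} I would next verify that the constant $\pi/8$ is attained. Taking $g=G=\pi^{-1/2}e^{-|x|^2/2}$ (so $\|G\|_{L^2}=1$) and using that the Schr\"odinger flow of a Gaussian remains Gaussian, a direct computation gives
\[
|e^{it\Delta}G(x)|^{2}\ =\ \frac{1}{\pi(1+4t^{2})}\exp\!\Bigl(-\frac{|x|^{2}}{1+4t^{2}}\Bigr),
\]
and then
\[
\int_{\R}\!\!\int_{\R^{2}}|e^{it\Delta}G|^{4}\,dx\,dt\ =\ \frac{1}{2\pi}\int_{\R}\frac{dt}{1+4t^{2}}\ =\ \frac{1}{4},
\]
so that $\mathcal H(G)=\pi/8$. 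Combined with the invariances of $\mathcal H$, this shows saturation on the full Gaussian orbit.

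\textbf{Step 3: Heat-flow monotonicity.} The heart of the proof is the upper bound, for which I would follow the heat-flow monotonicity strategy of Carlen--Lieb--Loss~\cite{CLL} and Bennett--Bez--Carbery--Hundertmark~\cite{BBCH}. By Plancherel, one first rewrites the Strichartz quantity as a diagonal restriction on $\R^{2}\times\R^{2}$:
\[
\|e^{it\Delta}g\|_{L^{4}_{t,x}}^{4}\ =\ \int_{\R}\!\!\int_{\R^{2}}\bigl|e^{it\Delta_{\R^{4}}}(g\otimes \overline{g})(x,x)\bigr|^{2}\,dx\,dt,
\]
and performs the orthogonal change of variable $u=(x+y)/\sqrt{2}$, $v=(x-y)/\sqrt{2}$, which turns the diagonal $\{x=y\}$ into the hyperplane $\{v=0\}$ and diagonalises $\Delta_{\R^{4}}=\Delta_{u}+\Delta_{v}$. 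Define then for $s\geq 0$ the heat-regularised functional
\[
Q(s)\ :=\ \|e^{it\Delta}\,e^{s\Delta}g\|_{L^{4}_{t,x}}^{4}\,\bigl/\,\|e^{s\Delta}g\|_{L^{2}}^{4}.
\]
Differentiating in $s$, using the intertwining identity $e^{it\Delta}e^{s\Delta}=e^{(s+it)\Delta}$ and the tensorised representation above, one obtains after integration by parts an expression for $Q'(s)$ controlled from below by a non-negative quadratic form; the key inequality is a Cauchy--Schwarz step applied in the $v$-variable to the regularised tensor $(e^{s\Delta}g)\otimes\overline{(e^{s\Delta}g)}$, which yields $Q'(s)\geq 0$ with equality if and only if the tensor factors as a Gaussian in $v$. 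Passing to $s\to\infty$ the central limit behaviour of the heat semigroup makes $e^{s\Delta}g$ asymptotically Gaussian after rescaling, and the Gaussian computation of Step~2 identifies $\lim_{s\to\infty}Q(s)=1/4$. Combining monotonicity $Q(0)\leq \lim Q(s)=1/4$ proves the bound.

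\textbf{Step 4: Rigidity and main obstacle.} For the uniqueness of extremizers, if $\mathcal H(f)=(\pi/8)\|f\|_{L^{2}}^{4}$ then $Q(0)=Q(\infty)=1/4$, and by monotonicity $Q$ is constant, forcing equality in the Cauchy--Schwarz step for every $s>0$. Propagating this rigidity back to $s=0$ and undoing the $(u,v)$ change of variable forces $g\otimes\overline{g}$ to be Gaussian, hence $g$ itself is Gaussian (up to the symmetry group listed above). The main obstacle I expect is the differentiation and the lower bound for $Q'(s)$: the algebra of the change-of-variable computation and the identification of the correct non-negative quadratic form must be done carefully, and the endpoint limit $s\to\infty$ must be justified with enough integrability (which is where the boundedness properties of $\mathcal T$ from Proposition~\ref{eider1} come in to legitimise the manipulations).
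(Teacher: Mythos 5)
Your overall strategy---reducing via \eqref{scarlatti} to the sharp $L^4$ Strichartz inequality and then running a heat-flow monotonicity argument---is a legitimate route, and the paper itself notes that Theorem~\ref{duck1} would follow from \cite{Fos,HZ} once \eqref{scarlatti} is in hand; your Gaussian computation in Step~2 is correct and does produce the constant $\pi/8$. The genuine gap is in Step~3: the functional you propose to flow, $Q(s)=\|e^{it\Delta}e^{s\Delta}g\|_{L^4_{t,x}}^4/\|e^{s\Delta}g\|_{L^2}^4$, applies the heat semigroup directly to the complex-valued $g$, and this is not the quantity whose monotonicity is known or provable by the Cauchy--Schwarz mechanism you invoke. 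The Carlen--Lieb--Loss/Bennett--Bez--Carbery--Hundertmark scheme \cite{CLL,BBCH} requires flowing a nonnegative density: the monotone quantity is $s\mapsto\|e^{it\Delta}\sqrt{e^{s\Delta}(|g|^2)}\|_{L^4_{t,x}}^4$, which exactly preserves the $L^2$ norm (so no normalizing ratio is needed), and it must be preceded by a reduction to nonnegative data (for the Hamiltonian this is $\mathcal H(f)\le\mathcal H(|f|)$, which follows from the positive representation \eqref{duck}). Your version also breaks at the endpoint: for general $g\in L^2$ the rescaled $e^{s\Delta}g$ is not asymptotically Gaussian ($g$ need not lie in $L^1$, and if $\int g=0$ the leading heat-kernel asymptotics vanish), whereas the correct flow acts on $|g|^2\in L^1$, whose integral is the conserved mass, so the central-limit asymptotics always apply and identify the limit as $\tfrac14\|g\|_{L^2}^4$.

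A second gap is the uniqueness of complex-valued extremizers: equality in the Cauchy--Schwarz step only rigidifies the modulus (it forces the flowed nonnegative profile to be Gaussian), and you still must recover the phase, i.e.\ show that an extremizer of the form $Ge^{i\theta}$ has $\theta$ equal to a quadratic polynomial with isotropic second-order part; your sketch jumps from ``the tensor factors as a Gaussian'' to ``$g$ is Gaussian up to symmetries'' without this step. For comparison, the paper's proof of Theorem~\ref{duckling} avoids the Strichartz reformulation altogether: it proves monotonicity of $\tau\mapsto\mathcal H\bigl(\sqrt{e^{\tau\Delta}h}\bigr)$ for nonnegative $h$ directly from \eqref{duck}, identifies the limit using the $L^1$ heat asymptotics together with the scaling invariance of $\mathcal H$, and then treats the two rigidity steps separately: first the equality case in the monotonicity formula (a log-derivative identity forcing $|f|$ to be Gaussian), and then a functional equation for the phase $\theta$. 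Repairing your argument amounts to replacing $Q(s)$ by the BBCH functional, adding the reduction to $|f|$, and supplying the phase-rigidity step.
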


Using in addition the profile decomposition for the $L^4$ Strichartz norm~\cite{MV} gives compactness of maximizing sequences and the $L^2-$orbital stability of the Gaussian family.

This last result and Theorem~\ref{duck1} are proved in section~\ref{sectionvariational}.

\subsection{Physical-space interpretation}\label{Interpretation} Here we try to give a heuristic explanation why the limiting equation (CR) is natural; this could play a role in the dynamics beyond the regime $\epsilon< L^{-1}$ mentioned above.

\bigskip

Let us first approach the equation in Fourier space. The previous arguments can then be recast as follows. For $L$ finite, the (discrete) resonant system derives from the Hamiltonian $\mathcal{H}_L (f) =  \frac14 \langle \mathcal{T}_L(f,f,f)\,,\,f \rangle$. We saw that $\mathcal{T}_L \rightarrow \mathcal{T}$ as $L \rightarrow \infty$, therefore $\mathcal{H}_L$ converges to $\mathcal{H} = \frac14 \langle \mathcal{T}(f,f,f)\,,\,f \rangle$, and the equation deriving from $\mathcal{H}$ is (CR).

\bigskip

It is perhaps physically more illuminating to examine the convergence to (CR) in physical space. There are then two key elements to be taken into account. 
\begin{enumerate}
\item The flow of the linear Schr\"odinger equation is periodic on $\mathbb{T}^2_L$ with period $T = L^2$ (the situation is actually more complicated: the period is actually shorter depending on the considered frequency, and this is ultimately responsible for the $\zeta(2)$ factor appearing in our derivation. We will however gloss over this additional difficulty). 
\item Let $S$ be the scattering operator for \eqref{cubicNLS} on $\R^2$ (This is the operator that maps the profile at $t = - \infty$ to the profile at $t = + \infty$), and denote by $\widetilde S=\mathcal F S\mathcal F^{-1}$ its Fourier space incarnation. Under appropriate assumptions, and for a real constant $C$, $\widetilde S$ can be expanded around $0$ as
\begin{equation}
\label{bluejay}
\widetilde S f = f + C i \mathcal{T}(f,f,f) + \text{higher order terms in }f%O(f^2),
\end{equation}
as can be seen by a small computation from Duhamel's formula.
\end{enumerate}
Let $v(t)$ be a solution of \eqref{cubicNLS} (with $\epsilon=1$)and let $f(t, K):=e^{-it|K|^2}\widehat v(t, K)$ be the Fourier transform of its linear profile. We can now explain why (CR) comes out as a natural limiting system governing the dynamics of $f$.  Suppose that at the initial time $t=0$, the data $f(0,K)$ is small and localized (so that all (NLS) solutions considered are global). Since $L$ is very large, the corresponding solution $v(t)$ of \eqref{cubicNLS} on $\T^2_L$ will initially evolve like a solution of (NLS) on $\mathbb{R}^2$: it will scatter, and be well-approximated by the linear flow; thus for $t$ sufficiently large the linear profile will keep a nearly constant value given in Fourier space by $f\left(\frac{T}{2}\right)$... until the period $T$ when the linear flow will focus $v$ again. The nonlinear interaction will then come into play, and since $L$ is very large it will essentially amount to applying the scattering operator $\widetilde S$. Thus, essentially, $f\left( \frac{3T}{2} \right) \simeq \widetilde S f\left( \frac{T}{2} \right)$. The same reasoning gives for any integer $n$ that $f\left( \frac{T}{2} + n T \right) \simeq \widetilde S f\left( \frac{T}{2} + (n-1) T \right)$. Using in addition the smallness of $f$ and~(\ref{bluejay}), this yields
$$
f\left( \frac{T}{2} + n T \right) - f\left( \frac{T}{2} + (n-1) T \right) \simeq C i \mathcal{T} \left( f ,f ,f \right)\left( \frac{T}{2} + (n-1) T \right),
$$
which, rescaling time, can be approximated by (CR).

\subsection{Relation to weak turbulence}\label{WT subsection}

Weak turbulence, also known as wave turbulence, seeks a statistical description\footnote{More precisely, this refers to kinetic wave turbulence, as opposed to more recent theories like \emph{discrete} or \emph{mesoscopic} wave turbulence.} of the out-of-equilibrium dynamics of equations ranging from \eqref{NLSe} to water waves or Vlasov-Maxwell. The fundamental equations of weak turbulence theory, sometimes called the Kolmogorov-Zakharov equations, appeared early on in the past century \cite{Peierls, Hassel1, Hassel2}; however the theory was most invigorated after the work of Zakharov in the 60s \cite{Zakh1, Zakh2}, who sought to parallel Kolmogorov's theory of hydrodynamic turbulence. 

The paradigm of kinetic weak turbulence can be briefly summarized as follows. We refer the reader to the classical works of Zakharov, L'vov and Falkovich~\cite{ZLF} and Nazarenko~\cite{Nazarenko}, as well as to Spohn~\cite{Spohn} for more precise information. Roughly speaking, one starts with initial data $a_K(0)=\sqrt{J_K(\omega)} \phi_K(\omega)$ where $J_K(\omega)\geq 0$ and $\phi_K(\omega)\in \mathbb{S}^1$ are random variables such that $\mathbb{E} \phi_k \overline{\phi_L} = \delta_{J=K}$: this is the {\it random phase approximation}. Then, one tries to write down an equation for the \emph{wave spectrum} $n_K(t)=\mathbb{E} |a_K|^2=\mathbb{E}J_K$. The equation of $n_K(t)$ is obtained in three steps: 1) Performing a sequence of statistical and time averages\footnote{This step is particularly far from being rigorous due to the assumption of ``preservation of chaos" i.e. preservation of the random phase approximation which is usually made in the physics literature.}, 2) taking the large-box limit $L\to \infty$ thus making $n_K(t)$ a field on $\R^2$, and 3) taking the weak non-linearity limit $\epsilon\to 0$ which has the effect of restricting to the resonant manifold in the obtained equation. The latter is called the Kolmogorov-Zakharov equation and, for \eqref{NLSe}, it often appears in the physics literature in the following form \cite{ZLF, Nazarenko}:

\begin{equation}\label{KZ}\tag{KZ}
\begin{split}
\partial_t n_K(t)= \epsilon^4 \iiint &n(K_1)n(K_2)n(K_3)n(K)\left(\frac{1}{n(K_1)}-\frac{1}{n(K_2)}+\frac{1}{n(K_3)}-\frac{1}{n(K)}\right)\\
&\delta(K_1-K_2+K_3-K)\,\delta(|K_1|^2-|K_2|^2+|K_3|^2-|K|^2)dK_1 \, dK_2\, dK_3.
\end{split}
\end{equation}

The derivation of this equation is very formal in the physics literature and a rigorous justification seems to be a difficult open question of great importance both in physics and mathematics (see however Lukkarinen and Spohn~\cite{LS} for first steps in this direction). The importance of the \eqref{KZ} equation is mostly due to the existence of four explicit power-type stationary solutions which are a manifestation of the equilibrium and out-of-equilibrium frequency dynamics of \eqref{NLSe}. Two of the those solutions can be easily seen by inspecting \eqref{KZ}: namely the \emph{thermodynamic equilibrium solution} $n_K(t)=1$ and the \emph{Raleigh-Jeans} solution $n_K(t)=|K|^{-2}$. The other two solutions are the interesting ones from the point of view of out-of-equilibrium dynamics and correspond to the constant flux of energy through scales (\emph{backward and forward cascade} spectra). Nonetheless, such constant-flux solutions can only be sustained by adding external sources and sinks of energy to the system (say at very high and low frequencies). 

There are strong similarities between weak turbulence theory and the ideas followed in the present paper: indeed, we follow the steps 2) and 3) in the above paradigm. Formal similarities are more obvious if we write \eqref{star} in the form
$$
-i \partial_t f_K (t) =  \iiint f(K_1)\overline{f(K_2)} f(K_3) \delta(K_1-K_2+K_3-K)\,\delta(|K_1|^2-|K_2|^2+|K_3|^2-|K|^2)dK_1 \, dK_2\, dK_3
$$
(in the above, we think of $K,K_1,K_2,K_3$ as continuous variables). Also notice that the exact solution $e^{iCt}|K|^{-1}$ of \eqref{star} plays the role of the Raleigh-Jeans solution $n(K)=|K|^{-2}$ of \eqref{KZ}.

The central difference is that, while we do perform the large-box and small non-linearity limit, no randomness, or averaging over statistical ensembles, is used in the present paper. As a result, the equation (CR) is Hamiltonian, time reversible, and exhibits no trend to equilibrium. This should be contrasted with (KZ), which is not Hamiltonian, and for which a version of the H-theorem applies. Another important difference is in terms of time-scales. It is obvious that the time scale of \eqref{KZ} is $O(\epsilon^{-4})$ (called the \emph{kinetic time-scale}, as opposed to the \emph{dynamic time-scale} $\epsilon^{-2}$); whereas the time scale in our case (see \eqref{pretend eqn4}) was roughly $T^*=\frac{L^2}{\epsilon^2 \log L}$ which is $\ll \epsilon^{-4}$ in the regime we are in. Clearly, an equation like \eqref{NLSe} undergoes a lot of nonlinear dynamics prior to the kinetic time scale $\epsilon^{-4}$. \eqref{KZ} seems too course to sense those intermediate time-scales, which \eqref{star} seems to capture.

Finally, we remark that it has been recognized in the wave turbulence community that some aspects of the turbulent dynamics are not captured by \eqref{KZ} especially starting with the highly-influential works of Majda, Mclaughlin, and Tabak \cite{MMT, CMMT}. Several theories were formulated to address these and other concerns \cite{Zakh3, Kartashova, KNR, Nazarenko2} and new regimes of wave turbulence emerged like \emph{mesoscopic} and \emph{discrete} wave turbulence \cite{Kartashova, KNR, Nazarenko2, ZKP, LN} in order to account for some finite-size effects that are not captured by the kinetic description mentioned above. We refer the reader to Nazarenko's modern textbook treatment in \cite{Nazarenko} for more details and references. A particularly relevant theory for our work stands out, namely that of \emph{discrete wave turbulence} (see for example \cite{Kartashova0, CNP, TY, Kartashova3, DLN, KNR, Kartashova, Kartashova2}) in which one retains the Hamiltonian nature of the equations and emphasizes the importance of the \emph{``discrete"} exactly resonant interactions on the dynamics, the same perspective and regime we adopt here. It is conceivable that the (CR) equation provides a good model for this framework.

% of nonlinear dispersive equation are best described by Hamiltonian models, see for instance~\cite{Kartashova}. It is conceivable that the (CR) equation provides a good framework for these aspects.

\section{Proof of Theorem~\ref{tri est theo}: uniform boundedness of the $\mathcal{T}_L$}\label{tri est section}

The purpose of this section is to prove Theorem~\ref{tri est theo} which is of interest in its own right. It gives a uniform bound on the operators $\mathcal{T}_L$ on the spaces $X^\sigma(\mathbb{Z}_L^2)$. 

\subsection{Sharp Strichartz and multilinear estimates}

%Recall that $X^\sigma=\langle K\rangle^{-\sigma}\ell^{\infty}(\Z^2_L)$ and that $\mathcal{T}_L$ is given by
%$$
%\mathcal{T}_L(a,b,c)(K) = \frac{C_0}{L^2 \log L} \sum_{\mathcal{R}(K)} a_{K_1} \bar b_{K_2} c_{K_3}.
%$$
%Recall the statement of Theorem~\ref{tri est theo}
%\begin{theorem*}
%For $\sigma>2$, 
%\begin{equation}\label{tri est}
%\left\| \mathcal{T}_L(a,b,c) \right\|_{X^\sigma} \lesssim \|a_K\|_{X^\sigma}\|b_K\|_{X^\sigma}\|c_K\|_{X^\sigma}.
%\end{equation}
%Moreover, the $L$-dependence in the above is sharp.
%\end{theorem*}

Estimates on resonant sums such as \eqref{tri est} are directly related to the endpoint\footnote{We call this estimate endpoint because $p=4$ is the smallest $L^p_{t,x}$ exponent in \eqref{Strichartz} where one might hope for an upper bound on $C(N)$ in \eqref{Strichartz} independent of $N$. This is indeed the case on $\R^2$ where $p=4$ is also the minimum exponent where any estimate of the form \eqref{Strichartz} can hold due to Knapp's counter-example.}
%As we argue here, $C(N)$ cannot be uniformly bounded in terms of $N$ on $\T^2$
%it corresponds to the $L^2$-restriction estimate on $\R^2$ given by 
%$$
%\|e^{it\Delta_{\R^2}}P_N\phi\|_{L^4_{t,x}(\R\times \R^2)}\lesssim \|\phi\|_{L^2(\R^2)}.
%$$
 Strichartz estimates on $\T^2$ given by
\begin{equation}\label{Strichartz}
\|e^{it\Delta_{\T^2}} P_N \phi\|_{L^4_{t,x}([0,1]\times \T^2)} \leq C(N) \|\phi\|_{L^2(\T^2)},
\end{equation}
where $P_N$ denotes frequency projection onto frequencies $\lesssim N$. To see the relation, one simply notices that:
\begin{align*}
& \|e^{it\Delta_{\T^2}}P_N\phi\|_{L^4_{t,x}([0,(2\pi)^{-1}]\times \T^2)}^4=\int_0^{(2\pi)^{-1}} \int_{\T^2}\left|\sum_{n \in \Z^2} \widehat \phi(n)e^{i(2\pi n \cdot x+4\pi^2|n|^2t)}\right|^4 dx dt\\
&\quad = \sum_{(k_1, k_2, k_3,k_4)}\int_0^{(2\pi)^{-1}}\int_{\T^2}e^{i 2\pi (k_1-k_2+k_3-k_4) \cdot x+4\pi^2i(|k_1|^2-|k_2|^2+|k_3|^2-|k|^2)t}\widehat \phi(k_1)\overline{\widehat \phi(k_2)} \widehat \phi(k_3)\overline{\widehat \phi(k)}\,dx\,dt \\
& \quad = \sum_{k\in \Z^2}\sum_{\mathcal R(k)} \widehat \phi(k_1)\overline{\widehat \phi(k_2)} \widehat \phi(k_3)\overline{\widehat \phi(k)}=\left\langle \sum_{\mathcal R(k)} \widehat \phi(k_1)\overline{\widehat \phi(k_2)} \widehat \phi(k_3)\, ,\widehat \phi(k)\right\rangle_{\ell^2\times \ell^2}.
\end{align*}

Bourgain initiated the systematic study of such estimates and proved in \cite{Bourgain} that the optimal $C(N)$ is $\lesssim_\epsilon N^\epsilon$ for any positive $\epsilon$. More precisely, using the divisor bound on integral domains and a counting argument, he shows that
\begin{equation}\label{Bourgain's bound}
C(N)\lesssim \exp(\frac{\log N}{\log \log N})\lesssim_\epsilon N^\epsilon.
\end{equation}
A lower bound on $C(N)$ is implicit in Bourgain's work \cite{Bourgain, Bourgain2} (but can be found explicitly in \cite{Kishi}), and is given by 
$$
C(N)\gtrsim (\log N)^{1/4}.
$$
This is in sharp contrast with the analogous situation when $\T^2$ is replaced by $\R^2$ in which case $C(N)$ is bounded uniformly in $N$.
The analogue of Theorem \ref{tri est theo} with $X_L^\sigma$ replaced by $\ell^{2, \sigma}_L$ is effectively equivalent (see proof of Corollary \ref{new strichartz corollary} below) to proving that $(\log N)^{1/4}$ is also an upper bound (up to a constant factor) for the optimal $C(N)$. Indeed, based on Theorem \ref{tri est theo} and Corollary \ref{new strichartz corollary} below, we conjecture the following 
\begin{conjecture}
For all $N$ sufficiently large and $\phi \in L^2(\T^2)$, it holds that:
\begin{equation}\label{conjectured strichartz}
\|e^{it\Delta_{\T^2}}P_N\phi\|_{L_{t,x}^4([0,1]\times \T^2)}\lesssim (\log N)^{1/4}\|\widehat \phi\,\|_{\ell^2(\Z^2)}.
\end{equation}
\end{conjecture}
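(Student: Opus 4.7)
The plan is to reduce the conjecture to a sharp $\ell^2$ bound on a resonant quadrilinear form by using the Fourier identity displayed immediately before the conjecture. Setting $a_k = \widehat\phi(k)$, supported in $\{|k| \leq N\}$, and noting that the $L^4_{t,x}$ norm over $[0,1]\times\T^2$ and the one over $[0,(2\pi)^{-1}]\times\T^2$ differ only by a constant, the identity in the excerpt reduces the conjecture to
\begin{equation*}
\Big| \sum_{k \in \Z^2} \sum_{(k_1,k_2,k_3) \in \mathcal R(k)} a_{k_1}\, \overline{a_{k_2}}\, a_{k_3}\, \overline{a_k}\,\Big| \;\lesssim\; (\log N)\, \|a\|_{\ell^2(\Z^2)}^4,
\end{equation*}
so the whole problem becomes to establish this single quadrilinear bound with sharp logarithmic loss.

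First I would reparametrize $(k_1,k_2,k_3)\in\mathcal R(k)$ via the rectangle description \eqref{eq:rect1}: write $k_1 = k+N_1$, $k_3 = k+N_3$, $k_2 = k+N_1+N_3$ with $N_1 \cdot N_3 = 0$. Each nonzero $N_1$ factors uniquely as $\alpha J$ with $J\in\Z^2$ visible and $\alpha \in \N^*$, and then $N_3 = \beta J^\perp$ with $\beta \in \Z^*$, exactly as in the derivation leading to \eqref{pretend eqn3} (now on $\Z^2$ itself, so there is no lattice-normalization factor). A Littlewood--Paley decomposition in the dyadic sizes of $J$, $\alpha$ and $\beta$ produces $O(\log N)$ relevant scales; one then hopes that cross-scale interactions are controlled by near-orthogonality, so that the aggregate $\log N$ arises purely from summing over scales rather than compounding across them.

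For each fixed dyadic scale I would run a $TT^*$-type reduction, turning the scale-localized trilinear operator into a bilinear lattice-counting problem: for every $\xi \in \Z^2$ and every $\mu \in \Z$ one is led to control
\begin{equation*}
S(\xi,\mu) \;\overset{def}{=}\; \#\big\{(k_1,k_2) \in (\Z^2)^2 : k_1 - k_2 = \xi,\ |k_1|^2 - |k_2|^2 = \mu,\ |k_1|,\,|k_2| \leq N\big\},
\end{equation*}
weighted against $a$ and its conjugates. Pointwise $S(\xi,\mu)$ is only controlled by the divisor function $d(\mu) = O_\epsilon(N^\epsilon)$, which is precisely what yields Bourgain's $N^\epsilon$ loss; on average over $\mu$, however, $d(\mu) \lesssim \log N$, and this average is the true source of the conjectured exponent $1/4$. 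The proof thus reduces to a weighted $\ell^2$ mean-value statement for $S(\xi,\mu)$, in the spirit of the Hardy--Littlewood $L^2$ mean for the divisor function but tailored to intersections of circles with affine lines.

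The hardest step is exactly this last weighted mean value: passing from the pointwise $N^\epsilon$ bound to the averaged $\log N$ bound while retaining uniformity in $\xi$ and in the residual localization parameters. One natural strategy would be to interpolate Theorem~\ref{tri est theo}, which already gives uniform control in the weighted $L^\infty$-type space $X^\sigma_L$, against a restricted weak-type estimate at the $\ell^1$ endpoint so as to absorb the extra log-scales; another would be to exploit additive-combinatorial structure in the level sets $|k_1|^2 - |k_2|^2 = \mu$. Either route runs into the arithmetic obstruction noted by the authors at the end of Section~\ref{tri est section}, which is why the statement appears here as a conjecture rather than a theorem: a genuinely new input beyond the divisor bound appears to be required to close the last $(\log N)^\epsilon$ gap.
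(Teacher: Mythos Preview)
The statement you are attempting to prove is labelled a \emph{Conjecture} in the paper, and the authors explicitly say that it ``remains open and is widely expected to be quite difficult.'' There is therefore no proof in the paper to compare your attempt against. Your own write-up correctly reflects this: you outline a plausible reduction (Fourier identity, rectangle parametrization, dyadic decomposition, $TT^*$ and lattice counting), but you close by acknowledging that the final mean-value step is exactly the arithmetic obstruction that keeps the conjecture open, and that ``a genuinely new input beyond the divisor bound appears to be required.'' So what you have written is not a proof proposal but an informed discussion of the difficulty, which is appropriate here.

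One small clarification: the paper does \emph{not} prove the conjecture via Theorem~\ref{tri est theo} or any interpolation thereof; rather it proves the weaker $X^\sigma_L$-based estimate \eqref{tri est} and derives from it the substitute Strichartz-type bound \eqref{new strichartz} in Corollary~\ref{new strichartz corollary}, which has the same scaling as the conjecture but a different norm on the right-hand side. Your suggestion to interpolate Theorem~\ref{tri est theo} against an $\ell^1$ endpoint would, at best, recover something like \eqref{new strichartz} rather than \eqref{conjectured strichartz}; it cannot by itself bridge the gap to the $\ell^2$ statement, since the obstruction is genuinely number-theoretic (sharp control of divisor-type sums on level sets) and not a matter of functional-analytic interpolation.
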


While this particular estimate remains open and is widely expected to be quite difficult, it is equivalent to \eqref{tri est} with $X^\sigma$ replaced by the $L^2-$based weighted spaces $\ell^{2,s}_L$ (the factor of $L^2$ in the definition \eqref{TL} of the operator $\mathcal{T}_L$ is acquired once one rescales the estimate to the torus $\T^2_L$ and the factor of $\log L$ is the effect of the constant $C(N)$ raised to the forth power). In short, the normalization factor in the definition \eqref{TL} of the operator $\mathcal{T}_L$ in the estimate \eqref{tri est} corresponds exactly to the sharp constant of this Strichartz estimate. 

Nonetheless, Theorem \ref{tri est theo} implies the following alternative to \eqref{conjectured strichartz}:
\begin{corollary}\label{new strichartz corollary}
For $N$ sufficiently large and $\phi$ a function on $\T^2$, the following estimate holds:
\begin{equation}\label{new strichartz}
\|e^{it\Delta_{\T^2}}P_N\phi\|_{L_{t,x}^4([0,1]\times \T^2)}\lesssim (\log N)^{1/4}N^{1/2}\|\widehat \phi\,\|_{\ell^\infty}^{3/4}\|\widehat \phi\,\|_{\ell^1}^{1/4}
\end{equation}
Moreover, this estimate is sharp in its dependence on $N$.
\end{corollary}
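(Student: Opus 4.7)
My plan is to reduce~\eqref{new strichartz} to Theorem~\ref{tri est theo} via a simple H\"older pairing followed by a rescaling that places the relevant frequency scale at $|K|\sim 1$ on a torus $\T^2_L$ with $L=N$. The identity derived just before the corollary statement (extended from $[0,(2\pi)^{-1}]$ to $[0,1]$ by periodicity at the cost of a harmless constant) gives
$$\|e^{it\Delta_{\T^2}}P_N\phi\|_{L^4_{t,x}([0,1]\times\T^2)}^4 \;\sim\; \sum_{k\in\Z^2} A_k\,\overline{\widehat\phi(k)},\qquad A_k := \sum_{\mathcal R(k)}\widehat\phi(k_1)\overline{\widehat\phi(k_2)}\widehat\phi(k_3),$$
with $\mathcal R(k)\subset(\Z^2)^3$ the resonant set on $\Z^2$. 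Pairing $\ell^\infty$--$\ell^1$ reduces the corollary to
\begin{equation}\label{Abound-plan}
\|A\|_{\ell^\infty(\Z^2)}\;\lesssim\;N^2\log N\,\|\widehat\phi\|_{\ell^\infty(\Z^2)}^3.
\end{equation}

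To establish~\eqref{Abound-plan}, set $L=N$ and define $a_K:=\widehat\phi(LK)$ on $\Z^2_L=L^{-1}\Z^2$; since $\widehat\phi$ is supported in $\{|k|\leq N\}$, $a_K$ is supported in $\{|K|\leq 1\}$. Under the bijection $K_i\leftrightarrow LK_i$ both constraints $K_1-K_2+K_3=K$ and $|K_1|^2-|K_2|^2+|K_3|^2=|K|^2$ scale homogeneously, so $\mathcal R_L(K)=L^{-1}\mathcal R(LK)$ and
$$\mathcal T_L(a,\overline{a},a)(K)\;=\;\frac{\zeta(2)}{2L^2\log L}\sum_{\mathcal R_L(K)}a_{K_1}\overline{a_{K_2}}a_{K_3}\;=\;\frac{\zeta(2)}{2N^2\log N}\,A_{NK}.$$
Fix $\sigma>2$. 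The output $\mathcal T_L(a,\overline a,a)$ is supported in $\{|K|\leq 3\}$ (since $K=K_1-K_2+K_3$ with each $|K_i|\leq 1$), so $\langle K\rangle^\sigma\sim 1$ on both the support of $a$ and of $\mathcal T_L(a,\overline a,a)$. Consequently
$$\|a\|_{X^\sigma_N}\sim\|\widehat\phi\|_{\ell^\infty(\Z^2)},\qquad \|\mathcal T_L(a,\overline a,a)\|_{X^\sigma_N}\gtrsim\frac{1}{N^2\log N}\,\|A\|_{\ell^\infty(\Z^2)},$$
and Theorem~\ref{tri est theo} yields~\eqref{Abound-plan}. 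Taking fourth roots produces~\eqref{new strichartz}.

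For sharpness, test the estimate with $\widehat\phi=\mathbf{1}_{\{|k|\leq N\}}$, so the right-hand side is of size $(\log N)^{1/4}N$. The fourth power of the left-hand side equals the number of resonant quadruples inside $\{|k|\leq N\}$. Parametrising each such quadruple by a base frequency $k_2$ together with an orthogonal pair $(m,n)\in\Z^2\times\Z^2$ (setting $k_1=k_2+m$, $k_3=k_2+n$, $k_4=k_2+m+n$; the Pythagorean condition reduces exactly to $m\perp n$) and restricting to $|k_2|\leq N/4$ and $|m|,|n|\leq N/4$ to guarantee that all four frequencies lie in the ball, the count is bounded below by
$$\#\{k_2:|k_2|\leq N/4\}\cdot\#\{(m,n)\in(\Z^2)^2:m\perp n,\ |m|,|n|\leq N/4\}\sim N^2\cdot N^2\log N=N^4\log N,$$
the second factor arising from a standard summation over primitive directions of the type used elsewhere in the paper (the coprime equidistribution of Section~\ref{T_L to T section}). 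Hence $\|e^{it\Delta}P_N\phi\|_4\gtrsim(\log N)^{1/4}N$, matching the right-hand side and ruling out any improvement in the $N$-dependence.

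The main obstacle is contained entirely in Theorem~\ref{tri est theo}; the present corollary is really a transcription of that sharp trilinear bound into a Strichartz-type inequality with $\ell^\infty$ and $\ell^1$ norms in place of $\ell^2$. The only mild subtlety is that the rescaling $L=N$ is tuned precisely so that the $L^2\log L$ normalisation of $\mathcal T_L$ aligns with the target $N^2\log N$ factor in~\eqref{Abound-plan}, while the $\langle K\rangle^\sigma$ weights become harmless constants on the bounded supports in play.
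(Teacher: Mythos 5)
Your proposal is correct and follows essentially the same route as the paper: rescale $\widehat\phi$ to a sequence $a_K=\widehat\phi(NK)$ on $\Z^2_N$ supported in a ball of bounded radius (so the $\langle K\rangle^{\sigma}$ weights are harmless), apply the sharp bound of Theorem~\ref{tri est theo}, and pair with $\|\widehat\phi\|_{\ell^1}$ to close the estimate. The only difference is cosmetic: for sharpness the paper simply cites Kishimoto's example (or Theorem~\ref{disctocont}), whereas you carry out the same test function $\widehat\phi=\mathbf{1}_{\{|k|\leq N\}}$ and its rectangle count $\sim N^4\log N$ explicitly, which is a correct and self-contained verification.
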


%{\color{red}While one cannot strictly compare the two estimates \eqref{new strichartz} and \eqref{conjectured strichartz}, it's worth mentioning that \eqref{new strichartz} replaces three out of the four copies of $\|\widehat \phi \,\|_{\ell^2}^{1/4}$ on the right hand side of \eqref{conjectured strichartz} with the weaker norm $\|\widehat \phi\,\|_{\ell^\infty}^{1/4}$ and one copy with the stronger norm $\|\widehat \phi \,\|_{\ell^1}^{1/4}$.}
The interest of \eqref{new strichartz} as opposed to other variants of \eqref{conjectured strichartz} (like the one obtained from Bourgain's bound \eqref{Bourgain's bound} for $C(N)$) is that it is scales exactly like the conjectured optimal estimate \eqref{conjectured strichartz}\footnote{In the sense that it translates into an estimate on $\T^2_L$ of the same form with $\ell^p$ replaced by $\ell^p_L$ and $N$ replaced by $LN$.}. A quick way to see this is to notice that \eqref{new strichartz} would follow from \eqref{conjectured strichartz} using the following scale-invariant estimates $\|\widehat{P_{N}\phi}\|_{\ell^2}\lesssim N^{1/2}\|\widehat \phi\|_{\ell^{4}}\lesssim N^{1/2}\|\widehat \phi\|_{\ell^1}^{1/4}\|\widehat \phi\|_{\ell^\infty}^{3/4}$. This scale-invariance turns out to be crucial for our purposes. In fact, the analysis in Section \ref{proof of approx theorem section} vitally requires us to know the sharp dependence on $L$ in Theorem \ref{tri est theo} to extent that any less refined analysis that slightly misses the right dependence (namely $L^2 \log L$ appearing in our normalization of $\mathcal T_L$ in \eqref{TL}), such as that given in Proposition \ref{R_mu sum prop} (which yields the dependence $L^{2+\epsilon}$ for any $\epsilon$), is completely useless.

\begin{proof}[Proof of Corollary \ref{new strichartz corollary}] Without loss of generality, we assume that $\widehat \phi$ is supported in the ball $B(0, 2N)$. As we noticed before, 
\begin{align*}
\|e^{it\Delta_{\T^2}}P_N\phi\|_{L^4_{t,x}([0,(2\pi)^{-1}]\times \T^2)}^4=&\int_0^{(2\pi)^{-1}} \int_{\T^2}\left|\sum_{n \in \Z^2} \widehat \phi(n)e^{i2\pi (n\cdot x+2\pi|n|^2t)}\right|^4 dx \, dt\\
=&\sum_{k\in \Z^2}\sum_{\mathcal R(k)} \widehat \phi(k_1)\overline{\widehat \phi(k_2)} \widehat \phi(k_3)\overline{\widehat \phi(k)}
=\sum_{K \in \Z^2_N}\sum_{\mathcal R(K)} a_{K_1} \overline{a_{K_2}} a_{K_3}\overline{a_{K}}
\end{align*}
where $a_K=\widehat \phi(NK)$ for any $K\in \Z^2_N$. Corollary \ref{new strichartz corollary} now follows by applying Theorem \ref{tri est theo} to the sequence $\{a_K\}$ and noting that since it is supported in $B(0,2)$ we have that $\|a_K\|_{X_N^\sigma}\sim \|\widehat \phi\|_{\ell^\infty}$ and $\sum_{K}\langle K \rangle^{-\sigma} a_K\sim \sum_K a_K=\|\widehat \phi\|_{\ell^1}$. The claim of sharpness follows either from the same example as that in \cite{Kishi} or by simply noticing it as a consequence to Theorem \ref{disctocont} proved in the next section. %, which shows due to the same construction in this proof that the dependence on $L$ in Theorem \ref{tri est theo} is sharp as well.
\end{proof}

We now proceed with the proof of Theorem \ref{tri est theo}. The sharpness of the estimate follows by the same argument just given above. 

\subsection{Proof of Theorem \ref{tri est theo}}
Without loss of generality, we assume that $\|a_K\|_{X^\sigma_L}=\|b_K\|_{X^\sigma_L}=\|c_K\|_{X^\sigma_L}=1$. We first parameterize $\mathcal R(K)$ as in \eqref{eq:rect1} and write
\begin{align*}
\sum_{\RR(K)}a_{K_1} b_{K_2} & c_{K_3}=\sum_{\substack{N_1, N_3 \in \Z^2_L\\ N_1\cdot N_3=0}} a_{K+N_1} b_{K+N_1+N_3} c_{K+N_3}\\
=& a_{K}\sum_{\substack{N \in \Z^2_L}}  b_{N} c_{N}+c_{K}\sum_{\substack{N \in \Z^2_L}}  a_N b_{N} -a_Kb_K c_K+\sum_{\substack{N_1, N_3 \in (\Z^2_L)^*\\ N_1\cdot N_3=0}} a_{K+N_1} b_{K+N_1+N_3} c_{K+N_3}.\\
\end{align*}
The estimate on the first three terms above is a triviality, so we only focus on proving that
\begin{align*}
\left\|\sum_{\substack{N_1, N_3 \in (\Z^2_L)^*\\ N_1\cdot N_3=0}} a_{K+N_1} b_{K+N_1+N_3} c_{K+N_3}\right\|_{X^\sigma_L}\lesssim L^2 \log L.
\end{align*}
Now using the parametrization \eqref{pretend eqn3} of the set $\{N_1, N_3 \in {\Z^2_L}^*:N_1 \cdot N_3=0\}$, we write the above sum as
$$
\sum_{\substack{N_1, N_3 \in (\Z^2_L)^*\\ N_1. N_3=0}} a_{K+N_1} b_{K+N_1+N_3} c_{K+N_3}=\sum_{\substack{\alpha\geq 1\\ \beta \in \Z^*}}\sum_{J\in \Z^2_L \, visible} a_{K+\alpha J} b_{K+\alpha J+\beta J^\perp} c_{K+\beta J^\perp}.
$$

Next we split the sum in $\alpha$ and $\beta$ into four pieces $\{1\leq \alpha \leq \beta\}, \{1\leq \alpha \leq -\beta\}, \{1\leq \beta\leq \alpha\}$ and $\{1\leq -\beta \leq \alpha\}$. Using the symmetries in the above sum (like $J\leftrightarrow -J$, $J\leftrightarrow J^\perp$), it is sufficient to prove the estimate for the first piece, namely:
\begin{equation*}
\left\|\sum_{\substack{1\leq \alpha\leq \beta}}\sum_{J\in \Z^2_L \, visible} a_{K+\alpha J} b_{K+\alpha J+\beta J^\perp} c_{K+\beta J^\perp}\right\|_{X^\sigma_L}\lesssim L^2 \log L.
\end{equation*}
This is equivalent to showing that for any $\sigma>2$ and any $K \in \Z^2_L$
\begin{equation}\label{aim tri}
\langle K \rangle^{\sigma}\sum_{\substack{1\leq \alpha\leq \beta}}\sum_{J\in \Z^2_L \, visible} \langle K+\alpha J\rangle^{-\sigma} \langle K+\alpha J+\beta J^\perp\rangle^{-\sigma} \langle K+\beta J^\perp\rangle^{-\sigma}\lesssim L^2 \log L.
\end{equation}

Recall the notation $\BB(m)=[-\frac{m}{2},\frac{m}{2})^2$. If $\gamma\in \N$, then any $N \in \Z^2_L$ can be written as $r_\gamma+\gamma Q_\gamma$ for some $r_\gamma\in \BB(\gamma/L) \cap \mathbb{Z}_L^2$ and $Q_\gamma \in \Z^2_L$.  This allows to write for any $\alpha, \beta$ of the sum \eqref{aim tri}
\begin{equation}
\label{eq:rQ}
K=r_\alpha+\alpha Q_\alpha ;\qquad K=r_\beta+\beta Q_{\beta}^\perp. 
\end{equation}
Note that we have $|r_\alpha| \leq \frac{\alpha}{\sqrt{2}L}$ and $|r_\beta| \leq \frac{\beta}{\sqrt{2}L}$. The proof of \eqref{aim tri} turns out to be particularly sensitive to two values of $J$ that we treat independently, namely $J=-Q_\alpha$ and $J=-Q_\beta$. So we denote by $S\overset{def}{=}\{ -Q_\alpha, -Q_\beta\}$, and we split the sum in $J$ into two pieces 
$$
\sum_{J\in \Z^2\, visible}=\sum_{J \notin S\; visible}+\sum_{J \in S\; visible} .
$$
and treat each case independently. We start with the first case.
\medskip 

\noindent
\textbf{Case 1: Contribution of the $J \notin S$} Here we prove that
\begin{equation}\label{aim tri1}
\langle K \rangle^{\sigma}\sum_{\substack{1\leq \alpha\leq \beta}}\sum_{J\in \Z^2_L\setminus S \, visible} \langle K+\alpha J\rangle^{-\sigma} \langle K+\alpha J+\beta J^\perp\rangle^{-\sigma} \langle K+\beta J^\perp\rangle^{-\sigma}\lesssim L^2 \log L.
\end{equation}

We further split the analysis into two cases: $|K|\leq 100$ and $|K|\geq 100$.

\medskip
\noindent
\underline {Subcase 1(a): $|K|\leq 100$.} Here we only need to show that
\begin{equation}\label{aim of subcase1}
\sum_{\substack{1\leq \alpha\leq \beta}}\sum_{\substack{J\in \Z^2_L\setminus S \\ visible}} \langle K+\alpha J\rangle^{-\sigma} \langle K+\alpha J+\beta J^\perp\rangle^{-\sigma} \langle K+\beta J^\perp\rangle^{-\sigma}\lesssim L^2 \log L. 
\end{equation}
This follows from the estimate
$$
\sum_{\substack{1\leq \alpha\leq \beta}}\sum_{\substack{J\in \Z^2_L\setminus S \\ visible}} \langle K+\alpha J+\beta J^\perp\rangle^{-\sigma} \langle K+\beta J^\perp\rangle^{-\sigma}\lesssim L^2 \log L,
$$
which is proved in Lemma \ref{2birds}. To prove Lemma \ref{2birds}, we need the following elementary inequality, which we isolate in a lemma since we will use it often:
\begin{lemma}\label{elementary}
Let $0\leq \alpha \leq \beta$ be non-negative integers with $\beta$ positive, and $\sigma>2$. Let $r\in \R^2$ be such that $|r|\leq \frac{3}{4}\frac{\sqrt{\alpha^2+\beta^2}}{L}$. Then the following estimate holds: 
\begin{equation}\label{elem1}
\sum_{J \in (\Z^2_L)^*} \langle r+\alpha J +\beta J^\perp\rangle^{-\sigma} \lesssim \min(\frac{L^2}{\beta^2}, \frac{L^\sigma}{\beta^\sigma}) = \begin{cases}\frac{L^2}{\beta^2} \quad \text{if } \beta \leq L,\\[1ex]
											\frac{L^{\sigma}}{\beta^{\sigma}} \quad \text{if } \beta \geq L.
				   \end{cases}
\end{equation}
\end{lemma}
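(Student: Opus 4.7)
The key structural observation is that the map $J\mapsto \alpha J+\beta J^\perp$, written in complex notation with $\R^2\cong\C$, is nothing but multiplication by $\alpha+i\beta$. Thus the image of $\Z^2_L=L^{-1}\Z[i]$ under this map is the lattice
\[
\Lambda \overset{def}{=} \tfrac{1}{L}(\alpha+i\beta)\,\Z[i],
\]
which is a rotation of $\frac{\sqrt{\alpha^2+\beta^2}}{L}\Z^2$. In particular, $\Lambda$ has covolume $d^2$ and minimum nonzero distance $d$, where
\[
d \overset{def}{=} \frac{\sqrt{\alpha^2+\beta^2}}{L}, \qquad \text{so that } \frac{\beta}{L}\le d\le \frac{\sqrt{2}\,\beta}{L}
\]
using $0\le \alpha\le \beta$. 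The sum to be bounded is then just $\sum_{Q\in\Lambda\setminus\{0\}}\langle r+Q\rangle^{-\sigma}$.

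The hypothesis $|r|\le \tfrac{3d}{4}$ is precisely what is needed to keep $r$ strictly inside the fundamental cell at the origin: for any $Q\in\Lambda\setminus\{0\}$, one has
\[
|r+Q|\ge |Q|-|r|\ge d-\tfrac{3d}{4}=\tfrac{d}{4}.
\]
Thus I would group $\Lambda\setminus\{0\}$ into dyadic or integer annular shells around the origin: writing $A_n=\{Q\in\Lambda:\ nd\le |Q|<(n+1)d\}$ for $n\ge 1$, the standard lattice-point count gives $\#A_n\lesssim n$ (the annulus has area $\sim n d^2$ and $\Lambda$ has covolume $d^2$). Combined with the lower bound $|r+Q|\gtrsim nd$ on $A_n$, this yields
\[
\sum_{Q\in\Lambda\setminus\{0\}}\langle r+Q\rangle^{-\sigma}\ \lesssim\ \sum_{n\ge 1} n\,\langle nd\rangle^{-\sigma}.
\]

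The two regimes in the claimed bound then come from evaluating this elementary series in the two cases. If $d\ge 1$ (equivalently $\beta\ge L$), then $\langle nd\rangle\sim nd$ for all $n\ge 1$, and
\[
\sum_{n\ge 1} n (nd)^{-\sigma} = d^{-\sigma}\sum_{n\ge 1} n^{1-\sigma}\ \lesssim\ d^{-\sigma}\ \sim\ \frac{L^\sigma}{\beta^\sigma},
\]
which converges since $\sigma>2$. If $d\le 1$ (equivalently $\beta\le L$), I split at the natural threshold $n\sim 1/d$: for $n\le 1/d$ one has $\langle nd\rangle\sim 1$ giving a contribution $\lesssim \sum_{n\le 1/d} n \lesssim d^{-2}$, while for $n>1/d$ one recovers the previous geometric-type tail bounded by $d^{-\sigma}\sum_{n>1/d} n^{1-\sigma}\lesssim d^{-2}$. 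Altogether the sum is $\lesssim d^{-2}\sim L^2/\beta^2$. The two estimates combine into the stated $\min(L^2/\beta^2,L^\sigma/\beta^\sigma)$.

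There is no serious obstacle; the only care needed is the identification of the image lattice (to get both the covolume $d^2$ and the minimum distance $d$, which together make the bound $\#A_n\lesssim n$ valid uniformly) and the use of $|r|\le \tfrac{3d}{4}$ to guarantee that the summand never blows up. The condition $\sigma>2$ is used exactly to ensure convergence of both series in the two regimes.
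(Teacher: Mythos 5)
Your proof is correct and follows essentially the same route as the paper: both arguments rest on the fact that $|\alpha J+\beta J^\perp|=\sqrt{\alpha^2+\beta^2}\,|J|\gtrsim \frac{\beta}{L}|j|$ (your multiplication-by-$\alpha+i\beta$ picture is just this identity), that the hypothesis $|r|\le \tfrac34\frac{\sqrt{\alpha^2+\beta^2}}{L}$ only removes a fixed fraction of that lower bound, and then on a two-regime evaluation of the resulting lattice sum using $\sigma>2$. Your annular-shell count $\#A_n\lesssim n$ is simply an explicit version of the counting implicit in the paper's direct estimate of $\sum_{j\in(\Z^2)^*}\langle \beta|j|/(4L)\rangle^{-\sigma}$, so the two proofs are interchangeable.
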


\begin{proof}
In the sum \eqref{elem1}, $J$ runs in $(\mathbb{Z}_L^2)^*$ and can be written $j/L$ for some $j \in \Z^2$. 
Now we have $|r+\alpha J +\beta J^\perp| \geq \sqrt{\alpha^2 + \beta^2} |J| - |r| \geq \frac{1}{4L}\sqrt{\alpha^2 + \beta^2} |j| \geq \frac{\beta}{4L} |j| $. 
Hence the previous sum can always be estimated  by 
$$
 \sum_{j \in (\Z^2)^*} \left\langle \frac{\beta|j|}{4L} \right\rangle^{-\sigma}\lesssim
 \begin{cases}
   \frac{L^\sigma}{\beta^\sigma} \big(\sum_{j \in (\Z^2)^*} \frac{1}{|j|^\sigma}\big)\lesssim \frac{L^\sigma}{\beta^\sigma} \text{ if $\beta \geq L$},\\
  \frac{L^2}{\beta^2} +\frac{L^\sigma}{\beta^\sigma} \big(\sum_{\substack{j \in (\Z^2)^*\\|j|\geq L\beta^{-1}}} \frac{1}{|j|^\sigma}\big)\lesssim \frac{L^2}{\beta^2} \text{ if $\beta \leq L$},
      \end{cases} 
$$
as $\sigma > 2$. 
\end{proof}

\begin{remark}
\label{beta<L1}
Note that when $\beta \leq L$, the previous estimate by $L^2/\beta^2$ is still valid when the sum \eqref{elem1} is taken over all the point of lattice, including the origin $J = (0,0)$. 
\end{remark}
Now \eqref{aim of subcase1} follows from the following lemma.
\begin{lemma}\label{2birds}
For $L\geq 5$ and any $K \in \Z_L^2$, there holds 
\begin{equation}
\label{firstlog}
\sum_{\substack{1\leq \alpha\leq \beta}}\sum_{\substack{J\in \Z^2_L\setminus S \\ visible}} \langle K+\alpha J+\beta J^\perp\rangle^{-\sigma} \langle K+\beta J^\perp\rangle^{-\sigma}\lesssim L^2 \log L.
\end{equation}
\end{lemma}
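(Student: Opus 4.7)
The plan is to bound one of the two weights trivially by $1$, reducing to a single-weight sum, and then translate the summation variable so that Lemma~\ref{elementary} applies. The key observation that makes the approach work is that the weight $\langle K+\beta J^\perp\rangle^{-\sigma}$ alone is already concentrated enough to give $L^2\log L$ after summing in $\alpha$ and $\beta$, so the second weight is a luxury we can afford to discard.

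First, I would observe that for fixed $1\le \alpha\le\beta$ the exclusion $J\notin S$ implies in particular $J\ne -Q_\beta$, so that estimating $\langle K+\alpha J+\beta J^\perp\rangle^{-\sigma}\le 1$ and enlarging the summation set (by dropping the visibility constraint) yields
$$
\sum_{\substack{J\in \Z_L^2\setminus S\\ J\text{ visible}}} \langle K+\alpha J+\beta J^\perp\rangle^{-\sigma}\,\langle K+\beta J^\perp\rangle^{-\sigma} \;\le\; \sum_{\substack{J\in \Z_L^2\\ J\ne -Q_\beta}} \langle K+\beta J^\perp\rangle^{-\sigma}.
$$
Using $K=r_\beta+\beta Q_\beta^\perp$ and the change of variables $J':=J+Q_\beta\in(\Z_L^2)^*$, the right-hand side becomes $\sum_{J'\in(\Z_L^2)^*}\langle r_\beta+\beta (J')^\perp\rangle^{-\sigma}$. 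Since $|r_\beta|\le \beta/(\sqrt{2}L)\le \tfrac34\beta/L$, Lemma~\ref{elementary} applies (with the first coefficient in its statement taken to be $0$), producing the bound $\min(L^2/\beta^2,\,L^\sigma/\beta^\sigma)$, uniform in $\alpha$ and $K$.

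Finally, summing over $1\le \alpha\le \beta$ contributes a factor of $\beta$, and splitting the $\beta$-sum at $\beta=L$ gives
$$
\sum_{\beta=1}^{L}\frac{L^2}{\beta}\;+\;\sum_{\beta>L}\frac{L^\sigma}{\beta^{\sigma-1}}\;\lesssim\; L^2\log L+L^2\;\lesssim\; L^2\log L,
$$
where the tail sum converges precisely because $\sigma>2$. The logarithmic factor is produced by the harmonic sum in the regime $\beta\le L$, and this matches exactly the target of the lemma. The subtle point that must not be glossed over is the role of the exclusion $J=-Q_\beta$ in $S$: without it, the missing term would contribute $\langle r_\beta\rangle^{-\sigma}$ of order $1$ for each $\alpha$-slice in the regime $\beta>L$ (since $|r_\beta|$ may be as small as $0$), and then $\sum_{\beta>L}\beta$ would diverge. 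The exclusion $J=-Q_\alpha$ is not used in the present reduction but enters symmetrically elsewhere in the argument (in the contribution of $J\in S$ treated separately in Case~2).
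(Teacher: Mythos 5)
Your proof is correct and follows essentially the same route as the paper: drop the factor $\langle K+\alpha J+\beta J^\perp\rangle^{-\sigma}$, use $K=r_\beta+\beta Q_\beta^\perp$ with $J\neq -Q_\beta$ to apply Lemma~\ref{elementary} with first coefficient $0$, then sum the resulting $\min(L^2/\beta^2,L^\sigma/\beta^\sigma)$ over $1\le\alpha\le\beta$ and split at $\beta=L$. Your closing remarks on why the exclusion $J=-Q_\beta$ is the one that matters here (and $J=-Q_\alpha$ only elsewhere) are accurate as well.
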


\begin{proof}[Proof of Lemma \ref{2birds}] 
Since $J \neq -Q_\beta$, then by Lemma \ref{elementary} with $\alpha=0$, we have that
$$
\sum_{J\in \Z^2_L\setminus S \, visible} \langle K+\alpha J+\beta J^\perp\rangle^{-\sigma} \langle K+\beta J^\perp\rangle^{-\sigma}\lesssim \min(\frac{L^2}{\beta^2}, \frac{L^\sigma}{\beta^\sigma})
$$
from which we deduce that the sum in \eqref{firstlog} can be estimated up to a constant by 
$$
\sum_{\substack{ 1 \leq \alpha \leq \beta \\Ê\beta \leq L}} \frac{L^2}{\beta^2}
+
\sum_{\substack {1 \leq \alpha \leq \beta \\Ê\beta \geq L}} \frac{L^\sigma}{\beta^\sigma} 
\leq L^2 \sum_{1 \leq \beta  \leq L}\frac{1}{\beta}
+
L\sum_{ Ê\beta \geq L} \Big(\frac{L}{\beta} \Big)^{\sigma-1}\lesssim L^2 \log L+L^2,
$$
from which we easily deduce the result. 
\begin{remark}\label{beta>L1}
Notice from the previous equation that if we restrict the sum in \eqref{aim of subcase1} to $\beta\geq L$, then we can estimate it by $L^2$ rather than $L^2\log L$.
\end{remark}
\black

\noindent
\medskip
\underline{Subcase 1(b): $|K|\geq 100$.} We start by noting that we can restrict the sum in $J$ to the case $|K +\alpha J|\leq \frac{|K|}{100}$. In fact, if $|K +\alpha J|\geq \frac{|K|}{100}$, then estimate \eqref{aim tri} follows directly from Lemma \ref{2birds} above. 

The restriction $|K+\alpha J|\leq \frac{|K|}{100}$ has several implications that we enumerate below:
\begin{enumerate}
\item First we notice that $|\alpha J|\geq \frac{99}{100}|K|$ and consequently $|\beta J^\perp|\geq \frac{99}{100}|K|$ since $\beta\geq \alpha$.
\item From the orthogonality relation 
$$
|K|^2+|K+\alpha J +\beta J^\perp|^2=|K+\alpha J|^2+|K+\beta J^\perp|^2
$$
we conclude that $|K+\beta J^\perp|\geq \frac{99}{100}|K|$. Consequently, we also have that
$$
|K+\alpha J +\beta J^\perp|\geq |\beta J^\perp|-|K+\alpha J|\geq \frac{98}{100}|K|.
$$
\item Since $|K+\alpha J|\leq \frac{1}{100}|K|$, then $\alpha J \in B(-K, \frac{|K|}{100})$ and hence $\beta J^\perp \in B(-\frac{\beta}{\alpha}K^\perp, \frac{\beta}{100\alpha}|K|)$. So $K+\beta J^\perp\in B(K-\frac{\beta}{\alpha}K^\perp, \frac{\beta}{100\alpha}|K|)$. But $|K-\frac{\beta}{\alpha}K^\perp|\geq \frac{\beta}{\alpha}|K|$, so we finally deduce that in fact we have:
$$
|K+\beta J^\perp|\geq \frac{99}{100}\frac{\beta}{\alpha}|K|.
$$
\end{enumerate}
As a consequence of this analysis, \eqref{aim tri} follows once we show that 
\begin{equation}\label{aim2 tri}
\sum_{1 \leq \alpha \leq \beta} \sum_{\substack{J \in \Z^2_L \setminus S\\ J \, visible}} \langle K+\alpha J\rangle^{-\sigma}\langle K+\beta J^\perp\rangle^{-\sigma}\lesssim L^2 \log L.
\end{equation}
For this we use the third point above to estimate as follows:
\begin{multline}
\label{wwl1}
\sum_{1 \leq \alpha \leq \beta} \sum_{\substack{J \in \Z^2_L \setminus S\\ J \, visible}} \langle K+\alpha J\rangle^{-\sigma}\langle   K+\beta J^\perp\rangle^{-\sigma}\\
\lesssim
 \sum_{1 \leq \alpha \leq \beta} \left(\sum_{\substack{J \in \Z^2_L \setminus S\\ J \, visible}} \langle K+\alpha J\rangle^{-2\sigma}\right)^{1/2} \left(\sum_{\substack{J \in \Z^2_L \setminus S\\ |K+\beta J^\perp|\geq \frac{99}{100}\frac{\beta}{\alpha}|K|}} \langle K+\beta J^\perp\rangle^{-2\sigma}\right)^{1/2}\\
\lesssim  \sum_{1 \leq \alpha \leq \beta} \frac{L}{\alpha}\left(\sum_{\substack{J \in \Z^2_L \setminus S\\ |K+\beta J^\perp|\geq \frac{99}{100}\frac{\beta}{\alpha}|K|}} \langle K+\beta J^\perp\rangle^{-2\sigma}\right)^{1/2}, 
\end{multline}
by Lemma \ref{elementary} and the fact that $J \neq -Q_\alpha$. The estimate on the term in parentheses above should be handled with some extra care. Recalling that $K=r_\beta+\beta Q_\beta^{\perp}$ with $|r_\beta|\leq \frac{\beta}{\sqrt 2 L}$ and $J \neq -Q_\beta$, we have
\begin{align*}
\sum_{\substack{J \in \Z^2_L \setminus S\\ |K+\beta J^\perp|\geq \frac{99}{100}\frac{\beta}{\alpha}|K|}} \langle K+\beta J^\perp\rangle^{-2\sigma}\leq \sum_{\substack{J \in (\Z^2_L)^* \\ |r_\beta+\beta J^\perp|\geq \frac{99}{100}\frac{\beta}{\alpha}|K|}} \langle r_\beta+\beta J^\perp\rangle^{-2\sigma}\lesssim \sum_{\substack{J \in (\Z^2_L)^* \\ |\beta J^\perp|\geq \frac{99}{200}\frac{\beta}{\alpha}|K|}} |\beta J|^{-2\sigma}
\end{align*}
since $|\beta J|=|\beta J^\perp|\geq \frac{1}{2} |r+\beta J^\perp|$. As a consequence, we get that:
\begin{align*}
\sum_{\substack{J \in \Z^2_L \setminus S\\ |K+\beta J^\perp|\geq \frac{99}{100}\frac{\beta}{\alpha}|K|}} \langle K+\beta J^\perp\rangle^{-2\sigma}&\lesssim \beta^{-2\sigma}\sum_{\substack{J \in \Z^2_L \setminus \{0\}\\ |J^\perp|\geq \frac{99}{200\alpha}|K|}} |J|^{-2\sigma} \\
&\lesssim \begin{cases} L^{2\sigma}\beta^{-2\sigma} \quad &\text{if }\alpha\geq 100L|K|,\\
L^2 \beta^{-2\sigma}(\frac{|K|}{\alpha})^{-2\sigma+2} \quad &\text{if }\alpha\leq 100L|K|.						\end{cases}
\end{align*}
Going back to \eqref{wwl1}, we get that
\begin{align*}
\sum_{1 \leq \alpha \leq \beta} \sum_{\substack{J \in \Z^2_L \setminus S\\ J \, visible}} \langle K+\alpha J\rangle^{-\sigma}\langle K+\beta J^\perp\rangle^{-\sigma}\lesssim \sum_{\substack{1 \leq \alpha \leq \beta\\ \alpha \leq 100L|K|}} \frac{L^2\alpha^{\sigma-2}}{\beta^{\sigma}|K|^{\sigma-1}} +\sum_{\substack{100L|K| \leq \alpha \leq \beta\\ }} \frac{L^{\sigma+1}}{\alpha\beta^\sigma} \\
\lesssim \sum_{\substack{1 \leq \alpha \leq 100L|K|}} \frac{L^2}{\alpha |K|^{\sigma-1}}+ \sum_{\substack{100L|K| \leq \alpha\\ }} \frac{L^{\sigma+1}}{\alpha^\sigma}  \lesssim \frac{L^2\log (L|K|)}{|K|^{\sigma-1}}\lesssim L^2 \log L.
\end{align*}
which finishes the proof of this case.

\begin{remark}\label{beta>L2}
Repeating this last estimate with the additional restriction that $\beta\geq L$ we get the improved bound
\begin{align*}
\sum_{\substack{1 \leq \alpha \leq \beta\\ \beta\geq L}} \sum_{\substack{J \in \Z^2_L \setminus S\\ J visible}} \langle K+\alpha J\rangle^{-\sigma}\langle K+\beta J^\perp\rangle^{-\sigma}\lesssim \sum_{\substack{1 \leq \alpha \leq \beta\\ \alpha \leq 100L|K|\;;\; \beta\geq L}} \frac{L^2\alpha^{\sigma-2}}{\beta^{\sigma}|K|^{\sigma-1}} +\sum_{\substack{100L|K| \leq \alpha \leq \beta\\ }} \frac{L^{\sigma+1}}{\alpha\beta^\sigma} \\
\lesssim \sum_{\substack{1 \leq \alpha \leq 100L|K|}} \frac{L^2\alpha^{\sigma-2}}{\max(\alpha, L)^{\sigma-1} |K|^{\sigma-1}}+ \sum_{\substack{100L|K| \leq \alpha\\ }} \frac{L^{\sigma+1}}{\alpha^\sigma} \lesssim L^2 \frac{\log |K|}{|K|^{\sigma-1}}\lesssim L^2.
\end{align*}

This, along with Remark \ref{beta>L1}, shows that if we restrict the $\beta$ sum in \eqref{aim tri1} to $\beta\geq L$, we also have the better estimate
\begin{equation}\label{beta>Lest1}
\langle K \rangle^{\sigma}\sum_{\substack{1\leq \alpha\leq \beta\\\beta\geq L}}\sum_{\substack{J\in \Z^2_L\setminus S \\ visible}} \langle K+\alpha J\rangle^{-\sigma} \langle K+\alpha J+\beta J^\perp\rangle^{-\sigma} \langle K+\beta J^\perp\rangle^{-\sigma}\lesssim L^2.
\end{equation}
This will be useful for us later.
\end{remark}

\noindent
\textbf{Case 2: Contribution of $J \in S$}.
We now deal with the case when $J$ is equal to one of the forbidden values in $S\overset{def}{=}\{-Q_\alpha, -Q_\beta\}$. Recall that since $J$ is visible, we only consider the cases when $Q_\alpha, Q_\beta \neq 0$. We start with the simpler case $J=-Q_\alpha \neq 0$. 

\medskip
\noindent
\underline{Subcase 2(a): $J=-Q_\alpha$.} We will show that for any $K \in \Z^2_L$, it holds that
\begin{equation}\label{J=-Q_alpha est}
\langle K \rangle^\sigma \sum_{1\leq \alpha \leq \beta} \langle K -\alpha Q_\alpha\rangle^{-\sigma}\langle K -\alpha Q_\alpha -\beta Q_\alpha^\perp\rangle^{-\sigma}\langle K-\beta Q_\alpha^{\perp}\rangle^{-\sigma}\lesssim L^2.
\end{equation}

We start with some elementary observations. \begin{enumerate}
\item  Recall that $K=r_\alpha+\alpha Q_\alpha$. Since $Q_\alpha\neq 0$, the $\alpha$ has to be less than $2L|K|$, since otherwise $r_\alpha=K$ and $Q_\alpha=0$. 
\item We have that $K+\alpha J=r_\alpha, K+\beta J^{\perp}=r_\alpha+\alpha Q_\alpha-\beta Q_\alpha^\perp,$ and $K+\alpha J+\beta J^\perp=r_\alpha-\beta Q_\alpha^\perp$. 
\item Notice that since $\alpha\leq \beta$, then $|K+\beta J^\perp|\geq \beta |Q_\alpha|-\frac{\alpha}{\sqrt 2 L}\gtrsim \beta |Q_\alpha|$. Similarly, $|K+\alpha J +\beta J^\perp|\gtrsim \beta |Q_\alpha|$. 
\item Finally, notice that since $Q_\alpha\neq 0$ and $K=r_\alpha+\alpha Q_\alpha$ with $|r_\alpha|\leq \frac{\alpha}{\sqrt 2 L} \leq \frac{1}{\sqrt{2}} \alpha |Q_\alpha|$, then $|Q_\alpha|\sim \frac{|K|}{\alpha}$.
\end{enumerate}
As a result, $|K+\beta J^\perp|, |K+\alpha J+\beta J^\perp|\gtrsim \frac{\beta}{\alpha}|K|$ and estimate \eqref{aim tri} would be satisfied for $J=-Q_\alpha$ once we prove that
\begin{equation}\label{aim Q_alpha}
\sum_{\substack{1\leq \alpha \leq \beta\\\alpha \leq 2L|K|}}\langle r_\alpha\rangle^{-\sigma}\langle \frac{\beta}{\alpha}|K|\rangle^{-2\sigma} \lesssim L^2\langle K \rangle^{-\sigma}.
\end{equation} 
Now we have
\begin{align*}
\sum_{\substack{1\leq \alpha \leq \beta\\\alpha\leq 2L|K|}}\langle r_\alpha\rangle^{-\sigma}\langle \frac{\beta}{\alpha}|K|\rangle^{-2\sigma}&
\lesssim \sum_{1\leq \alpha \leq \beta}\langle \frac{\beta}{\alpha}|K|\rangle^{-2\sigma} \\
&\lesssim \left[ \sum_{\substack{1\leq \alpha \leq \beta\leq \alpha|K|^{-1}\\\alpha\leq 2L|K|}}1\right] + |K|^{-2\sigma} \left[ \sum_{\substack{1\leq \alpha \leq \beta\\\beta \geq \alpha|K|^{-1}, \alpha\leq 2L|K|}}\frac{\alpha^{2\sigma}}{\beta^{2\sigma}} \right], 
\end{align*}
where the first sum is zero if $|K|>1$. A direct computation now gives the result.

\medskip
\noindent
\underline{Subcase 2(b): $J=-Q_\beta$.}\label{J=-Q_beta} Here the analysis is more delicate and relies heavily on the fact that if $J=-Q_\beta$, then $Q_\beta$ is a visible lattice point. We will show that 
\begin{equation}\label{J=-Q_beta est}
\langle K \rangle^\sigma \sum_{1\leq \alpha \leq \beta} \langle K -\alpha Q_\beta\rangle^{-\sigma}\langle K -\alpha Q_\beta -\beta Q_\beta^\perp\rangle^{-\sigma}\langle K-\beta Q_\beta^{\perp}\rangle^{-\sigma}\lesssim L^2
\end{equation}

As before, we start with some useful observations:

\begin{enumerate}
\item\label{obs 1} Recall that $K=r_\beta +\beta Q_\beta^\perp$. Since $Q_\beta\neq 0$, then $\beta\leq 2L|K|$. Also, since $|r_\beta|\leq \frac{1}{\sqrt 2} \frac{\beta}{L}\leq \frac{1}{\sqrt 2} \beta|Q_\beta|$, then $|Q_\beta|\sim \frac{|K|}{\beta}$.
\item\label{obs 2} Since $Q_\beta^\perp$ is visible, then $\beta=g.c.d ( LK-Lr_\beta) $ (For $m=(m_1, m_2)\in (\Z^2)^*$, we use the notation $g.c.d (m)$ to denote the greatest common divisor of the $|m_1|$ and $|m_2|$). %In particular the sum over $\beta$ in \eqref{aim tri} is also restricted to the cases when $K-r_\beta$
\item With $J=-Q_\beta$, we have that $K+\beta J^\perp=r_\beta, K+\alpha J=r_\beta-\alpha Q_\beta+\beta Q_\beta^{\perp}$, $K+\alpha J+\beta J^\perp=r_\beta -\alpha Q_\beta$.
\item\label{obs 4} Due to the orthogonality relation,
$$
|K|^2+|r_\beta-\alpha Q_\beta|^2=|r_\beta|^2+|r_\beta-\alpha Q_\beta+\beta Q_\beta^\perp|^2, 
$$
we have that either $|r_\beta|\geq |K|/\sqrt 2$ or $|r_\beta -\alpha Q_\beta+\beta Q_\beta^\perp|\geq |K|/\sqrt 2$. But $|r_\beta-\alpha Q_\beta+\beta Q_\beta^\perp|\geq (\sqrt 2- 1) |r_\beta|$ (since $|r_\beta|\leq \frac{\beta}{\sqrt 2L}\leq \frac{1}{\sqrt 2}\beta |Q_\beta^\perp|\leq \frac{1}{\sqrt 2}|-\alpha Q_\beta+\beta Q_\beta^\perp|$). Therefore, we always have that 
$$
|r_\beta -\alpha Q_\beta+\beta Q_\beta^\perp|\geq \frac{|K|}{100}.
$$
As a consequence of Observation \ref{obs 4}, we will be done once we show that
\begin{equation}\label{aim Q_beta0}
\sum_{1\leq \alpha \leq \beta \leq 2L|K|} \langle r_\beta\rangle^{-\sigma}\langle r_\beta-\alpha Q_\beta\rangle^{-\sigma} \lesssim L^2
\end{equation}

\item Next we reduce \eqref{aim Q_beta0} to the case when $\beta \geq 200L$. Indeed, 
$$
\sum_{\substack{1\leq \alpha \leq \beta\leq 200L}} \langle r_\beta\rangle^{-\sigma}\langle r_\beta-\alpha Q_\beta\rangle^{-\sigma} \leq \sum_{\substack{1\leq \alpha \leq \beta\leq 200L}} 1\lesssim L^2.
$$
Since the complementary range of $\beta$ is non-empty only when $|K|\geq 100$ (from observation (1) above), we are now reduced to proving that for any $|K|\geq 100$, we have
\begin{equation}\label{aim Q_beta1}
\sum_{\substack{1\leq \alpha \leq \beta\\ 200L \leq \beta \leq 2L|K|}} \langle r_\beta\rangle^{-\sigma}\langle r_\beta-\alpha Q_\beta\rangle^{-\sigma} \lesssim L^2
\end{equation}

\item\label{obs b6} Finally, we can also assume that $|r_\beta|\leq |K|/100$ and $|r_\beta-\alpha Q_\beta|\leq |K|/100$, since otherwise, one can trivially estimate the left-hand-side of \eqref{aim Q_beta1} by
$$
|K|^{-\sigma}\sum_{\substack{1\leq \alpha \leq \beta\\ 200L \leq \beta \leq 2L|K|}} 1\lesssim L^2 |K|^{2-\sigma}\lesssim L^2.
$$ 

\end{enumerate}

We start by estimating the sum in $\alpha$ in \eqref{aim Q_beta1} by 
$$
\sum_{1\leq \alpha \leq \beta}\langle r_\beta-\alpha Q_\beta\rangle^{-\sigma}\lesssim \min\left(\beta, \sum_{\alpha\geq 0} \langle \alpha Q_\beta\rangle^{-\sigma}\right)\lesssim \begin{cases}\min(\beta, |Q_\beta|^{-1}), \quad &\text{if }|Q_\beta|\leq 1\\
1 \quad &\text{if }|Q_\beta|\geq 1.
\end{cases}
$$

As a result, we have that
\begin{align}\label{Qbeta 001}
\sum_{\substack{1\leq \alpha \leq \beta\\ 200L \leq \beta \leq 2L|K|}} \langle r_\beta\rangle^{-\sigma}\langle r_\beta-\alpha Q_\beta\rangle^{-\sigma}\lesssim \sum_{\substack{200L \leq \beta \leq 2L|K|\\ |Q_\beta|\leq 1}}\langle r_\beta\rangle^{-\sigma}\min(\beta, |Q_\beta|^{-1}) +\sum_{\substack{200L \leq \beta \leq 2L|K|\\ |Q_\beta|\geq 1}}\langle r_\beta\rangle^{-\sigma}
\end{align}

The second sum above is easy to estimate: as $\beta$ ranges over $[200L,2L|K|]$, $r_\beta$ varies in the ball $B(0, \frac{|K|}{100})$ (see Observation \ref{obs b6} above). Moreover, for each $r \in B(0, \frac{|K|}{100})$, there is at most one $\beta$ for which $r=r_\beta$ (in fact, $\beta=g.c.d (LK -Lr)$ by Observation \ref{obs 2}. As a result, the second sum on the right-hand-side of \eqref{Qbeta 001} can be estimated by:
\begin{align*}
\sum_{r\in B(0, \frac{|K|}{100})}\langle r\rangle^{-\sigma}\lesssim L^2.
\end{align*}

Therefore, we will be done once we show that 
\begin{equation}\label{aim Q_beta 3}
\sum_{\substack{200L \leq \beta \leq 2L|K|\\ |Q_\beta|\leq 1}}\langle r_\beta\rangle^{-\sigma}\min(\beta, |Q_\beta|^{-1})\lesssim L^2.
\end{equation}

Recall from Observation \ref{obs 1} above, that $|Q_\beta|\sim \frac{|K|}{\beta}$, which means, since $|K|\geq 100$, that $\min(\beta, |Q_\beta|^{-1})=|Q_\beta|^{-1}\sim\frac{\beta}{|K|}$. Again, we change variables here from $\beta$ to $r_\beta$ due to the one-to-one correspondence between $\beta$ and $r$ ($\beta=g.c.d (LK-Lr_\beta)$  since $Q_\beta$ is visible). Consequently the left-hand side of \eqref{aim Q_beta 3} is estimated by
\begin{align*}
|K|^{-1}\sum_{\substack{100L \leq \beta \leq 2L|K|\\ |Q_\beta|\leq 1}}\langle r_\beta\rangle^{-\sigma}\beta\lesssim |K|^{-1}\sum_{\substack{|r|\leq \frac{|K|}{100}\\ L\leq g.c.d (LK-Lr)\leq 2L|K|}} \langle r\rangle^{-\sigma} g.c.d (LK-Lr).
\end{align*}
Let $k=LK\in \Z^2$, and $m=Lr\in\Z^2$, then the above sum is equal to 
\begin{align*}
&\frac{L}{|k|}\sum_{\substack{|m|\leq \frac{|k|}{100}\\ L\leq g.c.d (k-m)\leq 2|k|}}\langle \frac{m}{L}\rangle^{-\sigma} g.c.d(k-m)=\frac{L}{|k|}\sum_{\substack{n \in \Z^2, |k-n|\leq \frac{|k|}{100}\\ L\leq g.c.d (n)\leq 2|k|}}\langle \frac{k-n}{L}\rangle^{-\sigma} g.c.d(n)\\
&\lesssim\frac{L}{|k|}\sum_{j=L}^{2|k|}j \left(\sum_{\substack{n \in \Z^2, |k-n|\leq \frac{|k|}{100}\\g.c.d (n)=j}}\langle \frac{k-n}{L}\rangle^{-\sigma} \right)
\end{align*}
We claim that the term in parentheses is bounded by $\frac{L^2}{j^2}$. Assuming this claim, we get that 
\begin{align*}
\frac{L}{|k|}\sum_{\substack{|m|\leq \frac{|k|}{100}\\ L\leq g.c.d (k-m)\leq 2|k|}}\langle \frac{m}{L}\rangle^{-\sigma} g.c.d(k-m)\lesssim \frac{L}{|k|}\sum_{j=L}^{2|k|}j (\frac{L^2}{j^2})\lesssim L^2 \frac{\log(|k|/L)}{|k|/L}=L^2 \frac{\log |K|}{|K|}\lesssim L^2
\end{align*}
since $|K|\geq 1$. To prove the above-mentioned claim we notice that if $g.c.d (n)=j$, then $n$ is an element of the lattice $j \Z^2$, and hence
\begin{align*}
\sum_{\substack{n \in B(k, \frac{|k|}{100})\\g.c.d(n)=j}}\langle \frac{k-n}{L}\rangle^{-\sigma}\sim\sum_{l=0}^{\log_2(\frac{|k|}{L})}2^{-\sigma l} \sum_{\substack{n \in B(k, 2^lL) \\g.c.d(n)=j}} 1\lesssim \sum_{l=0}^{\log_2(\frac{k}{L})}2^{-\sigma l} \frac{2^{2l}L^{2}}{j^2}\lesssim \frac{L^2}{j^2},
\end{align*} 
since $\sigma>2$. This finishes the proof of Theorem \ref{tri est theo}.

\end{proof}

\begin{remark}\label{beta>L}
Combining Remark \ref{beta>L2} with estimates \eqref{J=-Q_alpha est} and \eqref{J=-Q_beta est}, we obtain that restricting the $\beta$ sum to $\beta\geq L$, one obtains the better estimate
\begin{equation}\label{beta>Lest}
\left\|\sum_{\substack{1\leq \alpha\leq \beta\\\beta\geq L}}\sum_{\substack{J\in \Z^2_L\\ visible}} a_{K+\alpha J}\,b_{K+\alpha J+\beta J^\perp}\,c_{K+\beta J^\perp}\,\right\|_{X^\sigma_L}\lesssim L^2 \|a_K\|_{X^\sigma_L}\|b_K\|_{X^\sigma_L}\|c_K\|_{X^\sigma_L}.
\end{equation}
This will be useful in the following section.
\end{remark}

\subsection{Estimates on non-resonant level sets}\label{non sharp estimates}
In order to effectively bound non-resonant interactions in Section \ref{proof of approx theorem section}, we will need estimates on trilinear sums taken over non-resonant tuples $(K_1, K_2, K_3)\in \mathcal S(K)$ that belong to some fixed non-resonant level set of the function $\Omega=|K_1|^2-|K_2|^2+|K_3|^2-|K|^2$, i.e. tuples $(K_1, K_2, K_3)$ belonging to sets of the form  
\begin{equation}\label{def of R_mu}
  \mathcal R_\mu (K)\overset{def}{=}\{(K_1, K_2, K_3) \in (\Z_L^2)^3: K_1-K_2+K_3=K, \; \mbox{and} \; |K_1|^2-|K_2|^2+|K_3|^2-|K|^2=\mu\}, 
\end{equation}
for $\mu \in \R$. In contrast to the resonant sums (corresponding to $\mu=0$) treated above, finding the sharpest dependence on $L$ in the estimates on these sums is not as crucial. The following proposition is sufficient for our purposes.
\begin{proposition}\label{R_mu sum prop} 
Let $L \geq 1$ and suppose that $|\mu| \leq L^{10}$, then for any $\epsilon>0$ and any $\sigma>2$, the following estimate holds
\begin{equation}\label{R_mu sum est}
\left\|\sum_{\RR_\mu(K)} a_{K_1}b_{K_2}c_{K_3}\right\|_{X^\sigma_L}\lesssim_{\sigma, \epsilon} L^{2+\epsilon} \|a_K\|_{X^\sigma_L}\|b_K\|_{X^\sigma_L}\|c_K\|_{X^\sigma_L}. 
\end{equation}
\end{proposition}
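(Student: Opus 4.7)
The plan is to establish the pointwise estimate
\[
\langle K \rangle^\sigma \sum_{\mathcal{R}_\mu(K)} \langle K_1\rangle^{-\sigma}\langle K_2\rangle^{-\sigma}\langle K_3\rangle^{-\sigma} \lesssim_{\sigma,\epsilon} L^{2+\epsilon}
\]
uniformly in $K \in \Z^2_L$, which, after normalising $\|a\|_{X^\sigma_L} = \|b\|_{X^\sigma_L} = \|c\|_{X^\sigma_L} = 1$ and using $|a_K| \leq \langle K\rangle^{-\sigma}$, is equivalent to \eqref{R_mu sum est}. The case $\mu = 0$ already follows from Theorem~\ref{tri est theo} with the stronger $L^2\log L$ bound, so I focus on $\mu \neq 0$. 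As in that proof, I change variables via $K_i = K + N_i$ (hence $N_2 = N_1 + N_3$). A direct computation gives $\Omega = -2 N_1 \cdot N_3$, so the defining condition of $\mathcal R_\mu(K)$ becomes $N_1 \cdot N_3 = -\mu/2 =: \nu \neq 0$, an affine analogue of the orthogonality $N_1 \perp N_3$ central to the resonant case. The non-vanishing of $\nu$ forces $N_1, N_3 \neq 0$, which conveniently removes several degenerate subcases that were delicate in Theorem~\ref{tri est theo}.

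The technical heart of the argument is the following divisor-type counting lemma, which I prove first: for every $\nu \in \R$ with $|\nu| \leq L^{10}$ and all $R_1, R_3 \geq L^{-1}$,
\[
\#\big\{(N_1, N_3)\in (\Z^2_L)^2 : N_1\cdot N_3 = \nu,\; |N_1| \lesssim R_1,\; |N_3| \lesssim R_3\big\} \lesssim_\epsilon R_1\, R_3\, L^{2+\epsilon}.
\]
To prove this I fix $N_1$ and count admissible $N_3 \in \Z^2_L$ on the affine line $\{N : N\cdot N_1 = \nu\}$. Writing $L N_1 = d(p,q)$ with $\gcd(p,q) = 1$, the intersection $\Z^2_L \cap \{\text{line}\}$ is a (possibly empty) one-dimensional arithmetic progression of spacing $|N_1|/d$, non-empty precisely when $d \mid \nu L^2$; this yields $\lesssim R_3 d/|N_1|$ admissible $N_3$. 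Stratifying the $N_1$-sum by $d = \gcd(L N_1)$ and invoking the classical divisor bound $\sum_{d\mid m} d^{-1} \lesssim_\epsilon m^\epsilon$ with $m = |\nu| L^2 \leq L^{12}$ produces the announced $L^\epsilon$ factor and the overall $R_1 R_3 L^{2+\epsilon}$ count.

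Given the counting lemma, the weighted estimate follows from a dyadic decomposition in $|N_1|\sim R_1$ and $|N_3|\sim R_3$, mirroring the blueprint of Theorem~\ref{tri est theo}. The identity $|N_1+N_3|^2 = |N_1|^2 + |N_3|^2 + 2\nu$ pins the dyadic scale of $|N_2|$ to within $O(1)$ for each $(R_1, R_3)$, so only two dyadic variables remain free. A pigeonhole argument ensures that at least one of $|K+N_i|$ is $\gtrsim |K|$, and its associated weight $\langle K+N_i\rangle^{-\sigma}$ absorbs the $\langle K\rangle^\sigma$ prefactor; the remaining two weights, combined with the counting bound, produce a geometric sum that converges for $\sigma > 2$ to $O_\epsilon(L^{2+\epsilon})$. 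The main obstacle I anticipate is the careful bookkeeping of the near-degenerate configurations in which $K$ nearly cancels one of $N_1, N_2, N_3$, analogous to Subcases~1(b), 2(a), 2(b) of Theorem~\ref{tri est theo}; these will require separate subcase analyses, but since the constraint $\nu \neq 0$ rules out the most singular degenerations ($N_1 = 0$ or $N_3 = 0$), the remaining losses are readily absorbed into the $L^\epsilon$ factor.
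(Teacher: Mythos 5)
Your counting lemma is correct, and your sketch of it is essentially sound: fixing $N_1$ with $LN_1=d(p,q)$, $\gcd(p,q)=1$, the solutions of $N_1\cdot N_3=\nu$ in $\Z^2_L$ form a progression of spacing $|N_1|/d$, nonempty only if $d\mid \nu L^2$, and summing over $d\mid\nu L^2$ (with $|\nu L^2|\lesssim L^{12}$) gives the $L^\epsilon$ from the divisor bound. (Two small repairs: the per-$N_1$ count is $1+R_3d/|N_1|$, not $R_3d/|N_1|$, and you should fix the variable with the smaller radius first, say $R_1\le R_3$, so that the resulting extra term $R_1^2L^{2+\epsilon}$ is admissible.) This is a genuinely different mechanism from the paper's proof of Proposition~\ref{R_mu sum prop}, which in the regime $|K_1|\gtrsim|K|$ fixes $K_2$ and counts $K_3$ on a \emph{circle} via Lemma~\ref{lattice counting lemma}, and in the regime $|K_1|,|K_3|\ll|K|$ uses the visible-point decomposition $N_1=\alpha J$ with $\alpha\mid L^2\mu/2$; in principle your single line-counting lemma could replace both, which would be a tidy unification.

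The genuine gap is in the passage from the counting lemma to the weighted estimate. After the pigeonhole absorbs $\langle K\rangle^\sigma$, the two surviving weights are of the form $\langle K+N_i\rangle^{-\sigma}$: in the $N$-variables they are centered at $-K$, whereas your dyadic blocks $|N_1|\sim R_1$, $|N_3|\sim R_3$ are centered at the origin, so on a block the weights are not comparable to any function of $(R_1,R_3)$. Concretely, for $|K|$ large the block $R_1\sim R_3\sim|K|$ allows tuples with $|K+N_1|,|K+N_3|=O(1)$, where the surviving weights are $\sim 1$; the bookkeeping ``(sup of weights on the block)$\times$(count $\lesssim R_1R_3L^{2+\epsilon}$)'' then yields $\sim|K|^2L^{2+\epsilon}$ from this single block, and there is no geometric sum — the claimed convergence for $\sigma>2$ fails as stated. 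To repair it you must decompose according to the sizes of the retained weights, i.e.\ prove a recentered count of pairs with $N_1\cdot N_3=\nu$, $|K+N_1|\lesssim\rho_1$, $|K+N_3|\lesssim\rho_3$, which forces you to track how the arithmetic data of the line (spacing $|N_1|/\gcd(LN_1)$, divisibility $\gcd(LN_1)\mid\nu L^2$) interacts with balls centered away from the origin (using, e.g., that $(d/L)\Z^2$ meets any ball of radius $\rho$ in $\lesssim(1+\rho L/d)^2$ points, and $d\le L|N_1|\lesssim L(|K|+\rho)$). This is precisely the regime that the paper's Case 2 (and the Subcase 2(b)-type bookkeeping in Theorem~\ref{tri est theo}) is built to handle; it can be carried out along your lines, but it is the heart of the proof, and your assertion that these configurations are ``readily absorbed into the $L^\epsilon$ factor'' is not supported by anything in the proposal. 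Note also that $\nu\neq0$ only excludes $N_1,N_3=0$; the real degeneracies are the near-cancellations $K_i=K+N_i$ small, and these are untouched by that observation.
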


The proof of this lemma depends on lattice counting arguments similar to those used by Bourgain to prove periodic Strichartz estimates \cite{Bourgain, Bourgain3}. In particular, we rely on the following lemma, whose proof is implicit in the above cited works.

\begin{lemma}\label{lattice counting lemma}
Let $C(R)$ denote the circle in $\R^2$ centered at $a\in \Z^2$ of radius $R$. For any $\epsilon>0$, the number of integer lattice points on the circle and inside a box of size $L$ is bounded by $C_\epsilon L^\epsilon$ independent of $a$ and $R$.
\end{lemma}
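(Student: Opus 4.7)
The plan is to combine the classical divisor bound for sums of two squares with a geometric constraint from Pick's theorem. First, translating by $-a$ (which preserves the integer lattice, since $a\in\Z^2$), the problem reduces to counting pairs $(x,y)\in\Z^2$ with $x^2+y^2=R^2$ lying in a translated box of side $L$. If $R^2$ is not a nonnegative integer the count is zero; otherwise, set $N = R^2\in\Z_{\geq 0}$.

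The classical divisor bound then yields
$$
r_2(N):=\#\{(x,y)\in\Z^2:x^2+y^2=N\}\leq 4\tau(N)\lesssim_\epsilon N^\epsilon,
$$
via the usual formula $r_2(N)=4(d_1(N)-d_3(N))$ (counting divisors of $N$ in each residue class mod $4$) together with the standard bound on the divisor function. This immediately bounds our count by $C_\epsilon R^{2\epsilon}$, but the dependence is on $R$ rather than $L$, which is the wrong quantity since $R$ is a priori unrestricted.

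To convert this into a bound in terms of $L$, I would use Pick's theorem to control $R$ geometrically. Assume the box contains three distinct lattice points $P_1,P_2,P_3$ on the circle (the case of at most two points is trivial). Since they lie on a circle, they are non-collinear, so Pick's theorem yields $\mathrm{area}(P_1P_2P_3)\geq \frac{1}{2}$. On the other hand, the circumradius formula gives $\mathrm{area}(P_1P_2P_3)=\frac{|P_1P_2|\,|P_2P_3|\,|P_3P_1|}{4R}$, and each side is bounded by the diameter $L\sqrt{2}$ of the box. Combining the two inequalities yields $R\lesssim L^3$, and hence $N=R^2\lesssim L^6$.

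Feeding this back into the divisor bound produces a count of at most $C_\epsilon N^\epsilon\lesssim_\epsilon L^{6\epsilon}$, and rescaling $\epsilon\mapsto\epsilon/6$ delivers the desired bound $C_\epsilon L^\epsilon$. The only genuinely nontrivial step is the Pick-plus-circumradius reduction: the divisor bound alone is insufficient because it controls the count by $R^{2\epsilon}$ rather than $L^\epsilon$, and it is exactly the geometric restriction to the box that forces $R$ to be polynomially bounded in $L$. I do not anticipate any obstacle beyond setting this reduction up cleanly; everything else is classical.
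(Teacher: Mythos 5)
Your proof is correct and follows essentially the same strategy as the paper: the divisor bound for representations as a sum of two squares handles the count once $R$ is polynomially bounded in $L$, and a Jarnik-type argument (lattice triangle area $\geq \frac12$ versus the geometry of a circle) shows that three points in the box force $R\lesssim L^3$. The only cosmetic difference is that you run the geometric step through the circumradius formula $\mathrm{area}=\frac{abc}{4R}$ with sides $\leq L\sqrt 2$, whereas the paper compares the triangle area to the circular cap area $\lesssim R^2\theta^3$ with $\theta\lesssim L/R$; both yield the same conclusion.
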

\begin{proof}
We split into two cases:

\medskip
\noindent
\textbf{Case 1: $R\leq L^{10}$.} In this case, we use the upper bound on the number of lattice points on the circle $C(R)$. If $z$ is such a lattice point, then $(m,n)=z-a\in \Z^2$ and $(m,n)\in \Z^2$ satisfies $R^2=m^2+n^2=(m+in)(m-in)$. Therefore $m+in$ is a divisor of $R^2$ in the integral domain $\Z+i\Z$, which is bounded by $C_\epsilon R^{\frac{\epsilon}{100}}$ for any $\epsilon>0$ (see for instance \cite{HW, TaoPL}), which is sufficient in this case.

\medskip
\noindent
\textbf{Case 2: $R\geq L^{10}$.} Here we will use an idea that goes back to Jarnick \cite{Jarnick} relying on the fact that the area of any triangle in $\R^2$ with vertices in the lattice $\Z^2$ is lower bounded by $1/2$ (being $1/2$ of the cross product of two adjacent edges). We will show that the if $L \ll R^{1/3}$, then there cannot be more than two lattice points on the circle $C(R)$ inside a box of size $L$. To see this, suppose that $A, B, C$ are three such lattice points with $\theta \overset{def}{=} \widehat{AC} < \pi$  and  $B$ being on the arc between $A$ and $C$. Then
$$
\frac{1}{2}\leq \text{Area of triangle }ABC \leq \text{Area of the  outer  cap between segment and the arc described by $AC$}. 
$$
As  $\theta$ is the angle of the sector described by $AC$, a simple calculation shows that the area of this cap is $\frac{R^2}{2}(\theta-\sin \theta)\lesssim R^2 \theta^3$ if $\theta$ is small enough. But $\theta \lesssim LR^{-1}$, and hence we get that 
$$
\frac{1}{2}\lesssim L^3R^{-1}.
$$
This gives the needed contradiction if $R\ll L^{1/3}$. 
\end{proof}

\begin{proof}[Proof of Proposition \ref{R_mu sum prop}] Due to the symmetry between $K_1$ and $K_3$, we may assume without any loss of generality that $|K_1|\geq |K_3|$. We split the analysis into two cases:

\medskip
\textbf{Case 1: $|K_1|\geq \frac{|K|}{100}$ or $|K|\leq 100$.}
In this case, \eqref{R_mu sum est} reduces to showing that
\begin{equation}\label{case2 aim mu2}
\sup_{K \in \Z^2_L}\sum_{\RR_\mu(K)}\langle K_2\rangle^{-\sigma}\langle K_3\rangle^{-\sigma}\lesssim_\epsilon L^{2+\epsilon}.
\end{equation}
Now we change variable and define $N_i=K_i-K$ ($i=1,2,3$) so that the new variables satisfy the relations:
$$
N_1+N_3=N_2\qquad 2(N_3-N_2)\cdot N_3= |N_1|^2+|N_3|^2-|N_2|^2=\mu.
$$
As a result, we have that 
\begin{align}
\label{eq:klm}
\sum_{\RR_\mu(K)}\langle K_2\rangle^{-\sigma}\langle K_3\rangle^{-\sigma}=\sum_{N_2\in \Z^2_L}\langle K +N_2\rangle^{-\sigma}\sum_{l=0}^\infty 2^{-\sigma l}\sum_{\substack{N_3\in \Z^2_L\cap B(-K,2^l)\\2(N_3-N_2)\cdot N_3=\mu }}1
\end{align}
Let $n_3=LN_3, n_2=LN_2, m =\frac{L^2\mu}{2}\in \Z$ (otherwise there is nothing to prove). Then we have that 
$$
(n_3-n_2)\cdot n_3=m \Leftrightarrow |2n_3 -n_2|^2=4m+|n_2|^2,
$$
which means that for a fixed $n_2$, $2n_3$ belongs to the circle centered at $n_2$ of radius $\sqrt{4m+|n_2|^2}$. Now in the last sum in \eqref{eq:klm}, for a given $l \in \N$, we have that $n_3 = LN_3$ belongs to balls of size $L 2^l$. 
Using Lemma \ref{lattice counting lemma}, we have that for any $n_2$,  $l \geq 0$, and $\epsilon > 0$, 
$$
\sum_{\substack{N_3\in \Z^2_L\cap B(-K,2^l)\\2(N_3-N_2)\cdot N_3=\mu }}1\quad \lesssim_\epsilon (2^l L)^\epsilon.
$$
Consequently, we have that
\begin{align*}
\sum_{\RR_\mu(K)}\langle K_2\rangle^{-\sigma}\langle K_3\rangle^{-\sigma}\lesssim_\epsilon L^\epsilon\sum_{N_2\in \Z^2_L}\langle K +N_2\rangle^{-\sigma}\sum_{l=0}^\infty 2^{-\sigma l}2^{\epsilon \ell}\lesssim L^{2+\epsilon},
\end{align*}
which is \eqref{case2 aim mu2}.

\medskip
\textbf{Case 2: $|K_1|< \frac{|K|}{100}$  and $|K|Ê> 100$.} This case is non-empty only when $\mu<0$ due to the relation 
\begin{equation}\label{orth with mu}
|K_1|^2+|K_3|^2=|K|^2+|K_2|^2+\mu. 
\end{equation}
From the relation $K_2=K_1-K_3-K$ and recalling that $|K_3|\leq |K_1|$, we deduce that $|K_2|\geq \frac{|K|}{50}$, and as a result, we are reduced to showing that 
\begin{equation}\label{case2 aim mu}
\sup_{K \in \Z^2_L} \sum_{\RR_\mu(K)} \langle K_1\rangle^{-\sigma}\langle K_3\rangle^{-\sigma} \lesssim_\epsilon L^{2+\epsilon}
\end{equation}

Now we change variable and define $N_i=K_i-K$ ($i=1,2,3$) so that the new variables satisfy the relations:
$$
N_1+N_3=N_2\qquad -2N_1\cdot N_3= |N_1|^2+|N_3|^2-|N_2|^2=\mu.
$$
Note that since $|K_3|\leq |K_1|\leq \frac{|K|}{100}$, we have that $|N_1|\sim |K|\sim |N_3|$. Now we write, $N_1=\alpha J$ where $\alpha \in \N$ and $J$ is visible and denote by $m \overset{def}{=}L^2 \frac{\mu}{2} \in \Z$ (otherwise there is nothing to prove). But if $2N_1\cdot N_3=-\mu$, we must have that $\alpha$ divides $m$. 

The fact that $N_3$ satisfies the relation $J \cdot  N_3=\frac{m}{L^2\alpha}$ shows that $K+N_3$ ranges over a one-dimensional lattice of spacing $|J|$:
Indeed, if we denote $n_3 = LN_3$ and $j=L J$, then the equation $j\cdot n_3=\frac{m}{2}$ has a general solution of the form $n_3=n_3^{(0)}+\beta j^{\perp}$ with $\beta \in \Z$ and $n_3^{(0)}$ being any particular solution of the equation $j\cdot n_3^{(0)}=\frac{m}{2}$ (see \cite{HW} for the elementary argument).  Consequently, for any fixed $\alpha, J$, we have that 
$$
\sum_{N_3\cdot J=\frac{m}{L^2\alpha}} \langle K +N_3\rangle^{-\sigma} \lesssim \sum_{\beta \in \Z}\langle \beta J^\perp \rangle^{-\sigma}\lesssim \max(1, |J|^{-1}).
$$

As a result, if we write as in the previous section $K=r_\alpha+\alpha Q_\alpha$, with $r_\alpha\in \BB(\alpha/L) \cap \mathbb{Z}_L^2$ and $Q_\alpha \in \Z^2_L$ (see \eqref{eq:rQ}), 
then we can estimate
\begin{align*}
\sum_{\RR_\mu(K)} \langle K_1\rangle^{-\sigma}\langle K_3\rangle^{-\sigma}\leq &\sum_{\alpha |m} \sum_{\substack{J \in \Z^2_L\\ J \, visible}}\langle K +\alpha J\rangle^{-\sigma} \sum_{N_3\cdot J=\frac{m}{L^2\alpha}}\langle K+N_3\rangle^{-\sigma}\\
\lesssim& \sum_{\alpha |m} \sum_{\substack{J \in \Z^2_L\setminus\{-Q_\alpha\}\\ J \, visible}}\langle r_\alpha +\alpha (J+Q_\alpha)\rangle^{-\sigma} \sum_{N_3\cdot J=\frac{m}{L^2\alpha}}\langle K+N_3\rangle^{-\sigma}\\
&+\sum_{\alpha |m} \langle r_\alpha \rangle^{-\sigma} \sum_{N_3\cdot Q_\alpha=\frac{-m}{L^2\alpha}}\langle r_\alpha +\alpha Q_\alpha+N_3\rangle^{-\sigma}\\
\lesssim& \sum_{\alpha |m} \sum_{\substack{J \in \Z^2_L\setminus\{-Q_\alpha\}\\ J \, visible}}\langle r_\alpha +\alpha (J+Q_\alpha)\rangle^{-\sigma} \max(1, |J|^{-1})\\
&+\sum_{\alpha |m} \max(1, |Q_\alpha|^{-1}).
\end{align*}
For the first sum above we recall that $|J|=\frac{|N_1|}{\alpha}\sim \frac{|K|}{\alpha}$ and use Lemma \ref{elementary}, as for the second sum we simply estimate $|Q_\alpha|^{-1}\leq L$ and use that the bound on the number of divisors \cite{HW, TaoPL} of $m$ given by $C_\epsilon |m|^{\epsilon/100}\lesssim_\epsilon L^\epsilon$, as $|m| = L^2 \frac{|\mu|}{2} \lesssim L^{12}$ by assumption on $\mu$. As a result,  we get that
\begin{align*}
\sum_{\RR_\mu(K)} \langle K_1\rangle^{-\sigma}\langle K_3\rangle^{-\sigma}\lesssim \left[\sum_{\alpha |m}\frac{L^2}{\alpha^2}\max(\frac{\alpha}{|K|},1)\right]+L^{1+\epsilon}\lesssim L^{2+\epsilon},
\end{align*}
which is \eqref{case2 aim mu}. This finishes the proof of the proposition.
\end{proof}

\section{Proof of Theorem~\ref{disctocont}: convergence of $\mathcal{T}_L$ to $\mathcal{T}$}\label{T_L to T section}

The aim of this section is to prove Theorem~\ref{disctocont}. We first give the proof by assuming the co-prime equidistribution Lemma \ref{equidistribution lemma}. In a second part, we prove this lemma. 

%
%\begin{theorem*}[Discrete to continuous approximation in $X^\sigma$]
%Suppose that $g :\R^2\to \C$ satisfies the bounds
%\begin{equation}\label{assumptions on g}
%\| g\|_{X^\sigma}+ \| \nabla g\|_{X^\sigma} \leq B
%\end{equation}
%for some $\sigma>2$. Then
%\begin{equation}\label{DtoC}
%\left \|\mathcal T(g,g, g)(K)- \mathcal{T}_L (g,g,g)\right\|_{X^\sigma(\Z^2_L)}\lesssim_\sigma \frac{B^3}{\log L}
%\end{equation}
%\end{theorem*}

\subsection{Proof of Theorem~\ref{disctocont}}
 
We will use the following expression for $\mathcal{T}_L$ (see \eqref{TL} and \eqref{eq:rect1}): 
$$
\mathcal{T}_L(g,g,g)(K) = 
\frac{1}{\GG}\sum_{K_1\cdot K_2=0} g(K+K_1)\bar g(K+K_1+K_2)g(K+K_2) \quad \mbox{with} \quad
\GG\overset{def}{=}\frac{2L^2\log L}{\zeta(2)}.
$$
Without loss of generality we assume that $B=1$ in the statement of Theorem \ref{disctocont}. 

\medskip
\noindent 
\underline{Step 1: decomposition of $\mathcal{T}_L$}. Start by writing
\begin{align*}
&\frac{1}{\GG}\sum_{K_1\cdot K_2=0} g(K+K_1)\bar g(K+K_1+K_2)g(K+K_2)\\
& =\frac{1}{\GG}\sum_{\substack{K_1\cdot K_2=0\\K_1, K_2 \neq 0}} g(K+K_1)\bar g(K+K_1+K_2)g(K+K_2)
+\frac{2}{\GG}\left(\sum_{K'} |g(K')|^2\right)g(K) -\frac{1}{\GG}|g(K)|^2g(K)\\
&=I_1+I_2+I_3. 
\end{align*}
The terms $I_2$ and $I_3$ can be easily bounded by $\frac{L^2}{\mathfrak G}\|g\|^2_{\ell_L^2}\|g\|_{\ell^\infty_L}$ and $\mathfrak G^{-1} \|g\|_{\ell^\infty_L}^3$
respectively. As for the main term $I_1$, we parameterize the sum over $\{k_1, k_2 \in \Z^2\setminus \{0\}: k_1 \cdot k_2=0\}$ as $k_1=\alpha(p,q), k_2=\beta(-q,p)$ with $\alpha, \beta, p, q \in \Z$ such that $\alpha \geq 1, \beta \neq 0$ and $g.c.d (|p|,|q|)=1$ (see Definition \ref{defvisible}). We then write it as:
\begin{align}
I_1&=\frac{1}{\GG}\sum_{\substack{\alpha \geq 1, \\ \beta \in \Z^*}}\sum_{g.c.d(p,q)=1} g(K+\frac{\alpha}{L} (p,q))\bar g(K+\frac{\alpha}{L}(p,q)+\frac{\beta}{L} (-q,p))g(K+\frac{\beta}{L}(-q, p))\nonumber\\
&=\frac{1}{\GG}\sum_{\substack{\alpha \geq 1, \\ \beta \geq 1}}\sum_{g.c.d(p,q)=1} g(K+\frac{\alpha}{L} (p,q))\bar g(K+\frac{\alpha}{L}(p,q)+\frac{\beta}{L}(-q,p))g(K+\frac{\beta}{L}(-q, p))\label{firsthalf}\\
&+\frac{1}{\GG}\sum_{\substack{\alpha \geq 1, \\ \beta \geq 1}}\sum_{g.c.d(p,q)=1} g(K-\frac{\alpha}{L} (p,q))\bar g(K-\frac{\alpha}{L}(p,q)+\frac{\beta}{L}(-q,p))g(K+\frac{\beta}{L}(-q, p))\label{sechalf}, 
\end{align}
where we used the symmetry $(p,q)\to -(p,q)$. Now we write:
\begin{align*}
\eqref{firsthalf}=& \frac{1}{\GG}\sum_{\substack{1\leq \alpha \leq  \beta }}\sum_{g.c.d(p,q)=1} +\frac{1}{\GG}\sum_{\substack{1\leq \beta \leq  \alpha }}\sum_{g.c.d(p,q)=1} -\frac{1}{\GG}\sum_{\substack{1\leq \alpha=\beta}}\sum_{(p,q)=1} \\
=&\frac{1}{\GG}\sum_{\substack{1\leq \alpha \leq  \beta }}\sum_{g.c.d(p,q)=1}g(K+\frac{\alpha}{L} (p,q))\bar g(K+\frac{\alpha}{L}(p,q)+\frac{\beta}{L}(-q,p))g(K+\frac{\beta}{L}(-q, p))\\
&+\sum_{\substack{1\leq \beta \leq  \alpha }}\sum_{g.c.d(p',q')=1}g(K+\frac{\alpha}{L} (-q',p'))\bar g(K+\frac{\alpha}{L}(-q',p')-\frac{\beta}{L}(p',q'))g(K-\frac{\beta}{L}(p', q'))\\
&-\frac{1}{\GG}\sum_{K'\in (\Z^2_L)^*}g(K+K')\bar g(K+K'+K'^\perp)g(K+K'^\perp)\\
=&\frac{1}{\GG}\sum_{\substack{1\leq \alpha \leq  \beta }}\sum_{g.c.d(p,q)=1}g(K+\frac{\alpha}{L} (p,q))\bar g(K+\frac{\alpha}{L}(p,q)+\frac{\beta}{L}(-q,p))g(K+\frac{\beta}{L}(-q, p))\\
&+\sum_{\substack{1\leq \alpha \leq  \beta}}\sum_{g.c.d(p,q)=1}g(K-\frac{\alpha}{L}(p, q))\bar g(K-\frac{\alpha}{L}(p,q)+\frac{\beta}{L}(-q',p'))g(K+\frac{\beta}{L} (-q,p))\\
&-\frac{1}{\GG}\sum_{K'\in (\Z^2_L)^*}g(K+K')\bar g(K+K'+K'^\perp)g(K+K'^\perp)
\end{align*}
(recall that the superscript $\perp$ the rotation in $\R^2$ by $\pi/2$ in the counter-clockwise direction). Arguing similarly
\begin{align*}
\eqref{sechalf}=& \frac{1}{\GG}\sum_{\substack{1\leq \alpha \leq  \beta }}\sum_{(p,q)=1} (\cdots) +\frac{1}{\GG}\sum_{\substack{1\leq \beta \leq  \alpha }}\sum_{(p,q)=1} (\cdots) -\frac{1}{\GG}\sum_{\substack{1\leq \alpha=\beta}}\sum_{(p,q)=1} (\cdots)\\
=&\frac{1}{\GG}\sum_{\substack{1\leq \alpha \leq  \beta }}\sum_{(p,q)=1}g(K-\frac{\alpha}{L} (p,q))\bar g(K-\frac{\alpha}{L}(p,q)+\frac{\beta}{L}(-q,p))g(K+\frac{\beta}{L}(-q, p))\\
&+\sum_{\substack{1\leq \alpha \leq  \beta}}\sum_{(p,q)=1}g(K+\frac{\alpha}{L}(p, q))\bar g(K+\frac{\alpha}{L}(p,q)+\frac{\beta}{L}(-q,p))g(K+\frac{\beta}{L} (-q,p))\\
&-\frac{1}{\GG}\sum_{K'\in (\Z_L^2)^*}g(K+K')\bar g(K+K'+K'^\perp)g(K+K'^\perp). 
\end{align*}

As a result,
\begin{align}
I_1&=\frac{2}{\GG}\sum_{\substack{1 \leq \alpha \leq \beta}}\sum_{(p,q)=1} g(K+\frac{\alpha}{L} (p,q))\bar g(K+\frac{\alpha}{L}(p,q)+\frac{\beta}{L} (-q,p))g(K+\frac{\beta}{L}(-q, p))\nonumber\\
&+\frac{2}{\GG}\sum_{\substack{1 \leq \alpha \leq \beta}}\sum_{(p,q)=1} g(K-\frac{\alpha}{L} (p,q))\bar g(K-\frac{\alpha}{L}(p,q)+\frac{\beta}{L} (-q,p))g(K+\frac{\beta}{L}(-q, p))\label{defofII_2}\\
&-\frac{2}{\GG}\sum_{K'\in (\Z^2_L)^*}g(K+K')\bar g(K+K'+K'^\perp)g(K+K'^\perp)\nonumber\\
&=II_1+II_2+II_3. 
%&=\frac{4}{\GG}\sum_{\substack{1 \leq \alpha \leq \beta}}\sum_{(p,q)=1} g(K+\frac{\alpha}{L} (p,q))\bar g(K+\frac{\alpha}{L}(p,q)+\frac{\beta}{L} (-q,p))g(K+\frac{\beta}{L}(-q, p))\\
%&-\frac{2}{\GG}\sum_{K'\in \frac{1}{L}\Z^2\setminus 0}g(K+K')\bar g(K+K'+RK')g(K+RK')=II_1+II_2
\end{align}
Notice that the sum in $II_3$ is a sum over squares with vertex at $K$. Using the relation 
$$
|K|^2+|K+K'+K'^\perp|^2=|K+K'|^2+|K+K'^\perp|^2
$$
one notices that either $|K+K'|$ or $|K+K'^\perp|$ is larger than $|K|/2$. This allows to estimate $II_3(K)$ as follows:
\begin{align*}
II_3(K)\lesssim \GG^{-1}\langle K \rangle^{-\sigma}\sum_{K'\in (\Z^2_L)^*} \langle K +K' + K'^\perp\rangle^{-\sigma}\lesssim \langle K \rangle^{-\sigma} \frac{L^2}{\GG}.
\end{align*}
which means that $II_3$ has an acceptable contribution in $X^\sigma$, namely
$$
\|II_3(K)\|_{X^\sigma}\lesssim \frac{L^2}{\GG}.
$$

We will perform the analysis only on $II_1$ since that for $II_2$ is exactly the same except for the signature of $\alpha$. 

\medskip
\noindent
\underline{Step 2: restricting the range of $\beta$.}
Before performing the analysis on $II_1$, recall that we call points $z$ of $\mathbb{Z}^2$ \emph{visible} if $z=(p,q)$ with $g.c.d(|p|, |q|)=1$ (see Definition \ref{defvisible}). Consequently, we can write $II_1$ as:
\begin{equation}\label{def of II_1}
II_1\overset{def}{=}\frac{2}{\GG}\sum_{\substack{1 \leq \alpha \leq \beta}}\sum_{J\in \Z^2 \, visible} g(K+\frac{\alpha}{L} J)\bar g(K+\frac{\alpha}{L}J+\frac{\beta}{L} J^\perp)g(K+\frac{\beta}{L}J^\perp). 
\end{equation}
We split $II_1$ into a sum over $\beta < L$ and another over $\beta \geq L$ to get:
\begin{equation}
II_1=III_1 +III_2
\end{equation}
where
\begin{equation}
III_1=\frac{2}{\GG}\sum_{\substack{1\leq \alpha \leq \beta\\ \beta < L}}\sum_{J\in \Z^2 \, visible} g(K+\frac{\alpha}{L} J)\bar g(K+\frac{\alpha}{L}J+\frac{\beta}{L} J^\perp)g(K+\frac{\beta}{L}J^\perp)
\end{equation}
and 
\begin{equation}
III_2=\frac{2}{\GG}\sum_{\substack{1\leq \alpha \leq \beta\\ \beta \geq L}}\sum_{J\in \Z^2 \, visible} g(K+\frac{\alpha}{L} J)\bar g(K+\frac{\alpha}{L}J+\frac{\beta}{L} J^\perp)g(K+\frac{\beta}{L}J^\perp).
\end{equation}

Remark \ref{beta>L} says exactly that 
\begin{equation}
\|III_2\|_{X^\sigma} \lesssim \frac{L^2}{\GG},
\end{equation}
which means that the contribution of $III_2$ is an acceptable error.

\medskip
\noindent
\underline{Step 3: discrete to continuous limit in $J$.}
We move on to analyzing $III_1$. For this, we define $N\overset{def}{=}L/\beta$ and write the sum: 
\begin{multline*}
\sum_{\substack{J \in \Z^2\\   visible}} g(K+\frac{\alpha}{L} J)\bar g(K+\frac{\alpha}{L}J+\frac{\beta}{L} J^\perp)g(K+\frac{\beta}{L}J^\perp) \\Ê
=\sum_{\substack{J' \in \Z_N^2\\  visible}} g(K+\frac{\alpha}{\beta} J')\bar g(K+\frac{\alpha}{\beta}J'+J'^\perp)g(K+J'^\perp).
\end{multline*}
The estimate on this sum is contained in the following lemma. 
\begin{lemma}[Co-prime equidistribution lemma]\label{equidistribution lemma}
Let $\sigma>2$. For all $B > 0$, if $g :\R^2\to \C$ satisfies the following bounds:
$$
\|g\|_{X^{\sigma+1}(\R^2)} +\|\nabla g\|_{X^{\sigma+1}(\R^2)}\leq B, 
$$
then for all $N > 1$, we have 
%Suppose that for some $\sigma>2$ and $\delta>0$
%$$\|\langle x\rangle^{\sigma+1}g\|_{L^\infty}+\|\langle x\rangle^{\sigma+1}\nabla g\|_{L^{\infty}}\leq B.$$
%Then,
\begin{equation}\label{equi}
\begin{split}
\mathcal E_N\overset{def}{=}&\left\|\frac{1}{N^2}\sum_{\substack{J \in \Z_N^2\\ J \; visible}} g(K+\frac{\alpha}{\beta} J)\bar g(K+\frac{\alpha}{\beta}J+J^\perp)g(K+J^\perp)\right. \\ 
&\qquad \qquad-\frac{1}{\zeta(2)} \left. \int_{\R^2}  g(K+\frac{\alpha}{\beta} z)\bar g(K+\frac{\alpha}{\beta} z+z^\perp)g(K+ z^\perp)dz\right\|_{X^\sigma(\R^2)} \lesssim  B^3\frac{1 + \log N}{N}. \quad
\end{split}
\end{equation}
\end{lemma}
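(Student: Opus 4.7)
\smallskip

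The strategy proceeds by M\"obius inversion in $d$ combined with quantitative Riemann sum approximation at scale $d/N$. Writing $\mathbf{1}_{\text{visible}}(J)=\sum_{d\mid \gcd(J)}\mu(d)$ for each $J\in (\Z^2)^*$, one has
$$
\frac{1}{N^2}\sum_{J\in \Z_N^2,\,\text{visible}} F(J) \;=\; \sum_{d\geq 1}\frac{\mu(d)}{N^2}\sum_{J\in(\Z^2)^*} F(dJ/N),
$$
where I denote $F(z)\overset{def}{=}g(K+\tfrac{\alpha}{\beta}z)\,\overline{g(K+\tfrac{\alpha}{\beta}z+z^\perp)}\,g(K+z^\perp)$. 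Combining with the classical identity $\zeta(2)^{-1}=\sum_d \mu(d)/d^2$ converts the desired error into a sum over $d$ of Riemann sum discrepancies,
$$
\mathcal E_N(K)=\sum_{d\geq 1}\mu(d)\Big(\frac{1}{N^2}\sum_{J\in(\Z^2)^*} F(dJ/N)-\frac{1}{d^2}\int_{\R^2} F(z)\,dz\Big).
$$

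For each $d$ I would estimate the inner discrepancy via the standard two-dimensional quadrature error
$$
\Bigl|\, h^2\!\!\sum_{J\in\Z^2} F(hJ) - \int_{\R^2}\! F\,\Bigr|\lesssim h\,\|\nabla F\|_{L^1(\R^2)},\qquad h=\tfrac{d}{N},
$$
plus an $O(N^{-2}\|F\|_\infty)$ correction to remove the $J=0$ term. Summing over $1\leq d\leq N$ produces the expected main contribution of order $\frac{\log N}{N}\|\nabla F\|_{L^1}+\frac{1}{N}\|F\|_\infty$, and this is where the $\log N$ factor in the conclusion originates. For $d>N$ the lattice spacing already exceeds $1$, so the two terms of the discrepancy must be bounded separately, using the $\langle \cdot\rangle^{-\sigma-1}$ pointwise decay of $F$ and $\nabla F$ to ensure that $\sum_{J\neq 0}|F(dJ/N)|$ is summable in $d$ down to $O(1/N)$.

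The crux -- and main obstacle -- is producing the $\langle K\rangle^{-\sigma}$ weight required by the $X^\sigma$ norm \emph{uniformly} in the ratio $\alpha/\beta\in (0,1]$. A naive bound by two of the three $g$-factors coupled with Cauchy--Schwarz generates a $\beta/\alpha$ Jacobian that ruins this uniformity. My way around this would be to invoke the Pythagorean identity
$$
|K|^2+\bigl|K+\tfrac{\alpha}{\beta}z+z^\perp\bigr|^2 = \bigl|K+\tfrac{\alpha}{\beta}z\bigr|^2+\bigl|K+z^\perp\bigr|^2,
$$
which follows from $\tfrac{\alpha}{\beta}z\perp z^\perp$ and forces $\max\bigl(|K+\tfrac{\alpha}{\beta}z|,\,|K+z^\perp|\bigr)\geq |K|/\sqrt 2$ at every $z$. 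Consequently one of the three $g$-factors in $F(z)$ is always pointwise $\lesssim B\langle K\rangle^{-\sigma-1}$ and may be pulled out; splitting $\R^2$ into the two regions according to which factor dominates and integrating the remaining two factors near their respective peaks ($z\approx -\tfrac{\beta}{\alpha}K$ and $z\approx K^\perp$) would then yield $\|F\|_{L^\infty}+\|F\|_{L^1}+\|\nabla F\|_{L^1}\lesssim B^3\langle K\rangle^{-\sigma}$ uniformly in $\alpha/\beta$. Plugging this into the discrepancy bounds closes the argument with the claimed rate $B^3(1+\log N)/N$.
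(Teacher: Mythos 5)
Your overall route is the same as the paper's: M\"obius inversion in $d$ (the paper's passage from $S$ to $D$), a split of the $d$-sum at $d\sim N$, the $\log N$ coming from the fine-lattice regime, and the Pythagorean relation to extract the $\langle K\rangle^{-\sigma}$ weight uniformly in $\alpha/\beta$. However, two of your steps do not hold as stated. First, the ``standard two-dimensional quadrature error'' $\bigl|h^2\sum_J F(hJ)-\int F\bigr|\lesssim h\|\nabla F\|_{L^1}$ is false in dimension two: take $F$ a bump of height $1$ and width $\epsilon\ll h$ centered at a lattice point; the left-hand side is $\sim h^2$ while the right-hand side is $\sim h\epsilon$. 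The correct substitute, which is what the paper proves in Lemma \ref{discrep lemma}(2), is a cell-by-cell comparison whose error is $h$ times a \emph{lattice sum} of pointwise majorants of $\nabla F$; bounding that lattice sum by $B^3\langle K\rangle^{-\sigma}$ uniformly in $h\le 1$ and in $\alpha/\beta$ uses the orthogonality relation together with the elementary lattice-sum Lemma \ref{elementary}, not merely $\|\nabla F\|_{L^1}$. This part is repairable with the tools you already invoke.

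The more serious gap is the coarse regime $d>N$. The quantities you propose to feed in, $\|F\|_{L^\infty}+\|F\|_{L^1}+\|\nabla F\|_{L^1}\lesssim B^3\langle K\rangle^{-\sigma}$ together with the decay $|F(z)|\lesssim B^3\langle z\rangle^{-\sigma-1}$, do not close the estimate: the $z$-decay alone gives $\sum_{d>N}N^{-2}\sum_{J\neq 0}|F(dJ/N)|\lesssim B^3/N$ \emph{without} the $\langle K\rangle^{-\sigma}$ weight; the $K$-weighted sup alone makes the sum over $d$ diverge; and even the combined bound $|F(z)|\lesssim B^3\min(\langle z\rangle,\langle K\rangle)^{-\sigma-1}$ loses a full factor of $|K|$ (count the points of $\tfrac{d}{N}\Z^2$ in $B(0,|K|)$ and sum over $N<d\le N|K|$). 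The missing idea is the localization of the enemy: for each sparse lattice $\tfrac{d}{N}\Z^2$ there is essentially one point, the one with $J\approx K^\perp$ so that $K+J^\perp\approx 0$ (the paper's $Q^\perp$), at which two of the three factors of $F$ are only $O(B)$; by the orthogonality relation the remaining factor there is still $\lesssim B\langle K\rangle^{-\sigma-1}$, such a point exists only for $N<d\lesssim N|K|$, and summing $\langle K\rangle^{-\sigma-1}/N^2$ over those $\lesssim N|K|$ values of $d$ gives exactly $\langle K\rangle^{-\sigma}/N$; away from this single point $F$ also decays in the local variable, so the rest of the coarse sum is harmless. This is precisely the paper's distinction between $D$ and $D^*$ and its term $\mathfrak D_2^{(2)}$; without it, your final sentence (``plugging this into the discrepancy bounds closes the argument'') does not go through.
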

We leave the proof of this lemma for later in order not to distract the reader. This allows us to write:

\begin{align*}
III_1&=\frac{2L^2}{\zeta(2)\GG} \sum_{1\leq \alpha \leq \beta \leq L} \frac{1}{\beta^2}\int_{\R^2} g(K+\frac{\alpha}{\beta} x)\bar g(K+\frac{\alpha}{\beta}x+x^\perp)g(K+x^\perp)\,dx+ \frac{2L^2}{\GG}\sum_{1\leq \alpha \leq \beta \leq L} \frac{1}{\beta^2}\mathcal E_{L/\beta}\\
&=IV_1+IV_2
\end{align*}
with 
\begin{align*}
\|IV_2\|_{X^\sigma}\lesssim \frac{L}{\GG}\sum_{1\leq \beta \leq L} (1+\log (L/\beta))\lesssim\frac{L^2}{\GG}.\end{align*}

\medskip
\noindent
\underline{Step 4: discrete to continuous limit in $\alpha$.}
In order to analyze $IV_1$, we view the $\alpha$ sum as a Riemann sum, which we will replace by the corresponding integral. For each $1\leq \beta \leq L$, denote:
\begin{align*}
\mathcal D_\beta \overset{def}{=}&\frac{1}{\beta}\sum_{1\leq \alpha \leq \beta} \int_{\R^2} g(K+\frac{\alpha}{\beta} x)\bar g(K+\frac{\alpha}{\beta}x+x^\perp)g(K+x^\perp)\,dx\\
&-\int_0^1\int_{\R^2} g(K+\lambda x)\bar g(K+\lambda x+x^\perp)g(K+x^\perp)\,dx\, d\lambda\\
&=\sum_{1\leq \alpha \leq \beta} \int_{\frac{\alpha-1}{\beta}}^{\frac{\alpha}{\beta}}\int_{\R^2}\left(g(K+\frac{\alpha}{\beta} x)-g(K+\lambda x)\right)\bar g(K+\frac{\alpha}{\beta}x+x^\perp)g(K+x^\perp)dx \,d\lambda\\
&+\sum_{1\leq \alpha \leq \beta} \int_{\frac{\alpha-1}{\beta}}^{\frac{\alpha}{\beta}}\int_{\R^2}g(K+\lambda x)\left(\bar g(K+\frac{\alpha}{\beta} x+x^\perp)-\bar g(K+\lambda x+x^\perp)\right)g(K+x^\perp)dx \,d\lambda.\\
\end{align*}
We only estimate the first term as the second is similar. It can be written
\begin{align*}
\sum_{1\leq \alpha \leq \beta} \int_{\frac{\alpha-1}{\beta}}^{\frac{\alpha}{\beta}}\int_{\lambda}^{\alpha/\beta}\int_{\R^2}x\cdot \nabla g (K+tx)\bar g(K+\frac{\alpha}{\beta}x+x^\perp)g(K+x^\perp)dx\,dt \,d\lambda.
\end{align*}

We will show that for each $1\leq \alpha\leq \beta$ and each $t\in [0,1]$, 
\begin{equation}\label{horse1}
\left\|\int_{\R^2}x\cdot\nabla g (K+tx)\bar g(K+\frac{\alpha}{\beta}x+x^\perp)g(K+x^\perp)\,dx\right\|_{X^\sigma}\lesssim 1.
\end{equation}

To see this we split into two cases: Either $|x|\leq 100|K|$, in which case we use the orthogonality relation
$$
|K|^2+|K+tx+x^\perp|^2=|K+tx|^2+|K+x^\perp|^2
$$
to obtain that either $|K+x^\perp|\geq|K|/2$ or $|K+tx|\geq |K|/2$. In both cases, we have since $|g(x)|+|\nabla g(x)|\lesssim \langle x\rangle^{-\sigma-1}$:
\begin{multline*}
\left| \int_{|x|\leq 100|K|}x\cdot\nabla g (K+tx)\bar g(K+\frac{\alpha}{\beta}x+x^\perp)g(K+x^\perp)\,dx\right|\\\lesssim \frac{|K|}{\langle K \rangle^{\sigma+1}}\int_{\R^2} \langle K+\frac{\alpha}{\beta}x+x^\perp\rangle^{-\sigma} \,dx\lesssim \langle K\rangle^{-\sigma},
\end{multline*}
which is \eqref{horse1}. In the case when $|x|\geq 100|K|$, then $|K+x^\perp|\geq |K|$ and $|K+\frac{\alpha}{\beta}x+x^\perp|\geq \frac12|\frac{\alpha}{\beta}x+x^\perp|\geq \frac12|x|$, and hence the estimate \eqref{horse1} follows directly.

With \eqref{horse1} in hand, Minkowski's inequality gives
\begin{equation}\label{bound on D_beta}
\|\mathcal D_\beta\|_{X^\sigma}\lesssim \frac{1}{\beta}.
\end{equation}

Consequently,
\begin{multline*}
IV_1=\frac{2L^2}{\zeta(2)\GG} \sum_{1\leq \beta \leq L} \frac{1}{\beta}\int_0^1\int_{\R^2} g(K+\lambda x)\bar g(K+\lambda x+x^\perp)g(K+x^\perp)\,dx\, d\lambda \\+\frac{2L^2}{\zeta(2)\GG} \sum_{1\leq \beta \leq L} \frac{1}{\beta}\mathcal D_{\beta}(K)
\end{multline*}
and hence 
\begin{align*}
IV_1
&=\frac{2\log^D(L)L^2}{\zeta(2)\GG} \int_0^1\int_{\R^2} g(K+\lambda x)\bar g(K+\lambda x+x^\perp)g(K+x^\perp)\,dx\, d\lambda +\frac{2L^2}{\zeta(2)\GG} \sum_{1\leq \beta \leq L} \frac{1}{\beta}\mathcal D_{\beta}(K)\\
&=V_1 +V_2,
\end{align*}
where $\log^D n = \sum_{k=1}^n \frac{1}{k}$, and where $V_2$ can be bounded as explained above by
\begin{equation}\label{bound on V_2}
\|V_2\|_{X^\sigma} \lesssim \frac{L^2}{\GG}. %(\|\langle x \rangle^{\kappa+1} \nabla g\|_{L^\infty}+\|\langle x \rangle^{\kappa} g\|_{L^\infty})
\end{equation}
Finally, using that $|\log^D L - \log L| = O(1)$, the definition of $\GG$, and Theorem~\ref{tri est theo}, we obtain
$$
V_1 = \int_0^1\int_{\R^2} g(K+\lambda x)\bar g(K+\lambda x+x^\perp)g(K+x^\perp)\,dx\, d\lambda + O_{X^\sigma}\left(\frac{1}{\log L}\right)
$$
(with the notation $A = B + O_{X^\sigma}(C)$ if $\|A - B \|_{X^\sigma} \lesssim C$).

\medskip
\noindent
\underline{Step 5: conclusion.} Combining all the above, we get that if $\GG=\frac{2\log^D(L)L^2}{\zeta(2)}$ then:
$$
II_1= \int_0^1 \int_{\R^2} g(K+\lambda z) \bar g(K+\lambda z+z^\perp)g(K+z^\perp) dz\, d\lambda +O_{X^\sigma}\left(\frac{1}{\log L}\right).
$$
The same analysis for $II_2$ gives (recalling that the only difference with the above is the sign of $\alpha$)
\begin{align*}
II_2=\int_0^1 \int_{\R^2} g(K-\lambda z) \bar g(K-\lambda z+z^\perp)g(K+z^\perp) dz\, d\lambda +O_{X^\sigma}\left(\frac{1}{\log L}\right)\\
=\int_{-1}^0 \int_{\R^2} g(K+\lambda z) \bar g(K+\lambda z+z^\perp)g(K+z^\perp) dz\, d\lambda +O_{X^\sigma}\left(\frac{1}{\log L}\right),
\end{align*}
and as a final result we get that:
\begin{equation}\label{dtcfinal}
\left\|\frac{1}{\GG}\sum_{K_1 \cdot K_3 = 0} g(K+K_1)\bar g(K+K_1+K_2)g(K+K_2)- \mathcal T(g,g,g)(K)\right\|_{X^\sigma} \lesssim \frac{1}{\log L},\\
\end{equation}
which finishes the proof.

% \marginpar{EF: In this section, big notation change}
\subsection{Proof of the co-prime equidistribution Lemma \ref{equidistribution lemma}}\label{co-prime equidistribution subsection} We start by recalling some elementary number theory.
%\marginpar{PG $\to$ ZH: to unify notations wouldn't it be better to denote $\mathbb{Z}_{1/\rho}^2$ instead of $\Lambda(\rho)$? I think this is confusing for the reader. (if you want to exclude zero, I suggest adding a star)}

\medskip
\noindent
\underline{Step 0: number theoretic preliminaries.} For any $N > 0$,  define for any subset $P$ of $\R^2$ the function:
$$
g_{P}(N)=\#\{z \in (\Z^2_N)^*\,\cap \,P\}.
$$
Note that for reasonably nice regions $P$, the area of $P$ is equal to the quantity:
$$
V_P=\lim_{N \to +\infty} \frac{g_P(N)}{N^2}.
$$
Recall that a lattice point $z\in (\Z^2_N)^*$ is \emph{visible} (from $0$) if the segment $[0z]$ does not intersect the lattice $\Z^2_N$ apart from the endpoints $0$ and $z$; equivalently, $z=N^{-1}(p,q)$ where $(p,q)$ are co-prime (see Definition \ref{defvisible}). This definition allows one to define the following counter-part of $g_P$:
\begin{equation*}
f_P(N)\overset{def}{=}\#\{z \in \Z^2_N \cap P: z \hbox{ is visible}\}.
\end{equation*}
It is a classical result in analytic number theory \cite{HW} that
\begin{equation}
\label{rossignol}
\lim_{N \to +\infty} \frac{f_P(N)}{N^2} = \frac{V_P}{\zeta (2)},
\end{equation}
where $\zeta(2)=\frac{\pi^2}{6}$ is the Riemann zeta function at $2$. The generalization of this result from characteristic functions to more general functions, along with the relevant discrepancy or error estimates, will be a byproduct of the analysis in this section, and we include it at the end in Corollary \ref{discrep corollary}. 

To prove Lemma  \ref{equidistribution lemma}, we need to estimate the discrepancy between the equidistribution sum in \eqref{equi} and the integral in the strong norm $X^\sigma(\Z^2_L)$. The proof of (\ref{rossignol}), which is actually the prototype for our proof, is based on an inversion formula for the multiplicative M\"obius function $\mu(m): \N \to \Z$, whose definition we recall first: 
\begin{equation}\label{def of mob}
\mu(k)=\begin{cases}
1 \quad \hbox{if } k=1,\\
(-1)^\kappa \quad\hbox{if $k=p_1 p_2 \ldots p_\kappa$ and all the primes $p_1, \ldots, p_\kappa$ are different},\\
0 \quad \hbox{if $k$ is not square-free}.\\
\end{cases}
\end{equation}

Recall that the M\"obius function is multiplicative (i.e. $\mu(ab)=\mu(a)\mu(b)$ whenever $a$ and $b$ are co-prime) and satisfies the following fundamental inversion formula proved by M\"obius \cite{HW}:
\begin{equation*}
g(n)=\sum_{d|n}f(d) \Rightarrow f(n)=\sum_{d|n}\mu(\frac{n}{d}) g(d),
\end{equation*}
from which follows:
\begin{lemma}[\cite{HW}]\label{Mobinv2} Suppose that $f: \N \to \C$ satisfies, for all $x \in \N$, $\sum_{m,n} |f(mn x)|=\sum_{k}d(k) |f(kx)|<\infty$, where $d(k)$ is the number of divisors of $k$. Then for any $x\in \N$, there holds
\begin{equation}
g(x)=\sum_{m=1}^\infty f(mx) \Leftrightarrow f(x)=\sum_{n=1}^\infty\mu(n) g(nx).
\end{equation}
\end{lemma}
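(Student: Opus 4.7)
The proof hinges on the fundamental M\"obius identity
\begin{equation*}
\sum_{d \mid k} \mu(d) = \begin{cases} 1 & \text{if } k = 1,\\ 0 & \text{if } k > 1, \end{cases}
\end{equation*}
which follows directly from the definition \eqref{def of mob}: for $k = p_1^{a_1} \cdots p_r^{a_r}$ with $r \geq 1$, the only divisors contributing a nonzero value of $\mu$ are square-free products $p_{i_1} \cdots p_{i_j}$, and their contributions sum to $\sum_{j=0}^{r} \binom{r}{j}(-1)^j = (1-1)^r = 0$. The other key ingredient is Fubini's theorem for double series, whose applicability is guaranteed precisely by the hypothesis $\sum_{m,n \geq 1} |f(mnx)| = \sum_{k} d(k) |f(kx)| < \infty$ (each $k$ is hit by exactly $d(k)$ pairs $(m,n)$ with $mn = k$). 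The whole proof is then a formal manipulation: express an iterated series as a double series by Fubini, and then reindex by the product $k=mn$ so that the inner sum becomes $\sum_{d \mid k} \mu(d)$ and collapses to a single term.

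For the forward implication $(\Rightarrow)$, assume $g(x) = \sum_{m \geq 1} f(mx)$. For each $n$, the series defining $g(nx) = \sum_m f(mnx)$ converges absolutely by the hypothesis. Hence the iterated sum
\begin{equation*}
\sum_{n=1}^\infty \mu(n)\, g(nx) = \sum_{n=1}^\infty \sum_{m=1}^\infty \mu(n)\, f(mnx)
\end{equation*}
is an absolutely convergent double series, to which Fubini applies. Grouping by $k = mn$ gives
\begin{equation*}
\sum_{n=1}^\infty \mu(n)\, g(nx) = \sum_{k \geq 1} f(kx) \sum_{d \mid k} \mu(d) = f(x),
\end{equation*}
by the M\"obius identity.

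For the reverse implication $(\Leftarrow)$, assume $f(x) = \sum_{n \geq 1} \mu(n)\, g(nx)$, understood as an absolutely convergent series for every $x$ (this is the natural way to read the identity, and one can in fact deduce $\sum_k d(k)|g(kx)| < \infty$ from the corresponding bound on $f$ by substituting the inversion formula back into itself). Substituting into $\sum_m f(mx)$ and interchanging the order of summation gives
\begin{equation*}
\sum_{m=1}^\infty f(mx) = \sum_{m=1}^\infty \sum_{n=1}^\infty \mu(n)\, g(mnx) = \sum_{k \geq 1} g(kx) \sum_{d \mid k} \mu(d) = g(x),
\end{equation*}
again by reindexing $k = mn$ and invoking the M\"obius identity.

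The main (and only) nontrivial point is the justification of Fubini in the reverse direction, since the absolute-summability hypothesis is phrased in terms of $f$ rather than $g$. The cleanest way to handle this is to verify that the double sum $\sum_{m,n}|\mu(n)||g(mnx)|$ is finite: since $\mu$ is supported on square-free integers, this sum is bounded by $\sum_{k} d(k)|g(kx)|$, which can be controlled by plugging $f(y) = \sum_{n}\mu(n)g(ny)$ back into the hypothesis on $f$ and using the M\"obius identity in the reverse direction to express $g$ as an absolutely convergent combination of translates of $f$. With that in place, both directions reduce to one-line applications of the M\"obius identity.
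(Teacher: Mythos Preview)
The paper does not give its own proof of this lemma; it is quoted from Hardy--Wright \cite{HW}, with the remark that it ``follows from'' the finite M\"obius inversion formula stated just before. Your argument via the identity $\sum_{d\mid k}\mu(d)=[k=1]$ together with Fubini is exactly the standard proof in \cite{HW}, so there is nothing to compare.

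One remark on the reverse implication. Your justification of Fubini there is circular: you propose to bound $\sum_k d(k)|g(kx)|$ by ``plugging $f(y)=\sum_n\mu(n)g(ny)$ back into the hypothesis on $f$ and using the M\"obius identity in the reverse direction,'' but the M\"obius identity in the reverse direction is precisely what you are trying to prove. A clean way around this is to define $h(x)\overset{def}{=}\sum_{m\geq 1} f(mx)$ (absolutely convergent by the hypothesis on $f$), apply the forward direction to get $f(x)=\sum_n \mu(n)h(nx)$, and then observe that $\sum_m f(mx)=h(x)$ tautologically; the content of $(\Leftarrow)$ is then that any $g$ satisfying $f(x)=\sum_n\mu(n)g(nx)$ must coincide with this $h$, which requires an additional absolute-summability assumption on $g$ (implicit in Hardy--Wright's formulation). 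In any case, only the forward direction is used in the paper (to derive \eqref{S to D}), and that part of your argument is complete and correct.
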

The final property of $\mu(n)$ that will be needed in the proof is its connection to the Riemann zeta function defined for $\operatorname{Re}s>1$ by
$$
\zeta(s)=\sum_{n \in \N} \frac{1}{n^s}.
$$
Applying Lemma \ref{Mobinv2} with $f(x)=x^{-s}$ and $g(x)=\zeta(s)x^{-s}$, one gets that
\begin{equation*}
\frac{1}{\zeta (s)}=\sum_{n = 1}^{\infty} \frac{\mu(n)}{n^s}. 
\end{equation*}

\bigskip
\noindent
\underline{Step 1: a new formulation for \eqref{equi}.} 
Again we assume without any loss of generality that $B=1$. Given $u:\R^2\to \C$ and such that $\sup_{N > 1} N^{-2} \sum_{z\in \Z^2_N}|u(z)|<\infty$, we define the following two sums:
\begin{equation}\label{S and D}
S(u,N)\overset{def}{=}\sum_{\substack{z\in \Z^2_N \\visible}} u(z) \quad \mbox{and} \quad D(u,N)=\sum_{z\in (\Z^2_N)^*}u(z).
\end{equation}
Note that if $u \in X^\kappa(\R^2)$ for some $\kappa>2$, then for any $N > 0$, we have that 
\begin{equation}\label{bound on D}
N^{-2} D(|u|, N)\lesssim 1
\end{equation}
We will first compare the discrepancy between $S(u,N)$ and $D(u,N)$. 

Notice that every $z \in \Z^2_N$ is a visible point for exactly one lattice in the family $\{\Z^2_{\frac{N}{m}}\}_{m \in \N}$. As a result, we have
$$
\sum_{z\in \Z^2_N} u(z) =\sum_{m=1}^\infty \sum_{\substack{z\in \Z^2_{\frac{N}m} \\visible}} u(z)
$$
and hence for all $N > 0$, 
\begin{equation*}
D(u,N)=\sum_{m=1}^\infty S(u,\frac{N}{m}).
\end{equation*}
Consequently, applying Lemma \ref{Mobinv2} to the functions $f(y) = D(u,\frac{N}y)$ and $g(y) = S(u,\frac{N}{y})$ at $x=1$ we obtain for all $N > 0$, 
\begin{equation}\label{S to D}
S(u,N)=\sum_{n=1}^\infty \mu(n)D(u, \frac{N}{n}), 
\end{equation}
provided that $\sum_{m,n \in \N}|S(u,\frac{N}{mn})| <\infty$. But this follows from \eqref{bound on D} as soon as $u \in X^\kappa(\R^2)$ with $\kappa > 2$, and the fact that $S(u, \sigma) \leq D(|u|, \sigma)$ for any $\sigma>0$.

%Notice that the estimate \eqref{equi} can be rephrased as follows: set 
With these notations, the error term in the estimate \eqref{equi} can be written as follows% \black 
\begin{equation*}
\mathcal E_N(K)\overset{def}{=}\frac{1}{N^2}\sum_{z\in \Z^2_N\, visible} u_{K}(z)  
-\frac{1}{\zeta(2)} \int_{\R^2}  u_K(z) dz=\frac{1}{N^2} S(u_K, N) -\frac{1}{\zeta(2)} \int_{\R^2}  u_K(z) dz,\end{equation*}
where 
\begin{equation}
\label{eq:uK}
u_K(z)=g(K+\frac{\alpha}{\beta} z)\bar g(K+\frac{\alpha}{\beta} z+z^\perp)g(K+ z^\perp).
\end{equation}
%The estimate \eqref{equi} is then equivalent to
%\begin{equation}\label{equi0.2}
%\|\mathcal E_N(K)\|_{X^\sigma}\lesssim \rho(1+\log \rho^{-1}).
%\end{equation}
 Now by \eqref{S to D}
\begin{align*}
\frac{1}{N^2}S(u_K, N) &=\sum_{n=1}^\infty \frac{\mu(n)}{n^2}\Big(\frac{n^2}{N^2}D(u_K,\frac{N}{n})\Big)\\
&=\sum_{n=1}^\infty \frac{\mu(n)}{n^2}\int_{\R^2} u_K(z) \,dz +\sum_{n=1}^\infty\frac{\mu(n)}{n^2}\left( \frac{n^2}{N^2}D(u_K,\frac{N}{n})-\int_{\R^2} u_K(z) \,dz\right)\\
&=\frac{1}{\zeta(2)}\int_{\R^2} u_K(z) \,dz +\sum_{n=1}^\infty\frac{\mu(n)}{n^2}\left( \frac{n^2}{N^2}D(u_K,\frac{N}{n}) -\int_{\R^2} u_K(z) \,dz\right).\\
\end{align*}

As a result, we have that 
\begin{align*}
\rho^2S(u_K, \rho) -\frac{1}{\zeta(2)}\int_{\R^2} u_K(z) \,dz &=\sum_{n=1}^\infty\frac{\mu(n)}{n^2}\left( \frac{n^2}{N^2}D(u_K,\frac{N}{n})-\int_{\R^2} u_K(z) \,dz\right)\\
&\overset{def}{=}\mathfrak D_1+\mathfrak D_2.
\end{align*}
with
\begin{align*}
&\mathfrak D_1 =\sum_{n=1}^{[N]}\frac{\mu(n)}{n^2}\left( \frac{n^2}{N^2}D(u_K,\frac{N}{n}) -\int_{\R^2} u_K(z) \,dz\right)\quad \mbox{and}\\
&\mathfrak D_2 = \sum_{n=[N]+1}^\infty \frac{\mu(n)}{n^2}\left( \frac{n^2}{N^2}D(u_K,\frac{N}{n}) -\int_{\R^2} u_K(z) \,dz\right).\\
\end{align*}

\medskip
\noindent
\underline{Step 2: estimates of $\mathfrak D_1$ and $\mathfrak D_2$.}
The estimate of $\mathfrak D_1$ and $\mathfrak D_2$ relies on Lemma \ref{discrep lemma} below, whose proof will be given in the next paragraph. 

For any given $R > 0$ and $K \in \R^2$, there exists a unique $r \in \mathcal{B}(\frac{1}{R})$ and 
$Q \in \Z_R^2$ such that $K = r + Q$.
Define 
\begin{equation}\label{def of D*}
D^*(u_K, R)\overset{def}{=}\sum_{M \in (\Z_R^2)^* \setminus \{Q^\perp\}} u_K(M).
\end{equation}
Note that $D^*(u_K, R)=D(u_K, R)$ if $Q=0$. In particular, if
\begin{equation}
\label{tgv}
 |K| \leq  \frac{1}{2R} \quad \Longrightarrow \quad D^*(u_K, R)=D(u_K, R). 
\end{equation}
\begin{lemma}\label{discrep lemma}
With the previous notations, we have 
\begin{enumerate}
\item If $R \leq 1$, then 
\begin{equation}\label{aimequi1}
\|R^{-2}D^*(u_K, R) -\int_{\R^2}u_K(z) \, dz\|_{X^\sigma(\R^2)}\lesssim 1.
\end{equation}
\item If $R\geq 1$, then
\begin{equation}\label{aimequi2}
\|R^{-2} D(u_K, R) -\int_{\R^2}u_K(z) \, dz\|_{X^\sigma(\R^2)}\lesssim \frac{1}{R}.
\end{equation}
%\begin{equation}\label{aimequi}
%\|\nu^2 D(u_K, \nu) -\int_{\R^2}u_K(z) \, dz\|_{X^\sigma}\lesssim \begin{cases} \nu \quad \hbox{if } \nu<1\\
%																																1\quad \hbox{if } \nu>1.
%\end{cases}
%\end{equation}
\end{enumerate}
\end{lemma}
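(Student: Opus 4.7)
The proof of Lemma \ref{discrep lemma} will rely critically on the orthogonality identity $|K|^2 + |K + \frac{\alpha}{\beta}z + z^\perp|^2 = |K + \frac{\alpha}{\beta}z|^2 + |K + z^\perp|^2$, which holds because $z \perp z^\perp$. This forces at least one of $|K + \frac{\alpha}{\beta}z|$ or $|K + z^\perp|$ to exceed $|K|/\sqrt{2}$, so using the pointwise bounds $|g|, |\nabla g| \lesssim \langle \cdot \rangle^{-\sigma-1}$ I can always extract a factor $\langle K\rangle^{-\sigma-1}$ from the triple product defining $u_K$ (or its gradient). A short convolution-type estimate using $\sigma + 1 > 2$ then delivers the uniform bound
\[
\Big\| \int_{\R^2} \bigl(|u_K(z)| + |\nabla u_K(z)|\bigr)\,dz \Big\|_{X^\sigma(\R^2)} \lesssim 1,
\]
which handles the continuous integral in both parts of the lemma.

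For part (2), with $R \geq 1$, I would run a standard Riemann sum argument. Partitioning $\R^2$ into cubes $M + \mathcal{B}(1/R)$ of side $1/R$ indexed by $M \in \Z_R^2$, the difference $R^{-2}D(u_K,R) - \int u_K\,dz$ decomposes into
\[
\sum_{M \in (\Z_R^2)^*} \int_{M + \mathcal{B}(1/R)} \bigl(u_K(M) - u_K(\zeta)\bigr)\,d\zeta \;-\; \int_{\mathcal{B}(1/R)} u_K(\zeta)\,d\zeta,
\]
the second term accounting for the excluded $M=0$ cell. By the mean value theorem, the first sum is pointwise bounded by $R^{-1}\int_{\R^2}|\nabla u_K(z)|\,dz$, which has $X^\sigma$ norm $\lesssim 1/R$ by the preliminary step. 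The leftover cell integral is $\lesssim R^{-2}\sup_{\mathcal{B}(1/R)} |u_K|$, whose $X^\sigma$ norm is $\lesssim R^{-2} \leq 1/R$ since $R \geq 1$.

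For part (1), with $R \leq 1$, the target bound of size $1$ (rather than vanishing with $R$) is loose enough that I can estimate $R^{-2} D^*(u_K, R)$ and $\int u_K\,dz$ separately and invoke the triangle inequality. The second is already controlled. For the first, the exclusion $M \neq Q^\perp$ is the key: writing $K = r + Q$ with $r \in \mathcal{B}(1/R)$ and $Q \in \Z_R^2$, we have $K + M^\perp = r + (Q + M^\perp)$, and $Q + M^\perp \in \Z_R^2 \setminus \{0\}$ for $M \in (\Z_R^2)^* \setminus \{Q^\perp\}$, so $|K + M^\perp| \geq 1/R - \sqrt{2}/(2R) \gtrsim 1/R$. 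This yields the lattice-sparseness gain $\langle K + M^\perp\rangle^{-\sigma-1} \lesssim R^{\sigma+1}$ at every surviving $M$. Combining it with the orthogonality identity to simultaneously extract the factor $\langle K\rangle^{-\sigma-1}$, and summing the remaining two decay factors over $M \in \Z_R^2$ (a convergent sum bounded by a constant uniform in $R \leq 1$, since $\Z_R^2$ is then coarser than the unit lattice and $\sigma + 1 > 2$), I aim at the bound $|D^*(u_K, R)| \lesssim R^{\sigma+1} \langle K\rangle^{-\sigma-1}$, whose $X^\sigma$ norm, after the $R^{-2}$ prefactor, is $\lesssim R^{\sigma-1} \lesssim 1$ for $R \leq 1$, $\sigma > 2$.

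The main technical obstacle will be making the combined use of orthogonality and lattice-sparseness work in all intermediate regimes of $\langle K\rangle$ relative to $1/R$, since the factor $\langle K + M^\perp\rangle^{-\sigma-1}$ must be split between the two available bounds $\min(\langle K\rangle^{-\sigma-1}, R^{\sigma+1})$ depending on the position of $M$. This mirrors exactly the Pythagorean case analysis (Cases 1, 2 and Subcases 2(a), 2(b)) carried out in the proof of Theorem \ref{tri est theo}, applied to the rectangle with vertices $K$, $K + \tfrac{\alpha}{\beta}M$, $K + \tfrac{\alpha}{\beta}M + M^\perp$, $K + M^\perp$: I expect essentially to re-run that argument in the present context, with the new ingredient being the $R^{\sigma+1}$ factor harvested from the sparseness of $\Z_R^2$ for $R \leq 1$.
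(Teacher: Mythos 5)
Your part (2) and your bound on the continuous integral are essentially the paper's own argument: a Riemann-sum comparison over the boxes of side $1/R$, the mean value theorem producing the factor $1/R$ against a gradient envelope, and then the orthogonality relation together with $|g|,|\nabla g|\lesssim \langle\cdot\rangle^{-\sigma-1}$ to show that the envelope, summed or integrated, is $\lesssim\langle K\rangle^{-\sigma}$; your explicit treatment of the $M=0$ cell (giving $R^{-2}\le R^{-1}$) is correct. Likewise, your key observation for part (1) — that excluding $M=Q^\perp$ forces $|K+M^\perp|\gtrsim 1/R$ — is exactly the mechanism the paper exploits.

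The gap is that in part (1) you stop precisely at the decisive step, and the shortcut you propose in its place does not work as stated. With three factors of decay $\langle\cdot\rangle^{-\sigma-1}$ you cannot in general extract $R^{\sigma+1}$ pointwise from $\langle K+M^\perp\rangle^{-\sigma-1}$, extract $\langle K\rangle^{-\sigma-1}$ from the orthogonality-large vertex, and still have ``two remaining decay factors'' to sum; moreover the summation over $M$ can never be carried by the factor $\langle K+\frac{\alpha}{\beta}M\rangle^{-\sigma-1}$, since its argument moves on the lattice $\frac{\alpha}{\beta}\Z^2_R$, whose spacing $\frac{\alpha}{\beta R}$ can be arbitrarily small (the lemma must hold for all $1\le\alpha\le\beta$ independently of $R$), so the parenthetical ``uniformly convergent since $\Z^2_R$ is coarser than the unit lattice'' is not available there. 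The conflict case you flag — orthogonality forcing the large vertex to be $K+M^\perp$ itself — is the entire content of part (1), and it is resolved much more cheaply than by re-running the proof of Theorem \ref{tri est theo}: one only needs $R^{-2}|D^*(u_K,R)|\lesssim\langle K\rangle^{-\sigma}$, not your stronger target $R^{\sigma+1}\langle K\rangle^{-\sigma-1}$. Split according to whether $|K+\frac{\alpha}{\beta}M|\ge\frac{|K|}{100}$ or not. In the first case absorb the weight $\langle K\rangle^{\sigma}$ into that factor; in the second case note $|M|\ge|\frac{\alpha}{\beta}M|\ge\frac{99}{100}|K|$, hence $|K+\frac{\alpha}{\beta}M+M^\perp|\ge\frac{98}{100}|K|$, and absorb the weight into that (opposite) factor instead. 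In both cases bound the spectator factor by $1$ and estimate
$$
\sum_{M\in(\Z^2_R)^*\setminus\{Q^\perp\}}\langle K+M^\perp\rangle^{-\sigma}\le\sum_{J\in(\Z^2_R)^*}\langle r+J^\perp\rangle^{-\sigma}\lesssim R^{\sigma}
$$
by Lemma \ref{elementary} (this is where $M\ne Q^\perp$ enters), so that after multiplying by $R^{-2}$ one gets $R^{\sigma-2}\le 1$. No splitting of any single factor between the scales $\langle K\rangle$ and $1/R$, and no analogue of Subcases 2(a)--2(b), is needed.
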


To estimate $\mathfrak D_1$, we use the second part this Lemma (with $R = N/n \geq 1$) to get that
\begin{align*}
\|\mathfrak D_1\|_{X^\sigma(\R^2)} \lesssim \sum_{n=1}^{[N]}\frac{1}{n^2} \frac{n}{N}\lesssim \frac{1 + \log N}{N}\end{align*}
which is acceptable. 

To estimate $\mathfrak D_2$, we must isolate from the sum defining $D(u_K, \frac{N}{n})$ the problematic point $Q^\perp$ (depending on $K$ and $R=\frac{N}{n}$) and define $D^*(u_K, \frac{N}{n})$ as in \eqref{def of D*}. Using \eqref{tgv} with $R = N/n \leq 1$, we can write 
\begin{align*}
\mathfrak D_2=&\sum_{n=[N]+1}^\infty \frac{\mu(n)}{n^2}\left( \frac{n^2}{N^2} D^*(u_K,\frac{N}{n}) -\int_{\R^2} u_K(z) \,dz\right)+\frac{1}{N^2}\sum_{n=[N]+1}^{2[N|K|]+1}\mu(n) u_K(Q^\perp)\\
\overset{def}{=} &\mathfrak D_2^{(1)}+\mathfrak D_2^{(2)}, 
\end{align*}
%Note that the point $Q$ in the second sum is defined with respect to $R = N/n$. 

The estimate on $\mathfrak D_2^{(1)}$ follows from the first part of Lemma \ref{discrep lemma} below, which gives that
$$
\|\mathfrak D_2^{(1)}\|_{X^\sigma(\R^2)}\lesssim \sum_{n=[N]+1}^\infty \frac{1}{n^2}\lesssim \frac{1}{N}.
$$

To estimate the contribution of $\mathfrak D_2^{(2)}$, we first note that
$$
u_K(Q^\perp)=g(r+\frac{\alpha}{\beta}Q^\perp+Q)\overline g(r+\frac{\alpha}{\beta}Q^\perp)g(r)
$$ 
and hence due to the orthogonality relation
$$
|K|^2+|r+\frac{\alpha}{\beta}Q^\perp|^2=|r|^2+|r+\frac{\alpha}{\beta}Q^\perp+Q|^2
$$
we get that $|r+\frac{\alpha}{\beta}Q^\perp+Q|\geq \frac{|K|}{2}$ or $|r| \geq \frac{|K|}{2}$. 
%(recall that $|r|\leq 10|r+\frac{\alpha}{\beta}Q^\perp+Q|$). 
Using the fact that $g\in X^{\sigma+1}$, we can estimate
$$
|\mathfrak D_2^{(2)}|\lesssim \frac{1}{N^2}\sum_{n=[N]+1}^{2[N|K|]+1}\langle K \rangle^{-\sigma-1}\lesssim \frac{1}{N^2}\frac{N|K|}{\langle K \rangle^{\sigma+1}}\lesssim \frac{1}{N \langle K \rangle^{\sigma}},
$$
and consequently we have that 
$$
\|\mathfrak D_2^{(2)}\|_{X^\sigma(\R^2)}\lesssim \frac{1}{N}
$$
as needed. This finishes the proof of Lemma \ref{equidistribution lemma}, modulo Lemma \ref{discrep lemma} that we now prove.

%%%%%%%%%%%%%%%%%%
\medskip
\noindent
\underline{Step 3: Proof of Lemma \ref{discrep lemma}.}
We start with part (1). In this case $R \leq 1$ and we will show that \begin{equation}\label{disc0.1}
\|R^{-2} D^*(u_K, R)\|_{X^\sigma(\R^2)}+ \|\int_{\R^2}u_K(z)\|_{X^\sigma(\R^2)} \lesssim 1. 
\end{equation}
Recall that we have written $K=r+Q$ with $Q \in \Z^2_R$ and $r\in \mathcal B(\frac{1}{R})$, and defined $D^*(u_K, R)$ so that:
\begin{align}
\frac{1}{R^2}D^*(u_K, R)=&\frac{1}{R^2}\sum_{M \in (\Z^2_R)^*\setminus \{Q^\perp\}}g(K +\frac{\alpha}{\beta} M)\overline{g}(K+\frac{\alpha}{\beta} M +M^\perp) g(K+M^\perp).\label{sumJnotM}%\\
%&+\nu^2 g(K +\frac{\alpha}{\beta} Q^\perp)\overline{g}(K+\frac{\alpha}{\beta} Q^\perp -Q) g(K-Q)\label{JequalM}
\end{align}

The bound on \eqref{sumJnotM} follows by splitting into two cases: either $|K+\frac{\alpha}{\beta} M|\geq \frac{|K|}{100}$ or $|K+\frac{\alpha}{\beta}M|\leq \frac{|K|}{100}$. The contribution of the first case ($|K+\frac{\alpha}{\beta} M|\geq \frac{|K|}{100}$) to the $X^\sigma$ norm of \eqref{sumJnotM} can be bounded by 
\begin{multline*}
\sup_{K \in \R^2} \frac{1}{R^2} \sum_{M \in \Z^2_R\setminus \{Q^\perp\}}\langle K+\frac{\alpha}{\beta} M +M^\perp\rangle^{-\sigma}\langle K+M^\perp\rangle^{-\sigma} \\
\lesssim \sup_{K \in \R^2} \frac{1}{R^2} \sum_{M \in \Z^2_R\setminus \{Q^\perp\}}\langle K+M^\perp\rangle^{-\sigma}\lesssim 1,
\end{multline*}
where we have used Lemma \ref{elementary} (in the case $\alpha = 0$, $\beta = 1$ and $L \leq 1$) and the fact that $M \neq Q^\perp$ in the last step. 

In the case when $|K+\frac{\alpha}{\beta} M|\leq \frac{|K|}{100}$, we can assume that $|K|\geq 100$ since otherwise the same argument as in the case $|K+\frac{\alpha}{\beta} M| > \frac{|K|}{100}$ applies. But $|K+\frac{\alpha}{\beta}M|\leq \frac{K}{100}$ implies that $|M|\geq |\frac{\alpha}{\beta}M|\geq \frac{99}{100}|K|$ (recall that $\alpha \leq \beta$ in \eqref{eq:uK}). This also implies that $|K+\frac{\alpha}{\beta}M +M^\perp|\geq |M^\perp|-|K+\frac{\alpha}{\beta} M|\geq \frac{98}{100}|K|$ and consequently we have that the contribution of the second case to the $X^\sigma$ norm of \eqref{sumJnotM} can be bounded by
$$
\sup_{K \in \R^2} \frac{1}{R^2}\sum_{M \in \Z^2_R\setminus \{Q^\perp\}}\langle K+\frac{\alpha}{\beta} M \rangle^{-\sigma}\langle K+M^\perp\rangle^{-\sigma}\lesssim\sup_{K \in \R^2} \frac{1}{R^2} \sum_{M \in \Z^2_R\setminus \{Q^\perp\}}\langle K+M^\perp\rangle^{-\sigma}\lesssim 1,
$$
where we used again Lemma \ref{elementary} and the fact that $M\neq Q^\perp$. In conclusion, we have that
$$
\|\eqref{sumJnotM}\|_{X^\sigma}\lesssim 1.
$$
As a result,
\begin{equation}\label{boundsigmaL1}
\|R^{-2} D^*(u_K, R)\|_{X^\sigma}\lesssim 1
\end{equation}
The estimate on $\int_{\R^2}u_K(z)dz$ is similar to (even simpler than) that performed above for the discrete sum, and in view of \eqref{eq:uK}, one gets the bound 
$$
\left\|\int_{\R^2}u_K(z)\,dz \right\|_{X^\sigma}\lesssim 1,
$$
which finishes the proof of \eqref{aimequi1}.

\medskip

Now we move on to proving Part (2). Here $R \geq 1$. Let $\mathcal B_M(\frac{1}{R})$ denote the box $M+[0, \frac1R)^2$. Then
\begin{align}
&\frac{1}{R^2} D(u_K, R) -\int_{\R^2}u_K(z) \, dz=\sum_{M \in \Z^2_R} \int_{\mathcal B_M(\frac{1}{R})}[g(K+\frac{\alpha}{\beta}M) \bar g(K+\frac{\alpha}{\beta}M+M^\perp)g(K+M^\perp)\nonumber\\
&-g(K+\frac{\alpha}{\beta}z) \bar g(K+\frac{\alpha}{\beta}z+z^\perp)g(K+z^\perp)]\,dz\nonumber\\
=& \sum_{M\in \Z_R^2}\int_{\mathcal B_M(\frac{1}{R})}\left[g(K+\frac{\alpha}{\beta}M) -g(K+\frac{\alpha}{\beta}z)\right]\bar g(K+\frac{\alpha}{\beta}M+M^\perp)g(K+M^\perp)dz \label{eqeq0.01}\\
&+\sum_{M\in \Z_R^2} \int_{\mathcal B_M(\frac{1}{R})}g(K+\frac{\alpha}{\beta}z)\left[ \bar g(K+\frac{\alpha}{\beta}M+M^\perp)-\bar g(K+\frac{\alpha}{\beta}z+z^\perp)\right]g(K+M^\perp)\,dz\label{eqeq0.02}\\
&+\sum_{M \in \Z_R^2} \int_{\mathcal B_M(\frac{1}{R})}g(K+\frac{\alpha}{\beta}z) \bar g(K+\frac{\alpha}{\beta}z+z^\perp)\left[ g(K+M^\perp)- g(K+z^\perp)\right] \,dz. \label{eqeq0.03}
\end{align}
We start by bounding \eqref{eqeq0.01}. For this we write
\begin{align*}
&|\eqref{eqeq0.01}|=\\
&\left|\sum_{M \in\Z_R^2} \int_{\mathcal B_M(\frac{1}{R})} \int_0^1 \frac{\alpha}{\beta}(M-z).\nabla g(K+\frac{\alpha}{\beta}z+\frac{\alpha}{\beta}t(M-z))\bar g(K+\frac{\alpha}{\beta}M+M^\perp)g(K+M^\perp) dt\,dz \right|\\
\leq& \frac1R \sum_{M \in \Z_R^2} \int_{\mathcal B_0(\frac{1}{R})} \int_0^1|\nabla g|(K+\frac{\alpha}{\beta}M+\frac{\alpha}{\beta}z-\frac{\alpha}{\beta}tz)|g|(K+\frac{\alpha}{\beta}M+M^\perp)|g|(K+M^\perp) dt\,dz \\
=& \frac1{R^3} \fint_{\mathcal B_0(\frac{1}{R})} \int_0^1\sum_{M \in \Z_R^2} |\nabla g|(K+\frac{\alpha}{\beta}M+\frac{\alpha}{\beta}z-\frac{\alpha}{\beta}tz)|g|(K+\frac{\alpha}{\beta}M+M^\perp)|g|(K+M^\perp) dt\,dz ,\\
\end{align*}
where we denoted by $\fint_{\mathcal B_0(\frac1R)}=R^2\int_{\mathcal B_0(\frac1R)}$. To prove that $\|\eqref{eqeq0.01}\|\lesssim \frac1R$, it suffices -given our bounds on $g$ and $\nabla g$ in $X^\sigma$- to prove that for any $0\leq t \leq 1$ and $z \in \mathcal B_0(\frac1{R})$
\begin{equation}\label{eqeq0.011}
\frac1{R^2} \sum_{M \in \Z_R^2}\langle K+\frac{\alpha}{\beta}M+\frac{\alpha}{\beta}(1-t)z\rangle^{-\sigma}\langle K+\frac{\alpha}{\beta}M+M^\perp\rangle^{-\sigma}\langle K +M^\perp \rangle^{-\sigma}\lesssim \langle K \rangle^{-\sigma}.
\end{equation}
But this follows by repeating the by-now-familiar argument: If $|K|\leq 200$ or if $|K+\frac{\alpha}{\beta}M|\geq \frac{|K|}{100}$, then since $(1 -t)z \in \mathcal B_0(\frac{1}{R})$, $\langle K+\frac{\alpha}{\beta}M+\frac{\alpha}{\beta}z-\frac{\alpha}{\beta}tz\rangle \gtrsim \langle K \rangle$; and \eqref{eqeq0.011} follows from Lemma \ref{elementary} (with $\beta = 1$ and $L \geq 1$) since
$$
\frac{1}{R^2} \sum_{M \in \Z^2_R}\langle K +M^\perp \rangle^{-\sigma}\lesssim 1.
$$
Otherwise, we have that $|K|\geq 200$ and $|K+\frac{\alpha}{\beta}M|\leq \frac{|K|}{100}$. In this case, \eqref{eqeq0.011} follows again from Lemma \ref{elementary} since $|K+\frac{\alpha}{\beta}M + M^\perp |\gtrsim |K|$ (using the orthogonality relation).

The argument for \eqref{eqeq0.02} and \eqref{eqeq0.03} are similar  upon using again Lemma \eqref{elementary} and the bounds on $g$ and $\nabla g$. 
This finishes the proof of Lemma \ref{discrep lemma}. 

\begin{remark}
A direct consequence of the analysis performed in the previous section is the following co-prime equidistribution result, which we state for functions $u$ satisfying $(|u|, |\nabla u|)\in X^\sigma(\R^2)$, but can be extended easily to much larger classes of functions.
\begin{corollary}\label{discrep corollary}
Suppose that $u:\R^2\to \C$ satisfies $\|u\|_{X^\sigma}+\|\nabla u\|_{X^\sigma}\leq B$ for $\sigma > 2$. Then for any $L\geq 1$, the following discrepancy bound holds:
\begin{equation}\label{discrep corollary est}
\left| L^{-2}\sum_{\substack{J \in \Z_L^2\\visible}} u(J) -\frac{1}{\zeta(2)}\int_{\R^2} u(z)\, dz\right| \lesssim B \frac{1 + \log L}{L}. 
\end{equation}
\end{corollary}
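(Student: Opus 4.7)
The proof is a direct adaptation of the M\"obius inversion strategy used for Lemma \ref{equidistribution lemma}, applied to the single function $u$ (instead of the $K$-parameterized trilinear integrand $u_K$) and requiring only a pointwise bound rather than an $X^\sigma$ estimate. Set $S(u,N)\overset{def}{=}\sum_{z\in\Z^2_N,\ \text{visible}} u(z)$ and $D(u,N)\overset{def}{=}\sum_{z\in(\Z^2_N)^*}u(z)$. The decomposition of $\Z^2_N\setminus\{0\}$ into visible points of the sublattices $\Z^2_{N/m}$ gives $D(u,N)=\sum_{m\geq 1}S(u,N/m)$, and Lemma \ref{Mobinv2} inverts this to $S(u,L)=\sum_{n\geq 1}\mu(n)D(u,L/n)$; absolute convergence is automatic since $u\in X^\sigma$ with $\sigma>2$. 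Combined with $\zeta(2)^{-1}=\sum_{n\geq 1}\mu(n)/n^2$, this rewrites the discrepancy as
$$
L^{-2}S(u,L)-\zeta(2)^{-1}\int_{\R^2}u\;=\;\sum_{n=1}^\infty\frac{\mu(n)}{n^2}\Bigl[\frac{n^2}{L^2}D(u,L/n)-\int_{\R^2}u\Bigr],
$$
which I split along $n\leq L$ versus $n>L$.

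For $n\leq L$, set $R=L/n\geq 1$. Then $R^{-2}D(u,R)$ is a Riemann sum for $\int u$ on a grid of spacing $1/R\leq 1$. The missing origin contributes $O(BR^{-2})$; the mean value theorem together with $|\nabla u(w)|\leq B\langle w\rangle^{-\sigma}$ bounds the Riemann error by
$$
\frac{\sqrt 2\,B}{R}\sum_{M\in\Z^2_R}\frac{1}{R^2}\sup_{w\in\mathcal B_M(1/R)}\langle w\rangle^{-\sigma}\;\lesssim\;\frac{B}{R},
$$
the last Riemann sum being uniformly controlled by $\int\langle z\rangle^{-\sigma}dz\lesssim 1$ since $\sigma>2$ and $R\geq 1$. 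Summing in $n$ yields
$$
\sum_{n=1}^L\frac{1}{n^2}\cdot\frac{Bn}{L}\;\lesssim\;\frac{B(1+\log L)}{L}.
$$

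For $n>L$, set $R=L/n<1$. Every non-zero point of $\Z^2_R$ has modulus $\geq 1/R$, so
$$
R^{-2}|D(u,R)|\;\leq\;BR^{-2}\sum_{m\in\Z^2\setminus\{0\}}\langle m/R\rangle^{-\sigma}\;\lesssim\;BR^{\sigma-2},
$$
while $|\int u|\lesssim B$. Hence the $n>L$ contribution is at most $BL^{\sigma-2}\sum_{n>L}n^{-\sigma}+B\sum_{n>L}n^{-2}\lesssim B/L$. Summing the two ranges yields \eqref{discrep corollary est}. No step presents a real obstacle: the ingredients (M\"obius inversion, Riemann sum control, and elementary bounds on lattice point counts) already appear in the proof of Lemma \ref{equidistribution lemma}, and the pointwise nature of the statement removes the delicate orthogonality-based case analysis on $|K|$ versus $|K+\tfrac{\alpha}{\beta}M|$ that was required there.
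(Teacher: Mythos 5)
Your proof is correct and is essentially the argument the paper intends: the corollary is stated as a direct consequence of the M\"obius-inversion analysis behind Lemma \ref{equidistribution lemma}, and you have simply instantiated that analysis (the splitting $n\leq L$ versus $n>L$, the Riemann-sum bound $\lesssim B/R$ for $R\geq 1$, and the crude lattice bound $\lesssim BR^{\sigma-2}$ for $R<1$) for a single function $u$, where the absence of the $\langle K\rangle^{\sigma}$ weight indeed makes the exceptional-point and orthogonality considerations of Lemma \ref{discrep lemma} unnecessary.
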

\end{remark}

\section{Proof of Approximation Theorems: convergence of (NLS) to (CR)}\label{proof of approx theorem section}

The purpose of this section is to prove Theorems \ref{approx thm1} and \ref{result on T^2}. We will start by adopting a notationally convenient rescaling. Recall that $\widetilde a_K(t)=e^{-4\pi^2i|K|^2t}a_K(t)$ satisfies the equation
\begin{equation*}
-i\partial_t \widetilde a_K(t)=\frac{\epsilon^2}{L^4}\sum_{\mathcal S(K)} \widetilde a_{K_1}(t)\overline{\widetilde a_{K_2}}(t)\widetilde a_{K_3}(t)e^{4\pi^2i\Omega t}.
\end{equation*}
where $\Omega(K,K_1,K_2,K_3) = -|K|^2 + |K_1|^2 -|K_2|^2 + |K_3|^2$. Set $\displaystyle C_0 = \frac{\zeta(2)}{2}$, and define the renormalized profile $b_K$ as
\begin{equation}
\label{eq:bK}
b_K(t) = \, \widetilde a_K(\frac{C_0 L^2}{\log L}t)=a_K(\frac{C_0 L^2}{\log L}t) e^{-i\frac{4\pi^2 C_0 L^2}{\log L}|K|^2t}. 
\end{equation}
It will be much more notationally convenient to work with $b_K(t)$ instead of $a_K(t)$. It satisfies
$$
- i \partial_t b_K = \frac{C_0 \epsilon^2}{L^2 \log L} \sum_{\mathcal{S}(K)} b_{K_1}(t) \overline{b_{K_2}(t)} b_{K_3}(t) e^{i \frac{4\pi^2C_0 L^2}{\log L}\Omega(K,K_1,K_2,K_3)t},
$$
Recall the definition of the operator $\mathcal T_L$ as
\begin{equation*}
\mathcal{T}_L (e,f,g)(K) \overset{def}{=} \frac{C_0}{L^2 \log L} \sum_{\mathcal{R}(K)} e_{K_1} f_{K_2} g_{K_3},
\end{equation*}
for sequences $e = \{e_K\}$, $e = \{f_K\}$ and $g = \{g_K\}$ on $\Z^2_L$. 
Hence  the equation can be written
\begin{equation}
\label{bassan}%\label{gypaete}
- i \partial_t b_K = \underbrace{\epsilon^2 \mathcal{T}_L (b,b,b)(K)}_{\mbox{resonant interactions}} 
+ \underbrace{\frac{\epsilon^2 C_0}{L^2 \log L} \sum_{\mathcal{S}(K) \setminus \mathcal{R}(K)} b_{K_1}(t) \overline{b_{K_2}(t)} b_{K_3}(t) e^{i\frac{4\pi^2C_0 L^2}{\log L}\Omega t}}_{\mbox{non-resonant interactions}}.
\end{equation}

In sequel, the initial value $b_K(0)$ will be the projection of a function $g_0: \R^2 \to \C$, that is $b_K(0) = g_0(K)$ for $K \in \Z_L^2$. With this rescaling, Theorem \ref{approx thm1} follows once we show that for any $0<\gamma<1$, there exists $c_\gamma$ such that if $\epsilon< c_\gamma L^{-1-\gamma} B^{-1}$, it holds that
\begin{equation}\label{aim5}
\|b_K(t)-g(\epsilon^2 t, K)\|_{X^\sigma_L}\lesssim \frac{B}{(\log L)^{1-\gamma}}
\end{equation}
for all $0\leq t \leq \epsilon^{-2}\min(M, \gamma \frac{\log \log L}{c_\gamma B^2})$. If we require from the start that $L\geq \exp \exp \gamma^{-1} c_\gamma M B^2$, Theorem \ref{approx thm1} follows as stated. 
%The dynamics of \eqref{bassan} and \eqref{macareux} are close since non-resonant terms have little impact on the dynamics. From \eqref{macareux}, one obtains, rescaling time, letting $L$ go to infinity and using the results of the two previous sections, the limiting system (CR). 
%\begin{equation}
%\label{macareux}
%\left\{ \begin{array}{l}
%- i \partial_t r_K = \mathcal{T} (r,r,r) \\
%b(t=0) = b_0.
%\end{array} \right.
%\end{equation}

Before proving Theorem~\ref{approx thm1}, we start with the following elementary lemma:
\begin{lemma}\label{crude multiestimates}
Let $\sigma > 2$. For any sequences $\{a_K\}, \{b_K\} $, and $\{c_K\}$ in $X^\sigma_L$, we have 
\begin{equation}
\left\|\sum_{K_1-K_2 +K_3=K} a_{K_1}b_{K_2} c_{K_3}\right\|_{X_L^\sigma} \lesssim  L^4 \|a_K\|_{X^\sigma_L}\|b_K\|_{X^\sigma_L}\|c_K\|_{X^\sigma_L}
\end{equation}
\end{lemma}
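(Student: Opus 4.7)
The plan is to reduce the claim, by homogeneity, to the pointwise estimate
\begin{equation*}
\langle K\rangle^{\sigma} \sum_{\substack{(K_1,K_2,K_3)\in(\Z^2_L)^3\\ K_1-K_2+K_3=K}} \langle K_1\rangle^{-\sigma}\langle K_2\rangle^{-\sigma}\langle K_3\rangle^{-\sigma} \lesssim L^4,
\end{equation*}
uniformly in $K\in\Z^2_L$. Normalizing so that $\|a\|_{X^\sigma_L}=\|b\|_{X^\sigma_L}=\|c\|_{X^\sigma_L}=1$, we have $|a_{K_1}|\leq \langle K_1\rangle^{-\sigma}$, and similarly for $b_{K_2}$ and $c_{K_3}$, so this is indeed all that must be proved.

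The single external input I need is the elementary bound
\begin{equation*}
\sum_{K\in\Z^2_L}\langle K\rangle^{-\sigma}\lesssim L^2 \qquad\text{for }\sigma>2,\ L\geq 1,
\end{equation*}
which follows from a direct comparison with the Riemann integral $\int_{\R^2}\langle y/L\rangle^{-\sigma}\,dy\sim L^2$ (convergent thanks to $\sigma>2$).

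To handle the weight $\langle K\rangle^{\sigma}$, I invoke pigeonhole on the constraint $K=K_1-K_2+K_3$: at least one of $|K_1|,|K_2|,|K_3|$ exceeds $|K|/3$, so I split the sum into three symmetric pieces according to which variable is large. On the piece where $|K_1|\geq |K|/3$, I use $\langle K_1\rangle^{-\sigma}\lesssim \langle K\rangle^{-\sigma}$ to absorb the weight, and then parametrize the remaining lattice freely by $(K_2,K_3)\in(\Z^2_L)^2$ (with $K_1$ determined by the constraint). The above counting bound twice gives
\begin{equation*}
\langle K\rangle^{\sigma}\sum_{|K_1|\geq |K|/3}\langle K_1\rangle^{-\sigma}\langle K_2\rangle^{-\sigma}\langle K_3\rangle^{-\sigma} \lesssim \sum_{K_2,K_3\in\Z^2_L}\langle K_2\rangle^{-\sigma}\langle K_3\rangle^{-\sigma}\lesssim L^4.
\end{equation*}
The cases $|K_2|\geq |K|/3$ and $|K_3|\geq |K|/3$ are handled identically, parametrizing by the remaining two free lattice variables. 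Summing the three contributions yields the claim.

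There is essentially no obstacle: the estimate is deliberately crude (it wastes all cancellation and all of the resonance structure), and it is included only as a baseline bound on the full trilinear sum to control non-resonant remainders in the normal form analysis. The only point of minor care is the standing convention $L\geq 1$, so that $\sum_{K\in\Z^2_L}\langle K\rangle^{-\sigma}\lesssim L^2$ actually holds; this is consistent with the large-box regime under consideration throughout the paper.
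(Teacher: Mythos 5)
Your proof is correct and follows essentially the same route as the paper: where you pigeonhole on $|K_i|\geq |K|/3$ to absorb the weight $\langle K\rangle^\sigma$ into one factor, the paper uses the equivalent triangle-inequality splitting $\langle K\rangle^\sigma\lesssim \langle K_1\rangle^\sigma+\langle K_2\rangle^\sigma+\langle K_3\rangle^\sigma$, and both arguments then sum the remaining two weights $\langle\cdot\rangle^{-\sigma}$ over the free lattice variables (the paper phrases this as $\|\cdot\|_{\ell^1_L}\lesssim\|\cdot\|_{X^\sigma_L}$), each sum contributing a factor $L^2$.
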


\begin{proof}
The proof is straightforward. Without loss of generality, we assume that the sequences $\{a_K\}$, $\{b_K\}$,  and $\{c_K\}$ are non-negative. Then we have for $K \in \Z_L^2$, 
\begin{align*}
\langle K \rangle^{\sigma}\sum_{K_1-K_2+K_3=K} a_{K_1}b_{K_2}c_{K_3}\leq& \sum_{K_1-K_2+K_3=K} \langle K_1 \rangle^{\sigma}a_{K_1}b_{K_2}c_{K_3}+\sum_{K_1-K_2+K_3=K} a_{K_1}\langle K_2 \rangle^{\sigma}b_{K_2}c_{K_3}\\
&+\sum_{K_1-K_2+K_3=K} a_{K_1}b_{K_2}\langle K_3 \rangle^{\sigma}c_{K_3}.
\end{align*}
We bound the contribution of the first terms; the other being similar.
\begin{align*}
 \sum_{K_1-K_2+K_3=K} \langle K_1 \rangle^{\sigma}a_{K_1}b_{K_2}c_{K_2}\leq \|a_K\|_{X^\sigma_L} \sum_{K_2, K_3}b_{K_2}c_{K_2}\leq& L^4 \|a_K\|_{X^\sigma}\|b_{K}\|_{\ell^1_L}\|c_K\|_{\ell^1_L}\\
 \lesssim& L^4 \|a_K\|_{X^\sigma_L}\|b_{K}\|_{X^\sigma_L}\|c_K\|_{X^\sigma_L}.
\end{align*}
\end{proof}

\begin{proof}[Proof of Theorem \ref{approx thm1}]
Let $g^\epsilon(t,\xi)\overset{def}{=}g(\epsilon^2 t, \xi)$. It satisfies the equation
\begin{equation}\label{g^epsilon}
-i\partial_t g^\epsilon(t,\xi)=\epsilon^2 \TT(g^\epsilon,g^\epsilon,g^\epsilon)(t,\xi).
\end{equation}
Note that $b_K(0) =  g_0(K) \in X_L^{\sigma + 1}$. 
From \eqref{bassan}, the sequence  $\{b_K\}$ satisfies the equation
\begin{multline*}
-i\partial_t b_K= \epsilon^2 \mathcal T_L(g^\epsilon,g^\epsilon,g^\epsilon)(K)+\epsilon^2 \left[ \mathcal T_L(b,b,b)(K) - \mathcal T_L(g^\epsilon,g^\epsilon,g^\epsilon)(K)\right] \\
+\frac{\epsilon^2}{L^2\log L}\sum_{\mathcal S(K)\setminus \RR(K)} b_{K_1}\overline{b_{K_2}}b_{K_3}e^{i \frac{4\pi^2C_0 L^2\Omega}{\log L}t}.
\end{multline*}
Let $w_K(t) \overset{def}{=} b_K(t)-g(\epsilon^2 t, K)$ for $K \in \Z_L^2$ and $t \leq \varepsilon^{-2}M$. Then the equation satisfied by $w_K$ is
\begin{align*}
-i\partial_t w_K(t)=&\, \epsilon^2 \left[\mathcal \mathcal T_L(b,b,b)(K) - T_L(g^\epsilon,g^\epsilon,g^\epsilon)(K)\right]+\epsilon^2 \left[\mathcal T_L(g^\epsilon,g^\epsilon,g^\epsilon)(K)-\TT(g^\epsilon,g^\epsilon,g^\epsilon)(K)\right]\\
&+\frac{C_0 \epsilon^2}{L^2\log L}\sum_{\mathcal S(K)\setminus \RR(K)} b_{K_1}\overline{b_{K_2}}b_{K_3}e^{i \frac{4\pi^2C_0 L^2\Omega}{\log L}t}.
\end{align*}

Our first step is an application of a normal form transformation: For this we write
\begin{align*}
\frac{C_0 \epsilon^2}{L^2\log L}\sum_{\mathcal S (K)\setminus \RR (K)} b_{K_1}\overline{b_{K_2}}b_{K_3}e^{i \frac{4\pi^2C_0 L^2\Omega}{\log L}t}=&\partial_t \left(\frac{\epsilon^2}{L^2}\sum_{\mathcal S(K)\setminus \RR(K)}\frac{1}{4\pi^2i L^2 \Omega} b_{K_1}\overline{b_{K_2}}b_{K_3}e^{i \frac{ 4\pi^2C_0 L^2\Omega}{\log L}t}\right)\\
&-\frac{\epsilon^2}{L^2}\sum_{\mathcal S(K)\setminus \RR(K)}\frac{1}{4\pi^2iL^2 \Omega} \partial_t \left(b_{K_1}\overline{b_{K_2}}b_{K_3}\right)e^{i \frac{ 4\pi^2C_0 L^2\Omega}{\log L}t}. 
\end{align*}
As a result, if we define the change of coordinates
$$
e_K(t)\overset{def}{=}w_K(t) - \frac{\epsilon^2}{L^2}\sum_{\mathcal S(K)\setminus \RR(K)}\frac{1}{4\pi^2L^2 \Omega} b_{K_1}(t)\overline{b_{K_2}}(t)b_{K_3}(t)e^{i \frac{4\pi^2C_0 L^2\Omega}{\log L}t},
$$
then the equation satisfied by $e_K(t)$ is 
\begin{align}
-i\partial_t e_K(t)=&\epsilon^2 \left[\mathcal T_L(b,b,b)(K) - \mathcal T_L(g^\epsilon,g^\epsilon,g^\epsilon)(K)\right]+\epsilon^2 \left[ \mathcal T_L(g^\epsilon,g^\epsilon,g^\epsilon)(K)-\TT(g^\epsilon,g^\epsilon,g^\epsilon)(K)\right]\label{line1 eKeqn}\\
&-\frac{\epsilon^2}{L^2}\sum_{\mathcal S(K)\setminus \RR(K)} \frac{1}{4\pi^2i L^2\Omega}\partial_t\left(b_{K_1}\overline{b_{K_2}}b_{K_3}\right)e^{i \frac{4\pi^2C_0 L^2\Omega}{\log L}t},\label{line2 eKeqn}\\
e_K(0)=&-\frac{\epsilon^2}{L^2}\sum_{\mathcal S(K)\setminus \RR(K)}\frac{1}{ 4\pi^2L^2 \Omega} b_{K_1}(0)\overline{b_{K_2}(0)}b_{K_3}(0)
\end{align}
Of course, the advantage of working with $e_K$ as opposed to $b_K$ is the fact that \eqref{line2 eKeqn} is quintic in $b_K$.

The proof follows a bootstrap argument. Recall that $\|g^\epsilon\|_{X^\sigma} \leq B$ for all $0\leq t \leq \epsilon^{-2}M$. We will show that we can find a constant $c=c_\gamma$, such that if 
$$
\epsilon< c_\gamma L^{-1-\gamma}B^{-1} \quad \hbox{ and }\sup_{t\in [0,T]}\|b_K(t)\|_{X^\sigma_L}\leq 2B
$$
for some $T\leq \epsilon^{-2}\min(M, c B^{-2}\log \log L)$, then \eqref{aim5} holds on the interval $[0,T]$. The result then follows on the full interval $[0,\epsilon^{-2}\min(M, c B^{-2}\log \log L)]$ via a continuity argument. 

We start by proving that our normal form transformation from $w_K\to e_K$ is close to the identity. Indeed, recalling the definition of \eqref{def of R_mu} of the set $\mathcal R_\mu$ for $\mu \in \R$, we can write
\begin{align*}
\|w_K(t)-e_K(t)\|_{X^\sigma_L}\lesssim & \frac{\epsilon^2}{L^2}\left\|\sum_{m\in \Z\setminus\{0\}}m^{-1}\sum_{\RR_{mL^{-2}}(K)}b_{K_1}\overline{b_{K_2}}b_{K_3}e^{i \frac{4\pi^2C_0 L^2\Omega}{\log L}t}\right\|_{X^\sigma_L}\\
\leq&\frac{\epsilon^2}{L^2}\left\|\sum_{0<|m|< L^{10}}m^{-1}\sum_{\RR_{mL^{-2}}(K)}b_{K_1}\overline{b_{K_2}}b_{K_3}e^{i \frac{4\pi^2C_0 L^2\Omega}{\log L}t}\right\|_{X^\sigma_L}\\
&+\frac{\epsilon^2}{L^2}\left\|\sum_{|m|\geq L^{10}}m^{-1}\sum_{\RR_{mL^{-2}}(K)}b_{K_1}\overline{b_{K_2}}b_{K_3}e^{i \frac{4\pi^2C_0 L^2\Omega}{\log L}t}\right\|_{X^\sigma_L}\\
\leq&\frac{\epsilon^2}{L^2}\sum_{0<|m|< L^{10}}|m|^{-1}\left\|\sum_{\RR_{mL^{-2}}(K)}b_{K_1}\overline{b_{K_2}}b_{K_3}e^{i \frac{4\pi^2C_0 L^2\Omega}{\log L}t}\right\|_{X^\sigma_L}\\
&+\frac{\epsilon^2}{L^2}L^{-10}\left\|\sum_{\mathcal S(K)}|b_{K_1}||b_{K_2}||b_{K_3}|\right\|_{X^\sigma_L}\\
\lesssim_\gamma& \epsilon^2 L^\gamma B^3
\end{align*}
for any $\gamma>0$, where we have used the bootstrap assumption that $\|b_K\|_{X^\sigma_L}\leq 2B$, Proposition \ref{R_mu sum prop} to bound the first sum, and Lemma \ref{crude multiestimates} to bound the second sum. 
As a result,
\begin{equation}\label{w_K and e_K}
\|w_K(t)-e_K(t)\|_{X^\sigma}\lesssim_\gamma \epsilon^2L^\gamma B^3.
\end{equation}

We start by estimating the terms on the right-hand-side of \eqref{line1 eKeqn}. We start with the first term:
\begin{align*}
\epsilon^2\left\| \mathcal T_L(b,b,b)(K) - \mathcal T_L(g^\epsilon,g^\epsilon,g^\epsilon)(K)\right\|_{X^\sigma}\lesssim \epsilon^2 B^2 \|w_K\|_{X^\sigma_L}\lesssim \epsilon^2 B^2 \|e_K\|_{X^\sigma_L}+c_\gamma \epsilon^4L^\gamma B^5.
\end{align*}
where we used Theorem \ref{tri est} and the trilinearity of $\mathcal T_L$ in the first step. The second term on the right-hand-side of \eqref{line1 eKeqn} can be estimated using Theorem~\ref{disctocont}, which gives
\begin{align*}
\epsilon^2 \left\|T_L(g^\epsilon,g^\epsilon,g^\epsilon)(K)-\TT(g^\epsilon,g^\epsilon,g^\epsilon)(K)\right\|_{X^\sigma}\lesssim \frac{\epsilon^{2}B^3}{\log L}.
\end{align*}
Now notice that \eqref{line2 eKeqn} can be written as:
\begin{align*}
\eqref{line2 eKeqn}=&- \frac{\epsilon^2}{4\pi^2 L^2}\sum_{m \in \Z\setminus\{0\}}(im)^{-1}\sum_{\RR_{m L^{-2}}(K)}(\partial_t b_{K_1})\overline{b_{K_2}}b_{K_3}e^{4\pi^2i \frac{L^2\Omega}{\log L}t}\\
&+\text{two similar terms}. 
\end{align*}
As a result, we can estimate as before using Proposition \ref{R_mu sum prop} and Lemma \ref{crude multiestimates}
\begin{align*}
\|\eqref{line2 eKeqn}\|_{X^\sigma_L}\lesssim& \frac{\epsilon^2}{L^2}\sum_{0<|m|<L^{10}}|m|^{-1}\left\|\sum_{\RR_{m L^{-2}}(K)}\partial_t b_{K_1}\overline{b_{K_2}}b_{K_3}e^{i \frac{4\pi^2C_0 L^2\Omega}{\log L}t}\right\|_{X^\sigma_L}+\frac{\epsilon^2}{L^2}L^{-10}\left\|\sum_{\mathcal S(K)}|\partial_t b_{K_1}||b_{K_2}||b_{K_3}|\right\|_{X^\sigma_L}\\
\lesssim_\gamma& \frac{\epsilon^2}{L^2}L^{2+\gamma} \|\partial_t b_K\|_{X^\sigma_L}\|b_K\|_{X^\sigma_L}^2. 
\end{align*}
Using once again Lemma \ref{crude multiestimates} gives
$$
\|\eqref{line2 eKeqn}\|_{X^\sigma_L} \lesssim _\gamma \epsilon^4L^{2+\gamma} \|b_K\|_{X^\sigma_L}^5\leq c_\gamma \epsilon^4L^{2+\gamma} B^5.
$$
Collecting all the above estimates, we get that $\|e_K\|_{X^\sigma_L}$ satisfies the following differential inequality: 
$$
\|\partial_t e_K(t) \|_{X^\sigma(\Z^2_L)}\leq  c\epsilon^2 B^2 \|e_K\|_{X^\sigma_L}+c\frac{\epsilon^{2}B^3}{\log L}+c_\gamma \epsilon^4L^{2+\gamma} B^5\quad\mbox{and}\quad
\|e_K(0)\|_{X^\sigma_L}\leq \, c_\gamma\epsilon^2 L^\gamma B^3, 
$$
for some universal constant $c$ and a constant $c_\gamma$ depending on $\gamma$.
An application of Gronwall's lemma then implies that, if $0\leq t \leq T\leq \epsilon^{-2} M$, we have 
\begin{align*}
\|e_K(t)\|_{X^\sigma_L}\leq \|e_K(0)\|_{X^\sigma_L}e^{c\epsilon^2B^2 t} +(e^{c\epsilon^2B^2 t}-1)\left(\frac{B}{\log L}+\frac{c_\gamma}{c}\epsilon^2 L^{2+\gamma} B^3\right).
\end{align*}
As a consequence, if $\epsilon< c_\gamma' L^{-1-\gamma} B^{-1}$, then it holds that 
$$
\|e_K(t)\|_{X^\sigma_L}\leq e^{c\epsilon^2 B^2 t} \frac{2B}{\log L}.
$$
Using in addition $t\leq T\leq \gamma \frac{\log \log L}{c\epsilon^2 B^2}$, we get that
$$
\|e_K(t)\|_{X^\sigma_L} \leq \frac{2B}{(\log L)^{1-\gamma}}.
$$
In conclusion, combining the above estimate with \eqref{w_K and e_K} we get
$$
\|b_K - g^\epsilon\|_{X^\sigma_L} = \|w_K\|_{X^\sigma_L} \leq \|e_K\|_{X^\sigma_L} + \|w_K - e_K\|_{X^\sigma_L} 
\leq \frac{B}{L^{2+\gamma}} + \frac{2B}{(\log L)^{1-\gamma}}.
$$
This completes the bootstrap argument, and the proof.
\end{proof}

Finally, we give the simple proof of how to deduce Corollary \ref{result on T^2} from the above theorem.

%\textbf{Maybe it's better to change $L$ to $N$ in the following proof.}
\begin{proof}[Proof of Corollary \ref{result on T^2}]
Shrink $\gamma$ if needed so that $\gamma \leq \frac{s-1}{100}$. Define $(\widetilde a_K(t))_{K \in \Z^2_L}$ such that $e^{-4\pi^2it|k|^2}\widehat v(t,k)=\frac{\epsilon}{N}\widetilde a_{\frac{k}{N}} (N^2t)$ for $k \in \Z$. Then $\widetilde a_K(t)$ satisfies \eqref{FNLSe}. %Recall that $b_K(t)$ is then defined by (see \eqref{eq:bK}) 
%$b_K(t) = \,a_K(\frac{C_0 N^2}{\log L}t) e^{-i\frac{C_0 N^2}{\log L}|K|^2t}$. 

Choose $\epsilon=N^{-s}<c_\gamma N^{-1-\gamma}B^{-1}$ if $N \geq CB^{\frac{s-1}{2}}$ thanks to our choice $\gamma$, and apply Theorem \ref{approx thm1} for $\widetilde a_K$ to get for $T^*=\frac{\zeta(2) N^2}{2\epsilon^2 \log N}$
\begin{equation*}%\label{tildea_K and g} 
\|\widetilde a_K(t)-g(\frac{t}{T^*},K)\|_{X_L^\sigma(\Z_N^2)}\lesssim \frac{B}{(\log N)^{1-\gamma}}
\end{equation*}
for all $0\leq t \leq T^*\min\left(M, \frac{\gamma \log\log N}{c B^2}\right)$. This gives that
\begin{equation*}%\label{tildea_K and g} 
\|e^{-4\pi^2itN^2|K|^2}\widehat v(t,NK)-\frac{\epsilon}{N}g(\frac{N^2t}{T^*},K)\|_{X_N^\sigma(\Z_N^2)}\lesssim \frac{\epsilon B}{N(\log N)^{1-\gamma}}
\end{equation*}
for all $0\leq t \leq N^{-2}T^*\min\left(M, \frac{\gamma \log\log N}{c B^2}\right)$. Letting $T_N\overset{def}{=}\frac{\zeta(2) N^2}{2\epsilon^2 \log N}$, we get that since $\sigma> s+1$ that
\begin{equation*}%\label{tildea_K and g} 
\|e^{-4\pi^2itN^2|K|^2}\widehat v(t,NK)-\frac{\epsilon}{N}g(\frac{t}{T_N},K)\|_{\ell^{2,s}_N(\Z_N^2)}\lesssim \frac{\epsilon B}{N(\log N)^{1-\gamma}},
\end{equation*}
for all $0\leq t \leq T_N \min\left(M, \frac{\gamma \log\log N}{c B^2}\right)$. This translates on $\Z^2$ to the estimate
\begin{equation*}
\|e^{-4\pi^2it|k|^2}\widehat v(t,k)-\frac{\epsilon}{N}g(\frac{t}{T_N},\frac{k}{N})\|_{\ell^{2,s}(\Z^2)}\lesssim \frac{(\epsilon N^s) B}{(\log N)^{1-\gamma}},
\end{equation*}
over the same time interval. Recalling that $\epsilon=N^{-s}$ gives \eqref{v_K and g}.
\end{proof}

\section{Hamiltonian structure of (CR)}\label{sectionhamiltonian}
Recall that the \eqref{star} equation is given by
\begin{equation}\tag{CR}
\begin{split}
-i\partial_t g(\xi,t)=& \mathcal T(g,g,g)(\xi,t);\qquad \xi \in \R^2\\
\mathcal T(g,g,g)(\xi,t)=& \int_{-1}^1\int_{\R^2} g(\xi+\lambda z, t) \overline{g}(\xi+\lambda z+z^\perp)g(\xi+z^\perp) \,dz\,d\lambda.
\end{split}
\end{equation}
By changing variables (formally for now) one can notice that the trilinear operator $T(g,g,g)$ can be written in several different ways like 
\begin{equation*}
\begin{split}
\mathcal T(g,g,g)(\xi,t)=& \int_{-1}^1\int_{\R^2} g(\xi+ z)\overline{g}(\xi+\lambda z^\perp+z)g(\xi+\lambda z^\perp, t)  \,dz\,d\lambda\\
=&\frac{1}{2}\int_\R\int_{\R^2} g(\xi+\lambda z, t) \overline{g}(\xi+\lambda z+z^\perp)g(\xi+ z^\perp)\,dz \,d\lambda.
\end{split}
\end{equation*}
In the latter integral, $\lambda$ runns over $\mathbb{R}$ instead of $[-1,1]$, as can be seen 
by a simple change of integration variables $\lambda \to \lambda^{-1}$.

\subsection{The Hamiltonian functional}

The equation (CR) derives from the Hamiltonian functional
\begin{equation}\label{Hamil2}
\mathcal{H}(f) = \frac{1}{4} \int_{-1}^1 \int_{\mathbb{R}^2} \int_{\mathbb{R}^2} 
 f(\xi+z) f(\xi +\lambda z^\perp)  \overline{f(\xi +z+\lambda z^\perp)} \, \overline{f(\xi)} \,dz\,d\xi\,d\lambda
\end{equation}
given the symplectic form
\begin{equation}
\label{homme}
\omega(f,g) = \frak{Im} \int_{\mathbb{R}^2} f(x) \overline{g (x)} \,dx. 
\end{equation}

Let us notice right away that the Hamiltonian can be equivalently written
\begin{equation}
\label{penguin}
\mathcal{H}(f) = \frac{1}{8} \int_{\mathbb{R}} \int_{\mathbb{R}^2} \int_{\mathbb{R}^2} 
 f(\xi+z) f(\xi+\lambda z^\perp)  \overline{f(\xi +z+\lambda z^\perp)} \, \overline{f(\xi)} \,dz\,d\xi\,d\lambda.
\end{equation}

A remarkable identity is the fact that 
\begin{equation}\label{HamStich}
\mathcal H(f)=\frac{\pi}{2} \int_{\R_t \times \R^2_x} \left| e^{it\Delta_{\R^2}} \widehat f\,\right| \,dx\,dt=\frac{\pi}{2} \|e^{it\Delta_{\R^2}} \widehat f\|_{L^4_{t,x}(\R_t\times \R_x^2)}^4
\end{equation}
which can be seen by expanding the right-hand-side as follows. Suppose first that $f\in \mathcal S(\R^2)$ (the case of general $f\in L^2$ follows by a limiting argument using \eqref{HL2bdd} which we prove later). Then
\begin{align*} 
& \|  e^{it\Delta} \widehat f \|_{L^4}^4 = \| | e^{it\Delta} \widehat f |^2  \|_{L^2}^2
= \frac{1}{4\pi^2} \| e^{-it|\xi|^2} f(\xi) * e^{it |\xi|^2} \overline{ f (\xi)} \|_{L^2}^2 \\
& = \frac{1}{4\pi^2} \int_{\mathbb{R}^2} \int_{\mathbb{R}^2}  \int_{\mathbb{R}^2}  e^{it 2 \xi \cdot \alpha} f(\eta+\alpha) \overline{ f(\eta+\alpha+\xi)} \, \overline{ f(\eta)} f(\eta+\xi) \,d\xi \,d\eta\,d\alpha \\
& = \frac{1}{4\pi^2} \int_{\mathbb{R}^2}\int_{\mathbb{R}^2} \int_{\mathbb{R}} \int_{\mathbb{R}} e^{i2t\mu |\alpha|^2}  f(\eta+\alpha) \overline{ f(\eta+\alpha+\mu \alpha + \lambda \alpha^\perp)} \,  \overline{ f(\eta)} f(\eta+\mu \alpha + \lambda \alpha^\perp) |\alpha|^2 d\lambda \, d \mu \,d\eta\,d\alpha,
\end{align*}
where we changed in the last line coordinates by $\xi = \mu \alpha + \lambda \alpha^\perp$.
Integrating in time and using the identity $\int_{\mathbb{R}} e^{ix y}dy = 2\pi \delta_{x=0}$ gives
$$
\|  e^{it\Delta} \widehat f \|_{L^4}^4 = \frac{1}{4\pi} \int \int \int  f(\eta+\alpha) \overline{ f(\eta+\alpha+\lambda \alpha^\perp) } \,\overline{ f(\eta) } f (\eta+\lambda \alpha^\perp)  d\lambda \,d\eta\,d\alpha = \frac{2}{\pi} \mathcal{H}(f).
$$
In particular, $\mathcal H(f)$ is positive definite, i.e. $\mathcal H(f)\geq 0$ and $\mathcal H(f)=0$ if and only if $f=0$.

\subsection{Symmetries}

\begin{proposition}
\label{robin}
The following symmetries leave the Hamiltonian $\mathcal{H}$ invariant:
\begin{itemize}
\item[(i)] Phase rotation: $f \mapsto e^{i \theta} f$.
\item[(ii)] Translation: $f \mapsto f(\,\cdot\,+x_0)$ for any $x_0 \in \mathbb{R}^2$.
\item[(iii)] Modulation: $f \mapsto e^{ix\cdot \xi_0} f$ for any $\xi_0 \in \mathbb{R}^2$.
\item[(iv)] Quadratic modulation: $f \mapsto e^{i\tau|x|^2} f$ for any $\tau \in \mathbb{R}$.
\item[(v)] Schr\"odinger group: $f \mapsto e^{i\tau\Delta} f$ for any $\tau \in \mathbb{R}$.
\item[(vi)] Rotation: $f \mapsto f(R_\theta\, \cdot\,)$, for any $\theta$, where $R_\theta$ is the rotation 
of angle $\theta$.
\item[(vii)] Scaling $f \mapsto \mu f (\mu \,\cdot\,)$ for any $\mu > 0$.
\item[(viii)] Fourier transform $f \mapsto \widehat{f}$.
\end{itemize}
\end{proposition}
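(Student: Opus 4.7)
The plan is to verify the eight symmetries by separating them into three groups, each handled by a different technique: items (i)--(iii), (vi), (vii) will follow by direct change of variables in the defining integral \eqref{Hamil2}; item (iv) reduces to a rectangle identity; item (v) uses \eqref{scarlatti} together with the lens transform; and item (viii) is a direct computation with Fourier integrals.

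For the first and easiest group, the integrand of $\mathcal{H}(f)$ carries two factors of $f$ and two of $\overline f$, which makes (i) immediate. The translation (ii) follows from shifting $\xi\mapsto\xi-x_0$, the rotation (vi) from simultaneously rotating $\xi$ and $z$ (using $(R_\theta z)^\perp=R_\theta z^\perp$), and the scaling (vii) from $(\xi,z)\mapsto(\mu\xi,\mu z)$, the Jacobian $\mu^{-4}$ cancelling the $\mu^4$ coming from the four copies of $f$. For (iii), the phase picked up collapses to $\exp\!\bigl\{i\xi_0\!\cdot\!\bigl[(\xi+z)+(\xi+\lambda z^\perp)-(\xi+z+\lambda z^\perp)-\xi\bigr]\bigr\}=1$. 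For (iv), the phase is $\exp\!\bigl\{i\tau\bigl[|\xi+z|^2+|\xi+\lambda z^\perp|^2-|\xi+z+\lambda z^\perp|^2-|\xi|^2\bigr]\bigr\}$, and expanding the squares the cross terms cancel, leaving $\exp(-2i\tau\lambda\,z\cdot z^\perp)=1$ since $z\perp z^\perp$; this is the parallelogram identity applied to the rectangle with vertices $\xi,\xi+z,\xi+\lambda z^\perp,\xi+z+\lambda z^\perp$.

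The step I expect to be the most involved is the Schr\"odinger-group symmetry (v). Using \eqref{scarlatti} and the identity $\widehat{e^{i\tau\Delta}f}(\xi)=e^{-i\tau|\xi|^2}\widehat f(\xi)$, it suffices to show
$$
\|e^{it\Delta}(e^{-i\tau|\cdot|^2}u_0)\|_{L^4_{t,x}(\R\times\R^2)}=\|e^{is\Delta}u_0\|_{L^4_{s,x}(\R\times\R^2)}.
$$
Completing the square in the Gaussian kernel representation of $e^{it\Delta}$ on $\R^2$ will yield the lens-transform identity
$$
e^{it\Delta}(e^{-i\tau|\cdot|^2}u_0)(x)=\frac{e^{-i\tau|x|^2/(1-4t\tau)}}{1-4t\tau}\,(e^{is\Delta}u_0)\!\left(\frac{x}{1-4t\tau}\right),\qquad s=\frac{t}{1-4t\tau}.
$$
Taking the fourth power of the modulus and integrating, the change of variables $\tilde x=x/(1-4t\tau)$ contributes a Jacobian $(1-4t\tau)^2$ and the change of variables $t\mapsto s$ contributes another Jacobian $(1-4t\tau)^2$; together these exactly cancel the prefactor $(1-4t\tau)^{-4}$. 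The technical heart of the argument is to verify that $t\mapsto s$ is a bijection from $\R\setminus\{1/(4\tau)\}$ onto $\R\setminus\{-1/(4\tau)\}$ (strictly increasing with derivative $(1-4t\tau)^{-2}$ on each of its two branches), so that the full integral over $s\in\R$ is recovered.

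For the Fourier invariance (viii), the plan is to expand each of the four $\widehat g$ in $\mathcal{H}(\widehat g)$ as $\frac{1}{2\pi}\int g(y_j)\,e^{-iy_j\cdot(\cdot)}\,dy_j$. The $\xi$-integral will produce $(2\pi)^2\delta(y_1+y_2-y_3-y_4)$, while the $z$-integral, after applying the identity $z^\perp\!\cdot\!w=-z\!\cdot\!w^\perp$, will produce $(2\pi)^2\delta((y_1-y_3)-\lambda(y_2-y_3)^\perp)$; the $(2\pi)^4$ from the two deltas cancels the $(2\pi)^{-4}$ from the four Fourier normalizations. Setting $a=y_1-y_3$ and $b=y_2-y_3$, integrating out $y_4$ and $a$ via the deltas, and renaming $z=b$, $\xi=y_3$ should reproduce precisely the integral defining $\mathcal{H}(g)$ in \eqref{Hamil2}.
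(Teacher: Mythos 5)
Your proposal is correct, and on items (i)--(iv), (vi), (vii) it coincides with the paper's argument: everything reduces to the linear relation $(\xi+z)+(\xi+\lambda z^\perp)=(\xi+z+\lambda z^\perp)+\xi$ and the orthogonality relation $|\xi+z|^2+|\xi+\lambda z^\perp|^2=|\xi+z+\lambda z^\perp|^2+|\xi|^2$, exactly the two identities the paper invokes. Where you genuinely differ is in (v) and in the packaging of (viii). For (viii), the paper proves the stronger operator identity $\mathcal{F}\,\mathcal{T}(f,g,h)=\mathcal{T}\big(\widehat f,\widehat g,\widehat h\big)$ (Lemma~\ref{albatros}) and deduces $\mathcal{H}(f)=\mathcal{H}\big(\widehat f\big)$ by Plancherel; your symmetric four-fold expansion of $\mathcal{H}\big(\widehat g\big)$ with the two delta functions is the same computation carried out directly at the level of the Hamiltonian, and it does close the proposition, but the operator-level identity is what the paper reuses later (e.g.\ boundedness in $H^\sigma$ from $L^{2,\sigma}$ and the weak continuity argument), so it is worth recording your computation in that form. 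For (v), the paper's proof is a two-line deduction: conjugating by the Fourier transform turns $e^{i\tau\Delta}$ into the quadratic modulation $e^{-i\tau|\xi|^2}$, so (v) follows from (iv) and (viii); since you prove (viii) anyway, this shortcut is available to you. Your alternative via the representation \eqref{scarlatti} and the lens transform is legitimate (no circularity, as \eqref{HamStich} is established by an independent computation) and the bookkeeping checks out: the fractional-linear time change $s=t/(1-4t\tau)$ has Jacobian $(1-4t\tau)^{-2}$, which together with the spatial Jacobian $(1-4t\tau)^{2}$ cancels the prefactor $(1-4t\tau)^{-4}$, and the Möbius map $t\mapsto s$ is indeed a bijection of $\R\setminus\{1/(4\tau)\}$ onto $\R\setminus\{-1/(4\tau)\}$, the excluded time being a null set. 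What your route buys is a self-contained proof of (v) independent of (viii); what it costs is the kernel computation and this bijectivity discussion. Two points you should make explicit to match the paper's level of rigor: both the lens-transform identity and the delta-function manipulations should first be performed for Schwartz data and then extended to $L^2$ by density, using the bound \eqref{HL2bdd} (this is exactly how the paper handles Lemma~\ref{albatros}).
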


\begin{proof}
The points (i), (ii), (iii), (iv), (vi), (vii) follow simply from the formula giving $\mathcal{H}$, in particular
from the fact that the arguments of $f$ in this formula satisfy the relations
$$
\left\{ \begin{array}{l} 
(\xi+z) + (\xi+\lambda z^\perp) = (\xi +z+\lambda z^\perp) + (\xi) \quad \mbox{and}\quad \\
|\xi+z|^2 + |\xi+\lambda z^\perp|^2 = |\xi +z+\lambda z^\perp|^2 + |\xi|^2.
\end{array}
\right.
$$
The point (v) is a consequence of (iv) and (viii). Thus we are left with proving (viii); this will be done in 
Lemma~\ref{albatros}.
\end{proof}

Since the transformations (i) to (vi) are generated by Hamiltonian flows, and as a result they give, by Noether's theorem, conserved quantities for the equation (CR).

\begin{corollary}
\label{pelican}
The following quantities are conserved by the flow of {\rm (CR)}:
\begin{itemize}
\item[(i)] Mass: $\int |f(x)|^2\,dx$.
\item[(ii)] Momentum: $\int \xi \left| \widehat{f}(\xi) \right|^2\,d\xi$.
\item[(iii)] Position: $\int x |f(x)|^2 \,dx$.
\item[(iv)] First moment $\int |x|^2 |f(x)|^2 \,dx$.
\item[(v)] Kinetic energy: $\int \left| \nabla f (x) \right|^2\,dx$.
\item[(vi)] Angular momentum $i \int (x \times \nabla) f(x) \overline{f(x)} \,dx$.
\item[(vii)] Hamiltonian $\mathcal{H}(f)$.
\end{itemize}
\end{corollary}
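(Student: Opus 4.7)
The plan is to derive all conservation laws from the Hamiltonian structure and the symmetries recorded in Proposition~\ref{robin} via Noether's theorem. Since (CR) takes the form $\partial_t g = X_{\mathcal{H}}(g)$, where $X_{\mathcal{H}}$ is the Hamiltonian vector field of $\mathcal{H}$ with respect to the symplectic form $\omega(f,g) = \mathfrak{Im}\int f\overline{g}\,dx$, conservation of $\mathcal{H}$ along the flow (item (vii)) is automatic. For the remaining items, the symmetries (i)--(vi) listed in Proposition~\ref{robin} are one-parameter groups of symplectomorphisms preserving $\mathcal{H}$; each is therefore generated by a Hamiltonian vector field $X_F$ for some functional $F$, and the standard identity
\[
\frac{d}{dt}F(g(t)) = \omega(X_F,X_{\mathcal{H}})(g) = -\frac{d}{ds}\bigg|_{s=0}\mathcal{H}(\Phi_s^F(g)) = 0
\]
yields conservation of $F$.

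The concrete task, then, is to identify for each symmetry the Hamiltonian functional whose flow reproduces it. Using the convention that $X_F(f) = -i\,\delta F/\delta\bar f$ so that $\omega(X_F,\cdot) = dF$, a short computation gives:
\begin{itemize}
\item[(i)] The generator of $f\mapsto e^{i\theta}f$ is $if$, which is $X_F$ for $F(f) = \tfrac12\int |f|^2\,dx$; hence the mass is conserved.
\item[(ii)] The generator of $f \mapsto f(\cdot + x_0)$ is $x_0\cdot \nabla f$; solving $-i\delta F/\delta\bar f = x_0\cdot\nabla f$ yields $F(f) = \mathfrak{Im}\int \overline{f}\,x_0\cdot \nabla f\,dx = x_0\cdot\int \xi|\widehat f(\xi)|^2 d\xi$, the momentum.
\item[(iii)] The generator of $f\mapsto e^{ix\cdot\xi_0}f$ is $i(x\cdot\xi_0)f$, giving $F(f) = -\xi_0\cdot\int x|f|^2\,dx$, the position.
\item[(iv)] The generator of $f\mapsto e^{i\tau|x|^2}f$ is $i|x|^2 f$, giving $F(f) = -\tfrac12\int|x|^2|f|^2\,dx$, the first moment.
\item[(v)] The generator of $f \mapsto e^{i\tau\Delta}f$ is $i\Delta f$; an integration by parts gives $F(f) = -\tfrac12\int|\nabla f|^2\,dx$, the kinetic energy.
\item[(vi)] The generator of $f\mapsto f(R_\theta\,\cdot)$ is $(x_1\partial_2-x_2\partial_1)f = (x\times\nabla)f$, and the corresponding Hamiltonian is $F(f) = i\int (x\times\nabla)f\cdot\overline{f}\,dx$, the angular momentum.
\end{itemize}

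The main (minor) obstacle is regularity: the identity $\frac{d}{dt}F(g) = 0$ must be justified on a function class in which $\mathcal{T}(g,g,g)$, $\nabla g$, $xg$, $|x|^2 g$ are all controlled. This is precisely the setting afforded by Theorem~\ref{eider2}: for $g_0$ lying in, e.g., Schwartz class (where all quantities are finite) one differentiates $F(g(t))$, substitutes $\partial_t g = i\mathcal{T}(g,g,g)$, and uses the explicit quadrilinear form \eqref{Hamil2} of $\mathcal{H}$ together with the symmetries verified in Proposition~\ref{robin} to collapse the integral to zero; the result then propagates to the natural function spaces $L^2 \cap \dot X^1$, $H^1$, $L^{2,1}$ by the continuity of the flow and a density argument. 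No genuinely new analytic estimate is required beyond Proposition~\ref{eider1}, so the entire argument is structural.
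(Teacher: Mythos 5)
Your proposal is correct and follows essentially the same route as the paper: the paper's proof of Corollary~\ref{pelican} is exactly the observation that the symmetries (i)--(vi) of Proposition~\ref{robin} are generated by Hamiltonian flows, so Noether's theorem yields the corresponding conserved quantities, with $\mathcal{H}$ itself conserved since the flow is Hamiltonian. Your explicit identification of the generating functionals (up to harmless sign/constant conventions) and the density argument for regularity simply flesh out details the paper leaves implicit.
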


The following is essentially a corollary of Theorem~\ref{robin}.

\begin{corollary}
The symmetries mentioned in Proposition~\ref{robin} (acting only in the $x$ variable) leave the 
set of solutions of {\rm (CR)} invariant. This is also the case for the transformation
\begin{equation}
\label{mesange}
f \mapsto \lambda^\alpha \left( \lambda^{2\alpha - 2\gamma} t \,,\,\lambda^\gamma x \right) \qquad \mbox{where $\lambda,\alpha,\gamma>0$}.
\end{equation}
\end{corollary}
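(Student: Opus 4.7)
The proof splits into two independent parts. For the first part, concerning the symmetries (i)--(viii) of Proposition~\ref{robin}, the most conceptual route is to observe that (CR) is the Hamiltonian flow associated to the pair $(\mathcal{H},\omega)$, so any spatial transformation $\Phi$ that preserves both $\mathcal{H}$ and $\omega$ automatically conjugates the flow to itself, hence carries solutions to solutions. Every map in Proposition~\ref{robin} is a $\C$-linear isometry of $L^2(\R^2)$ (the Fourier case is Plancherel), and such a map trivially preserves $\omega(f,g) = \mathrm{Im}\!\int f\overline{g}$; combined with the $\mathcal{H}$-invariance already established, this settles the claim.

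Equivalently, and what I would actually present for transparency, one verifies each symmetry directly using the trilinear structure of $\mathcal{T}$. For phase rotation (i), translation (ii), modulation (iii), rotation (vi) and scaling (vii), one just substitutes into the definition \eqref{eq:defT} of $\mathcal{T}$ and performs an obvious change of variables in the $z$-integral (e.g.\ $w=R_\theta z$ or $w=\mu z$), using that $\cdot^{\perp}$ commutes with rotations and with scalar multiplication by $\mu>0$. A representative computation for (vii) is
\begin{equation*}
\mathcal{T}(\mu g(\mu\cdot),\mu g(\mu\cdot),\mu g(\mu\cdot))(\xi)=\mu\,\mathcal{T}(g,g,g)(\mu\xi),
\end{equation*}
which matches $-i\partial_t\bigl(\mu g(t,\mu\xi)\bigr)=-\mu i(\partial_t g)(t,\mu\xi)$. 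For the quadratic modulation (iv), the four arguments of $g$ in $\mathcal{T}(e^{i\tau|\cdot|^2}g,\ldots)$ are $\xi$, $\xi+\lambda z$, $\xi+\lambda z+z^{\perp}$, $\xi+z^{\perp}$, which are the vertices of a rectangle with sides $\lambda z$ and $z^{\perp}$; the Pythagorean identity
\begin{equation*}
|\xi|^2+|\xi+\lambda z+z^{\perp}|^2=|\xi+\lambda z|^2+|\xi+z^{\perp}|^2
\end{equation*}
then makes all four quadratic phases cancel exactly. The Schrödinger group (v) is handled by writing $e^{i\tau\Delta}=\mathcal{F}^{-1}e^{-i\tau|\xi|^2}\mathcal{F}$ and combining (iv) with (viii), the latter being Lemma~\ref{albatros}.

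For the second part, concerning the two-parameter family \eqref{mesange}, I would do a direct substitution. Writing $h(t,x)=\lambda^{\alpha} g(\lambda^{2\alpha-2\gamma}t,\lambda^{\gamma}x)$ and making the change of variable $w=\lambda^{\gamma}z$ in the integral defining $\mathcal{T}(h,h,h)(t,x)$ (using $w^{\perp}=\lambda^{\gamma}z^{\perp}$ and $dw=\lambda^{2\gamma}dz$) yields
\begin{equation*}
\mathcal{T}(h,h,h)(t,x)=\lambda^{3\alpha-2\gamma}\,\mathcal{T}(g,g,g)\bigl(\lambda^{2\alpha-2\gamma}t,\lambda^{\gamma}x\bigr),
\end{equation*}
while a straightforward chain rule gives $-i\partial_t h(t,x)=\lambda^{3\alpha-2\gamma}(-i\partial_t g)\bigl(\lambda^{2\alpha-2\gamma}t,\lambda^{\gamma}x\bigr)$. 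Matching the two and using the equation satisfied by $g$ shows that $h$ solves (CR) for every $\lambda>0$ and every $\alpha,\gamma>0$; note that unlike for genuinely $L^2$-critical NLS on $\R^2$, the exponents $\alpha$ and $\gamma$ are independent because the nonlocal $\mathcal{T}$ does not depend on derivatives of $g$ and absorbs arbitrary rescalings of $z$ via the $L^1$-type $z$-integration.

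Neither step presents a real obstacle; the only mildly delicate point is tracking phases in the quadratic modulation, but this reduces to the rectangle identity above, which is the same orthogonality relation that underlies the whole resonant structure of \eqref{pretend eqn2}.
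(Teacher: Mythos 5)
Your argument is correct and matches the paper's intent: the paper leaves this as an immediate consequence of Proposition~\ref{robin}, i.e.\ the symmetries are $\C$-linear $L^2$-isometries preserving both $\mathcal{H}$ and $\omega$, hence conjugate the Hamiltonian flow to itself, and the scaling family \eqref{mesange} is checked by direct substitution exactly as you do (with the exponent bookkeeping $\lambda^{3\alpha-2\gamma}$ on both sides coming out right). Nothing is missing; your explicit verification of (iv) via the rectangle identity and of (v) via (iv)+(viii) is the same mechanism used in the paper's proof of Proposition~\ref{robin}.
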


\begin{remark}[scaling]
It is natural to ask which is the critical space for {\rm (CR)}. Since it enjoys a family of possible scalings, as noted above, the answer is not as
clear as for, say, {\rm (NLS)}. However, it seems natural to think of $L^2$ as a critical space for data. Indeed,
\begin{itemize}
\item The space $L^2$ is critical for {\rm (NLS)}, from which {\rm (CR)} is derived.
\item It is essentially the largest Sobolev space for which local well-posedness can be proved, see Section~\ref{sectionanalytic}.
\item Finally, the only scaling above which does not act on the time variable, namely $f \mapsto \lambda f (\lambda \,\cdot\,)$, leaves the norm of $L^2$ invariant.
\end{itemize}
\end{remark}

\begin{lemma}
\label{albatros}
If $f,g,h \in L^2$, 
$$
\mathcal{F} \left( \mathcal{T} (f,g,h) \right) = \mathcal{T} \big( \widehat{f},\widehat{g},\widehat{h} \big)
\quad \mbox{and}\quad
\mathcal{H}(f) = \mathcal{H} \big( \widehat{f} \big).
$$
\end{lemma}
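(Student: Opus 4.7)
The plan is to first establish the trilinear intertwining identity $\mathcal{F}(\mathcal{T}(f,g,h)) = \mathcal{T}(\widehat f, \widehat g, \widehat h)$ by direct Fourier computation, and then deduce the scalar identity $\mathcal{H}(f) = \mathcal{H}(\widehat f)$ as a corollary via Parseval. I will first establish both identities for $f,g,h$ in the Schwartz class, where the oscillatory integrals below are absolutely convergent after smooth cutoffs, and then extend to $L^2$ by density using the continuity of $\mathcal{T}$ on $(L^2)^3$ provided by Proposition~\ref{eider1}.

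For the trilinear identity, I would substitute the Fourier inversion formulas $f(y) = \frac{1}{2\pi}\int e^{iy\cdot\alpha}\widehat f(\alpha)\,d\alpha$ (and analogously for $g,h$) into the defining expression for $\mathcal{T}(f,g,h)(\xi)$, and then apply $\mathcal{F}$ in $\xi$. Exchanging orders of integration, the $\xi$-integral produces the delta function $(2\pi)^2\delta(\alpha_1 - \alpha_2 + \alpha_3 - \eta)$, so that $\alpha_2 = \alpha_1 + \alpha_3 - \eta$. Setting $\beta_1 = \alpha_1-\eta$ and $\beta_3 = \alpha_3-\eta$, one is reduced to computing the oscillatory integral
\begin{equation*}
I(\beta_1,\beta_3) = \int_{-1}^1 \int_{\R^2} e^{-ix\cdot\beta_3 \,-\, i\lambda\, x^\perp\cdot\beta_1}\,dx\,d\lambda.
\end{equation*}
To evaluate $I$, I would decompose $x$ in the orthonormal frame $(\hat\beta_1, \hat\beta_1^\perp)$ adapted to $\beta_1$, writing $x = u\hat\beta_1 + v\hat\beta_1^\perp$; then $x^\perp\cdot\beta_1 = -v|\beta_1|$. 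Integration in $u$ yields $2\pi\delta(\hat\beta_1\cdot\beta_3)$, which forces $\beta_3 \perp \beta_1$, and the remaining $v$- and $\lambda$-integrations produce a second delta pinning the perpendicular component of $\beta_3$ to $\lambda|\beta_1|$ for some $\lambda\in[-1,1]$. Tracking the resulting Jacobian shows that $I$ distributionally encodes the constraint $\beta_3 = \lambda\beta_1^\perp$ with $\lambda\in[-1,1]$, and the change of variable $\beta_3 \mapsto \lambda$ recovers exactly the defining formula for $\mathcal{T}(\widehat f, \widehat g, \widehat h)(\eta)$.

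For the Hamiltonian identity, a direct inspection of \eqref{Hamil2} shows that $\mathcal{H}(f) = \tfrac14\langle\mathcal{T}(f,f,f), f\rangle_{L^2}$. Parseval's identity combined with the intertwining relation then gives
\begin{equation*}
\mathcal{H}(f) = \tfrac14 \langle \mathcal{F}(\mathcal{T}(f,f,f)), \widehat f\rangle_{L^2} = \tfrac14\langle \mathcal{T}(\widehat f, \widehat f, \widehat f), \widehat f\rangle_{L^2} = \mathcal{H}(\widehat f).
\end{equation*}

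The main difficulty is the rigorous handling of the oscillatory integral $I(\beta_1,\beta_3)$, which is not absolutely convergent and must be interpreted distributionally. I would circumvent this by performing the Fourier calculation first for Schwartz $f,g,h$, where suitable Fubini and integration-by-parts arguments are justified, and then extend to $L^2$: Proposition~\ref{eider1} guarantees that both sides of each identity depend continuously on $(f,g,h)\in (L^2)^3$, and the Fourier transform is an $L^2$-isometry, so the identities pass to the limit.
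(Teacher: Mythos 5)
Your proposal is correct and follows essentially the same route as the paper: expand all three factors by Fourier inversion for Schwartz data, integrate out the two spatial variables to produce delta constraints, resolve them to land on the formula for $\mathcal{T}(\widehat f,\widehat g,\widehat h)$, deduce $\mathcal{H}(f)=\mathcal{H}(\widehat f)$ via Plancherel and $\mathcal{H}(f)=\tfrac14\langle\mathcal{T}(f,f,f),f\rangle$, and pass to $L^2$ by density using the $L^2$ boundedness of $\mathcal{T}$. The only difference is cosmetic bookkeeping: you read the remaining oscillatory integral as the constraint $\beta_3=\lambda\beta_1^\perp$ directly, whereas the paper solves both deltas for two frequency variables and then undoes a $\lambda^{-2}$ Jacobian by a final change of variables.
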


\begin{remark}
Why this Fourier transform symmetry actually holds is somewhat mysterious. It is reminiscent of the invariance of the cubic nonlinear Schr\"odinger equation set on $\mathbb{R}^2$ by the pseudo-conformal transformation
$$
u(t,x) \mapsto u_c(t)\overset{def}{=}\frac{1}{t} e^{i \frac{|x|^2}{4t}} u \left( -\frac{1}{t} \,,\,\frac{x}{t} \right).
$$
which can also be written at the level of profiles $f=e^{-it\Delta_{\R^2}}u$ and $f_c=e^{-it\Delta_{\R^2}} u_c$ as
$$
f(t)=\overline{\mathcal F f_c}(t^{-1}).
$$
Also notice that the well known fact that
$$
\mbox{as $t \rightarrow \infty$}, \qquad \left\| e^{it\Delta} \psi(x) - \frac{1}{4 \pi i t} e^{i \frac{|x|^2}{4t}}
\widehat{\psi}\left(\frac{x}{2t} \right) \right\|_{L^2} \longrightarrow 0,
$$
if $\psi \in \mathcal S(\R^2)$, combined with $L^2$ boundedness of $\mathcal{H}$, implies that points (iii) and (vii) of Proposition~\ref{robin} are equivalent.
\end{remark}

\begin{proof}
Again we assume that all functions are in Schwartz class $\mathcal S(\R^2)$. The case of functions in $L^2$ follows from a standard limiting argument using Lemma \ref{baldeagle}. The second equality is a consequence of the first and Plancherel's theorem:
$$
\mathcal{H}(f) = \frac{1}{4} \left< \mathcal{T}(f,f,f)\,,\,f \right> = \frac{1}{4} \left< \mathcal{F} \mathcal{T}(f,f,f) \,,\, \widehat{f}
\right> = \frac{1}{4} \left< \mathcal{T} \left( \widehat{f},\widehat{f},\widehat{f} \right) \,,\, \widehat{f} \right>
=  \mathcal{H}\left( \widehat{f} \right).
$$
We now prove the first identity: By Fourier inversion and using the identity $\frac{1}{4\pi^2} \int e^{ix \cdot \xi} \,dx = \delta_{\xi = 0}$ we compute
\begin{equation*}
\begin{split}
\mathcal{F} \mathcal{T} &(f,g,h)(\xi)  = \frac{1}{2\pi} \int_{\mathbb{R}^2} \int_{-1}^1 \int_{\mathbb{R}^2} e^{-iz\cdot \xi} f(x+z) g(\lambda x^\perp + z) 
\overline{h(x+\lambda x^\perp +z)}\,dx\,d\lambda \,dz \\
& = \frac{1}{16 \pi^4} \int \int_{-1}^1 \int \int  \int \int e^{iz\cdot (-\xi+\alpha+\beta-\gamma)} e^{ix\cdot(\alpha-\lambda \beta^\perp + \lambda \gamma^\perp - \gamma)} \widehat{f}(\alpha) \widehat{g} (\beta) \overline{\widehat{h}(\gamma)} \,d\alpha \,d\beta \,
d\gamma \,dx\,d\lambda \,dz \\
& =  \int_{-1}^1 \int \widehat{f} \left( \xi + \frac{1}{\lambda} \xi^\perp - \frac{1}{\lambda} \beta^\perp \right) \widehat{g}(\beta) 
\overline{\widehat{h} \left( \frac{1}{\lambda} \xi^\perp + \beta - \frac{1}{\lambda} \beta^\perp \right)}\,d\beta \,
\frac{d \lambda}{\lambda^2}.
\end{split}
\end{equation*}
Changing variables to $\beta' = \frac{1}{\lambda} ( \xi - \beta)^\perp$ and then omitting the primes gives
$$
\mathcal{F} \mathcal{T} (f,g,h)(\xi) = \int_{-1}^1 \int \widehat{f} (\xi + \beta) \widehat{g} (\xi+\lambda \beta^\perp)
\overline{\widehat{h} ( \beta + \lambda \beta^\perp + \xi )} \,d\beta\,d\lambda,
$$
which is the desired result.
\end{proof}

\subsection{Invariance of eigenspaces of the Harmonic oscillator}

Recall that the eigenvalues of the harmonic oscillator $-\Delta+|x|^2$ on $L^2(\mathbb{R}^2)$ are given by $2k$, $k\in \N$. The smallest eigenvalue, $2$, has an eigenspace $E_2$ of dimension 1, generated by the Gaussian $e^{-|x|^2/2}$. The eigenspace $E_{2k}$ has dimension $k$, and is generated by Hermite functions.

\begin{proposition}
The eigenspaces $E_k$, $k \in 2 \mathbb{N}+2$, of the harmonic oscillator are invariant by the flow of {\rm (CR)}.
\end{proposition}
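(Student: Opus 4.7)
The plan is to exhibit the quantum harmonic oscillator evolution $\Psi_s\overset{def}{=}e^{is(-\Delta+|x|^2)}$ as a Hamiltonian symmetry of $\mathcal H$ whose flow commutes with the (CR) flow $\Phi_t$. Invariance of each $E_k$ under $\Phi_t$ will then be automatic, since $E_k$ is precisely the simultaneous eigenspace of the one-parameter group $(\Psi_s)_{s\in\R}$.

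The key identity is $\{\mathcal H,H_{HO}\}=0$, where
$$
H_{HO}(f)\overset{def}{=}\int_{\R^2}\bigl(|\nabla f|^2+|x|^2|f|^2\bigr)\,dx
$$
is the classical Hamiltonian generating $\Psi_s$ with respect to the symplectic form $\omega$ of \eqref{homme} (a direct computation $\frac{\delta H_{HO}}{\delta\bar f}=(-\Delta+|x|^2)f$ identifies the associated Hamiltonian flow as $\partial_t f=i(-\Delta+|x|^2)f$). To establish the Poisson commutation, I would combine two ingredients already in the paper: items (iv) and (v) of Proposition~\ref{robin} say that the quadratic modulation $f\mapsto e^{i\tau|x|^2}f$ and the Schr\"odinger group $f\mapsto e^{i\tau\Delta}f$ are symmetries of $\mathcal H$, so Noether's theorem (items (iv) and (v) of Corollary~\ref{pelican}) yields
$$
\{\mathcal H,\textstyle\int|x|^2|f|^2\,dx\}=\{\mathcal H,\textstyle\int|\nabla f|^2\,dx\}=0.
$$
Bilinearity of the Poisson bracket then gives $\{\mathcal H,H_{HO}\}=0$, and hence the Hamiltonian flows $\Phi_t$ and $\Psi_s$ commute whenever both are defined.

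Given $g_0\in E_k$, so that $(-\Delta+|x|^2)g_0=k\,g_0$ and consequently $\Psi_sg_0=e^{iks}g_0$, the commutation combined with the phase-rotation invariance (symmetry (i) of Proposition~\ref{robin}) of $\Phi_t$ gives
$$
\Psi_s\bigl(\Phi_t g_0\bigr)=\Phi_t\bigl(\Psi_s g_0\bigr)=\Phi_t\bigl(e^{iks}g_0\bigr)=e^{iks}\,\Phi_t(g_0).
$$
Differentiating in $s$ at $s=0$ yields $(-\Delta+|x|^2)\Phi_t(g_0)=k\,\Phi_t(g_0)$, i.e.\ $\Phi_t(g_0)\in E_k$, as desired.

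The only mildly technical step is upgrading $\{\mathcal H,H_{HO}\}=0$ to a \emph{bona fide} commutation of flows at the regularity level handled by Theorem~\ref{eider2}. This is essentially automatic: $\Psi_s$ acts unitarily on every $L^2$-based Sobolev/weighted space, and in particular preserves $H^s(\R^2)\cap L^{2,s}(\R^2)$ in which $\Phi_t$ is globally well-posed; since the Hermite functions spanning $E_k$ lie in $\mathcal S(\R^2)\subset H^s\cap L^{2,s}$ for all $s$, the differentiation in $s$ performed above is rigorous. As an equivalent pathway avoiding Poisson-bracket formalism, one may note that invariance of $\mathcal H$ under $\Psi_s$ (which follows from the Mehler/metaplectic decomposition of $\Psi_s$ as a product of the symmetries (iv), (v), (vii) and (viii) of Proposition~\ref{robin}) together with uniqueness in Theorem~\ref{eider2} directly forces $\Psi_s\Phi_t=\Phi_t\Psi_s$.
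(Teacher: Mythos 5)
Your proposal is correct and follows essentially the same route as the paper: both establish $\{\mathcal H,\int|\nabla f|^2\}=\{\mathcal H,\int|x|^2|f|^2\}=0$ from the symmetries (iv)–(v) of Proposition~\ref{robin}, add the brackets to conclude that the harmonic-oscillator flow $e^{is(-\Delta+|x|^2)}$ commutes with the (CR) flow, and then use phase-rotation equivariance together with $e^{is(-\Delta+|x|^2)}g_0=e^{iks}g_0$ to force $\Phi_t(g_0)\in E_k$. Your added remarks on regularity and the alternative uniqueness argument are fine but do not change the substance of the argument.
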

\begin{proof} 

Let us introduce some notations: 
Given a functional $\mathcal F$, denote its symplectic gradient with respect to the symplectic form $\omega$ (see \eqref{homme}) by $\nabla_\omega \mathcal F$,
thus $d\mathcal F(X) = \omega (X,\nabla_\omega \mathcal F)$; denote further $\{ \mathcal F,\mathcal G\}$ for the Poisson bracket $\omega (\nabla_\omega \mathcal F,\nabla_\omega \mathcal G)$; 
finally $[X,Y]$ is the Lie bracket of the vector fields $X$ and $Y$ . Then the Hamiltonian $\mathcal G$ is invariant by the Hamiltonian flow induced by 
$\mathcal F$ if and only if $\{ \mathcal F , \mathcal G \} = 0$. Furthermore, the Hamiltonian flows of $\mathcal F$ and $\mathcal G$ commute if and only if $[ \nabla_\omega \mathcal F , \nabla_\omega \mathcal G]$.
The formula $[ \nabla_\omega \mathcal F, \nabla_\omega \mathcal G] = \nabla_\omega \{\mathcal F,\mathcal G\} = 0$ (see~\cite{Arnold}) implies that: if $\mathcal{G}$ is invariant by the flow of $\mathcal{F}$, then the flows of $\mathcal{F}$ and $\mathcal{G}$ commute.

We saw that the transformations $e^{it\Delta}$ (the flow associated with the Hamiltonian $\mathcal K_1 \overset{def}{=} \int |\nabla f|^2$)  and $e^{it|x|^2}$ (the flow of the Hamiltonian $\mathcal K_2 \overset{def}{=} \int |x f|^2 $) both leave $\mathcal{H}$ invariant. 
If we denote by $\{\cdot,\cdot\}$ the Poisson bracket associated with the symplectic form \eqref{homme} this implies that  $\{Ê\mathcal H, \mathcal K_1\} = 0$ and $\{\mathcal H, \mathcal K_2\} = 0$, and hence 
$\{Ê\mathcal H, \mathcal K_1 + \mathcal K_2\} = 0$. Now, the flow associated with the Hamiltonian $\mathcal K_1 + \mathcal K_2$ is the transformation $e^{it(-\Delta+|x|^2)}$. By the preceding paragraph, this implies that 
the unitary flow of the harmonic oscillator and the flow of $\mathcal{H}$ commute. 

Using in addition that phase rotations commute with the flow of $\mathcal{H}$ (which we denote here by $U(t))$, we get if $f \in E_k$: for any $s,t$
$$
e^{is(-\Delta+|x|^2)} U(t)f = U(t) e^{is(-\Delta+|x|^2)} f = U(t)e^{isk} f = e^{isk} U(t)f.
$$
The equality $e^{is(-\Delta+|x|^2)} U(t)f = e^{isk} U(t)f$ implies that $U(t)f \in E_k$.
\end{proof}

\subsection{Stationary waves}

We are concerned here with solutions of the type $f = e^{i\omega t} \phi$; thus $\phi$ verifies
$$
\omega \phi = \mathcal{T} (\phi,\phi,\phi).
$$
First notice that the boundedness of $\mathcal{T}$ on $L^2$ (see Section~\ref{sectionanalytic} and \eqref{optimal cst H}) implies, after taking
the scalar product of the above with $\phi$, that
$$
0 \leq \omega \leq \frac{\pi}{8} \|\phi\|_2^2.
$$
(this is in conformity the scaling of the equation of course). Many stationary solutions can be uncovered by examining the eigenspaces of the harmonic oscillator which, as we know from the previous subsection, are 
invariant by the flow of (CR). We give a few examples:
\begin{itemize}
\item The first eigenspace $E_2$ is generated by the Gaussian $e^{-\frac{|x|^2}{2}}$; its stability and the conservation of the $L^2$ norm imply that for some constant $\omega_0$, $e^{i\omega_0 t} e^{-\frac{|x|^2}{2}}$ is an exact solution. In fact, this is equivalent to saying that 
\begin{equation}\label{Gaussians and T}
\mathcal T(e^{-\frac{|x|^2}{2}},e^{-\frac{|x|^2}{2}},e^{-\frac{|x|^2}{2}})=\omega_0 e^{-\frac{|x|^2}{2}},
\end{equation}
which can also easily be verified explicitly.

Letting the symmetries of the equation act on this Gaussian, we obtain the family of stationary waves: for $(\alpha,\beta,x_0,v_0,\gamma) \in \mathbb{R}_+ \times \mathbb{R} \times \mathbb{R}^2 \times \mathbb{R}^2 \times \mathbb{C}$,
$$
\gamma e^{-(\frac{\alpha}{2}+i\beta) |x-x_0|^2 + i \left( v_0 \cdot x + \omega_0 \frac{|\gamma|^2}{\alpha^2} t \right)}.
$$
\item The second eigenspace $E_4$ is generated by $x_1 e^{-\frac{|x|^2}{2}}$ and $x_2 e^{-\frac{|x|^2}{2}}$. Using the invariance of $E_4$, conservation of the $L^2$ norm, and of the oddness with respect to $x_1$, we obtain that $x_1 e^{-\frac{|x|^2}{2}+ i \omega_1 t}$ is an exact solution. By invariance by rotation, we obtain the family of stationary solutions
$$
(\cos \theta x_1 + \sin \theta x_2) e^{-\frac{|x|^2}{2} + i \omega_1 t}, 
$$
where $\omega_1$ is a constant and $\theta$ arbitrary in $\mathbb{R}$ (more stationary solutions can then be obtained by applying the symmetries).

\item The third eigenspace $E_6$ is generated by $x_1 x_2 e^{-\frac{|x|^2}{2}}$, $(2x_1^2 - 1)e^{-\frac{|x|^2}{2}}$ and $(2x_2^2 - 1)e^{-\frac{|x|^2}{2}}$. By conservation of the $L^2$ norm, invariance of $E_6$, and invariance of the set of radial functions, we find new exact solutions of the form
$$
(2|x|^2 - 1) e^{-\frac{|x|^2}{2}+\omega_2 t}
$$
for a constant $\omega_2$.
\end{itemize}

Orbital stability of Gaussians with respect to perturbations in $L^{2,1} \cap {H}^1$ can be easily established by relying on the conserved quantities $\int |f|^2$, $\int |xf|^2$, $\int |\nabla f|^2$. This is done in the following proposition.

\begin{proposition}
Consider data $f_0$ such that $\|f_0 - e^{-\frac{|x|^2}{2}} \|_{H^1 \cap L^{2,1}} = \delta >0$, and let $f(t)$ be the corresponding solution of \eqref{star}. Then
$$
\sup_t \operatorname{dist}_{H^1\cap L^{2,1}} (f(t),\mathbb{S}^1 e^{-\frac{|x|^2}{2}}) \lesssim \sqrt{\delta},
$$
where we denoted $\mathbb{S}^1 e^{-\frac{|x|^2}{2}}$ for $\{ \gamma e^{-\frac{|x|^2}{2}}, \gamma \in \mathbb{C} \;\mbox{and}\; |\gamma| = 1 \}$.
\end{proposition}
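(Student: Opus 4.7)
The plan is a classical Lyapunov / spectral argument, in the spirit of Grillakis--Shatah--Strauss orbital stability, adapted to the conservation laws of \eqref{star}. Throughout, set $G(x) = e^{-|x|^2/2}$, and recall $(-\Delta + |x|^2)G = 2G$ and $\|G\|_{L^2}^2 = \pi$.

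The central object is the Lyapunov functional
\begin{equation*}
\Lambda(f) \overset{def}{=} \int_{\R^2}\left(|\nabla f|^2 + |xf|^2 - 2|f|^2\right) dx,
\end{equation*}
which is a linear combination of conserved quantities from Corollary~\ref{pelican}, hence conserved along the \eqref{star} flow. Note that $G$ is a critical point of $\Lambda$ (its Euler--Lagrange equation is precisely $(-\Delta+|x|^2-2)G=0$), and $\Lambda(G)=0$. First I would decompose, for each $t$ in the interval of existence, $f(t) = \alpha(t) G + w(t)$ with $\alpha(t)\overset{def}{=}\pi^{-1}\langle f(t),G\rangle$ and $w(t)\perp G$ in $L^2$. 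Using $(-\Delta+|x|^2)G=2G$ together with $\langle w,G\rangle=0$, an integration by parts shows that the cross terms in the quadratic form $\Lambda(\alpha G + w)$ cancel, giving
\begin{equation*}
\Lambda(f(t)) = \Lambda(w(t)).
\end{equation*}
The crucial coercivity step then rests on the spectral gap of the harmonic oscillator: the operator $-\Delta+|x|^2$ has pure point spectrum $\{2,4,6,\dots\}$ on $L^2(\R^2)$, with the ground eigenspace spanned by $G$, so its spectrum on $G^\perp$ is bounded below by $4$. Consequently, for any $w \perp G$,
\begin{equation*}
\Lambda(w) = \langle (-\Delta+|x|^2-2)w,\,w\rangle \;\geq\; \tfrac{1}{2}\langle (-\Delta+|x|^2)w,\,w\rangle \;\gtrsim\; \|w\|_{H^1\cap L^{2,1}}^2.
\end{equation*}

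With the coercivity in hand, writing $f_0 = G + r$ with $\|r\|_{H^1\cap L^{2,1}}=\delta$, the fact that $G$ is a critical point of $\Lambda$ makes the linear term in the Taylor expansion of $\Lambda(G+r)$ vanish, so $\Lambda(f_0) = \Lambda(r)\lesssim \delta^2$ by Cauchy--Schwarz. Conservation of $\Lambda$ and the coercivity estimate then yield $\|w(t)\|_{H^1\cap L^{2,1}}\lesssim \delta$ uniformly in $t$. Finally, mass conservation gives $|\alpha(t)|^2\pi+\|w(t)\|_{L^2}^2 = \|f_0\|_{L^2}^2 = \pi + O(\delta)$, whence $||\alpha(t)|-1|\lesssim \delta$ once $\delta$ is small. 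Setting $\theta(t)=\alpha(t)/|\alpha(t)|$, so that $|\alpha(t)-\theta(t)|=||\alpha(t)|-1|$, one concludes
\begin{equation*}
\operatorname{dist}_{H^1\cap L^{2,1}}\bigl(f(t),\mathbb{S}^1 G\bigr) \leq \|f(t)-\theta(t)G\|_{H^1\cap L^{2,1}} \leq ||\alpha(t)|-1|\,\|G\|_{H^1\cap L^{2,1}} + \|w(t)\|_{H^1\cap L^{2,1}} \lesssim \delta,
\end{equation*}
which is in fact stronger than the $\sqrt\delta$ rate in the statement. The main (and only delicate) step is the spectral coercivity, which depends on the gap between the ground and first excited states of $-\Delta+|x|^2$; once this is in place, the remainder is bookkeeping on the conserved quantities of \eqref{star}.
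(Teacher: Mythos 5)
Your argument is correct and is essentially the paper's own proof: both rest on the conservation of the mass and of the harmonic-oscillator energy $\int |\nabla f|^2 + \int |x|^2|f|^2$, combined with the spectral gap of $-\Delta+|x|^2$ above its simple ground state $e^{-\frac{|x|^2}{2}}$, to control the component of $f(t)$ orthogonal to the Gaussian (the paper phrases this through the spectral projectors $P_k$). The only difference is that you package the conserved quantities into the single functional $\Lambda(f)=\langle(-\Delta+|x|^2-2)f,f\rangle$ and exploit the exact vanishing of the linear term at $G$, which even sharpens the paper's rate $\sqrt{\delta}$ (the paper only records that each conserved quantity deviates by $O(\delta)$) to $O(\delta)$ for small $\delta$.
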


\begin{proof} Let us denote by $P_{k}$, with $k \in 2 \mathbb{N}$, the projectors on the eigenspaces of $-\Delta + |x|^2$. Notice first that the assumption that $\|f_0 - e^{-\frac{|x|^2}{2}} \|_{H^1 \cap L^{2,1}} = \delta$, implies that 
$$
\left| \|f_0\|_{L^2}^2 - \| e^{-\frac{|x|^2}{2}} \|_{L^2}^2 \right| \lesssim \delta
$$
and that
$$
\left| \langle (-\Delta + |x|^2) f_0\,,\,f_0 \rangle - \langle (-\Delta + |x|^2) e^{-\frac{|x|^2}{2}}\,,\,e^{-\frac{|x|^2}{2}} \rangle \right| \lesssim \delta.
$$
Furthermore, these two inequalities remain true if one replaces $f_0$ by $f(t)$, which follows by using the conserved quantities of the equation. In terms of the spectral projectors $P_{k}$, this means
\begin{align*}
& \left| \sum_{k\in 2 \mathbb{N}} \|P_k f(t)\|_{L^2}^2 - 4\pi^2 \right| \lesssim \delta \\
& \left| \sum_{k\in 2 \mathbb{N}} k^2 \|P_k f(t)\|_{L^2}^2 - 16 \pi^2 \right| \lesssim \delta.
\end{align*}
This implies that
$$
\sum_{k\in 2 \mathbb{N}+2} k^2\|P_k f(t)\|_{L^2}^2 \lesssim \delta,
$$
which easily implies the desired result.
\end{proof}

Finally, another interesting explicit solution is given by
$$
e^{i\omega_3 t} \frac{1}{|x|} \qquad \mbox{for a constant $\omega_3$}.
$$
This follows easily from the identity $\mathcal{T}(\frac{1}{|x|},\frac{1}{|x|},\frac{1}{|x|}) = \frac{\omega_3}{|x|}$, which will be established in Section~\ref{sectionanalytic}.
Once again, applying the symmetries of the equation (detailed in Section~\ref{sectionhamiltonian}) gives new solutions. 
The action of these symmetries is easily described, except for the expression $e^{i s \Delta} \frac{1}{|x|}$. We refer to the book of
Cazenave~\cite{Cazenave}, where it is shown that it is $L^\infty$, smooth, and decaying like $\frac{1}{|x|}$ for $s>0$.

As mentioned before the solution $g(t, \xi)=e^{i\omega_3 t}|\xi|^{-1}$ carries particular significance in terms of its relation to wave turbulence theory as it corresponds to the Raleigh-Jeans solution $n(\xi)=|\xi|^{-2}$ of the \eqref{KZ} equation mentioned in Section \ref{obtained results} (recall that $n(\xi,t)$ is related to the square of the Fourier modes).

\section{Analytic properties}

\label{sectionanalytic}

The analytic properties of solutions of (CR) reflect essentially those of $\mathcal{T}$. This operator does not seem to belong to a well-studied class of operators.
If one fixes $\lambda$ and integrates only over $y$ in the definition~(\ref{eq:defT}), the operator under consideration belongs to the class studied by Brascamp and Lieb
~\cite{BL}; however, the integration over $\lambda$ seems responsible for some important properties of $\mathcal{T}$, such as Theorem~\ref{hibou} below. 

The feature of $\mathcal{T}$ which differs from the setting of Brascamp and Lieb is that the arguments of $f$, $g$, and $h$ are nonlinear functions of $\lambda$ and $x$.
A nonlinear Brascamp-Lieb theory is not available, but a nonlinear analog of the related Loomis-Whitney inequality has been established by Bennett, Carbery and Wright~\cite{BCW}; we will
make use of it.

\subsection{Boundedness of $\mathcal{T}$}

\begin{proposition}
\label{baldeagle}
For the following spaces $X$, the trilinear operator $\mathcal{T}$ is bounded from $X^3$ to $X$:
\begin{itemize}
\item[(i)] $L^2$.
\item[(ii)] $\dot{L}^{\infty,1}$.
\item[(iii)] $\dot{L}^{p,1-\frac{2}{p}}$ for $p \geq 2$.
\item[(iv)] $L^{2,\sigma}$ for any $\sigma \geq 0$.
\item[(v)] $H^\sigma$ for any $\sigma \geq 0$.
\item[(vi)] $Y^{\sigma, p} \overset{def}{=} \{ f(x) \, | \, e^{\sigma |x|^2}f(x) \in  L^p\}$ for any $\sigma \geq 0$, $p\geq 2$. \black 
\end{itemize}
\end{proposition}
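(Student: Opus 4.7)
The plan is to treat (i) as the core $L^2$ estimate, derive (ii) from a direct pointwise computation, obtain (iii) by interpolation, deduce (iv) via an algebraic commutation relation, and get (v)--(vi) by combining (iv) with the Fourier and Gaussian-modulation symmetries of $\mathcal{T}$. For (i), I would polarize the identity \eqref{HamStich} to obtain
\[
\langle \mathcal T(f,g,h), k\rangle_{L^2} \;=\; 2\pi \int_{\R \times \R^2}\bigl(e^{it\Delta}\widehat f\bigr)\overline{\bigl(e^{it\Delta}\widehat g\bigr)}\bigl(e^{it\Delta}\widehat h\bigr)\overline{\bigl(e^{it\Delta}\widehat k\bigr)}\,dx\,dt,
\]
and then H\"older in $L^4_{t,x}$ combined with the $L^4$ Strichartz inequality on $\R^2$ (which has a universal constant) bounds this quadrilinear form by $\|f\|_2\|g\|_2\|h\|_2\|k\|_2$; $L^2$-duality closes (i). For (ii), the pointwise bound $|f(\xi)| \leq A|\xi|^{-1}$ with $A = \|f\|_{\dot L^{\infty,1}}$ gives
\[
|\mathcal T(f,g,h)(\xi)| \leq ABC \int_{-1}^1\!\int_{\R^2} \frac{dy\,d\lambda}{|\xi+y|\,|\xi+y+\lambda y^\perp|\,|\xi+\lambda y^\perp|},
\]
and rescaling $y = |\xi|z$ together with rotational invariance reduces matters to the finiteness of the dimensionless universal integral $I = \int_{-1}^1\!\int \frac{dz\,d\lambda}{|e+z|\,|e+z+\lambda z^\perp|\,|e+\lambda z^\perp|}$ for any $e\in\mathbb{S}^1$; this is the main technical point and is addressed below. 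Part (iii) then follows by complex interpolation between (i) and (ii), using $[L^2,\dot L^{\infty,1}]_\theta = \dot L^{p,\,1-2/p}$ with $\theta = 1 - 2/p$.

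For (iv), the pivotal observation is the algebraic decomposition $x = (x+y) - (x+y+\lambda y^\perp) + (x+\lambda y^\perp)$, which yields the vector commutation relation
\[
x\,\mathcal T(f,g,h) \;=\; \mathcal T(xf,g,h) - \mathcal T(f,xg,h) + \mathcal T(f,g,xh),
\]
together with the orthogonality-based identity $|x|^2 = |x+y|^2 + |x+\lambda y^\perp|^2 - |x+y+\lambda y^\perp|^2$ (a consequence of $y \perp y^\perp$), which gives an analogous commutation for $|x|^2$. Iterating these for $\sigma\in\N$ and invoking (i) on each resulting term controls $\||x|^\sigma \mathcal T(f,g,h)\|_{L^2}$ by an appropriate product of weighted $L^2$-norms, with interpolation handling arbitrary $\sigma\geq 0$; replacing $|x|$ by the Japanese bracket $\langle x\rangle$ just contributes harmless lower-order terms already bounded by (i). Part (v) is then immediate from (iv) and the Fourier invariance of Lemma~\ref{albatros}: $\|\mathcal T(f,g,h)\|_{H^\sigma} = \|\mathcal T(\widehat f,\widehat g,\widehat h)\|_{L^{2,\sigma}}$. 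For (vi), writing $f = e^{-\sigma|\cdot|^2}F$, $g = e^{-\sigma|\cdot|^2}G$, $h = e^{-\sigma|\cdot|^2}H$ with $F,G,H\in L^p$ and applying the same orthogonality, one obtains
\[
e^{\sigma|x|^2}\mathcal T(f,g,h)(x) \;=\; \int_{-1}^1\!\int_{\R^2} e^{-2\sigma|x+y+\lambda y^\perp|^2}\,F(x+y)\,\overline{G(x+y+\lambda y^\perp)}\,H(x+\lambda y^\perp)\,dy\,d\lambda,
\]
and the Gaussian factor on the middle argument provides enough localization that direct integration handles $p=\infty$ (since $\int_{-1}^1\!\int e^{-2\sigma|w|^2}\,dw\,d\lambda/(1+\lambda^2)<\infty$ after changing $y\mapsto w = x+y+\lambda y^\perp$) and Cauchy--Schwarz handles $p=2$, with intermediate $p$ following by interpolation.

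\textbf{Main obstacle.} The sharpest technical point is the finiteness of $I$ in step (ii). Its three singular loci are individually integrable, but two of them coincide non-transversely at $(z,\lambda)=(-e,0)$, producing a coalescing singularity on a 3-dimensional domain. Writing $z = -e+w$ with $w$ and $\lambda$ small, the local model becomes $\int\!\int |w|^{-1}|w-\lambda e^\perp|^{-1}\,dw\,d\lambda$; the inner $w$-integral produces a logarithmic factor $\sim\log(1/|\lambda|)$ which the $\lambda$-integration absorbs (the antiderivative $\lambda\log(1/\lambda)+\lambda$ is bounded near $0$), showing that $I<\infty$. This degenerate trilinear singular integral fits precisely into the setting of the nonlinear Loomis--Whitney inequality of Bennett--Carbery--Wright~\cite{BCW} cited at the start of this section, and that framework provides a cleaner abstract route when one does not wish to carry out the local analysis by hand.
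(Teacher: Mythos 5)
Your proposal is correct, and its overall skeleton matches the paper's: (ii) is reduced by scaling to the finiteness of a single universal integral (this is exactly the paper's Lemma~\ref{cormorant}, where the delicate region is the same coalescing singularity you isolate, handled there by a full case-by-case analysis that also produces and absorbs a logarithm; your local model $\int\!\int |w|^{-1}|w-\lambda e^\perp|^{-1}dw\,d\lambda$ is accurate since the map $w\mapsto w+\lambda w^\perp-\lambda e^\perp$ has derivative $I+\lambda J$ comparable to the identity, and the remaining regions, including the behaviour at infinity where the factor $|e+\lambda z^\perp|^{-1}$ becomes singular along a locus escaping to infinity, are indeed routine though they do require integrating in $z$ and $\lambda$ jointly rather than in $\lambda$ alone); (iii) is the same one-line interpolation; (v) is the same deduction from (iv) and Lemma~\ref{albatros}; and (vi) uses the same two endpoints $p=2,\infty$ with the same Gaussian-weight identity, your kernel formula with the factor $e^{-2\sigma|x+y+\lambda y^\perp|^2}$ being just an explicit form of the inequality the paper invokes (both arguments need $\sigma>0$ at $p=\infty$). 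You genuinely deviate in two places. For (i) you polarize \eqref{HamStich} and quote the $\R^2$ Strichartz estimate, whereas the paper gives a two-line Cauchy--Schwarz argument precisely to stay self-contained (it notes the equivalence with Strichartz explicitly); your route is fine provided you first work with Schwartz data, since the paper's extension of \eqref{HamStich} to $L^2$ itself uses \eqref{HL2bdd}, so state the polarized identity on $\mathcal S$ and extend the resulting bound by density to avoid any appearance of circularity. For (iv) you use the vector and $|x|^2$ commutation identities, iterate over integer $\sigma$, and interpolate; the paper instead distributes the weight directly through $\langle\xi\rangle^\sigma\lesssim\langle\xi+z\rangle^\sigma\langle\xi+z+\lambda z^\perp\rangle^\sigma\langle\xi+\lambda z^\perp\rangle^\sigma$ and reruns the proof of (i), which covers all real $\sigma\geq0$ at once without interpolation -- simpler, though your Leibniz-type identities are of independent interest (they are the algebraic shadow of the momentum and energy conservation laws). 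Finally, note that the paper reserves the Bennett--Carbery--Wright inequality for the localized estimates of Proposition~\ref{loriot}, not for Lemma~\ref{cormorant}, so your suggestion to use it for (ii) is an alternative rather than a reconstruction of the paper's argument.
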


\begin{remark} Due to~(\ref{HamStich}), the statement (i) is equivalent to the classical $L^4$ Strichartz estimate~\cite{Tomas, Strichartz}; the proof we give is very simple, and seems not be found in the literature.

Notice that the three first statements above are at the scaling of the equation; the three last spaces
are subcritical, thus they will give for the equation the possibility to propagate smoothness, or localization.
\end{remark}

\begin{proof}
To prove (i), we argue by duality and use the Cauchy-Schwarz inequality to obtain
\begin{equation*}
\begin{split}
& \lan \mathcal{T}(f,g,h) , F \ran = \int_{-1}^1 \int \int f(x+z) g(\lambda x^\perp + z) \overline{h(x + \lambda x^\perp + z)}
\, \overline{F(z)} \,dx\,dz\,d\lambda \\
& \qquad \lesssim \int_{-1}^1 \left( \int \int |f(x+z)|^2 |F(z)|^2\,dx\,dz \right)^{1/2} \\
& \qquad \qquad \qquad \left( \int \int |h(x + z)|^2 |g( z)|^2\,dx\,dz \right)^{1/2} \,d\lambda \\
& \qquad \lesssim \|f\|_{L^2} \|g\|_{L^2} \|h\|_{L^2} \|F\|_{L^2}.
\end{split}
\end{equation*}
To prove (ii), it suffices to establish that
$$
\mathcal{T}\left(\frac{1}{|x|},\frac{1}{|x|},\frac{1}{|x|} \right) \leq  \frac{C}{|x|},
$$
this will be done in Lemma~\ref{cormorant} below. Indeed, equality holds!

The point (iii) can be obtained from (i) and (ii) and complex interpolation.

To prove (iv), proceed as for (i), using in addition the inequality
$$
\mbox{for any $\lambda \in \mathbb{R}$, $\xi,z$ in $\mathbb{R}^2$}, \qquad
\frac{\lan \xi \ran^\sigma}{\lan \xi + z \ran^\sigma \lan \xi +z+\lambda z^\perp \ran^\sigma 
\lan \xi + \lambda z^\perp + z \ran^\sigma} \lesssim 1.
$$
which follows from the relation $\xi=(\xi+z)-(\xi+z+\lambda z^\perp)+(\xi+\lambda z^\perp)$.

The assertion (v) is a consequence of (iv) and the invariance of $\mathcal{T}$ under Fourier transform proved in
Proposition~\ref{robin}.

Finally, (vi) follows from the interpolation between the two cases $p=2$ and $p=\infty$. The endpoint $p=\infty$ follows from \eqref{Gaussians and T}; as for the
endpoint $p=2$, it can be proved by combining (i) with the fact that for any $\lambda,x,z$,
$$
\mbox{for any $\lambda \in \mathbb{R}$, $x,z$ in $\mathbb{R}^2$}, \qquad
\frac{e^{\sigma|z|^2}}{e^{\sigma|x+z|^2}e^{\sigma|\lambda x^\perp +z|^2}e^{\sigma|x+\lambda x^\perp +z|^2}} \lesssim 1. 
$$
\end{proof}

We now prove the identity that was alluded to earlier.% leading Note that \eqref{HL2bdd} follows as a consequence of the above lemma.

\begin{lemma} \label{cormorant}
For some constant $C$, we have
$$
\mathcal{T}\left( \frac{1}{|x|} , \frac{1}{|x|}  , \frac{1}{|x|}  \right) = \frac{C}{|x|}.
$$
\end{lemma}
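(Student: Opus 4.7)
The strategy is to exploit the symmetries of $\mathcal{T}$ established in Proposition \ref{robin}: the function $f(x)=1/|x|$ is simultaneously rotation invariant and a fixed point of the critical scaling $f \mapsto \mu f(\mu\,\cdot\,)$, so $\mathcal{T}(f,f,f)$ must itself be radial and homogeneous of degree $-1$, hence a constant multiple of $1/|x|$.

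First I would verify that $\mathcal{T}(1/|x|,1/|x|,1/|x|)$ is a well-defined element of $\dot X^1(\mathbb{R}^2)$: this is immediate from Proposition \ref{baldeagle}(ii), since $1/|x|\in \dot L^{\infty,1}$ with norm one.

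Next I would check the two required symmetries. For rotations, writing $\tilde f(x)=f(R_\theta x)$, a change of variable $z\mapsto R_\theta z$ in the integral \eqref{eq:defT} combined with the fact that $R_\theta$ commutes with $\cdot^\perp$ yields
\[
\mathcal{T}(\tilde f,\tilde f,\tilde f)(\xi)=\mathcal{T}(f,f,f)(R_\theta\xi).
\]
Applied to $f(x)=1/|x|$, which is invariant under $R_\theta$, this shows $\mathcal{T}(f,f,f)$ is radial. For scalings, setting $f_\mu(x)=\mu f(\mu x)$ and changing variable $z\mapsto z/\mu$ in \eqref{eq:defT} gives
\[
\mathcal{T}(f_\mu,f_\mu,f_\mu)(\xi)=\mu\,\mathcal{T}(f,f,f)(\mu\xi).
\]
Since $f(x)=1/|x|$ satisfies $f_\mu=f$ for every $\mu>0$, we conclude
\[
\mathcal{T}(f,f,f)(\xi)=\mu\,\mathcal{T}(f,f,f)(\mu\xi)\qquad \text{for all }\mu>0,
\]
i.e.\ $\mathcal{T}(f,f,f)$ is homogeneous of degree $-1$.

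Combining radiality with homogeneity of degree $-1$ forces $\mathcal{T}(1/|x|,1/|x|,1/|x|)=C/|x|$ for some constant $C$, as claimed. The only potentially delicate point is making the symmetry identities rigorous at the borderline integrability of $1/|x|$, but this is harmless: the formal changes of variables are justified by the absolute convergence guaranteed by the $\dot X^1$ boundedness of $\mathcal{T}$, and the identity can alternatively be obtained by approximating $1/|x|$ by truncations $\chi_{\varepsilon\le|x|\le\varepsilon^{-1}}/|x|$ and passing to the limit.
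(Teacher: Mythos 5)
Your symmetry argument (radiality plus homogeneity of degree $-1$ forces the output to be a multiple of $1/|x|$) is fine, but it is only the trivial part of the statement: the paper itself dispatches it in one line (``by scale invariance, it suffices to show \dots''). The entire analytic content of the lemma is the assertion that the constant $C$ is \emph{finite}, i.e.\ that
\begin{equation*}
\int_{-1}^1 \int_{\mathbb{R}^2} \frac{1}{|x+e_1|}\,\frac{1}{|\lambda x^\perp + e_1|}\,\frac{1}{|x+\lambda x^\perp + e_1|}\,dx\,d\lambda < \infty ,
\end{equation*}
and your proposal never addresses this. Worse, the way you try to sidestep it is circular: you invoke Proposition~\ref{baldeagle}(ii) (boundedness of $\mathcal{T}$ on $\dot X^1 = \dot L^{\infty,1}$) to guarantee that $\mathcal{T}(1/|x|,1/|x|,1/|x|)$ is well defined, but in the paper that very proposition is \emph{deduced from} this lemma — indeed, for arguments bounded by $1/|x|$ the $\dot X^1$ bound is exactly equivalent to the finiteness of the displayed integral, so there is no independent source for it.

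The truncation fallback does not repair this: applying $\mathcal{T}$ to $\chi_{\{\varepsilon\le|x|\le\varepsilon^{-1}\}}/|x|$ and letting $\varepsilon\to 0$, monotone convergence only tells you the limit equals the integral above, which could a priori be $+\infty$. Establishing its convergence is genuinely delicate — the integrand is critical both at infinity and near the sets where each factor blows up, and the hardest regime is $|x+e_1|\ll 1$ with $|\lambda|\sim|x+e_1|$, where the third factor nearly vanishes along a curve; the paper handles this by a multi-region splitting ending with a quantitative lower bound on the quadratic form $P(\alpha,r,\theta)=|\omega(\theta)+\alpha(r\omega(\theta)-e_1)^\perp|^2$ and a logarithmic integral estimate. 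Without some version of that case analysis (or an equivalent quantitative bound), your proof establishes only that $\mathcal{T}(1/|x|,1/|x|,1/|x|)$ is $C/|x|$ with $C\in(0,+\infty]$, which is not the lemma.
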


\begin{proof}
Denote $e_1$ for the point $(1,0)$ in $\mathbb{R}^2$. By scale invariance, it suffices to show that
$$
\mathcal{T} \left( \frac{1}{|x|} , \frac{1}{|x|}  , \frac{1}{|x|}  \right) (e_1) 
= \int_{-1}^1 \int_{\mathbb{R}^2} \frac{1}{|x+e_1|} \frac{1}{|\lambda x^\perp + e_1|} \frac{1}{|x + 
\lambda x^\perp + e_1|}\,dx\,d\lambda < \infty.
$$
We prove this bound by splitting the integration domain into several regions.

\bigskip

\noindent
\underline{Where $|x| \ll 1$.} The contribution of this set is easily seen to be bounded.

\bigskip

\noindent
\underline{Where $|x|>> 1$ and $|\lambda| >> \frac{1}{|x|}$.} The contribution of this set is
bounded by
$$
C \int_{|x|>>1} \int_{\frac{1}{|x|} \ll |\lambda| \ll 1} \frac{1}{|x|} \frac{1}{|\lambda x|} \frac{1}{|x|} 
\,d\lambda\,dx \lesssim \int_{|x|>>1} \frac{1}{|x|^3} \log |x| \,dx < \infty.  
$$

\bigskip

\noindent
\underline{Where $|x|>> 1$ and $|\lambda| \ll \frac{1}{|x|}$.} This contributes at most
$$
C \int_{|x| >> 1} \int_{|\lambda| \ll \frac{1}{|x|}} \frac{1}{|x|^2} d\lambda\,dx \lesssim 
\int_{|x| >> 1} \frac{1}{|x|^3} \,dx <\infty.
$$

\bigskip

\noindent
\underline{Where $|x|>> 1$ and $|\lambda| \sim \frac{1}{|x|}$.} This gives at most
$$
C \int_{|x| >> 1} \int_{\lambda \sim‎ \frac{1}{|x|}} \frac{1}{|\lambda x^\perp + e_1|} \frac{1}{|x|^2} d\lambda\,dx
= \int_{|\lambda| \ll 1} \lambda^2 \int_{|x| \sim \frac{1}{\lambda}} \frac{1}{|\lambda x^\perp + e_1|}
\,dx\,d\lambda.
$$
Changing variable $y=\lambda x^\perp + e_1$, this is less than
$$
\int_{\lambda \ll 1} \int_{|y| \lesssim 1} \frac{1}{|y|} \,dy\,d\lambda < \infty.
$$

\bigskip

\noindent
\underline{Where $|x| \sim 1$, $|x+e_1| \gtrsim 1$, $|\lambda x^\perp +e_1| \gtrsim 1$, and $|x + \lambda x^\perp + e_1| \gtrsim 1$.} This contribution is immediately seen to be finite.

\bigskip

\noindent
\underline{Where $|x|\sim 1$ and $|\lambda x^\perp + e_1| \ll 1$.} This region contributes less than
$$
C \int_{|\lambda| \sim 1} \int_{|x|\sim 1,|\lambda x^\perp + e_1| \ll 1} \frac{1}{|\lambda x^\perp + e_1|}
\,dx\,d\lambda \lesssim \int_{|\lambda| \sim 1} \int_{|y| \ll 1}  \frac{1}{|y|} \,dy\,\frac{d\lambda}{\lambda^2} < \infty,
$$
were we changed variables in the integral, setting $y = \lambda x^\perp + e_1$.

\bigskip

\noindent
\underline{Where $|x| \sim 1$, and $|x + \lambda x^\perp + e_1| \ll 1$.} By the relation 
$$
|e_1+x|^2+|e_1+\lambda x^\perp|^2=1+|e_1+x+\lambda x^\perp|^2$$
We have that \emph{both} $|x+e_1|$ and $|\lambda x+e_1|\gtrsim 1$ (since $|x|\sim 1$), and therefore this contribution can be bounded by
$$
C \int_{|\lambda| \sim 1} \int_{|x + \lambda x^\perp + e_1| \ll 1} \frac{1}{|x+\lambda x^\perp + e_1|}\,dx\,d\lambda < \infty
$$
by arguments similar to the ones already employed.

\bigskip
We are thus left with the case when $|x|\sim 1$ and $|x+e_1|\ll1$, which we split into three cases:

\medskip
\noindent
\underline{Where $|x+e_1| \ll 1$ and $|\lambda|>4|x+e_1|$.} The integral over this region can be bounded by
$$
C \int_{|x+e_1|\ll1} \int_{\substack{ -1<\lambda < 1 \\ |\lambda|>4|x+e_1|}} \frac{1}{|x+e_1|} \frac{1}{|x+\lambda x^\perp + e_1|}\,d\lambda\,dx
\lesssim \int_{|y|\ll1} \int_{\substack{ -1<\lambda < 1 \\ |\lambda|>4|y|}} \frac{1}{|y|} \frac{1}{|y + \lambda (y - e_1)^\perp |}\,d\lambda\,dy
$$
upon changing variable $y = x + e_1$. Now notice that for fixed $\lambda$
$$
|y+\lambda (y-e_1)^\perp | \geq\left|\,|y+\lambda y^\perp|-\lambda\right|=\left| \sqrt{1+\lambda^2} |y| - |\lambda| \right|.
$$
Therefore, the above can be bounded by
\begin{equation*}
\begin{split}
\dots & \lesssim \int_{|y|\ll1} \int_{\substack{ -1<\lambda < 1 \\ |\lambda|>4|y|}} \frac{1}{|y|} \frac{1}{\left| \sqrt{1+\lambda^2} |y| - |\lambda| \right|} \,d\lambda \,dy
\lesssim \int_{|y|\ll1} \int_{\substack{ -1<\lambda < 1 \\ |\lambda| >4|y|}} \frac{1}{|y|} \frac{1}{\left| \lambda \right|} \,d\lambda \,dy \\
& \lesssim \int_{|y|\ll1} \frac{|\log|y||}{|y|}\,dy <\infty.
\end{split}
\end{equation*}

\bigskip

\noindent
\underline{Where $|x+e_1| \ll 1$ and $|\lambda|<\frac{1}{4}|x+e_1|$.} Proceeding as in the previous case, we see that this contributes at most
\begin{equation*}
\begin{split}
C \int_{|y|\ll1} \int_{|\lambda|<\frac{1}{4}|y|} \frac{1}{|y|} \frac{1}{\left| \sqrt{1+\lambda^2} |y| - |\lambda| \right|} \,dy\,d\lambda
& \lesssim \int_{|y|\ll1} \int_{|\lambda|<\frac{1}{4}|y|} \frac{1}{|y|^2} \, dy \\
& \lesssim \int_{|y|\ll 1}\frac{dy}{|y|}<\infty.%\int_{-1}^1 |\log \lambda|\,d\lambda < \infty.
\end{split}
\end{equation*}

\bigskip

\noindent
\underline{Where $|x+e_1| \ll 1$ and $\frac{1}{4}|x+e_1|<|\lambda|<4|x+e_1|$.} For simplicity in the notations, we only treat the case $\lambda>0$. This region contributes at most
\begin{equation*}
\begin{split}
& \int_{|y|\ll1}  \int_{\frac{1}{4}|y|<\lambda<4|y|} \frac{1}{|y|} \frac{1}{|y + \lambda (y - e_1)^\perp |}\,d\lambda\,dy \\
& \qquad \qquad \qquad \qquad = 2\pi \int_{|r|\ll1} \int_{\theta \in \mathbb{S}^1} \int_{\frac{1}{4}<\alpha<4} 
\frac{1}{|\omega(\theta) + \alpha (r\omega(\theta)-e_1)^\perp|}\,d\alpha\,dr\,d\theta ,
\end{split}
\end{equation*}
where we set $\lambda = \alpha r$, and parameterized $y$ by $y = r\omega(\theta)$, with $\omega(\theta)=\left(\begin{array}{l} \cos \theta \\ \sin \theta \end{array} \right)$,
and $(r,\theta) \in \mathbb{R}^+ \times \mathbb{S}^1$. Now define
$$
P(\alpha,r,\theta) = |\omega(\theta) + \alpha (r\omega(\theta)-e_1)^\perp |^2 = 1 + \alpha^2 r^2 + \alpha^2 -2 \alpha (\sin \theta + \alpha r \cos \theta).
$$
The following properties are easily established:
\begin{itemize}
\item For fixed $\alpha$ and $r$, $P$ is minimized at $\theta_0$ such that $\sin \theta_0 = \frac{1}{\sqrt{1+\alpha^2 r^2}}$ 
and $\cos \theta_0 = \frac{\alpha r}{\sqrt{1+\alpha^2 r^2}}$. The minimal value of $P$ is then $\left| \sqrt{1+\alpha^2 r^2} - \alpha \right|^2$
\item For $|\theta-\theta_0| < \frac{1}{10}$, $\partial_\theta^2 P(\alpha,r,\theta) \gtrsim \alpha$, thus
$P(\lambda,r,\theta) \gtrsim \left| \sqrt{1+\alpha^2 r^2} - \alpha \right|^2 + \alpha (\theta-\theta_0)^2$.
\item For $|\theta-\theta_0| > \frac{1}{10}$, $P(\alpha,r,\theta) \gtrsim \alpha + \left| \sqrt{1+\alpha^2 r^2} - \alpha \right|^2$.
\end{itemize}
The above can therefore be bounded by
\begin{equation*}
\begin{split}
& \dots \lesssim \int_{|r|\ll1}   \int_{\frac{1}{4}<\alpha<4} \int_{|\theta-\theta_0| < \frac{1}{10}}
\frac{1}{\sqrt{\left| \sqrt{1+\alpha^2 r^2} - \alpha \right|^2 + \alpha (\theta-\theta_0)^2}}\,d\theta\,d\alpha\,dr  \\
& \qquad \qquad \qquad \qquad + \int_{|r|\ll1}  \int_{\frac{1}{4}<\alpha<4} \int_{|\theta-\theta_0| > \frac{1}{10}} 
\frac{1}{\sqrt{\alpha + (\sqrt{1+\alpha^2 r^2} - \alpha)^2}}\,d\theta\,d\alpha\,dr  \\
\end{split}
\end{equation*}
The second term on the above right-hand side is immediately seen to be bounded, thus we focus on the first one, which can be written as\begin{equation}\label{banana}
\begin{split}
& \int_{|r|\ll1}   \int_{\frac{1}{4}<\alpha<4}\int_{|\theta| < \frac{1}{10}}
\frac{1}{\sqrt{\left| \sqrt{1+\alpha^2 r^2} - \alpha \right|^2 + \alpha \theta^2}}\,d\theta\,d\alpha\,dr  \\
& \qquad \qquad \qquad \qquad 
 \lesssim \int_{|r|\ll1} \int_{\frac{1}{4}<\alpha<4} \langle \log \left| \sqrt{1 + \alpha^2 r^2} - \alpha \right| \rangle \,d\alpha\,dr,
\end{split}
\end{equation}
where we used the following simple bound for $0\leq u \leq 10$:
$$
\int_{|\theta|<1} \frac{1}{\sqrt{u + \theta^2}}\,d\theta=\int_{|\theta|\leq u^{-1/2}} \frac{1}{\sqrt{1 + \theta^2}}\,d\theta\lesssim \langle \log u \rangle.
$$
Finally we note that the integral \eqref{banana} is finite by the change of variable $\alpha \mapsto f(\alpha)= \alpha-\sqrt{1 + \alpha^2 r^2}$ which satisfies $f^\prime(\alpha)\sim 1$ in the considered range of $\alpha$ and $r$.
\end{proof}

The final lemma in this subsection will be useful to prove the well-posedness of \eqref{star} in the spaces $X^\sigma$ that were used in the approximation results of Theorems \ref{approx thm1} and \ref{result on T^2}.
\begin{lemma}\label{X^sigma bound of TT}
The following tame bound holds: for any functions $f$, $g$ and $h$,  and $\sigma \geq 0$ 
\begin{equation}
\label{TT in X^sigma}
\|T(f,g,h)\|_{X^\sigma}\lesssim \|f\|_{X^\sigma}\|g\|_{L^2}\|h\|_{L^2}+\|f\|_{L^4}\|g\|_{L^{4/3}}\|h\|_{X^\sigma}
\end{equation}
\end{lemma}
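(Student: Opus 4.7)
The plan is to reduce the estimate to two pointwise bounds, uniform in $\xi$, via a Pythagorean splitting of the weight $\langle\xi\rangle^\sigma$. Because $x\perp \lambda x^\perp$, the four points $\xi,\xi+x,\xi+x+\lambda x^\perp,\xi+\lambda x^\perp$ appearing in $\mathcal T(f,g,h)(\xi)$ are the vertices of a rectangle with right angle at $\xi$, so
$$|\xi|^2+|\xi+x+\lambda x^\perp|^2=|\xi+x|^2+|\xi+\lambda x^\perp|^2,$$
which yields $\langle\xi\rangle\leq\langle\xi+x\rangle+\langle\xi+\lambda x^\perp\rangle$ and hence, for $\sigma\geq 0$,
$$\langle\xi\rangle^\sigma\lesssim\langle\xi+x\rangle^\sigma+\langle\xi+\lambda x^\perp\rangle^\sigma.$$
The decisive feature is that the weight is distributed only onto the outer arguments (those of $f$ and $h$), never onto the middle argument (that of $g$). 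Absorbing $\langle\xi+x\rangle^\sigma|f(\xi+x)|\leq\|f\|_{X^\sigma}$ in the first piece and $\langle\xi+\lambda x^\perp\rangle^\sigma|h(\xi+\lambda x^\perp)|\leq\|h\|_{X^\sigma}$ in the second reduces the lemma to two pointwise estimates, uniform in $\xi$:
\begin{align*}
\mathcal I_f(\xi) &:= \int_{-1}^{1}\!\!\int_{\R^2}|g(\xi+x+\lambda x^\perp)||h(\xi+\lambda x^\perp)|\,dx\,d\lambda \lesssim \|g\|_{L^2}\|h\|_{L^2},\\
\mathcal I_h(\xi) &:= \int_{-1}^{1}\!\!\int_{\R^2}|f(\xi+x)||g(\xi+x+\lambda x^\perp)|\,dx\,d\lambda \lesssim \|f\|_{L^4}\|g\|_{L^{4/3}}.
\end{align*}

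The bound on $\mathcal I_h$ is straightforward. For fixed $\lambda$ the substitution $y=\xi+x$ turns the inner integral into $\int_{\R^2}|f(y)||g(y+\lambda(y-\xi)^\perp)|\,dy$. The linear transformation $y\mapsto y+\lambda(y-\xi)^\perp$ has Jacobian $1+\lambda^2$, so H\"older in $y$ with conjugate exponents $4$ and $4/3$ produces the factor $(1+\lambda^2)^{-3/4}\|f\|_{L^4}\|g\|_{L^{4/3}}$, integrable against $d\lambda$ on $[-1,1]$.

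The bound on $\mathcal I_f$ is the main obstacle, because a separated Cauchy--Schwarz in $x$ for fixed $\lambda$ produces a prefactor of order $(|\lambda|\sqrt{1+\lambda^2})^{-1}$ which is not integrable near $\lambda=0$; the variables $(x,\lambda)$ must therefore be treated jointly. My plan is to pass to the coordinates $(y,z)=(\xi+x+\lambda x^\perp,\xi+\lambda x^\perp)$, which realises $\mathcal I_f(\xi)$ as a surface integral over the $3$-dimensional variety
$$\mathcal V_\xi = \{(y,z)\in\R^4\,:\,(y-z)\perp(z-\xi),\ |z-\xi|\leq|y-z|\}$$
with measure $d\mathcal H^3(y,z)/\sqrt{|y-\xi|^2+|z-\xi|^2}$. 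In the polar parametrisation $z=\xi+s\omega(\theta)$, $y=z+t\omega(\theta)^\perp$ with $s\geq 0$, $|t|\geq s$, $\theta\in[0,2\pi)$, a direct Gram computation shows that this weight cancels exactly the Gram determinant, reducing $\mathcal I_f(\xi)$ to
$$\int_{0}^{2\pi}\!\!d\theta\int_{0}^{\infty}\!\!ds\int_{|t|\geq s}\!\!dt\,|g(\xi+s\omega+t\omega^\perp)||h(\xi+s\omega)|.$$
Since for each fixed $\theta$ the map $(s,t)\mapsto\xi+s\omega+t\omega^\perp$ is an isometry of $\R^2$, $g$ is seen in rotated coordinates with total $L^2$-mass $\|g\|_{L^2}^2$ after integration in $\theta$; combining Cauchy--Schwarz in $(\theta,s,t)$ with the constraint $|t|\geq s$---or equivalently dualising the $L^4$--Strichartz identity $\mathcal H(f)=\frac{\pi}{2}\|e^{it\Delta}\hat f\|_{L^4}^4$ against the indicator of $\mathcal V_\xi$---yields the uniform bound $\mathcal I_f(\xi)\lesssim\|g\|_{L^2}\|h\|_{L^2}$. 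The constraint $|\lambda|\leq 1$ is precisely what restores the integrability that separated Cauchy--Schwarz had destroyed, and this joint estimate is the technical heart of the lemma.
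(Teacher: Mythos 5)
Your first step (the right-angle relation at $\xi$, putting the weight only on the two outer arguments, and reducing to the two pointwise bounds) is exactly the paper's opening move, and your estimate for $\mathcal I_h$ is correct. The gap is the other reduced estimate: the inequality $\mathcal I_f(\xi)\lesssim\|g\|_{L^2}\|h\|_{L^2}$ is not just unproven by your sketch, it is \emph{false}. Your polar rewriting of $\mathcal I_f$ is correct (the Jacobian is indeed $1$), but the resulting bound fails: take $\xi=0$, $g=\mathbf{1}_{\{1\le|v|\le 2\}}$ and $h(w)=|w|^{-1}\bigl(1+\bigl|\log|w|\bigr|\bigr)^{-1}\mathbf{1}_{\{|w|\le 1/2\}}\in L^2$. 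Since $|s\omega+t\omega^\perp|^2=s^2+t^2$, for every $\omega$ and every $0<s\le 1/2$ the set $\{t:\,1\le s^2+t^2\le 4\}$ lies in $\{|t|\ge s\}$ and has measure bounded below by an absolute constant, so
\begin{equation*}
\mathcal I_f(0)\ \gtrsim\ \int_{\mathbb S^1}\int_0^{1/2}|h(s\omega)|\,ds\,d\omega\ =\ \int_{\{|w|\le 1/2\}}\frac{|h(w)|}{|w|}\,dw\ =\ +\infty,
\end{equation*}
while $\|g\|_{L^2}\|h\|_{L^2}<\infty$ (truncating $h$ at $|w|\ge 1/n$ gives finite examples blowing up like $\log\log n$). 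This is why neither a joint Cauchy--Schwarz in $(\theta,s,t)$ nor any weighted variant can close the argument: the claimed bound would amount to $L^2$-boundedness of the positive transform $g\mapsto |w|^{-1}\int_{|t|\ge|w|}|g(w+t\,w^\perp/|w|)|\,dt$, which has a genuine logarithmic divergence as $w\to\xi$; and dualizing the Strichartz identity cannot help because your integrand carries absolute values, so no oscillation is available.

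The source of the trouble is the choice of parametrization: you applied the $L^2\times L^2$ pairing to the two arguments whose relative change of variables degenerates as $\lambda\to 0$ (namely $\xi+x+\lambda x^\perp$ and $\xi+\lambda x^\perp$). The paper distributes the weight in the equivalent form $\mathcal T(f,g,h)(\xi)=\int_{-1}^1\int_{\R^2} f(\xi+\lambda z)\,\overline{g(\xi+\lambda z+z^\perp)}\,h(\xi+z^\perp)\,dz\,d\lambda$: when the weight falls on the $\lambda$-scaled slot, the remaining pair $\bigl(\xi+\lambda z+z^\perp,\ \xi+z^\perp\bigr)$ has Jacobians $1+\lambda^2$ and $1$, so Cauchy--Schwarz gives $\|g\|_{L^2}\|h\|_{L^2}$ with no singularity; when the weight falls on the unscaled slot, the remaining pair contains the degenerate argument and is handled by H\"older with exponents $4$ and $4/3$, producing the integrable factor $|\lambda|^{-1/2}$. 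Translated into your labeling of the slots, what this argument proves is the permuted statement $\|\mathcal T(f,g,h)\|_{X^\sigma}\lesssim\|h\|_{X^\sigma}\|f\|_{L^2}\|g\|_{L^2}+\|f\|_{X^\sigma}\|h\|_{L^4}\|g\|_{L^{4/3}}$: the $X^\sigma$ norm paired with $L^2\times L^2$ must sit on the argument carrying the factor $\lambda$. So the fix is not a cleverer joint estimate for your $\mathcal I_f$ --- none exists --- but a different assignment of the norms to the three arguments (equivalently, working with the ordering of the arguments used in the paper's proof).
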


\begin{proof} Without loss of generality, we assume that $f,g,$ and $h$ are non-negative. Using the orthogonality relation
$$
|\xi|^2+|\xi+\lambda z +z^\perp|^2=|\xi+\lambda z|^2+|\xi+z^\perp|^2
$$
we arrive at the conclusion that for any $\xi, z\in \R^2$ and $\lambda\in [-1,1]$,% \red and $\sigma > 2$, \black 
$$
\langle \xi\rangle^\sigma\lesssim_\sigma \langle \xi+\lambda z\rangle^{\sigma}+\langle \xi+z^\perp\rangle^\sigma.
$$
As a result, we get that
\begin{align*}
\langle \xi \rangle^\sigma \TT(f,g,h)\lesssim_\sigma \TT(\langle \,\cdot\, \rangle^\sigma f, g, h) +\TT(f,g,\langle\, \cdot\, \rangle^\sigma h)
\end{align*}
and hence we arrive at 
\begin{align*}
\|\TT(f,g,h)\|_{X^\sigma}\lesssim_\sigma& \|f\|_{X^\sigma}\sup_{\xi\in\R^2}\int_{-1}^1\int_{\R^2}g(\xi+\lambda z+z^\perp) h(\xi+z^\perp) dz\,d\lambda\\
&+\|h\|_{X^\sigma}\sup_{\xi \in \R^2}\int_{-1}^1 \int_{\R^2} f(\xi+\lambda z) g(\xi+\lambda z +z^\perp)dz\,d\lambda\\
\lesssim& \|f\|_{X^\sigma}\|g\|_{L^2}\|h\|_{L^2}+\|h\|_{X^\sigma}\int_{-1}^1 \underbrace{\|f(\xi+\lambda z)\|_{L_z^4}}_{=|\lambda|^{-1/2}\|f\|_{L^4}} \underbrace{\|g(\xi+\lambda z +z^\perp)\|_{L_z^{4/3}}}_{\lesssim \|g\|_{L^{4/3}}} d\lambda\\
\lesssim& \|f\|_{X^\sigma}\|g\|_{L^2}\|h\|_{L^2}+\|h\|_{X^\sigma}\|f\|_{L^4}\|g\|_{L^{4/3}},
\end{align*}
from which \eqref{TT in X^sigma} follows. 
\end{proof}

\subsection{Local and global well-posedness for (CR)}

We consider the Cauchy problem for (CR):
\begin{theorem} 
\label{th:7.4}
\begin{itemize}
\item[(i)] Local well posedness: For $X$ being any of the  spaces given in Proposition \ref{baldeagle}, the 
Cauchy problem {\rm (CR)} is locally well-posed in $X$.
That is, for any $f_0$ in $X$, there exists a time $T>0$, and  a solution in $\mathcal{C}^\infty ([0,T],X)$,
which is unique in $L^\infty ([0,T],X)$, and depends continuously on $f_0$ in this topology. 

\item[(ii)] Global well-posedness: if $f_0 \in L^2$, the local solution can be prolonged into a global one.
More precisely: there exists a unique solution in $\mathcal{C}^\infty ([0,\infty),L^2)$. For any $T$, it is unique
in $L^\infty ([0,T],L^2)$, and depends continuously on $f_0$ in this topology. 

Furthermore, the mass, kinetic energy, and first moment of $f$ remain constant in time if they were finite
at time 0.

\item[(iii)] Propagation of regularity: assume $f_0 \in L^2$, and let $f$ be the solution given in (ii).
If in addition $f_0 \in H^\sigma$ (respectively $L^{2,\sigma}$) for $\sigma  \geq 0$ then $f \in 
\mathcal{C}^\infty ([0,\infty),H^\sigma)$ (respectively $\mathcal{C}^\infty ([0,\infty),
L^{2,\sigma})$. 

\item[(iv)] The Cauchy problem {\rm (CR)} is locally well-posed in the space $X^\sigma$ for any $\sigma > 2$. If $g_0\in H^1$, the solution can be extended globally in time in $X^\sigma$. Moreover, if $\nabla g(0) \in X^\sigma$, then $g, \nabla g \in C_{\operatorname{loc}}(\R; X^\sigma)$.
\end{itemize}
\end{theorem}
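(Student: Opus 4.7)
Parts (i)--(iii) follow a classical contraction-mapping-and-conservation-laws pattern, so I will dispose of them first. For (i), Proposition~\ref{baldeagle} states that $\mathcal{T}$ is trilinear bounded from $X^3$ to $X$ for each of the listed spaces; rewriting (CR) as the fixed-point problem $g(t) = g_0 + i\int_0^t \mathcal{T}(g,g,g)(s)\,ds$, a standard contraction in a ball of radius $2\|g_0\|_X$ in $C([0,T];X)$ gives local existence, uniqueness and continuous dependence for $T\lesssim \|g_0\|_X^{-2}$, with $C^\infty$ time-regularity obtained by bootstrapping $\partial_t g = i\mathcal{T}(g,g,g)$. Part (ii) is then immediate from (i) applied in $L^2$ combined with conservation of mass (Corollary~\ref{pelican}, or a direct pairing with $g$), which prevents finite-time blow-up of the $L^2$-norm. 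For (iii), I would first derive the tame estimate $\|\mathcal{T}(g,g,g)\|_{H^\sigma}\lesssim \|g\|_{L^2}^2\|g\|_{H^\sigma}$ by distributing the weight $\langle\xi\rangle^\sigma$ through $\xi = (\xi+z)-(\xi+z+\lambda z^\perp)+(\xi+\lambda z^\perp)$ exactly as in the proof of Proposition~\ref{baldeagle}(iv); Gronwall together with conservation of mass then yields an exponentially growing but globally finite bound on $\|g(t)\|_{H^\sigma}$. The analogous $L^{2,\sigma}$ statement follows by applying the $H^\sigma$ result to $\widehat{g}$ via the Fourier-invariance Lemma~\ref{albatros}.

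The substance of the theorem lies in (iv). The starting point is the tame bound Lemma~\ref{X^sigma bound of TT}. For $\sigma>2$ one has the embedding $X^\sigma\hookrightarrow L^p$ for every $p>2/\sigma$, in particular $X^\sigma\hookrightarrow L^2\cap L^{4/3}\cap L^4$, so that lemma upgrades to a genuine trilinear bound $\mathcal{T}:(X^\sigma)^3\to X^\sigma$. The same contraction argument as in (i) then delivers local well-posedness in $X^\sigma$. For the global extension under the extra hypothesis $g_0\in H^1$, observe that $\sigma>2$ also gives the embedding $X^\sigma\hookrightarrow L^{2,1}$ (since $\int \langle x\rangle^{2-2\sigma}\,dx<\infty$), and hence $g_0\in H^1\cap L^{2,1}$. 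Conservation of the kinetic energy $\int|\nabla g|^2$ and of the first moment $\int|x|^2|g|^2$ (Corollary~\ref{pelican}), combined with mass conservation, then give uniform-in-time control of $\|g(t)\|_{L^2}$, $\|g(t)\|_{H^1}$, and $\|\langle x\rangle g(t)\|_{L^2}$. By two-dimensional Sobolev embedding $H^1\hookrightarrow L^4$, the quantity $\|g(t)\|_{L^4}$ is uniformly bounded, and by H\"older $\|g(t)\|_{L^{4/3}}\leq \|\langle x\rangle^{-1}\|_{L^4(\R^2)}\|\langle x\rangle g(t)\|_{L^2}$ is as well. Feeding these into the differential inequality
$$
\frac{d}{dt}\|g(t)\|_{X^\sigma}\;\lesssim\;\|g\|_{X^\sigma}\bigl(\|g\|_{L^2}^2 + \|g\|_{L^4}\|g\|_{L^{4/3}}\bigr)
$$
from Lemma~\ref{X^sigma bound of TT}, Gronwall closes the bound and extends the $X^\sigma$-solution to all of $\R$.

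The final statement, that $\nabla g \in C_{\mathrm{loc}}(\R;X^\sigma)$ whenever $\nabla g_0\in X^\sigma$, follows by differentiating (CR) in $\xi$. Since $\mathcal{T}$ involves only translations of its arguments, one has the Leibniz identity $\nabla \mathcal{T}(g,g,g) = \mathcal{T}(\nabla g,g,g) + \mathcal{T}(g,\nabla g,g) + \mathcal{T}(g,g,\nabla g)$; applying Lemma~\ref{X^sigma bound of TT} to each of the three terms yields a linear-in-$\|\nabla g\|_{X^\sigma}$ inequality whose coefficients involve only the norms $\|g\|_{L^2}$, $\|g\|_{L^4}$, $\|g\|_{L^{4/3}}$, $\|g\|_{X^\sigma}$, all of which have just been shown to be uniformly bounded on compact time intervals. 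A second Gronwall argument therefore propagates $\nabla g$ in $X^\sigma$ globally, and continuity in time follows from the integral equation as in (i). The main obstacle, and the step that does all the real work, is the tame estimate of Lemma~\ref{X^sigma bound of TT}: without the subtle redistribution of the $\langle\xi\rangle^\sigma$ weight onto at most one factor, one cannot close the Gronwall loop using the available coercive conservation laws.
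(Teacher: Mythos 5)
Your proposal is correct and follows essentially the same route as the paper's proof: contraction via Proposition~\ref{baldeagle} for (i), the conservation laws for (ii)--(iii) (with the weighted case handled through Fourier invariance), and for (iv) Picard iteration with Lemma~\ref{X^sigma bound of TT}, the embeddings $H^1\hookrightarrow L^4$ and $L^{2,1}\hookrightarrow L^{4/3}$, a Gronwall bound on $\|g(t)\|_{X^\sigma}$, and the Leibniz-plus-Gronwall step for $\nabla g$. The only bookkeeping point is that applying Lemma~\ref{X^sigma bound of TT} to $\mathcal T(\nabla g,g,g)$, $\mathcal T(g,\nabla g,g)$, $\mathcal T(g,g,\nabla g)$ also produces the norms $\|\nabla g\|_{L^2}$, $\|\nabla g\|_{L^4}$, $\|\nabla g\|_{L^{4/3}}$, which you must absorb into $\|\nabla g\|_{X^\sigma}$ via the same embedding $X^\sigma\hookrightarrow L^p$ ($\sigma>2$) you already invoked, exactly as the paper does, so the linear Gronwall inequality still closes.
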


\begin{remark} The restriction $\sigma>2$ for local well-posedness in item (iv) could easily be improved to $\sigma \geq 1$, but global well-posedness beyond $\sigma>2$ seems very difficult to prove. \end{remark}

\begin{proof} (i) Write (CR) as a fixed point problem:
$$
f(t) = f_0 + i \int_0^t \mathcal{T}(f,f,f)(s) \,ds.
$$
Proposition~\ref{baldeagle} immediately gives the a priori estimates
$$
\left\{ \begin{array}{l} \left\| f \right\|_{L^\infty([0,T],X)} \lesssim \|f_0\|_X + T \|f\|_{L^\infty([0,T],X)}^3 \\
\left\| f-g \right\|_{L^\infty([0,T],X)} \lesssim \|f_0-g_0\|_X + T \|f-g\|_{L^\infty([0,T],X)}
\left( \|f\|_{L^\infty([0,T],X)}^2 + \|g\|_{L^\infty([0,T],X)}^2 \right), \end{array} \right.
$$
for $f$ and $g$ two solutions of (CR), from which one deduces the local well-posedness statement
by a standard fixed point argument. Notice that $T$ can be chosen depending only on $\|f_0\|_X$ (namely $\sim \|f_0\|_X^{-2}$).

\medskip

(ii) Combining the above local well-posedness with the conservation laws (Corollary~\ref{pelican}) gives 
the global well-posedness (recall that the local well-posedness time $T$ only depends on $\|u_0\|_2$).

\medskip

(iii) is classical, so we only say a few words. Using the boundedness of $\TT$ from $L^2\times L^2 \times L^2\to L^2$, one notices that the equation is locally well-posed in $H^s$ with the time of existence depending on the conserved quantity $\|g(t)\|_{L^2}$. Consequently, one can iterate the local existence statement indefinitely to get global existence. The result for weighted spaces $L^{2,\sigma}$ follows from the invariance under Fourier transform.

(iv) Local well-posedness in $X^\sigma$ is a direct consequence of Picard iteration and Lemma \ref{X^sigma bound of TT}. This gives a time of existence bounded below by $C\|g_0\|_{X^\sigma}^{-2}$. Global existence follows from the same lemma, which gives due to the embeddings $H^1\hookrightarrow L^4$ and $L^{2,1}\hookrightarrow L^{4/3}$ and the conservation of the $H^1$ and $L^{2,1}$ norms that
$$
\|g(t)\|_{X^\sigma}\leq \|g_0\|_{X^\sigma}+\|g_0\|_{H^1}\|\langle x\rangle g_0\|_{L^2} \int_0^t \|g(s)\|_{X^\sigma} ds.
$$
This yields an a priori bound on $\|g(t)\|_{X^\sigma}$ that allows extending the solution globally. Finally, if $\nabla g_0 \in X^\sigma$, then proceeding as above and using that $\nabla g$ satisfies
\begin{equation}\label{nablag eqn}
\nabla g(t,\xi)=\nabla g_0(\xi) +i\int_0^t\left(\TT(\nabla g, g,g)+\TT(g,\nabla g, g)+\TT(g,g,\nabla g)\right) ds,
\end{equation}
one can obtain local-in-time existence of $\nabla g \in C([0,T]; X^\sigma)$. To extend this to a global statement, one has to show that $\|\nabla g\|_{X^\sigma}$ cannot blow up. This follows once we obtain an a priori bound for $\|\nabla g\|_{X^\sigma}$ that prohibits its blowup. Using Lemma \ref{X^sigma bound of TT} and \eqref{nablag eqn}, we have
\begin{align*}
\|\nabla g(t)\|_{X^\sigma}\lesssim& \|\nabla g_0\|_{X^\sigma}+\int_0^t \left(\|\nabla g(s)\|_{X^\sigma}\|g(s)\|_{L^2}^2+\|g(s)\|_{X^\sigma}\|\nabla g\|_{L^4}\| g(s)\|_{L^{4/3}}\right.\\
 &\left. +\|g(s)\|_{X^\sigma}\|\nabla g\|_{L^2}\|g\|_{L^2} +\|g(s)\|_{X^\sigma}\|\nabla g(s)\|_{L^{4/3}}\|g\|_{L^4}+\|\nabla g(s)\|_{X^\sigma}\|g\|_{L^4}\|g\|_{L^{4/3}}\right) ds\\
 \lesssim& \|\nabla g_0\|_{X^\sigma}+\int_0^t \|\nabla g\|_{X^\sigma}\|g(s)\|_{X^\sigma}^2\,ds
\end{align*}
Since the solution $g$ is bounded in $X^\sigma$ on any compact time interval of $\R$, we get the needed a priori bound on $\|\nabla g\|_{X^\sigma}$ to finish the proof.
\end{proof}

\subsection{Weak continuity of $\mathcal{T}$ on $L^2$}

Recall that
$$
\mathcal{T}(f,g,h)(\xi) = \int_{-1}^1 \int_{\R^2} f(\xi +x) \overline{g(\xi+x+\lambda x^\perp )} h(\xi + \lambda x^\perp) \,dx\,d\lambda.
$$
We introduce the following notation: $\mathcal{T}_{\substack{x \in I \\ \lambda \in J}} (f,g,h)$ is defined as $\mathcal{T}$ above, but with the integration domain changed
from $(x,\lambda) \in \mathbb{R}^2 \times [-1,1]$ to $(x,\lambda) \in I \times (J \cap [-1,1])$. For instance:
$$
\mathcal{T}_{\substack{|x| \sim 1 \\ |\lambda| < 1/4}} (f,g,h)(\xi) = \int_{|\lambda|<1/4} \int_{|x|\sim 1} 
f(\xi +x) \overline{g(\xi+x+\lambda x^\perp )} h(\xi + \lambda x^\perp) \,dx\,d\lambda.
$$
We start by proving more precise estimates on such localized versions of $\mathcal{T}$. 
Space and frequency localization operators will be needed; we start by defining them. Pick $\psi$ a smooth function equal to $1$ on $B(0,1)$,
$0$ on $B(0,2)^c$, and set
\begin{equation*}
\begin{split}
& Q_{<M} f \overset{def}{=} \psi \left(\frac{x}{M}\right) f \quad,\quad Q_{>M}\overset{def}{=} f - P_{<M} f \\
& P_{<M} \overset{def}{=} \mathcal{F}^{-1} \psi \left(\frac{\xi}{M}\right) \widehat{f}(\xi)
\quad \mbox{and} \quad P_{>M}\overset{def}{=} f - Q_{<M} f
\end{split}
\end{equation*}

\begin{proposition}
\label{loriot}
\begin{itemize}
\item[(i)] If $f$, $g$, $h$ belong to $L^2$,
$$
\left\| \mathcal{T}_{|\lambda|<\epsilon}(f,g,h) \right\|_{L^2(\mathbb{R}^2)} \lesssim \epsilon \|f\|_{L^2(\mathbb{R}^2)}\|g\|_{L^2(\mathbb{R}^2)} \|h\|_{L^2(\mathbb{R}^2)}
$$
\item[(ii)] For $\epsilon>0$, there exists a constant $C(\epsilon)$ such that if $f$, $g$, $h$ belong to $L^2$,
$$
\left\| \mathcal{T}_{\substack{|x| \geq M \\ |\lambda| > \epsilon}}(f,g,h) \right\|_{L^\infty(\mathbb{R}^2)} \leq C(\epsilon) M^{-1} \|f\|_{L^2(\mathbb{R}^2)} \|g\|_{L^2(\mathbb{R}^2)} \|h\|_{L^2(\mathbb{R}^2)}.
$$
\item[(iii)] For $\epsilon>0$, $R\ll M$, there exists a constant $C(\epsilon)$ such that if $f$, $g$, $h$ belong to $L^2$,
\begin{equation*}
\begin{split}
\left\| \mathcal{T}_{|\lambda|>\epsilon} (Q_{>M} f,g,h) \right\|_{L^\infty(B(0,R))} +\left\| \mathcal{T}_{|\lambda|>\epsilon} (f,Q_{>M} g,h) \right\|_{L^\infty(B(0,R))} \leq C(\epsilon)  M^{-1} \|f\|_{L^2(\mathbb{R}^2)} \|g\|_{L^2(\mathbb{R}^2)} \|h\|_{L^2(\mathbb{R}^2)} \\
\end{split}
\end{equation*}
\end{itemize}
\end{proposition}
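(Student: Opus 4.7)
This is a direct adaptation of the duality argument from Proposition~\ref{baldeagle}(i): pair $\mathcal{T}_{|\lambda|<\epsilon}(f,g,h)$ against $F\in L^2$ and apply Cauchy-Schwarz in $(x,z)$ to split the integrand as $\|f(x+z)\overline{F(z)}\|_{L^2_{x,z}}\cdot\|g(\lambda x^\perp+z)h(x+\lambda x^\perp+z)\|_{L^2_{x,z}}$. Both factors yield $L^2$ norms by translation invariance, and the outer $\lambda$-integration now runs over $|\lambda|<\epsilon$, contributing $2\epsilon$ in place of $2$.

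\textbf{Part (ii).} Fix $\xi$ and, for each fixed $x$ with $|x|\geq M$, change variable $\lambda \mapsto v := \lambda x^\perp$ in the 1-D inner integral. Since $\lambda \mapsto \lambda x^\perp$ traces the line $\R x^\perp$ at constant speed $|x|$, we have $d\lambda = dv_\perp/|x|$ (with $dv_\perp$ denoting 1-D arc length), and the range $|\lambda|\in[\epsilon,1]$ becomes $|v|\in[\epsilon|x|,|x|]$ with $v\perp x$. This yields
\begin{equation*}
\mathcal{T}_{\substack{|x|\geq M\\|\lambda|>\epsilon}}(f,g,h)(\xi) \;=\; \int_{|x|\geq M}\frac{f(\xi+x)}{|x|}\int_{\substack{v\perp x\\ \epsilon|x|\leq|v|\leq|x|}}\bar g(\xi+x+v)\,h(\xi+v)\,dv_\perp\,dx,
\end{equation*}
and the pointwise estimate $1/|x|\leq 1/M$ immediately extracts the $M^{-1}$ decay.

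It remains to bound the triple integral by $C(\epsilon)\|f\|_{L^2}\|g\|_{L^2}\|h\|_{L^2}$. The plan is two successive applications of Cauchy-Schwarz: first in $v_\perp$ (for fixed $x$) bounds the inner factor by $T_g(x)^{1/2}T_h(x)^{1/2}$, where $T_g(x) = \int|g(\xi+x+v)|^2\,dv_\perp$ and $T_h(x) = \int|h(\xi+v)|^2\,dv_\perp$ are 1-D Radon-type integrals over parallel lines perpendicular to $x$; then Cauchy-Schwarz in $x$ separates $\|f\|_{L^2}$ from the $L^2$-norm of $(T_gT_h)^{1/2}$, reducing the problem to bounding $\int_{|x|\geq M} T_g(x)T_h(x)\,dx$ by $C(\epsilon)\|g\|_{L^2}^2\|h\|_{L^2}^2$. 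This last bound is the main obstacle. I would view $(x,v)$ as coordinates on the 3-D surface $\{(x,v)\in\R^2\times\R^2 : v\perp x\}\subset\R^4$ and apply the coarea formula to the two natural projections $(x,v)\mapsto v$ and $(x,v)\mapsto w := x+v$; the Jacobian works out to $|v|/|x|$, which is bounded between $\epsilon$ and $1$ by our restriction, so the fibered integrals convert into standard 2-D $L^2$ integrals of $|g|^2$ and $|h|^2$ up to an $\epsilon$-dependent constant.

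\textbf{Part (iii).} This reduces to (ii) by a support argument. For $\mathcal{T}_{|\lambda|>\epsilon}(Q_{>M}f,g,h)(\xi)$ with $|\xi|\leq R\ll M$, the factor $Q_{>M}f(\xi+x)$ vanishes unless $|\xi+x|\geq M$, forcing $|x|\geq M-R\gtrsim M$, so (ii) applies with $M/2$ in place of $M$. For $\mathcal{T}_{|\lambda|>\epsilon}(f,Q_{>M}g,h)(\xi)$, the nonvanishing condition $|\xi+x+\lambda x^\perp|\geq M$ combined with $|\xi|\leq R$ gives $\sqrt{1+\lambda^2}|x|\geq M-R$, hence $|x|\gtrsim M$, and (ii) again applies after dominating the integrand in absolute value.
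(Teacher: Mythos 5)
Your parts (i) and (iii) are essentially the paper's own arguments: (i) is the duality/Cauchy--Schwarz proof of the $L^2$ bound for $\mathcal{T}$ with the $\lambda$-integration shrunk to $|\lambda|<\epsilon$, and (iii) is the same support argument the paper uses (it only writes out the $Q_{>M}f$ term, and your treatment of the $Q_{>M}g$ term via $|x+\lambda x^\perp|=\sqrt{1+\lambda^2}\,|x|\geq M-R$ is exactly the intended ``similar'' step). Part (ii) is where you genuinely depart from the paper: there, the paper writes $\mathcal{T}_{|x|\sim 1,\,|\lambda|>\epsilon}(f,g,h)(0)$ as an integral of $f(\pi_1)\overline{g}(\pi_2)h(\pi_3)$ with $\pi_1=x$, $\pi_2=x+\lambda x^\perp$, $\pi_3=\lambda x^\perp$, invokes the nonlinear Loomis--Whitney inequality of Bennett--Carbery--Wright for the unit-scale $L^\infty$ bound, and obtains the $M^{-1}$ decay by rescaling to $|x|\sim 2^k$ and summing over $2^k\gtrsim M$. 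You instead substitute $v=\lambda x^\perp$, pull out $1/|x|\leq 1/M$ directly, and after two Cauchy--Schwarz steps reduce matters to $\int_{|x|\geq M}T_g(x)T_h(x)\,dx\lesssim_\epsilon\|g\|_2^2\|h\|_2^2$. That bound is correct and your ``Jacobian $|v|/|x|\in[\epsilon,1]$'' heuristic is the right mechanism, but note that the two factors are not symmetric and cannot both be converted into two-dimensional integrals: writing $x=r\omega$, $dx=r\,dr\,d\omega$, only the $h$-factor (an integral over the fixed line $\xi+\mathbb{R}\omega^\perp$ with $\epsilon r\leq|t|\leq r$) can absorb the polar Jacobian, via $r\leq\epsilon^{-1}|t|$ and then $\int_{\mathbb{S}^1}\int_{\mathbb{R}}|t|\,|h(\xi+t\omega^\perp)|^2\,dt\,d\omega=2\|h\|_2^2$; the $g$-factor must be estimated uniformly in $\omega$ by $\|g\|_2^2$ using the isometry $(r,s)\mapsto\xi+r\omega+s\omega^\perp$, without a second $\omega$-integration. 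With this bookkeeping one gets $\int_{|x|\geq M}T_gT_h\,dx\leq 2\epsilon^{-1}\|g\|_2^2\|h\|_2^2$, hence (ii) with the explicit constant $C(\epsilon)\sim\epsilon^{-1/2}$. Your route is therefore more elementary and self-contained than the paper's (no appeal to the nonlinear Loomis--Whitney theorem, no dyadic summation) and gives an explicit $\epsilon$-dependence; the paper's argument is shorter to write given the Bennett--Carbery--Wright reference and applies in situations where no exact change of variables is available.
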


\begin{proof} \underline{Proof of (i).} The proof of (i) follows exactly that of (i) in Proposition~\ref{baldeagle}, we do not repeat it here. 

\bigskip

\noindent
\underline{Proof of (ii).} Observe that
$\mathcal{T}_{\substack{|x| \sim 1 \\ |\lambda| > \epsilon}}(f,g,h)(0)$ can be written
$$
\mathcal{T}_{\substack{|x| \sim 1 \\ |\lambda| > \epsilon}}(f,g,h)(0) = \int_{\substack{|x| \sim 1 \\ 1> |\lambda| > \epsilon}}
f(\pi_1(x,\lambda)) \overline{g}(\pi_2(x,\lambda)) h(\pi_3(x,\lambda))\,dx\,d\lambda
$$
with $\pi_1(x,\lambda) = x$, $\pi_2(x,\lambda) =x + \lambda x^\perp$, and $\pi_3(x,\lambda) =  \lambda x^\perp$. This corresponds to the type of operators analyzed in Bennett, Carbery and Wright~\cite{BCW}. The non-degeneracy condition in Theorem 2 of this article can be checked, implying the nonlinear Loomis-Whitney inequality
$$
\left| \mathcal{T}_{\substack{|x| \sim 1 \\ |\lambda| > \epsilon}}(f,g,h)(0) \right| \leq C(\epsilon) \|f\|_{L^2} \|g\|_{L^2}  \|h\|_{L^2} .
$$
Since $\mathcal{T}_{\substack{|x| \sim 1 \\ |\lambda| > \epsilon}}(f,g,h)(z) = \mathcal{T}_{\substack{|x| \sim 1 \\ |\lambda| > \epsilon}}(f(z+\cdot),g(z+\cdot),h(z+\cdot))(0)$,
we deduce that
$$
\left\| \mathcal{T}_{\substack{|x| \sim 1 \\ |\lambda| > \epsilon}}(f,g,h)\right\|_{L^\infty} \leq C(\epsilon) \|f\|_{L^2}  \|g\|_{L^2}  \|h\|_{L^2} .
$$
By scaling,
$$
\left\| \mathcal{T}_{\substack{|x| \sim 2^k \\ |\lambda| > \epsilon}}(f,g,h)\right\|_{L^\infty} \leq C(\epsilon) 2^{-k} \|f\|_{L^2}  \|g\|_{L^2}  \|h\|_{L^2} .
$$
This gives the desired result since
$$
\left\| \mathcal{T}_{\substack{|x| > M \\ |\lambda| > \epsilon}}(f,g,h)\right\|_{L^\infty}
\leq \sum_{2^k \gtrsim M} \left\| \mathcal{T}_{\substack{|x| \sim 2^k \\ |\lambda| > \epsilon}}(f,g,h)\right\|_{L^\infty}
\leq C(\epsilon) M^{-1} \|f\|_{L^2}  \|g\|_{L^2}  \|h\|_{L^2} .
$$

\bigskip

\noindent
\underline{Proof of (iii).} We only prove that the first term on the left-hand side satisfies the inequality; the second being similar. Thus we wish to estimate
$$
\mathcal{T}_{|\lambda|>\epsilon} (Q_{>M} f,g,h)(z) =  \int_{1>|\lambda|>\epsilon} \int \left[ Q_{>M} f \right] (x+z)\overline{g(x + \lambda x^\perp + z)}  h(\lambda x^\perp + z) \,dx\,d\lambda, 
$$
where $|z|<R$. Since $Q_{>M} f$ is supported outside of a ball of radius $\sim M$ and center $0$, we see that only $x$ such that $|x+z| \gtrsim M$ contribute to the above
integral. Since $M \gg R$, this implies $|x| \gtrsim M$. But then the estimate follows from (ii).
\end{proof}

These estimates lead to the following theorem.

\begin{theorem}
\label{hibou}
Suppose that $(f^n)$, $(g^n)$ and $(h^n)$ are three sequences such that
$$
f^n \rightharpoonup f \quad\,\quad g^n \rightharpoonup g \quad\,\quad h^n \rightharpoonup h \quad\,\quad \mbox{in $L^2$}.
$$
(we denote as is customary $f^n \rightarrow f$ for the strong convergence in $L^2$, and $f^n \rightharpoonup f$ for the weak convergence in $L^2$). Then
$$
\mathcal{T}(f^n,g^n,h^n) \rightharpoonup \mathcal{T}(f,g,h) \quad\,\quad \mbox{in $L^2$}.
$$
\end{theorem}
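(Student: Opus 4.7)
The proof proceeds by three localization steps followed by a compactness-on-the-Fourier-side argument. By the Banach--Steinhaus principle, $\|f^n\|_{L^2}+\|g^n\|_{L^2}+\|h^n\|_{L^2}\leq C$, and so $\{\mathcal{T}(f^n,g^n,h^n)\}$ is bounded in $L^2$ by Proposition~\ref{baldeagle}(i); weak convergence will follow upon testing against a dense subclass of $L^2$, for which I take $\phi$ with $\hat\phi\in C_c^\infty(B(0,N))$. Fix an error tolerance $\eta>0$.

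Proposition~\ref{loriot}(i) immediately yields $|\langle \mathcal{T}_{|\lambda|<\epsilon}(f^n,g^n,h^n),\phi\rangle|\lesssim \epsilon C^3\|\phi\|_{L^2}<\eta$ for $\epsilon$ small. Proposition~\ref{loriot}(iii) (whose proof extends identically to truncating $h$, since $Q_{>M}h(\xi+\lambda x^\perp)$ nonvanishing forces $|x|\gtrsim M$) then permits the replacement of each of $f^n,g^n,h^n$ by its spatial cutoff $Q_{<M}(\cdot)$. To pair the $L^\infty(B(0,R))$ bound furnished by (iii) with a general $\phi\in L^2$, split $\phi=\phi\,\mathbf{1}_{B(0,R)}+\phi\,\mathbf{1}_{\{|x|>R\}}$: the first piece contributes $\lesssim C(\epsilon)M^{-1}R\,\|\phi\|_{L^2}$ via $L^\infty$--$L^1$ duality, the second $\lesssim C\,\|\phi\|_{L^2(\{|x|>R\})}$ via the global $L^2$-boundedness of $\mathcal{T}$; choosing $R$ large then $M\gg R$ yields total error $<\eta$.

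Set $\tilde f^n:=Q_{<M}f^n$ and analogously $\tilde g^n,\tilde h^n$. These sequences are bounded in $L^2$, \emph{uniformly} supported in the fixed ball $B(0,2M)$, and still weakly convergent. By the Fourier invariance of $\mathcal{T}$ (Lemma~\ref{albatros}) together with Plancherel, it suffices to prove $\langle \mathcal{T}_{|\lambda|>\epsilon}(\widehat{\tilde f^n},\widehat{\tilde g^n},\widehat{\tilde h^n}),\hat\phi\rangle \to \langle \mathcal{T}_{|\lambda|>\epsilon}(\widehat{\tilde f},\widehat{\tilde g},\widehat{\tilde h}),\hat\phi\rangle$. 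The decisive consequence of the uniform spatial cutoff is strong local convergence of the Fourier transforms: for each $\xi$ one has $\widehat{\tilde f^n}(\xi)=\langle \tilde f^n,\,e^{i\xi\cdot}\mathbf{1}_{B(0,2M)}\rangle\to \widehat{\tilde f}(\xi)$ by the very definition of weak convergence, while $|\widehat{\tilde f^n}|\leq \|\tilde f^n\|_{L^1}\leq |B(0,2M)|^{1/2}C$, so dominated convergence on bounded sets yields $\widehat{\tilde f^n}\to \widehat{\tilde f}$ strongly in $L^2_{\mathrm{loc}}(\mathbb{R}^2)$, and analogously for $\tilde g^n,\tilde h^n$. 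One further truncation via Proposition~\ref{loriot}(ii) restricts the inner integration to $|x|<M'$ with error $\lesssim C(\epsilon)(M')^{-1}\|\hat\phi\|_{L^1}<\eta$, since $\hat\phi$ is compactly supported.

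After these reductions, the localized operator $\mathcal{T}^{\epsilon,M'}:=\mathcal{T}_{|\lambda|>\epsilon,|x|<M'}$ evaluated at $\xi\in B(0,N)$ depends only on the values of its inputs on the fixed compact set $B(0,N+2M')$, where all three Fourier transforms converge strongly in $L^2$. The trilinear expansion $\mathcal{T}^{\epsilon,M'}(A^n,B^n,C^n)-\mathcal{T}^{\epsilon,M'}(A,B,C)=\mathcal{T}^{\epsilon,M'}(A^n-A,B^n,C^n)+\mathcal{T}^{\epsilon,M'}(A,B^n-B,C^n)+\mathcal{T}^{\epsilon,M'}(A,B,C^n-C)$, combined with the $L^2$-boundedness of $\mathcal{T}^{\epsilon,M'}$ (inherited from Proposition~\ref{baldeagle}(i)), forces each term to tend to $0$ strongly in $L^2$; testing against $\hat\phi$ closes the argument. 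The main obstacle is that weak convergence in $L^2$ is not preserved by the pointwise multiplications inside $\mathcal{T}$, so one cannot substitute the weak limits directly; the structural input that circumvents this difficulty is Proposition~\ref{loriot}(iii), which localizes the spatial supports of the inputs to a common compact set, after which the Fourier invariance upgrades their weak convergence to strong local convergence on the Fourier side, bringing ordinary trilinear continuity to bear.
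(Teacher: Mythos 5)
Your proof is correct, and it rests on exactly the same pillars as the paper's argument: the uniform $L^2$ bound from Proposition~\ref{baldeagle}(i), the localized estimates of Proposition~\ref{loriot} (including the extension of (iii) to a cutoff in the third slot, which you justify correctly since $|\lambda|\leq 1$ forces $|x|\gtrsim M$), the Fourier invariance of Lemma~\ref{albatros}, and the fact that simultaneous space–frequency localization upgrades weak to strong convergence. The difference is in how the second localization is organized: the paper applies Lemma~\ref{albatros} to the \emph{full} operator, performs a second round of cutoffs $Q_{<M}P_{<M}$ on the inputs, and re-runs the Step~1 error estimates on the Fourier side, whereas you test against $\phi$ with $\widehat{\phi}\in C_c^\infty$, truncate the $x$-integration via Proposition~\ref{loriot}(ii), and obtain strong $L^2_{\mathrm{loc}}$ convergence of $\widehat{Q_{<M}f^n}$ directly from pointwise convergence plus domination. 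This is a slightly more elementary route that trades the paper's second decomposition for the compact support of $\widehat{\phi}$. Two points deserve an explicit line in your write-up. First, you invoke Fourier invariance for the truncated operator $\mathcal{T}_{|\lambda|>\epsilon}$; Lemma~\ref{albatros} is stated for $\mathcal{T}$ itself, but its proof carries over verbatim because the change of variables there is performed in $\beta$ at fixed $\lambda$, so $\mathcal{F}\,\mathcal{T}_{|\lambda|>\epsilon}(f,g,h)=\mathcal{T}_{|\lambda|>\epsilon}\big(\widehat{f},\widehat{g},\widehat{h}\big)$ does hold — note this is representation-dependent (the equivalent forms of $\mathcal{T}$ obtained via $\lambda\mapsto\lambda^{-1}$ would carry a different truncation), so the remark is not vacuous. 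Second, the same $\epsilon$, $M$, $R$, $M'$ error bounds must also be applied to the limit triple $(f,g,h)$ before comparing with the localized limits; this is automatic since all bounds depend only on $L^2$ norms and $\|f\|_{L^2}\leq\liminf_n\|f^n\|_{L^2}$, but it should be said, as in the paper's Step~3.
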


\begin{remark}
It is tempting to mention here the div-curl lemma~\cite{Murat}~\cite{Tartar}: if $(u^n)$, $(v^n)$ are sequences of functions from $\mathbb{R}^3$ to $\mathbb{R}^3$,
in $L^2$, which are respectively divergence and curl free, and converge weakly to $u$ and $v$, then their scalar product $u^n \cdot v^n$ converges to $u \cdot v$ in the 
sense of distribution. It was later realized by Coifman, Lions, Meyer, Semmes~\cite{CLMS} that this particular structure also has implications in harmonic analysis. Namely, if $u$, $v$ belong in $L^2$, and are respectively divergence and curl free, then their scalar product $u \cdot v$ not only belongs to $L^1$, 
but even to the Hardy space $\mathcal{H}^1$.

One might wonder whether a similar phenomenon occurs for $\mathcal{T}$: does this operator map $L^2 \times L^2 \times L^2$ to a slightly smaller space than $L^2$?
\end{remark}

\begin{proof} First fix sequences $f^n$, $g^n$ and $h^n$ converging weakly to $f$, $g$, $h$ in $L^2$.
By the uniform boundedness principle, the functions $f^n$, $g^n$, $h^n$ enjoy uniform bounds in $L^2$.
Our aim will be to show that, for $\phi$ in the Schwartz class $\mathcal{S}$,
$$
\langle \mathcal{T}(f^n,g^n,h^n)\,,\,\phi \rangle \rightarrow \langle \mathcal{T}(f,g,h)\,,\,\phi \rangle.
$$
This will give the desired result by boundedness of $\mathcal{T}$ on $L^2$, and density of $\mathcal{S}$ in $L^2$. Thus we fix from now on $\phi \in \mathcal{S}$; all implicit constants are allows to depend on $\phi$. 

We will be using three parameters, $\epsilon$, $M$ and $R$, whose precise value will be fixed at the end of the proof, but which satisfy $\epsilon \ll 1$, $M\gg R\gg1$.

\bigskip

\noindent
\underline{Step 1: localization in space.} 
Writing $f = Q_{<M} f^n + Q_{>M} f^n$, and similarly for $g$ and $h$, we obtain that $\mathcal{T}(f,g,h)(x)$ 
can be written as the sum
\begin{subequations}
\begin{align}
\label{chardonneret1}
\langle \mathcal{T}(f^n,g^n,h^n)\,,\,\phi \rangle = & \langle \mathcal{T}(Q_{>M} f^n,g^n,h^n)  \,,\,\phi \rangle \\
& \label{chardonneret2} \quad + \langle \mathcal{T}(Q_{<M} f^n,g^n,Q_{>M}h^n) \,,\,\phi \rangle \\
& \label{chardonneret3} \quad + \langle \mathcal{T}(Q_{<M} f^n,Q_{>10 M}g^n,Q_{<M}h^n) \,,\,\phi \rangle \\
& \label{chardonneret4} \quad + \langle \mathcal{T}(Q_{<M} f^n,Q_{< 10 M}g^n,Q_{<M}h^n)  \,,\,\phi \rangle. 
\end{align}
\end{subequations}
To estimate~(\ref{chardonneret1}), we first split this term, and then use Proposition~\ref{loriot} as well as the fast decay of $\phi$:
\begin{equation}
\begin{split}
\left| (\ref{chardonneret1}) \right| & \leq \left| \langle \mathcal{T}_{|\lambda|<\epsilon} (Q_{>M} f^n, g^n, h^n) \,,\,\phi \rangle \right| 
 + \left|\langle \mathcal{T}_{|\lambda|>\epsilon}(Q_{>M} f^n,g^n,h^n)  \,,\,Q_{<R}\phi \rangle \right| \\
& \qquad \qquad + \left|\langle \mathcal{T}_{|\lambda|>\epsilon}(Q_{>M} f^n,g^n,h^n)  \,,\,Q_{>R}\phi \rangle \right| \\
& \lesssim \epsilon \|f^n\|_2 \|g^n\|_2 \|h^n\|_2 \|\phi\|_2 +  C(\epsilon) M^{-1} \|f^n\|_2 \|g^n\|_2 \|h^n\|_2 \|\phi\|_1 + \|f^n\|_2 \|g^n\|_2 \|h^n\|_2 R^{-10}\\
& \lesssim \epsilon +  C(\epsilon) M^{-1} + R^{-10}.
\end{split}
\end{equation}
The next term,~(\ref{chardonneret2}), can be treated identically to give
\begin{equation}
\left| (\ref{chardonneret2}) \right| \lesssim \epsilon +  C(\epsilon) M^{-1} + R^{-10}.
\end{equation}
Finally, taking advantage of the localization properties of $\mathcal{T}$, (\ref{chardonneret3}) can be bounded by
$$
\left| (\ref{chardonneret3}) \right| = \left| \langle \mathcal{T}(Q_{<M} f^n,Q_{>10 M}g^n,Q_{<M}h^n) \,,\,Q_{>R} \phi \rangle \right|
\lesssim \|f^n\|_2 \|g^n\|_2 \|h^n\|_2 R^{-10} \lesssim R^{-10}.
$$
We are left with~(\ref{chardonneret4}) which we will decompose further, in frequency this time.

\bigskip

\noindent
\underline{Step 2: localization in frequency.} By Plancherel's theorem and Lemma~\ref{albatros},
$$
4\pi^2 (\ref{chardonneret4}) = \langle \mathcal{F} \mathcal{T}(Q_{<M} f^n,Q_{< 10 M}g^n,Q_{<M}h^n)  \,,\,\widehat{\phi} \rangle
=  \langle \mathcal{T} ( P_{<M} \widehat{f^n},P_{< 10 M} \widehat{g^n},P_{<M} \widehat{h^n}) \,,\,\widehat{\phi} \rangle.
$$
Performing the same decomposition as in Step 1, this is equal to
\begin{subequations}
\begin{align}
\label{bruant1}
(\ref{chardonneret4}) = & \langle \mathcal{T}(Q_{>M} P_{<M} \widehat{f^n},P_{< 10 M} \widehat{g^n},P_{<M} \widehat{h^n})  \,,\,\widehat{\phi} \rangle \\
& \label{bruant2} \quad + \langle \mathcal{T}(Q_{<M} P_{<M} \widehat{f^n},P_{< 10 M} \widehat{g^n},Q_{>M}P_{<M} \widehat{h^n}) \,,\,\widehat{\phi} \rangle \\
& \label{bruant3} \quad + \langle \mathcal{T}(Q_{<M} P_{<M} \widehat{f^n},Q_{>10 M} P_{< 10 M} \widehat{g^n},Q_{<M}P_{<M} \widehat{h^n}) \,,\,\widehat{\phi} \rangle \\
& \label{bruant4} \quad + \langle \mathcal{T}(Q_{<M} P_{<M} \widehat{f^n},Q_{< 10 M}P_{< 10 M} \widehat{g^n},Q_{<M}P_{<M} \widehat{h^n})  \,,\,\widehat{\phi} \rangle,
\end{align}
\end{subequations}
and just like in Step 1 we can bound
$$
|(\ref{bruant1})| + |(\ref{bruant2})| + |(\ref{bruant3})| \lesssim \epsilon +  C(\epsilon) M^{-1} + R^{-10}.
$$

\bigskip

\noindent
\underline{Step 3: conclusion.} Fix $\eta>0$, we want to show that, for $n$ big enough,
$$
\left| \langle \mathcal{T}(f^n,g^n,h^n)\,,\,\phi \rangle - \langle \mathcal{T}(f,g,h)\,,\,\phi \rangle \right| < \eta.
$$
First split the above as
\begin{subequations}
\begin{align}
& 4 \pi^2 \left| \langle \mathcal{T}(f^n,g^n,h^n)\,,\,\phi \rangle - \langle \mathcal{T}(f,g,h)\,,\,\phi \rangle \right| \\
& \qquad = \left| \langle \mathcal{T}(\widehat{f^n},\widehat{g^n},\widehat{h^n})\,,\,\widehat{\phi} \rangle - \langle \mathcal{T}(\widehat{f},\widehat{g},\widehat{h})\,,\,\widehat{\phi} \rangle \right| \\
& \label{petrel1} \qquad \leq \left| \langle \mathcal{T}(\widehat{f^n},\widehat{g^n},\widehat{h^n})\,,\,\widehat{\phi} \rangle -
 \langle \mathcal{T}(Q_{<M} P_{<M} \widehat{f^n},Q_{< 10 M}P_{< 10 M} \widehat{g^n},Q_{<M}P_{<M} \widehat{h^n}) \,,\,\widehat{\phi} \rangle \right| \\
& \label{petrel2} \qquad \quad \begin{array}{l} 
                  + \left|\langle \mathcal{T}(Q_{<M} P_{<M} \widehat{f^n},Q_{< 10 M}P_{< 10 M} \widehat{g^n},Q_{<M}P_{<M} \widehat{h^n})\,,\,\widehat{\phi} \rangle\right. \\
\qquad \qquad \qquad \qquad - \left. \mathcal{T}(Q_{<M} P_{<M} \widehat{f},Q_{< 10 M}P_{< 10 M} \widehat{g},Q_{<M}P_{<M} \widehat{h})\,,\, \widehat{\phi} \rangle \right|
                 \end{array} \\
& \label{petrel3} \qquad \quad + \left|\langle \mathcal{T}(Q_{<M} P_{<M} \widehat{f},Q_{< 10 M}P_{< 10 M} \widehat{g},Q_{<M}P_{<M} \widehat{h})\,,\, \widehat{\phi} \rangle 
- \langle \mathcal{T}(\widehat{f},\widehat{g},\widehat{h}) \,,\,\widehat{\phi} \rangle \right|
\end{align}
\end{subequations}
Gathering the estimates of Step 1 and Step 2, we see that
$$
\left| (\ref{petrel1}) \right| + | \eqref{petrel3}|
\lesssim \epsilon +  C(\epsilon) M^{-1} + R^{-10}.
$$
The weak convergence of $f^n$, $g^n$ and $h^n$ to, respectively, $f$, $g$ and $h$ in $L^2$ implies that, for $M$ fixed
$$
Q_{<M} P_{<M} \widehat{f^n} \overset{L^2}{\rightarrow} Q_{<M} P_{<M} \widehat{f} \;\;,\;\;
 Q_{< 10 M}P_{< 10 M} \widehat{g^n} \overset{L^2}{\rightarrow}  Q_{< 10 M}P_{< 10 M} \widehat{g}  \;\;,\;\;
 Q_{<M}P_{<M} \widehat{h^n} \overset{L^2}{\rightarrow} Q_{<M}P_{<M} \widehat{h}
$$
as $n\rightarrow \infty$. Since $\mathcal{T}$ is bounded on $L^2$, we deduce that
$$
\left| (\ref{petrel2}) \right| \overset{n \rightarrow \infty}{\longrightarrow} 0.
$$
To conclude, it suffices to fix $\epsilon$, $M$, $R$ such that $\left| (\ref{petrel1}) \right| + \left|(\ref{petrel3}) \right| < \frac{1}{2} \eta$.
Then
$$
\operatorname{limsup}_{n \rightarrow \infty} \left| \langle \mathcal{T}(f^n,g^n,h^n)\,,\,\phi \rangle - \langle \mathcal{T}(f,g,h)\,,\,\phi \rangle \right| < \frac{1}{2} \eta.
$$
\end{proof}

\subsection{Weak compactness of solutions in $L^\infty_t L^2_x$} The preceding theorem on weak continuity of $\mathcal{T}$ on $L^2$ implies easily the following compactness
result.
\begin{corollary}
Assume $(f^n)$ is a sequence of solutions of {\rm (CR)} in $L^\infty([0,T],L^2)$. Then there exists a subsequence, which we still denote $(f^n)$,
and a solution $f \in L^\infty([0,T],L^2)$ of {\rm (CR)} such that
$$
f^n \overset{n \rightarrow \infty}{\longrightarrow} f \qquad \mbox{weak-* in $L^\infty ([0,T],L^2)$}.
$$
\end{corollary}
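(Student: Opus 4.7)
The plan is to combine a standard weak-$*$ compactness extraction with the weak continuity of $\mathcal{T}$ on $L^2$ established in Theorem~\ref{hibou}; the latter is precisely what makes it possible to pass to the limit inside the nonlinear term.

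First, let $M \overset{def}{=} \sup_n \|f^n\|_{L^\infty([0,T],L^2)} < \infty$ (by assumption). Since $L^\infty([0,T],L^2)$ is the dual of the separable space $L^1([0,T],L^2)$, Banach-Alaoglu yields a subsequence (not relabeled) and some $f \in L^\infty([0,T],L^2)$ such that $f^n \rightharpoonup^* f$. I next upgrade this to pointwise-in-time weak convergence in $L^2$. Because each $f^n$ solves (CR) in the classical sense and $\mathcal{T}$ is bounded from $(L^2)^3$ to $L^2$ by Proposition~\ref{baldeagle}, one has the uniform bound
\begin{equation*}
\|\partial_t f^n\|_{L^\infty([0,T],L^2)} = \|\mathcal{T}(f^n,f^n,f^n)\|_{L^\infty([0,T],L^2)} \lesssim M^3.
\end{equation*}
Consequently, for each fixed $\phi \in L^2$, the scalar functions $t \mapsto \langle f^n(t),\phi\rangle$ form a uniformly bounded, uniformly Lipschitz family on $[0,T]$. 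Picking a countable dense set $\{\phi_k\} \subset L^2$ and diagonally extracting by Arzel\`a-Ascoli, one obtains a subsequence along which $\langle f^n(t),\phi_k\rangle$ converges uniformly in $t$ for every $k$; combined with the uniform $L^2$-bound on $f^n(t)$, this yields $f^n(t) \rightharpoonup f(t)$ in $L^2$ for every $t \in [0,T]$.

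With this pointwise-in-$t$ weak convergence in hand, one passes to the limit in the Duhamel formulation
\begin{equation*}
\langle f^n(t),\phi\rangle = \langle f^n(0),\phi\rangle + i \int_0^t \langle \mathcal{T}(f^n,f^n,f^n)(s),\phi\rangle\, ds, \qquad \phi \in L^2.
\end{equation*}
The left-hand side and the initial-data term converge to $\langle f(t),\phi\rangle$ and $\langle f(0),\phi\rangle$ respectively. For the integral on the right, Theorem~\ref{hibou} applied at each fixed $s$ to the three sequences $f^n(s), f^n(s), f^n(s)$, all weakly convergent to $f(s)$ in $L^2$, gives $\mathcal{T}(f^n,f^n,f^n)(s) \rightharpoonup \mathcal{T}(f,f,f)(s)$ weakly in $L^2$, so the integrand $\langle \mathcal{T}(f^n,f^n,f^n)(s),\phi\rangle$ converges pointwise in $s$. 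The uniform bound $\bigl|\langle \mathcal{T}(f^n,f^n,f^n)(s),\phi\rangle\bigr| \lesssim M^3\|\phi\|_2$ allows invoking dominated convergence on $[0,T]$, yielding
\begin{equation*}
\langle f(t),\phi\rangle = \langle f(0),\phi\rangle + i \int_0^t \langle \mathcal{T}(f,f,f)(s),\phi\rangle\, ds
\end{equation*}
for every $\phi \in L^2$ and $t \in [0,T]$. Hence $f$ solves (CR) in the integral (equivalently, distributional) sense, and lies in $L^\infty([0,T],L^2)$. Finally, the pointwise weak convergence combined with the uniform bound yields the desired weak-$*$ convergence in $L^\infty([0,T],L^2)$ by a second application of dominated convergence against arbitrary test functions in $L^1([0,T],L^2)$.

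The only real obstacle is of course passing to the limit in the cubic term; weak convergence of $f^n$ to $f$ is notoriously incompatible with nonlinear operations. What makes the argument go through is precisely the Loomis-Whitney-type structure of $\mathcal{T}$ encoded in Theorem~\ref{hibou}: the averaging over $\lambda$ smooths out the nonlinear interaction enough to make $\mathcal{T}$ weakly sequentially continuous on $L^2$, so no extra strong compactness (Sobolev embeddings, Aubin-Lions, \emph{etc.}) is needed.
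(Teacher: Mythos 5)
Your proof is correct and rests on the same two pillars as the paper's own argument: the uniform $\operatorname{Lip}([0,T],L^2)$ bound coming from the $L^2$ boundedness of $\mathcal{T}$ (Proposition~\ref{baldeagle}), and the weak sequential continuity of $\mathcal{T}$ on $L^2$ (Theorem~\ref{hibou}). The only real difference is in how the time integration is handled: where the paper approximates $f^n$ and $f$ by piecewise-constant-in-time functions at dyadic times (with an $O(2^{-j})$ error controlled by the Lipschitz bound) before invoking weak continuity, you extract pointwise-in-time weak convergence by Arzel\`a--Ascoli and a diagonal argument and then pass to the limit in the Duhamel formulation by dominated convergence, which is an equally valid and slightly more streamlined way of justifying the interchange of the limit with the time integral.
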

\begin{proof}
First notice that $(f^n)$ is uniformly bounded in $\operatorname{Lip}([0,T],L^2)$. This means that we may define $(f^n(t))$ for all $t\in [0,T]$ and for each such $t$, there exists a weak limit $f(t) \in L_t^\infty ([0,T];L^2)$ such $f^n(t) \rightharpoonup f(t)$ weakly in $L^2(\R^2)$. One can easily verify that $f^n(t) \rightharpoonup f(t)$ weak-* in $L_t^\infty([0,T]; L^2)$ and that $f(t) \in \operatorname{Lip}([0,T],L^2)$.

It suffices to show now that $f$ solves (*). Set
$$
f^n_j(t) = f^n \left( \frac{k}{2^j} \right) \quad \mbox{and} \quad f_j(t)=f \left( \frac{k}{2^j} \right) \qquad \mbox{where $k$ minimizes $\left| t - \frac{k}{2^j} \right|$}.
$$
We start from
$$
\partial_t f_n = \mathcal{T}(f_n,f_n,f_n)
$$
and want to pass to the limit $n \rightarrow \infty$ in the sense of distributions. For the linear part, it is automatic: $\partial_t f_n \rightarrow \partial_t f$.
Now take $\phi$ in the Schwartz class $\mathcal{S}([0,T] \times \mathbb{R}^2)$. Then
\begin{equation*}
\begin{split}
\left| \langle \mathcal{T}(f^n,f^n,f^n)\,,\,\phi \rangle - \langle \mathcal{T}(f,f,f)\,,\,\phi \rangle \right|
& \leq \left| \langle \mathcal{T}(f^n,f^n,f^n)\,,\,\phi \rangle - \langle \mathcal{T}(f^n_j,f^n_j,f^n_j)\,,\,\phi \rangle \right| \\
& \quad + \left| \langle \mathcal{T}(f^n_j,f^n_j,f^n_j)\,,\,\phi \rangle - \langle \mathcal{T}(f_j,f_j,f_j)\,,\,\phi \rangle \right| \\
& \quad + \left| \langle \mathcal{T}(f_j,f_j,f_j)\,,\,\phi \rangle - \langle \mathcal{T}(f,f,f)\,,\,\phi \rangle \right| \\
& \leq \left| \langle \mathcal{T}(f^n_j,f^n_j,f^n_j)\,,\,\phi \rangle - \langle \mathcal{T}(f_j,f_j,f_j)\,,\,\phi \rangle \right| + O(2^{-j}),
\end{split}
\end{equation*}
where we used in the last line the uniform bound on $(f^n)$ and $f$ in $\operatorname{Lip}([0,T]; L^2)$. From this last inequality, by weak convergence of 
the $f^n_j$ to $f_j$ in $L^2$, and the weak continuity of $\mathcal{T}$ in $L^2$, we deduce that
$$
\operatorname{limsup}_{n \rightarrow \infty} \left| \langle \mathcal{T}(f^n,f^n,f^n)\,,\,\phi \rangle - \langle \mathcal{T}(f,f,f)\,,\,\phi \rangle \right| \lesssim 2^{-j} 
$$
for any $j$; but this gives the desired result.
\end{proof}

\section{Variational properties}\label{sectionvariational}

We examine here more precisely how the Hamiltonian $\mathcal{H}$ and the $L^2$ mass are related. Our main result is that Gaussians maximize the Hamiltonian for prescribed $L^2$ norm. This implies dynamical stability of the Gaussians.

The results in this section would follow immediately from the relation $\mathcal{H}(f) = \|e^{it\Delta} f\|_4^4$, and known results on the $L^4$ Strichartz norm, in particular~\cite{Fos, HZ, BBCH}. Still, we chose to ignore this relation and give full proofs of Proposition~\ref{duck} and Theorem~\ref{duckling}; they turn out to be very short and provide a slightly different point of view.

%For Theorem~\ref{bull} however, we rely on the profile decomposition in~\cite{MV} which we take for granted.

\subsection{Positivity of the Hamiltonian}

\begin{proposition}
The Hamiltonian can be written
\begin{equation}
\label{duck}
\mathcal{H}(f) = \frac{1}{16}
\int_{\mathbb{S}^1} \int_\mathbb{R} \int_\mathbb{R} \left| 
\int_{\mathbb{R}} f ( u\omega + s \omega^\perp ) \overline{f(t\omega + s \omega^\perp)} \,ds \right|^2
du\,dt\,d\omega.
\end{equation}
In particular, it is non-negative, and zero only if $f=0$.
\end{proposition}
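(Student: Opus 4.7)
The plan is to pass to polar coordinates in the $z$ variable of the expression~\eqref{penguin}, absorb $\lambda$ via a linear rescaling, and then decouple the four factors of $f$ into two complex-conjugate one-dimensional integrals.

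Starting from the symmetric form
$$
\mathcal{H}(f) = \frac{1}{8} \int_{\mathbb{R}} \int_{\mathbb{R}^2} \int_{\mathbb{R}^2} f(\xi+z)\, f(\xi+\lambda z^\perp)\, \overline{f(\xi+z+\lambda z^\perp)}\, \overline{f(\xi)} \,d\xi\,dz\,d\lambda,
$$
I would first write $z = u\omega$ with $\omega\in\mathbb{S}^1$ and $u\in\mathbb{R}$, using the identity $\int_{\mathbb{R}^2} F(z)\,dz = \tfrac{1}{2}\int_{\mathbb{S}^1}\int_{\mathbb{R}} F(u\omega)\,|u|\,du\,d\omega$ (the factor $\tfrac12$ accounts for the double cover). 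Then $\lambda z^\perp = (\lambda u)\omega^\perp$, and the substitution $v = \lambda u$ (for fixed $u\neq 0$) gives $d\lambda = dv/|u|$, so $|u|\,du\,d\lambda = du\,dv$, and the Jacobian factor disappears.

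Next I would diagonalize $\xi$ in the frame $(\omega,\omega^\perp)$ by writing $\xi = t\omega + s\omega^\perp$ with $d\xi = ds\,dt$. The four arguments of $f$ then become
$$
\xi+u\omega = (t+u)\omega + s\omega^\perp,\quad \xi+v\omega^\perp = t\omega + (s+v)\omega^\perp,\quad \xi+u\omega+v\omega^\perp = (t+u)\omega+(s+v)\omega^\perp,\quad \xi = t\omega+s\omega^\perp.
$$
Setting $u' = t+u$ and $s' = s+v$ yields, after relabelling $u'\to u$ and $s'\to s'$,
$$
\mathcal{H}(f) = \frac{1}{16}\int_{\mathbb{S}^1}\int_{\mathbb{R}^2}\int_{\mathbb{R}^2} f(u\omega+s\omega^\perp)\overline{f(t\omega+s\omega^\perp)}\,f(t\omega+s'\omega^\perp)\overline{f(u\omega+s'\omega^\perp)}\,ds\,ds'\,dt\,du\,d\omega.
$$

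The final step is the key observation that this integrand \emph{factorizes} as a function of $s$ times a function of $s'$. Grouping the $s$ and $s'$ integrations separately gives
$$
\mathcal{H}(f) = \frac{1}{16}\int_{\mathbb{S}^1}\int_{\mathbb{R}^2} \left[\int_{\mathbb{R}} f(u\omega+s\omega^\perp)\overline{f(t\omega+s\omega^\perp)}\,ds\right]\left[\int_{\mathbb{R}} f(t\omega+s'\omega^\perp)\overline{f(u\omega+s'\omega^\perp)}\,ds'\right]dt\,du\,d\omega,
$$
and since the second bracket is the complex conjugate of the first (exchange the roles of $f$ and $\overline{f}$ and rename $s'\to s$), the product equals the squared modulus of the first, yielding~\eqref{duck}. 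Positivity is immediate, and the vanishing case $\mathcal{H}(f)=0$ forces $\int f(u\omega+s\omega^\perp)\overline{f(t\omega+s\omega^\perp)}\,ds = 0$ for a.e. $\omega,t,u$; taking $t=u$ shows $\int|f(u\omega+s\omega^\perp)|^2\,ds = 0$ for a.e. $\omega,u$, so $f\equiv 0$. No step seems to be a serious obstacle — the only care needed is tracking the factor of $\tfrac12$ in the polar substitution and ensuring the change of variable $v=\lambda u$ is justified (the set $\{u=0\}$ has measure zero).
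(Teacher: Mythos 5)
Your derivation of \eqref{duck} is correct and is essentially the paper's own proof: both pass to polar coordinates $z=u\omega$ in \eqref{penguin} (picking up the factor $\tfrac12$ from the double cover), write $\xi=t\omega+s\omega^\perp$, absorb the Jacobian $|u|$ through a linear change of variable in $\lambda$ (the paper does $\lambda'=s+\lambda v$ in one step where you do $v=\lambda u$ followed by $s'=s+v$), and then factor the integrand into a modulus squared. The only caveat concerns the vanishing statement: the identity $\int_{\R} f(u\omega+s\omega^\perp)\overline{f(t\omega+s\omega^\perp)}\,ds=0$ holds only for a.e.\ $(\omega,u,t)$, so you cannot literally set $t=u$ (a null set); instead observe that this expression is the kernel of $CC^*$, where $C$ is the Hilbert--Schmidt operator on $L^2(\R)$ with kernel $f(u\omega+s\omega^\perp)$, so its a.e.\ vanishing forces $C=0$ and hence $f=0$ (the paper leaves this point as a ``small exercise'').
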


\medskip

\begin{proof}
The proof of the formula only consists of a change of integration variables in the formula~(\ref{penguin}). Parametrizing in this formula
$z$ by $v \omega$, with $(v,\omega) \in \mathbb{R} \times \mathbb{S}^1$ (at the expense of an extra factor of 2 in the demominator), and $\xi$ by $t \omega + s \omega^\perp$,
with $(s,t) \in \mathbb{R}^2$, it becomes
\begin{equation*}
\begin{split}
\mathcal{H}(f) 
& = \frac{1}{16} \int_{\mathbb{S}^1} \int_\mathbb{R} \int_\mathbb{R} \int_\mathbb{R} \int_\mathbb{R} 
f\left( (t+v) \omega + s \omega^\perp \right) f \left( t \omega + (s + \lambda v)  \omega^\perp \right) \\
& \qquad \qquad \qquad \qquad \qquad \qquad \qquad
\overline{f \left( (t+v) \omega + (s+\lambda v) \omega^\perp \right)}
\, \overline{f\left(t\omega + s \omega^\perp \right)} |v| \,d\lambda\,ds\,dt\,dv \,d\omega.
\end{split}
\end{equation*}
Setting now $\lambda' = s + \lambda v$, $u = t + v$ gives
\begin{equation*}
\begin{split}
\mathcal{H}(f) 
& = \frac{1}{16} \int_{\mathbb{S}^1} \int_\mathbb{R} \int_\mathbb{R} \int_\mathbb{R} \int_\mathbb{R} 
f\left( u \omega + s \omega^\perp \right) f \left( t \omega + \lambda'  \omega^\perp \right) \\
& \qquad \qquad \qquad \qquad \qquad \qquad \qquad
\overline{f \left( u \omega + \lambda' \omega^\perp \right)}
\, \overline{f\left(t\omega + s \omega^\perp \right)} \,ds\,d\lambda'\,dt\,du \,d\omega,
\end{split}
\end{equation*}
which is the desired result. It is then a small exercise to see that $\mathcal{H}$ vanishes only for the zero function.
\end{proof}

\subsection{Maxima of the Hamiltonian}

By Proposition~\ref{baldeagle}, $\mathcal{H}$ is bounded on $L^2$. In other words,
$$
0 \leq \mathcal{H}(f) \lesssim \|f\|_2^4.
$$
Our next aim here is to understand for which functions the above inequality is saturated, and what is the best constant.

\begin{theorem}
\label{duckling}
For fixed mass $\|f\|_2$, the Hamiltonian $\mathcal{H}(f)$ is maximized if $f$ equals, up to the 
symmetries of the equation, $\|f\|_2 G$, where $G$ is the $L^2-$normalized Gaussian $G(x) = \frac{1}{\sqrt \pi} e^{-\frac{|x|^2}{2}}$. Furthermore, all the maximizers are of the form $\|f\|_2 G$, up to the symmetries of the equation. Finally, the optimal constant in the bound for $\mathcal{H}$ is given by
\begin{equation}\label{optimal cst H}
\mathcal{H}(f) \leq \frac{\pi}{8} \|f\|_2^4.
\end{equation}
\end{theorem}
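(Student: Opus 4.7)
The plan is to combine the representation \eqref{duck} of $\mathcal{H}$ with a rigidity argument in the spirit of the heat flow monotonicity of \cite{CLL,BCCT,BBCH}.

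The sharp constant in \eqref{optimal cst H} falls out directly from \eqref{duck} by applying Cauchy-Schwarz to the inner $s$-integral:
\begin{equation*}
\left|\int_{\mathbb{R}} f(u\omega+s\omega^\perp)\overline{f(t\omega+s\omega^\perp)}\,ds\right|^2 \leq \left(\int_{\mathbb{R}} |f(u\omega+s\omega^\perp)|^2\,ds\right)\left(\int_{\mathbb{R}} |f(t\omega+s\omega^\perp)|^2\,ds\right).
\end{equation*}
The $u$- and $t$-integrations then decouple, and after the isometric change of variables $\xi=u\omega+s\omega^\perp$ each factor produces $\|f\|_{L^2(\mathbb{R}^2)}^2$. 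Integrating over $\omega\in\mathbb{S}^1$ yields $\mathcal{H}(f)\leq \frac{2\pi}{16}\|f\|_{L^2}^4 = \frac{\pi}{8}\|f\|_{L^2}^4$. Saturation by the Gaussian $G$ is then a direct computation using \eqref{HamStich} and the explicit formula $e^{it\Delta}G(x)=\frac{1}{\sqrt{\pi}(1-2it)}e^{-|x|^2/(2(1-2it))}$, which (together with $\int_{\mathbb{R}}(1+4t^2)^{-1}dt=\pi/2$) gives $\mathcal{H}(G)=\pi/8$, confirming that $\pi/8$ is optimal.

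To characterize all maximizers, note that equality in the Cauchy-Schwarz step forces, for a.e.\ $\omega$ and a.e.\ $(u,t)$, the proportionality $f(u\omega+s\omega^\perp)=c(u,t,\omega)f(t\omega+s\omega^\perp)$ in $s$. Equivalently, in every orthonormal frame $(\omega,\omega^\perp)$ one must have the rank-one tensor factorization $f(\xi_1\omega+\xi_2\omega^\perp)=a_\omega(\xi_1)b_\omega(\xi_2)$. For smooth non-vanishing $f$ this would force $\partial_{\xi_1}\partial_{\xi_2}\log f=0$ in every frame, so that $\operatorname{Hess}\log f$ is a scalar multiple of the identity pointwise, $\log f$ is a quadratic polynomial, and $L^2$-integrability pins down $f(x)=C e^{-A|x|^2+B\cdot x}$ with $\Re A>0$ - precisely the orbit of $G$ under the symmetry group of Proposition~\ref{robin} (translations, modulations, quadratic modulations, scaling, and phase).

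The main obstacle is carrying out this rigidity step for a general $f\in L^2$, where one cannot assume smoothness or non-vanishing. The plan is to replace the pointwise reasoning by the heat flow monotonicity scheme of Carlen-Lieb-Loss and Bennett-Bez-Carbery-Hundertmark: set $f_\tau=e^{\tau\Delta}f$ and study the quotient $Q(\tau)\overset{def}{=}\mathcal{H}(f_\tau)/\|f_\tau\|_{L^2}^4$. Differentiating $Q$ and using the positivity structure of \eqref{duck}, one expects to exhibit $Q'(\tau)$ as a non-negative integrand that vanishes only when $f_\tau$ is a generalized Gaussian; combined with the fact that (after rescaling) $f_\tau$ concentrates on a centered Gaussian as $\tau\to\infty$ along which $Q\to \pi/8$, this yields the sharp constant together with the characterization of extremizers in a single stroke, bypassing the regularity issues of the direct argument.
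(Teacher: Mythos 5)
Your derivation of the sharp constant is correct, and it is genuinely more elementary than the paper's: applying Cauchy--Schwarz in $s$ inside \eqref{duck} and integrating in $u,t,\omega$ gives $\mathcal{H}(f)\leq \frac{2\pi}{16}\|f\|_2^4=\frac{\pi}{8}\|f\|_2^4$ in two lines, and saturation by $G$ can be checked either directly from \eqref{duck} or, as you do, from \eqref{HamStich}. The paper instead reaches $\mathcal{H}(f)\leq \|f\|_2^4\,\mathcal{H}(G)$ by monotonicity of $\tau\mapsto\mathcal{H}\bigl(\sqrt{e^{\tau\Delta}|f|^2}\bigr)$ plus the $L^1$ heat-kernel limit. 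So for the inequality \eqref{optimal cst H} alone your route is a real simplification.

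The gap is in the classification of maximizers, which is a substantive part of the statement. Your rank-one/$\operatorname{Hess}\log f$ argument is only carried out for smooth, non-vanishing $f$ (you say so yourself), and the proposed repair does not work as written. The monotone quantity in \cite{CLL,BBCH}, and in the paper's Step 2, is $\tau\mapsto\mathcal{H}\bigl(\sqrt{e^{\tau\Delta}|f|^2}\bigr)$: the heat flow is applied to $|f|^2$ and the square root is taken, which preserves the $L^2$ norm and is exactly what makes the $\tau$-derivative organize into the manifestly non-negative square appearing in the paper's computation. For your quotient $Q(\tau)=\mathcal{H}(e^{\tau\Delta}f)/\|e^{\tau\Delta}f\|_2^4$, with the heat flow applied to $f$ itself, no such positivity structure is available (both numerator and denominator decay, and the derivative of $\mathcal{H}(e^{\tau\Delta}f)$ does not become a square), so the claimed monotonicity is an unsupported hope; and even granting it, the assertion that $Q'(\tau)=0$ only at generalized Gaussians is precisely the rigidity you set out to avoid, so nothing has been gained.

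Moreover, any argument run on $|f|^2$ (or any limit that only identifies $|f|$) says nothing about the phase, whereas the theorem claims that \emph{all} complex maximizers lie on the Gaussian orbit. The paper needs a separate step for this (Step 5): writing a maximizer as $f=Ge^{i\theta}$, equality in \eqref{Hamil2} forces $\theta(\xi+z^\perp)+\theta(\xi+\lambda z)=\theta(\xi)+\theta(\xi+z^\perp+\lambda z)$ for all $\xi,z,\lambda$, and one then shows $\theta$ is a quadratic polynomial with isotropic second-order part. Your proposal has no counterpart of this for rough $f$. If you want to stay close to your Cauchy--Schwarz idea, the equality condition itself is the natural tool: it says that for a.e.\ $\omega$ the maximizer is a.e.\ a tensor product in the frame $(\omega,\omega^\perp)$; one can then treat $|f|^2$ by a Bernstein-type independence theorem and the phase by a measurable additive functional equation --- but that argument has to be supplied, and in its absence the maximizer characterization remains unproved.
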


\begin{remark}
As noticed earlier, the operator $\mathcal{T}$, or the Hamiltonian $\mathcal{H}$, correspond to a nonlinear generalization of the class studied by Brascamp and Lieb.
Whether Brascamp-Lieb inequalities are saturated by Gaussians has been studied by many authors, we mention in particular~\cite{Lieb,CLL,BCCT}.Our proof relies on the heat flow, following an approach initiated in~\cite{CLL,BCCT}.
\end{remark}

\begin{proof}
\underline{Step 1: if $g \in L^1$, convergence in $L^1$ of $\sqrt{t \left[ e^{\frac{t}{4}\Delta} g \right] (\sqrt{t} x)}$ to $\left( \int g \right)^{1/2} G(x)$}.
Recalling that the kernel of $e^{t\Delta}$ is $\frac{e^{-\frac{|x|^2}{4t}}}{4\pi t}$, it is easy to see that this statement is true if $g$ is in addition in 
$\mathcal{C}_0^\infty$. The general case $g \in L^1$ follows by a density argument.

\bigskip

\noindent
\underline{Step 2: if $f \in L^2$, $f \geq 0$, $\mathcal{H}\left( \sqrt{e^{\tau \Delta} f} \right)$ is increasing with $\tau$.}
Notice that for $f\geq 0$, $ \sqrt{e^{\tau \Delta} f}$ has a constant $L^2-$norm. It suffices to treat the case where $f$ is smooth with rapid decay, the general case follows
by an approximation argument using the boundedness of $\mathcal{H}$ on $L^2$.
Furthermore, since $e^{\tau \Delta} f (x)>0$ for $\tau>0$, we assume from
now on that $f$ is positive, smooth, and decays rapidly, which justifies all the manipulations which follow.
Recall the formula~(\ref{duck}) giving $\mathcal{H}(f)$. It can be written as
$$
\mathcal{H}(f) = \int_{\mathbb{S}^1} \mathcal{H}_\omega (f) d\omega \quad \mbox{with} \quad 
\mathcal{H}_\omega (f) = \int_\mathbb{R} \int_\mathbb{R} \left| 
\int_s f ( u\omega + s \omega^\perp ) \overline{f(t\omega + s \omega^\perp)} \,ds \right|^2
du\,dt.
$$
We will show that for any $\omega$, $\frac{d}{d\tau} \mathcal{H}_\omega(\sqrt{e^{\tau \Delta} f}) \geq 0$.
Due to the semi-group property of $e^{\tau \Delta}$, it suffices to prove it for $\tau =0$, which we proceed to do.
For convenience, we adopt the Cartesian coordinates given by $\omega$, and denote simply
$f(u,s) = f ( u\omega + s \omega^\perp )$. Then
\begin{equation*}
\begin{split}
& \frac{d}{d\tau} \mathcal{H}_\omega \left( \sqrt{e^{\tau \Delta} f} \right) (\tau=0) \\
& \qquad \qquad = 2 \int \int \int \int \sqrt{f} (u,\sigma) \sqrt{f}(t,\sigma) \sqrt{f}(t,s) \left[ \partial_s^2 + \partial_u^2 \right]
f(u,s) \frac{1}{\sqrt{f}(u,s)} \,ds\,d\sigma \,du\,dt \\
& \qquad \qquad \overset{def}{=} I + II.
\end{split}
\end{equation*}
Since $I$ and $II$ are symmetrical, it suffices to show that $II \geq 0$. But integrating by parts 
once in $u$, it can be written
\begin{equation}
\label{elephant}
\begin{split}
II & = \int \int \int \int \left[ - \frac{ \sqrt{f}(t,\sigma) \sqrt{f}(t,s) }{\sqrt{f}(u,\sigma)
\sqrt{f}(u,s) } \partial_u f(u,\sigma) \partial_u f (u,s) \right. \\
& \qquad \qquad \qquad \qquad \left. + \frac{\sqrt{f}(t,\sigma) \sqrt{f}(t,s) 
\sqrt{f}(u,\sigma)}{f(u,s)\sqrt{f}(u,s)} \left| \partial_u f(u,s) \right|^2 \right] \,du\,dt\,d\sigma\,ds \\
&= \int\int\int \int \frac{1}{2} \left( \partial_u f(u,s) \frac{f^{1/2}(u,\sigma)}{f^{1/2}(u,s)}
- \partial_u f(u,\sigma) \frac{f^{1/2}(u,s)}{f^{1/2}(u,\sigma)} \right)^2\frac{ \sqrt{f}(t,\sigma) \sqrt{f}(t,s) }{\sqrt{f}(u,\sigma)
\sqrt{f}(u,s) }\,% \sqrt{f}(t,\sigma) \sqrt{f}(t,s)\,
du\,dt\,d\sigma\,ds \\
& \geq 0,
\end{split}
\end{equation}
which concludes the proof.

\bigskip

\noindent\
\underline{Step 3: Gaussians are maximizers} Let $f$ in $L^2$. Then
\begin{equation}
\begin{split}
\mathcal{H}(f) & \leq \mathcal{H}(|f|) \\
& \leq \mathcal{H}\left( \sqrt{ e^{\frac{t}{4}\Delta} |f|^2 } \right) \qquad \mbox{for $t\geq 0$, by Step 2} \\
& = \mathcal{H}\left( \sqrt{t \left[ e^{\frac{t}{4}\Delta} |f|^2 \right] (\sqrt{t} \, \cdot \, ) } \right) \qquad \mbox{by scaling invariance (Proposition~\ref{robin})} \\
& \overset{t \rightarrow \infty}{\longrightarrow} \|f\|^4_2 \;\mathcal{H}(G) \qquad \mbox{by Step 1 and continuity of $\mathcal{H}$ on $L^2$ (Proposition~\ref{baldeagle})},
\end{split} 
\end{equation}
which gives the desired inequality.

\bigskip

\noindent
\underline{Step 4: up to the symmetries, Gaussians are the unique non-negative maximizers.} 
Assume that $g$ a maximizer with $g \geq 0$. We will show that $g$ is a Gaussian, up to the symmetries of the equation. Applying the heat flow, we might assume without loss of generality
that $g$ is positive and smooth. Then, if it is a maximizer, $\frac{d}{dt} \mathcal{H}(\sqrt{ e^{t \Delta} g } ) = 0$, which, by~(\ref{elephant}), implies (with the notations of Step 2) that
$$
\frac{\partial_u g(u,s)}{g(u,s)} = \frac{\partial_u g(u,\sigma)}{g(u,\sigma)} \quad \mbox{for any}\; u,s,\sigma.
$$
Setting $h(u,s) = \log g(u,s)$, this becomes
$$
\partial_u h(u,s) = \partial_u h(u,\sigma), \quad \mbox{for any}\; u,s,\sigma.
$$
We can rewrite the above as $\partial_u h(u,s) = \partial_u h(u,s + s_0)$ for any $u$, $s$, $s_0$. Taking the Fourier transform of this identity in $u$, $s$ and denoting $\xi$, $\eta$ for their
respective Fourier variables, we obtain
$$
\xi \widehat{h}(\xi,\eta) = \xi \widehat{h}(\xi,\eta) e^{i\eta s_0} \quad \mbox{for any}\;\xi,\eta,s_0.
$$
From this identity one deduces first that $\widehat{h}$ is localized at $(0,0)$, and then that it is a linear combination of Dirac masses $\delta_0$, first derivatives of Dirac masses $\partial_i \delta_0$,
and $\Delta \delta_0$. Thus, $h$ is a second order polynomial with an isotropic second order part.
Coming back to $g$, and using that it belongs to $L^2$, it has to be a Gaussian up to symmetries.

\bigskip

\noindent
\underline{Step 5: up to the symmetries, Gaussians are the unique complex-valued maximizers.}  
Assume that $f$ is a maximizer. So is $|f|$, which, by the previous step, is a Gaussian. We can thus assume that $f = G e^{i\theta}$, where $\theta$ is real and smooth. By adding a constant factor to $\theta$, we may also assume that $\mathcal H(f)=\mathcal H(G)>0$. Using the original formulation of $\mathcal H(f)$ in \eqref{Hamil2}, it is easy to see that we must then have that for any $\xi , z\in \R^2$ and $\lambda\in [-1,1]$:
$$
\theta(\xi+z^\perp)+\theta(\xi+\lambda z)=\theta(\xi)+\theta(\xi+z^\perp+\lambda z).
$$
Taking the derivative with respect to $\lambda$ we get that for any $\xi, z\in \R^2, \lambda \in [-1,1]$:
$$
z\cdot \nabla \theta(\xi+\lambda z)=z\cdot\nabla \theta(\xi+z^\perp+\lambda z).
$$
which implies that 
\begin{equation*}%\label{th8.2aux}
z\cdot \nabla \theta(\eta)=z\cdot \nabla \theta(\eta+z^\perp)\quad \mbox{ for any }\eta, z \in \R^2.
\end{equation*}
%%%%%%%%%%%%Alternative Proof without using the Fourier transform%%%%%%%%%
%Setting $\eta=0$ gives that $\theta(z)=\theta(0)+z\nabla \theta(0)+f(|z|)$ for some smooth function $f:\R\to \R$. Taking another derivative in $\lambda$ in \eqref{th8.2aux}, we get that 
%$$
%z\nabla^2 \theta(\eta)=z\nabla^2 \theta(\eta+z^\perp)
%$$
%Taking the inner product of the above equation with $\eta +z^\perp$, we arrive at the relation:
%$$
%f''(|\eta+z|)-f''(|\eta|)=z\cdot(\frac{\eta^\perp}{|\eta|^2}f''(|\eta|)+\frac{\eta}{|\eta|^3} f'(|\eta|)).
%$$
%which implies that $f'''(|\eta|)=\frac{\eta}{|\eta|}\nabla_\eta f''(|\eta|)=0$ and hence $f''$ is constant as needed.
As a result, for any $\omega \in \mathbb{S}^1$ and with the notation $\theta(u,s)=\theta(u\omega+s\omega^\perp)$, it holds that
$$
\partial_u \theta(u,s) = \partial_u \theta(u,\sigma), \quad \mbox{for any}\; u,s,\sigma.
$$
This identity already appeared in the previous step. Arguing as there, we obtain that $\theta$ is a quadratic polynomial with isotropic second order part, which gives the desired result. 

\end{proof}

\subsection{Compactness of maximizing sequences and $L^2$ stability of Gaussians}

Let $\mathcal{G}$ be the manifold obtained by letting the symmetries of $\eqref{star}$ act on $G$:
$$
\mathcal{G} = \left\{ \gamma G\left( \frac{x-x_0}{|\gamma|} \right) e^{iv_0 \cdot x + i\beta |x|^2}, \; (\gamma, x_0,v_0,\beta) \in \mathbb{C} \times \mathbb{R}^2 \times \mathbb{R}^2 \times \mathbb{R} \right\}.
$$

\begin{theorem} 
\label{bull}
Let $(f^n)$ be a sequence in $L^2$ such that, for some $Q > 0$, 
$$
\|f^n\|_2 \rightarrow Q \quad \mbox{and} \quad \mathcal{H}(f^n) \rightarrow \frac{\pi}{8} Q^4 \qquad \mbox{as $n \rightarrow \infty$}.
$$
Then
$$
\operatorname{dist}_{L^2}(f^n,Q \mathcal{G}) \longrightarrow 0 \qquad \mbox{as $n \rightarrow \infty$}.
$$
\end{theorem}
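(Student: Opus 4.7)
The plan is to reduce the statement to a compactness result for maximizing sequences of the $L^4_{t,x}$--Strichartz functional on $\R^2$, and then invoke the associated profile decomposition of Merle--Vega~\cite{MV}. The first step is to translate the problem via the identity~\eqref{HamStich}: setting $g^n \overset{def}{=} \widehat{f^n}$, the hypotheses become
\[
\|g^n\|_{L^2(\R^2)} \to Q, \qquad \|e^{it\Delta_{\R^2}} g^n\|_{L^4_{t,x}(\R\times\R^2)}^4 \to \frac{Q^4}{4},
\]
where $\tfrac{1}{4}$ is the sharp constant of the $L^4$--Strichartz inequality (this is precisely Theorem~\ref{duckling} transported via~\eqref{HamStich}). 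So $\{g^n\}$ is a maximizing sequence for Strichartz.

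Next I would apply the $L^4_{t,x}$ Strichartz profile decomposition to the bounded sequence $\{g^n\} \subset L^2(\R^2)$: after extracting a subsequence, there exist profiles $\phi_j \in L^2$ and pairwise asymptotically orthogonal sequences of symmetries $\Gamma_j^n$ (compositions of phase rotations, space translations, Galilean boosts, parabolic rescalings, and Schr\"odinger evolutions $e^{it_j^n \Delta}$) such that for every $J \geq 1$,
\[
g^n = \sum_{j=1}^J \Gamma_j^n \phi_j + r_J^n,
\]
with the asymptotic Pythagorean identities
\[
\|g^n\|_{L^2}^2 = \sum_{j=1}^J \|\phi_j\|_{L^2}^2 + \|r_J^n\|_{L^2}^2 + o_n(1),
\]
\[
\|e^{it\Delta} g^n\|_{L^4_{t,x}}^4 = \sum_{j=1}^J \|e^{it\Delta}\phi_j\|_{L^4_{t,x}}^4 + \|e^{it\Delta} r_J^n\|_{L^4_{t,x}}^4 + o_n(1),
\]
and $\limsup_n \|e^{it\Delta} r_J^n\|_{L^4_{t,x}} \to 0$ as $J \to \infty$. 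Inserting the sharp Strichartz bound $\|e^{it\Delta}\phi_j\|_{L^4}^4 \leq \tfrac14 \|\phi_j\|_{L^2}^4$ for each $j$ and setting $a_j \overset{def}{=} \|\phi_j\|_{L^2}^2$, the limit $n\to\infty$ followed by $J\to\infty$ yields
\[
\sum_{j\ge 1} a_j \;\leq\; Q^2 \qquad \text{and} \qquad \sum_{j\ge 1} a_j^2 \;\geq\; Q^4.
\]
Since $\sum_j a_j^2 \leq (\max_j a_j)\sum_j a_j \leq Q^2 \max_j a_j$, the two inequalities force $\max_j a_j = Q^2$ and all other $a_j$ to vanish; by the same token $\|r_J^n\|_{L^2} \to 0$ in the iterated limit. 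The surviving profile $\phi_1$ has $\|\phi_1\|_{L^2} = Q$ and must itself saturate Strichartz, so by Theorem~\ref{duckling} (transported via~\eqref{HamStich}) $\phi_1$ is a Gaussian up to the symmetries of the Strichartz functional.

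To conclude, I would observe that the symmetry group involved in the profile decomposition is precisely the Fourier image of the symmetry group of $\mathcal{H}$ listed in Proposition~\ref{robin}: in particular, $e^{it_j^n\Delta}$ on the Fourier side is the quadratic modulation in $\mathcal{G}$, and Gaussians are stable under all these operations, so $\Gamma_1^n \phi_1 \in Q \widehat{\mathcal{G}} = Q\mathcal{G}$ (using Proposition~\ref{robin}(viii)). Therefore
\[
\operatorname{dist}_{L^2}(g^n, Q\mathcal{G}) \leq \|g^n - \Gamma_1^n \phi_1\|_{L^2} \to 0
\]
along the extracted subsequence, and by Plancherel the same holds for $f^n$. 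A standard subsequence-of-subsequence argument then upgrades this to convergence of the full sequence. The main obstacle I expect is the bookkeeping required to match the symmetry group produced by the profile decomposition with the group used to define $\mathcal{G}$, so that the limiting Gaussian maximizer indeed lies in $Q\mathcal{G}$; everything else is a short computation once the profile decomposition is in hand.
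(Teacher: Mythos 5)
Your argument is correct and is precisely the route the paper takes: its proof of Theorem~\ref{bull} simply states that one combines Theorem~\ref{duckling} with the Merle--Vega profile decomposition via standard concentration compactness arguments, omitting the details you have supplied. The transport via~\eqref{HamStich}, the $L^2$/$L^4$ Pythagorean decoupling forcing a single profile of full mass $Q^2$, and the identification of that profile with a Gaussian up to the (matching) symmetry groups constitute exactly the intended argument.
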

\begin{proof} The proof follows by combining Theorem~\ref{duckling} with the profile decomposition in~\cite{MV} via standard concentration compactness arguments. We omit the details.\end{proof}

This theorem has an easy consequence: dynamic stability of Gaussians by the flow of (CR).

\begin{proposition}
Assume that $(f^n_0)$ is a sequence in $L^2$ converging to $\mathcal{G}$. Denote $f_n(t)$ for the solution of \eqref{star} with data $f^n_0$. Then for any time $t>0$,
$$
\operatorname{dist}_{L^2}(f^n(t) ,Q \mathcal{G}) \longrightarrow 0 \qquad \mbox{as $n \rightarrow \infty$}.
$$
\end{proposition}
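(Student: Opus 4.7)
The plan is to deduce the statement as a direct consequence of the variational characterization of Gaussians (Theorems \ref{duckling} and \ref{bull}), combined with the two fundamental conservation laws of $\eqref{star}$. The orbital-stability philosophy is that $\mathcal{G}$ is precisely the set of maximizers of $\mathcal{H}$ at fixed $L^2$ mass, so any sequence whose mass is preserved and whose Hamiltonian saturates the sharp inequality must concentrate on $\mathcal{G}$ in $L^2$.

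First I would fix notation: since $\operatorname{dist}_{L^2}(f_0^n, \mathcal{G}) \to 0$, one may write $f_0^n = g_0^n + r_n$ with $g_0^n \in \mathcal{G}$ and $\|r_n\|_{L^2} \to 0$; passing to a subsequence we may assume the masses $\|g_0^n\|_{L^2}$ converge to some $Q \geq 0$ (the case $Q = 0$ is trivial, so assume $Q > 0$), hence $\|f_0^n\|_{L^2} \to Q$. By Theorem \ref{th:7.4}(ii) each $f^n(t)$ exists globally in $L^2$ and the mass is conserved, so
\[
\|f^n(t)\|_{L^2} = \|f_0^n\|_{L^2} \longrightarrow Q \quad \text{for every } t \in \R.
\]
Conservation of the Hamiltonian (Corollary \ref{pelican}) gives $\mathcal{H}(f^n(t)) = \mathcal{H}(f_0^n)$. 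Since $\mathcal{H}(f) = \tfrac{1}{4}\langle \mathcal{T}(f,f,f),f\rangle$ and Proposition \ref{baldeagle}(i) provides the $L^2$ bound on $\mathcal{T}$, the trilinearity of $\mathcal{T}$ yields the Lipschitz-type estimate
\[
|\mathcal{H}(f) - \mathcal{H}(g)| \lesssim (\|f\|_{L^2}^3 + \|g\|_{L^2}^3)\,\|f-g\|_{L^2},
\]
so $\mathcal{H}$ is continuous on $L^2$. As each $g_0^n \in \mathcal{G}$ saturates the sharp inequality $\mathcal{H}(f) \leq \tfrac{\pi}{8}\|f\|_{L^2}^4$ (Theorem \ref{duckling}), $\mathcal{H}(g_0^n) = \tfrac{\pi}{8}\|g_0^n\|_{L^2}^4 \to \tfrac{\pi}{8}Q^4$, and the continuity estimate above then gives $\mathcal{H}(f_0^n) \to \tfrac{\pi}{8}Q^4$.

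Putting these pieces together, for any fixed $t > 0$ the sequence $(f^n(t))_n$ satisfies
\[
\|f^n(t)\|_{L^2} \longrightarrow Q \qquad \text{and} \qquad \mathcal{H}(f^n(t)) \longrightarrow \tfrac{\pi}{8} Q^4,
\]
which is exactly the hypothesis of Theorem \ref{bull}; applying it yields $\operatorname{dist}_{L^2}(f^n(t), Q\mathcal{G}) \to 0$. Since the same argument applies to every subsequence one extracts at the outset, the full original sequence converges. There is no substantive obstacle in this proof: all the heavy lifting has been carried out in Sections \ref{sectionanalytic} and \ref{sectionvariational}, and the mild technical point is simply the $L^2$-continuity of $\mathcal{H}$, which is immediate from the $L^2$ boundedness of $\mathcal{T}$.
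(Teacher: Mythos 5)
Your proposal is correct and follows exactly the route the paper intends: the proposition is stated there as an immediate consequence of Theorem~\ref{bull}, obtained from conservation of the mass and of the Hamiltonian, the saturation $\mathcal{H}(g)=\tfrac{\pi}{8}\|g\|_{L^2}^4$ for $g\in\mathcal{G}$, and the $L^2$-continuity of $\mathcal{H}$ coming from Proposition~\ref{baldeagle}. Your write-up simply fills in these (intended) details, including the harmless subsequence step to fix $Q$, so there is nothing genuinely different or missing.
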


\appendix

\section{The case of dimension 1}

It is easy and instructive to describe in dimension 1 the weakly nonlinear ($\epsilon \rightarrow 0$), big box ($L \rightarrow \infty$) limit of (NLS).
We sketch it here and start by considering
$$
- i \partial_t u + \Delta u = \epsilon^2 |u|^2 u \qquad \mbox{set on $\mathbb{T}_L$}.
$$
First expand $u$ in Fourier series: $u = \frac{1}{L} \sum_{K \in \mathbb{Z}/L} a_K e^{2\pi iKx}$, and set $b_K(t)= a_K(L^2 t) e^{-i  4\pi^2 t L^2 K^2}$. It solves
$$
- i \partial_t b_K = \epsilon^2 \sum_{K_1 - K_2 + K_3 = K} b_{K_1} \bar{b}_{K_2} b_{K_3} e^{i 4 \pi^2 t L^2 \Omega } \qquad \mbox{with $\Omega = -K^2 + K_1^2 - K_2^2 + K_3^2$}.
$$
In the above sum, keep only the resonant terms, for which $\Omega$ vanishes; the other ones have less impact on the dynamics due to the weak nonlinearity hypothesis. This gives
$$
- i \partial_t b_K = - \epsilon^2 |b_K|^2 b_K + 2 \epsilon^2 \underbrace{\left(\sum_{J \in \mathbb{Z}/L} |b_J|^2 \right)}_{L \|u\|_{L^2(\mathbb{T}_L)}^2} b_K. 
$$
Noticing that the  $L^2$ norm of the solution is preserved for all times, and changing the dependent variables one last time to $c_K = b_K e^{i 2 t \epsilon^2 L \|u\|_2^2}$, we obtain the equation
$$
- i \partial_t c_K = - \epsilon^2 |c_K|^2 c_K.
$$
In the above equation, $K$ ranges over $\mathbb{Z}/L$. As $L \rightarrow \infty$, it is clear that the above converges to the continuous equation
$$
- i \partial_t g(t,\xi) = - \epsilon^2 |g(t,\xi)|^2 g(t,\xi), \qquad \mbox{where $(t,\xi) \in \mathbb{R}^2$}.
$$
Rescaling time gives
$$
i \partial_t g(t,\xi) = |g(t,\xi)|^2 g(t,\xi).
$$
The corresponding Cauchy problem for an initial data $g(0,\xi) = g_0(\xi)$ is easily solved:
\begin{equation}
\label{eq:wtdim1}
g(t,\xi) = g_0(\xi) e^{it |g_0(\xi)|^2}.
\end{equation}
One notices that in this 1D setting, the continuous resonant dynamics are not as interesting as the 2D case. This is due to the absence of non-trivial cubic resonances in dimension 1. Indeed, an application of a normal forms transformation indicates that the nonlinear interactions responsible for any change in the dynamics of ``action" variables $|\widehat u(K)|^2$ are higher order (namely quintic). On the other hand, equation \eqref{eq:wtdim1} suggest that the corresponding ``angle" variables tend to become (generically) decorrelated as time increases.

%For generic smooth $g_0$, it is easy to see that this leads to a growth of Sobolev norm in $\xi$, and correspondingly, to a pointwise decay,  as well as growth of weighted norms, for the Fourier transform $\widehat{g}$. This seems to contradict the almost periodicity of solutions of (NLS) on the torus in dimension one; but of course this has to do with the limit we take. In terms of weak turbulence, Equation \eqref{eq:wtdim1} has the following interpretation : in dimension one, the angles of complex coefficients $g(t,\xi)$ solution of the large box limit equation tend (generically) to be decorrelated when time increases. 

\subsection*{Acknowledgements} The authors would like to thank Benoit Pausader for a discussion leading to the interpretation in Section \ref{Interpretation}.

\end{document}